\renewcommand*{\Re}{\mathop{}\!\mathrm{Re}\,} 
\numberwithin{equation}{section}
\newcommand{\eu}[1]{\textcolor{black}{#1}}
\newtheorem{theorem}{Theorem}[section]{\bf}{\it}
\newtheorem{lemma}[theorem]{Lemma}{\bf}{\it}
{\bf}{\it}
\newtheorem{proposition}[theorem]{Proposition}{\bf}{\it}
{\bf}{\it}
\newtheorem{reminder*}[theorem]{Reminder}
\newtheorem{details*}[theorem]{Details}
\newtheorem{comm*}[theorem]{Comment}
\newtheorem{definition}[theorem]{Definition} 
\newtheorem{definition*}[theorem]{Definition}
\newtheorem{notation*}[theorem]{Notation}
\newtheorem{assumption}[theorem]{Assumption}
\newtheorem{example}[theorem]{Example}
\title{Modelling physiologically structured populations: renewal equations and  partial differential equations}
\begin{document}
	\date{}
	
	
	\author{Eugenia Franco \and Odo Diekmann \and  Mats Gyllenberg}

	%

	\maketitle

\begin{abstract}
We analyse the long term behaviour of the measure-valued solutions of a class of linear renewal equations modelling physiologically structured populations. 
The renewal equations that we consider are characterised by a regularisation property of the kernel. This regularisation property allows to deduce the large time behaviour of the measure-valued solutions from the asymptotic behaviour of their absolutely continuous, with respect to the Lebesgue measure, component. We apply the results to a model of cell growth and fission and to a model of waning and boosting of immunity.
For both models we relate the renewal equation (RE) to the partial differential equation (PDE) formulation and draw conclusions about the asymptotic behaviour of the solutions of the PDEs.
\end{abstract}
\textbf{Keywords: }Measure-valued solutions; Asynchronous exponential growth; Laplace transform; Waning and boosting of the level of immunity; Cell growth and fission model


\section{Introduction}

Models of physiologically structured populations can take various forms. If the individual states are discrete stages in which individuals sojourn for an exponentially distributed amount of time, then it is natural to formulate the model at the population level as a system of ordinary differential equations describing the rate of change of the number of individuals in the different stages \cite{Fanetal}, \cite{thieme2018mathematics}. 
If the individual states form a continuum, like in e.g. age-size structured populations, there are several popular modelling approaches. 

A very natural, and historically the oldest, approach is to formulate an integral equation of renewal type for the population birth rate.  The method is based on the observation that those who are born at the current time are the children of individuals who were themselves born in the past, have survived up to the current time and give birth at the current time. This approach was formalised by Lotka  \cite{Lotka1907} and Sharpe and Lotka \cite{SharpeLotka1911} for age structured populations, using ideas going back to Euler \cite{Euler1790}.

Another way to model the dynamics of structured populations is to first write down a partial differential equation (PDE) of transport-degradation type describing development (movement in the individual state space) and survival. After that, the PDE is augmented by a rule for reproduction. This can either lead to extra non-local terms in the PDE, like in models of individuals reproducing by fission, or to non-local boundary conditions, like in age-size structured models in which newborns enter the individual state space at the boundary where age is zero.  The PDE-approach was first introduced by McKendrick \cite{McKendrick1926} for age-structured populations and later adapted to age-size-structured populations by Tsuchiya {\em et al.} \cite{Tsuchiya1966}, Bell and Anderson \cite{BellAnderson1967} and others. For a data oriented discrete time variant, see \cite{ellner2016data} and \cite{caswell2000matrix}.

Here we focus on the integral equation for the population birth rate, which in general is a measure representing the rate at which individuals are born in different subsets of the individual state space. If there are no dependencies between individuals such as competition for resources, that is, if the environmental condition is given, then the integral equation is linear and of renewal type.
Here we assume that the environmental condition is constant in time. 

In a recent paper, \cite{franco2021one}, we considered the question when such a renewal equation can be reduced to a one dimensional renewal equation in the sense that the measure-valued solution of the original equation can be recovered from the solution of the one dimensional reduction.
In the present paper we analyse the renewal equation when a different, less restrictive, assumption on the kernel is satisfied and prove asynchronous exponential growth/decline for the solution of the renewal equation. 

\eu{Our interest in measure-valued solutions is motivated by the fact that it allows to consider in a unified way the case in which $\Omega $ is a discrete set and the case in which we have a continuum of states. 
Moreover, considering measure-valued solutions  for REs allows us to draw the connection with measure-valued solutions of PDEs, that have gained much interest in the last years, see for instance  \cite{dull2021spaces}.}

We apply our results to two concrete population models: a model of cell growth and fission (into equal or unequal parts) and a model of waning and boosting of the immunity level.
As anticipated above, these models can be also formulated as PDEs, see \eqref{PDE growth-fission den}, \eqref{eq:equal fission den}, \eqref{eq:waning and boosting den} and could also be analysed in the PDE framework as has been done in \cite{pichor2020dynamics}, \cite{bertoin2019feynman}, \cite{canizo2021spectral}. 

The aim of this paper is twofold. 
On one hand we reiterate the message presented in \cite{franco2021one}, i.e., renewal equations are suitable when dealing with measure-valued solutions. 
The reason is that the existence and uniqueness of their solution can be proven constructively as in the case of scalar equations and, moreover, since renewal equations are integral equations, no regularity assumption with respect to the time variable is required for the concept of solution. This is in contrast with what happens in the PDE framework, where it is typically necessary to work with weak solutions in the measure sense, i.e., weak solutions of the dual equation, see \eqref{PDE}.
 
The second aim of the paper is to provide applicable techniques, based on the work presented in \cite{heijmans1986dynamical} and \cite{inaba2017age}, to study the asymptotic behaviour of the measure-valued solutions of renewal equations. 

In the case of age-structured populations the relationship between the renewal equation and PDE approaches is well understood and discussed in an abstract setting in \cite{Diekmannetal1991}. In the case of size-structured populations the relationship between the two formulations has been investigated in \cite{calsina2016structured} and \cite{barril2021nonlinear}. 
In the closing section of the present paper we show that the measure-valued solutions of the renewal equation yield a solution of  a corresponding PDE and we deduce the asymptotic behaviour of the solution of the PDE from the behaviour of the solution of the RE.

The paper is organised as follows: in Section \ref{sec:well posedness} we provide conditions on the kernel that guarantee the existence of a unique solution for the renewal equation. 
In Section \ref{sec:regularization} we introduce the main assumption of this work: the kernel has a regularising effect on the initial condition. We also motivate heuristically the assumption. 

In Section \ref{sec:method} we prove asynchronous exponential growth for the solution of the renewal equation when the kernel satisfies the assumption presented in Section \ref{sec:regularization}.
We do this by adapting the methods presented in \cite{heijmans1986dynamical} and \cite{inaba2017age}.
The aim of Section \ref{sec:examples} is to show that kernels satisfying the regularisation assumption arise in applications. We analyse the corresponding models in Section \ref{sec:asy models}.
Finally, as anticipated above, Section \ref{sec:PDE} is devoted to the connection between REs and PDEs.  

In Appendix \ref{sec:notation} we collect explanations of the notational conventions, while in Appendix \ref{sec:well posed PDE} we collect results on the existence of a unique solution for the PDE that corresponds to the renewal equation we study.


\section{The Renewal Equation: existence and uniqueness of the solution} \label{sec:well posedness} 
In this paper we study linear physiologically structured population models that can be formalised via a renewal equation with a  measure-valued solution. 
More precisely, we consider a population of individuals characterised by a structuring variable, $i$-state. We assume that the individual state evolves in time due to different individual level mechanisms that might be continuous and deterministic, as is growth, or discontinuous and stochastic, as is fission. 

We denote with $\Omega $ the set of the possible $i$-states and we assume that $\Omega $ is a Borel subset of $\mathbb R^n.$ 
The set of the possible states at birth is $\Omega_0 \subset \Omega.$

We denote with $B(t, \omega) $ the population birth rate, that is the rate at which individuals appear in the population with state in the set $\omega \in \mathcal B(\Omega_0)$ at time $t$. 
Note that when an individual jumps from state A to another state B, we will say that an individual with state A has died and that an individual with state B is born. Likewise, in the case of cell fission, we consider the disappearance of the mother as ‘death’ and the appearance of the two daughters as ‘birth’.

If we assume that the population distribution at time zero is a given datum $M_0 \in \mathcal M_{+, b} (\Omega)$, then we deduce that $B$ solves
\begin{equation} \label{RE}
B(t, \omega) = \int_0^t \int_{\Omega_0} K(a, \xi, \omega ) B(t-a, d\xi)  da + B_0(t, \omega) \quad t >0, \omega \in \mathcal B(\Omega_0)
\end{equation} 
where $K(t,\xi, \omega)$ is interpreted as the rate at which an individual, having state $\xi$ time $t$ ago, gives birth to an individual with state in the set $\omega$ and
\begin{equation} \label{b0 function of m0}
B_0(t, \omega):=\int_{\Omega} K(t, x, \omega ) M_0( dx).
\end{equation}

Here we repeat the definition of locally bounded kernels, and of their convolution, from \cite{franco2021one}, but we refer to the Appendix of that paper for the proofs of the results presented below. 
\begin{definition} [Locally bounded kernel] \label{def: kernel of type 0} 
		A locally bounded kernel is a positive function $K: \mathbb{R}_+ \times \Omega \times \mathcal B(\Omega_0) \rightarrow \mathbb{R}_+$ with the following properties 
		\begin{enumerate}
			\item for every $(a, \xi) \in \mathbb R_+ \times \Omega$, $K(a,\xi, \cdot) \in M_+(\Omega_0)$ (space of positive Borel measures) 
			\item for every $\omega \in \mathcal B(\Omega_0)$, the function 
			\[
			(a,\xi) \mapsto K(a,\xi,\omega), \quad  (a,\xi) \in \mathbb{R^+} \times \Omega
			\] is measurable (with respect to the product Borel $\sigma$-algebra). 
\item 		for any $ T>0$
		\[
		\sup_{ (a,\xi )  \in [0,T] \times \Omega}   K(a,\xi,\Omega_0) < \infty.
		\]
	\end{enumerate}
	\end{definition} 

The middle argument $\xi$ of $K$ ranges over all of $\Omega$ only in connection with the initial condition, cf. (2.2). In connection with births it ranges over $\Omega_0$.
 We therefore define $\mathbb B_{loc}$, the \textit{set of the locally bounded kernels}, as the set of kernels defined on $\mathbb R_+ \times \Omega_0 \times \mathcal B(\Omega_0)$
such that the properties of Definition \ref{def: kernel of type 0} hold with $\xi$ restricted to $\Omega_0.$

\begin{definition}[Convolution product of kernels]\label{convolution of kernels}
We define the convolution product of $K_1, K_2 \in \mathbb B_{loc}$, as 
\begin{equation}\label{eq:convolution of kernels}
(K_2 * K_1)(t, x, \omega) := \int_0^t \int_{\Omega_0} K_2(t-s, \xi,\omega)K_1(s , x, d\xi) ds.
\end{equation}
\end{definition} 
\begin{definition}[Semiring]
	A semiring $R$ is a set endowed with two binary operations, addition $+$ and multiplication $*$, such that 
	\begin{itemize}
		\item $(R,+)$ is a commutative monoid with identity element $\textbf{0}$: i.e. for every element $a,b,c \in R $ we have that $(a+b)+c=a+(b+c)$, for every $a, b \in R $ we have that $a+b=b+a$ and for every $a \in R$ we have that $a+ \textbf{0}=a$; 
		\item  $(R,*)$ is a semigroup: for every $a,b,c \in R$ we have that $(a*b)*c=a*(b*c)$; 
		\item multiplication from the right and from the left is distributive over the addition, 
		\item  multiplication by $\textbf{0}$ annihilates $R$: for every $a \in R$ we have that $a * \textbf{0}= \textbf{0} *a =  \textbf{0}$. 
	\end{itemize}
\end{definition}
\begin{lemma}[Properties of the convolution]\label{convolution preserves type}
The convolution product $*$ of two locally bounded kernels is a locally bounded kernel. 
The set $\mathbb B_{loc}$, equipped with the sum and with the convolution product $*$, is a semiring.
\end{lemma}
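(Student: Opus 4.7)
The plan is to verify the three defining properties of Definition \ref{def: kernel of type 0} for $K_2 * K_1$ (closure of $\mathbb{B}_{loc}$ under $*$), and then to check the semiring axioms in turn, isolating the one genuinely technical point: associativity of the convolution.

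First I would show that $(K_2 * K_1)(t,x,\cdot)$ is a positive Borel measure on $\Omega_0$. Positivity is inherited from $K_1, K_2 \geq 0$. For countable additivity, if $\{\omega_n\} \subset \mathcal{B}(\Omega_0)$ is disjoint, then $K_2(t-s,\xi,\bigsqcup_n \omega_n) = \sum_n K_2(t-s,\xi,\omega_n)$ since $K_2(t-s,\xi,\cdot)$ is a measure, and monotone convergence lets one interchange this sum with the iterated integral against $K_1(s,x,d\xi)\,ds$. Joint measurability of $(t,x) \mapsto (K_2 * K_1)(t,x,\omega)$ for fixed $\omega$ follows from Tonelli's theorem applied to the jointly measurable integrand $(t,s,x,\xi) \mapsto \mathds{1}_{[0,t]}(s)\, K_2(t-s,\xi,\omega)$, using the measurability hypotheses on $K_1$ and $K_2$ (this is where one invokes the general fact that integration against a measurable transition kernel preserves measurability). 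For the uniform bound, for $t \in [0,T]$,
\[
(K_2 * K_1)(t,x,\Omega_0) \;\leq\; \int_0^t M_2(T)\, K_1(s,x,\Omega_0)\, ds \;\leq\; T\, M_1(T)\, M_2(T),
\]
where $M_i(T) := \sup_{[0,T] \times \Omega_0} K_i(\cdot,\cdot,\Omega_0)$ is finite by assumption.

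For the semiring structure, addition is defined pointwise, $(K+K')(t,x,\omega) := K(t,x,\omega) + K'(t,x,\omega)$, which again lies in $\mathbb{B}_{loc}$; the identity $\mathbf 0$ is the kernel identically equal to the zero measure, and commutativity and associativity of $+$ are inherited from the pointwise sum of measures. Distributivity of $*$ over $+$ on both sides is immediate from the linearity of integration, and the annihilation $K * \mathbf 0 = \mathbf 0 * K = \mathbf 0$ is obvious from the definition.

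The only substantive step is associativity of $*$. Given $K_1, K_2, K_3 \in \mathbb{B}_{loc}$, unfolding the definition gives
\[
((K_3 * K_2) * K_1)(t,x,\omega) = \int_0^t \!\! \int_{\Omega_0} \int_0^{t-s} \!\! \int_{\Omega_0} K_3(t-s-u,\eta,\omega)\, K_2(u,\xi,d\eta)\, du\, K_1(s,x,d\xi)\, ds,
\]
and an analogous expression for $(K_3 * (K_2 * K_1))(t,x,\omega)$ after representing the composite measure $(K_2 * K_1)(r,x,d\eta)$ explicitly. Swapping the $s$- and $u$-integrations on the triangular region $\{s,u \geq 0,\ s+u \leq t\}$ via Tonelli's theorem (justified by positivity of the integrands and the finiteness bound of the first paragraph) and performing the change of variables $\sigma := s+u$ produces the other iterated integral. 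This Fubini/Tonelli juggling, together with the sigma-finiteness/joint measurability needed to represent $(K_2*K_1)(r,x,\cdot)$ as a genuine transition kernel, is the main obstacle; once it is in hand the semiring identities drop out as bookkeeping.
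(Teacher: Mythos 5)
Your argument is correct and is the standard one; note that the paper itself does not prove this lemma but defers to the appendix of \cite{franco2021one}, and your verification (closure via monotone convergence, the transition-kernel measurability fact, and the local bound $T\,M_1(T)\,M_2(T)$; then the semiring axioms with associativity via Tonelli on the triangular time region) fills that in along the same lines. The one step you leave at sketch level is the genuinely load-bearing one: to rewrite $\int_{\Omega_0} K_3(t-r,\eta,\omega)\,(K_2*K_1)(r,x,d\eta)$ as an iterated integral you need the identity $\int g\,d(K_2*K_1)(r,x,\cdot)=\int_0^r\int_{\Omega_0}\bigl(\int g\,dK_2(r-u,\xi,\cdot)\bigr)K_1(u,x,d\xi)\,du$ for all nonnegative measurable $g$, which follows from the definition on indicators by linearity and monotone convergence; making that monotone-class argument explicit would complete the proof.
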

Unlike the classical convolution of scalar functions, the convolution $*$ defined by \eqref{convolution of kernels} is not commutative. 
Moreover, $(\mathbb B_{loc},*)$ is a semigroup, but not a monoid. The reason is that the candidate identity element $\textbf{1}$ is a Dirac measure in the time/age variable, indeed $\textbf{1}(t, x, \omega)=\delta_0(t) \chi_\omega(x) $. Hence $\textbf{1} $ does not belong to $\mathbb B_{loc}.$ 

\begin{definition} \label{def:set X} 
$\mathcal X$ denotes  the set of functions $f: \mathbb R_+ \times \mathcal B(\Omega_0) \rightarrow \mathbb R_+ $ such that for every $a \in \mathbb R_+$, $f(a, \cdot)$ is a measure, the function $f(\cdot, \omega)$ is measurable for every $\omega \in \mathcal B(\Omega_0)$ and $ f(\cdot, \Omega_0)$ is locally integrable.
\end{definition}
\begin{definition}
Given $K \in \mathbb B_{loc}$ and $f \in \mathcal X$, we denote with $\mathcal L_K f $ the convolution of $K$ and $f$, defined by
\begin{equation}\label{def L_k}
(\mathcal L_K f) (t, \omega) := \int_0^t \int_{\Omega_0}  K(t-\sigma, x, \omega) f(\sigma, dx)d\sigma \quad t \geq 0 \quad \omega \in \mathcal B(\Omega_0).
\end{equation}
\end{definition} 

\begin{lemma}
	If $K \in \mathbb B_{loc}$, then
the operator $\mathcal L_K$ is a linear operator from $\mathcal X$ to itself.
If $K_1, K_2 \in \mathbb B_{loc}$, then $\mathcal L_{K_2} \mathcal L_{K_1}  = \mathcal L_{ K_2 * K_1}. $
\end{lemma}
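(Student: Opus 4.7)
The plan is to verify, for the first claim, that $\mathcal L_K f$ belongs to $\mathcal X$ whenever $f\in\mathcal X$ and $K\in\mathbb B_{loc}$, by checking each of the three defining properties of $\mathcal X$ from Definition \ref{def:set X}; linearity of $\mathcal L_K$ will then be immediate from the linearity of the integral with respect to $f$. For the second claim, the plan is a direct computation that rearranges iterated integrals via Fubini--Tonelli together with the change of variable $s=\tau-\sigma$.

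For the first claim, I would proceed as follows. Fix $t\ge 0$. To see that $(\mathcal L_K f)(t,\cdot)$ is a measure on $\mathcal B(\Omega_0)$, I take disjoint $\omega_n$ with union $\omega$; because $K(a,x,\cdot)$ is a measure and everything is nonnegative, monotone convergence lets me interchange the sum over $n$ with the double integral in \eqref{def L_k}, giving countable additivity (and $(\mathcal L_K f)(t,\emptyset)=0$ is trivial). To check measurability of $t\mapsto(\mathcal L_K f)(t,\omega)$, I would note that $(t,\sigma,x)\mapsto K(t-\sigma,x,\omega)\chi_{[0,t]}(\sigma)$ is jointly Borel measurable by property (2) of Definition \ref{def: kernel of type 0}, and combine this with the measurability/measure properties of $f$ via a Fubini-type argument (first on indicator sets, then extending by linearity and monotone convergence). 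Local integrability of $t\mapsto(\mathcal L_K f)(t,\Omega_0)$ follows from the estimate
\[
(\mathcal L_K f)(t,\Omega_0)\;\le\;\Big(\sup_{(a,\xi)\in[0,T]\times\Omega_0}K(a,\xi,\Omega_0)\Big)\int_0^t f(\sigma,\Omega_0)\,d\sigma\qquad (t\in[0,T]),
\]
which is finite by property (3) of Definition \ref{def: kernel of type 0} and local integrability of $f(\cdot,\Omega_0)$.

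For the second claim, writing the definitions out gives
\[
(\mathcal L_{K_2}\mathcal L_{K_1}f)(t,\omega)=\int_0^t\!\!\int_{\Omega_0}K_2(t-\tau,y,\omega)\,(\mathcal L_{K_1}f)(\tau,dy)\,d\tau,
\]
and unfolding the inner measure $(\mathcal L_{K_1}f)(\tau,\cdot)$ (via the standard indicator-to-nonnegative-measurable-function extension) turns this into a quadruple integral over $0\le\sigma\le\tau\le t$, $x,y\in\Omega_0$ of the integrand $K_2(t-\tau,y,\omega)\,K_1(\tau-\sigma,x,dy)\,f(\sigma,dx)$. Since everything is nonnegative, Fubini--Tonelli allows me to swap the $\sigma$- and $\tau$-integrations and then substitute $s=\tau-\sigma$; after this change of variables the inner double integral over $s\in[0,t-\sigma]$ and $y\in\Omega_0$ is exactly $(K_2*K_1)(t-\sigma,x,\omega)$ by Definition \ref{convolution of kernels}, and what remains is $\mathcal L_{K_2*K_1}f(t,\omega)$.

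The main obstacle I anticipate is purely a measure-theoretic bookkeeping one: justifying that the inner integral $\int_{\Omega_0}K_2(t-\tau,y,\omega)(\mathcal L_{K_1}f)(\tau,dy)$ really equals the triple integral obtained by substituting the formula for $\mathcal L_{K_1}f$ into its argument. This is the standard lifting of Fubini from indicator functions to nonnegative Borel functions, but one has to verify joint measurability of the integrand in all of its variables, which in turn relies on property (2) of Definition \ref{def: kernel of type 0} for both $K_1$ and $K_2$; the supremum bound in property (3) then guarantees that all integrals are finite on any compact time interval, so Tonelli applies without further integrability concerns.
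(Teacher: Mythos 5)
Your proposal is correct. Note that the paper itself does not prove this lemma: it explicitly defers the proofs of the results of Section 2 to the Appendix of the earlier paper by the same authors, so there is no in-text argument to compare against. Your route — verifying the three defining properties of $\mathcal X$ (countable additivity by monotone convergence, measurability in $t$ by the indicator-to-monotone-limit extension using joint measurability of $(a,\xi)\mapsto K(a,\xi,\omega)$, and local integrability from the local sup bound on $K(\cdot,\cdot,\Omega_0)$), and then obtaining $\mathcal L_{K_2}\mathcal L_{K_1}=\mathcal L_{K_2*K_1}$ by unfolding the inner measure, applying Tonelli (legitimate since $f(\sigma,dx)\,d\sigma$ is a finite measure on $[0,t]\times\Omega_0$ by local integrability of $f(\cdot,\Omega_0)$), and substituting $s=\tau-\sigma$ — is precisely the standard argument one would expect in that appendix, and all the measure-theoretic caveats you flag are the right ones and are handled correctly.
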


We now interpret \eqref{RE} as the equation $B=\mathcal L_K B+B_0$ with given $B_0 \in \mathcal X$ and unknown $B \in \mathcal X.$

\begin{proposition}\label{prop:existence of a solution for the RE}
Let $K\in \mathbb B_{loc}$ and $B_0 \in \mathcal X$. Then, there exists a unique solution $B$ of equation \eqref{RE} and it is given by
\begin{equation}\label{formula for b}
B = B_0 + \mathcal L_R B_0 
\end{equation} 
where $R \in \mathbb B_{loc}$ is the \textit{resolvent} of the kernel $K$ defined by $R=\sum_{n=1}^\infty  K^{*n}$
where $K^{*1}=K$ and for every $n \geq 2 $
\[
K^{*n}= K^{*(n-1)} *K.
\]
\end{proposition}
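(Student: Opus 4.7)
The plan is to follow the classical resolvent series construction, relying throughout on the semiring structure of $\mathbb{B}_{loc}$ (Lemma \ref{convolution preserves type}) and the identity $\mathcal{L}_{K_2}\mathcal{L}_{K_1} = \mathcal{L}_{K_2 * K_1}$, rather than on any Banach space norm that would be awkward to set up in the measure-valued setting.

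The first step is to show that the series $R = \sum_{n \geq 1} K^{*n}$ converges and defines an element of $\mathbb{B}_{loc}$. Fixing $T>0$ and setting $M_T := \sup_{(a,\xi) \in [0,T]\times\Omega_0} K(a,\xi,\Omega_0) < \infty$, a straightforward induction on $n$ via the definition of convolution and Tonelli's theorem yields the pointwise bound
\begin{equation*}
K^{*n}(a,\xi,\Omega_0) \leq \frac{M_T^n\, a^{n-1}}{(n-1)!}, \qquad (a,\xi) \in [0,T] \times \Omega_0,
\end{equation*}
so that $\sum_n K^{*n}(a,\xi,\Omega_0) \leq M_T e^{M_T T}$. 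Setwise summation then defines a positive measure $R(a,\xi,\cdot)$ on $\Omega_0$ with total mass at most $M_T e^{M_T T}$, measurable in $(a,\xi)$ as a pointwise monotone limit of nonnegative measurable functions; thus $R \in \mathbb{B}_{loc}$.

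For existence, I would verify directly that $B := B_0 + \mathcal{L}_R B_0$ lies in $\mathcal{X}$ (by the mapping property $\mathcal{L}_R : \mathcal X \to \mathcal X$) and satisfies \eqref{RE}. Using $\mathcal{L}_K \mathcal{L}_R = \mathcal{L}_{K*R}$ together with the reindexing $K * R = \sum_{n\geq 1} K * K^{*n} = \sum_{n \geq 2} K^{*n} = R - K$ (legitimate by Tonelli and the uniform bound above), I obtain $\mathcal{L}_K B = \mathcal{L}_K B_0 + \mathcal{L}_{R-K} B_0 = \mathcal{L}_R B_0 = B - B_0$, which is \eqref{RE}. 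For uniqueness, given any solution $\widetilde B \in \mathcal X$, iterating the equation $n$ times gives $\widetilde B = \sum_{k=0}^n \mathcal{L}_{K^{*k}} B_0 + \mathcal{L}_{K^{*(n+1)}}\widetilde B$ (with the convention $\mathcal{L}_{K^{*0}} B_0 := B_0$), and the tail admits the estimate
\begin{equation*}
\mathcal{L}_{K^{*(n+1)}}\widetilde B(t,\omega) \leq \frac{M_T^{n+1}\, T^n}{n!}\int_0^T \widetilde B(s,\Omega_0)\,ds,
\end{equation*}
which vanishes as $n \to \infty$ for every $t \in [0,T]$ since $\widetilde B(\cdot,\Omega_0)$ is locally integrable. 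Since the finite sums converge to $B_0 + \mathcal{L}_R B_0 = B$, I conclude $\widetilde B = B$.

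The only point requiring care, more than a genuine obstacle, is that every manipulation must be phrased at the level of positive kernels and measures: nonnegativity legitimises the Tonelli-based reindexing of $K * R$ and the interchange of sum and integral, while the uniform estimate $M_T e^{M_T T}$ on compact time intervals takes the place of a norm bound on $\mathcal X$ or $\mathbb B_{loc}$. Once these measure-theoretic checks are recorded, the proof collapses to the standard Picard--Neumann argument.
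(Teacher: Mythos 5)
Your proof is correct and follows exactly the standard resolvent-series (Picard--Neumann) argument that the paper has in mind: the paper itself does not reproduce a proof but defers it to the appendix of \cite{franco2021one}, and the auxiliary facts it records here (the semiring Lemma \ref{convolution preserves type} and the identity $\mathcal L_{K_2}\mathcal L_{K_1}=\mathcal L_{K_2*K_1}$) are precisely the ingredients you invoke. Your factorial bound on $K^{*n}(a,\xi,\Omega_0)$, the Tonelli-based reindexing $K*R=R-K$, and the vanishing-tail uniqueness estimate are all sound, so nothing further is needed.
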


We deduce that, if $K$ is a locally bounded kernel and if $B_0$ is given by \eqref{b0 function of m0}, where $M_0 \in \mathcal M_{+,b}(\Omega)$, then $B_0 \in \mathcal X$ and equation \eqref{RE} has a unique solution.


\section{Reduction to densities} \label{sec:regularization}
In this subsection we present the assumptions on the kernel $K$ that allow us to study the asymptotic behaviour of the solution of equation \eqref{RE} by studying the asymptotic behaviour of the density of its non-singular component. 
We start with an heuristic explanation of the simplification achieved in this manner.  

We can rewrite equation \eqref{RE} in the following translation invariant form
\begin{equation} \label{RE trans inv}
B(t, \omega) = \int_0^\infty \int_{\Omega_0} K(a, \xi, \omega ) B(t-a, d\xi)  da 
\end{equation} 
where for every $\omega \in \mathcal B(\Omega_0)$, $ B(\theta, \omega) d \theta := \Phi (d\theta, \omega ) \text{ if } \theta < 0$ with $\Phi$ a given measure. We allow $\Phi$ to be a measure with respect to the time-of-birth variable simply because it does not harm. 
The fact that equation \eqref{RE trans inv} is translation invariant and linear suggests to look for exponential solutions of the form: 
\begin{equation} \label{expo ansatz}
B(t, \omega):=e^{\lambda t} \Psi(\omega) \text{ for every } t \in \mathbb R. 
\end{equation}
We want to investigate whether such exponential solutions exist and whether they are attractive, i.e., describe the long-term behaviour of $B.$

To guarantee the convergence of the relevant integrals we make the following assumption. 
\begin{assumption} \label{z_0 kernel}
	There exists a $z_0<0 $ and a constant $C>0$ such that for every $t\geq 0$  
	\begin{equation}\label{condition on k for reduction}
	\sup_{x  \in \Omega_0}  K (t,x,  \Omega_0) \leq C e^{ z_0 t}.
	\end{equation} 
\end{assumption}
We say that a locally bounded kernel that satisfies Assumption \ref{z_0 kernel} is a $z_0$-\textit{bounded kernel}.

Substituting the Ansatz \eqref{expo ansatz} in \eqref{RE trans inv} we obtain the following non-linear eigenproblem
\begin{equation}\label{ansatz char}
\Psi(\omega)=  \int_0^\infty  \int_{\Omega_0}  e^{-\lambda a} K(a, \xi, \omega ) \Psi(d \xi) da. 
\end{equation}
So we need to study the properties of the operator
\[
\Psi \mapsto \int_0^\infty  \int_{\Omega_0}  e^{-\lambda a} K(a, \xi, \cdot ) \Psi(d \xi)da. 
\]
 that maps $\mathcal M_{+,b}(\Omega_0) $ into itself. 
In particular we would like to prove its compactness, but this is a very difficult task when we deal with spaces of measures, see for instance \cite{thieme2020discrete}.

Therefore we introduce regularity assumptions on $K$ that allow us to reduce the non-linear eigenproblem \eqref{ansatz char} to measures that are absolutely continuous with respect to the Lebesgue measure, so to an associated non-linear eigenproblem in $L^{1}(\Omega_0).$
It is easiest to assume that for each $t$ and $x$ the measure $K(t,x,.)$ has a density. But as we shall see in Section \ref{sec:examples}, there are natural examples in which the ‘smoothing’ needs one more step. 
\begin{assumption} \label{ass:AC}
For every $x \in \Omega_0$, every $t \geq 0$ the measure
\begin{equation} \label{AC m_xt}
\omega \mapsto \int_0^t \int_{\Omega_0}  K(t-a,\xi, \omega) K(a,x, d \xi ) da 
\end{equation}
is absolutely continuous with respect to the Lebesgue measure. Moreover, for every $t \geq 0$ and for every $f \in L^1(\Omega_0)$ the measure
\begin{equation}\label{int AC}
\omega \mapsto \int_{\Omega_0} K(t,x,\omega) f(x) dx 
\end{equation}
is absolutely continuous with respect to the Lebesgue measure. 
\end{assumption} 
\begin{definition}\label{def:z0 regularizing}
We say that $K$ is a $z_0$-bounded regularizing kernel if it is a $z_0$-bounded kernel that satisfies Assumption \ref{ass:AC}.
\end{definition} 
The interpretation of the absolute continuity with respect to the Lebesgue measure of \eqref{AC m_xt} is that, when we focus on an individual with state $x$ and look $t$ time later at the distribution of the state-at-birth over $\Omega_0$ of grandchildren born at that time, it has a density. So we require that the distribution concentrated in $x$ is, by the combination of growth, survival and twice reproduction, transformed into an absolutely continuous distribution.

On the other hand, the absolute continuity, with respect to the Lebesgue measure, of \eqref{int AC}, guarantees that, if the distribution of the states at birth of a certain generation is absolutely continuous with respect to the Lebesgue measure, then the same is true for the future generations.  

We refer to Appendix \ref{sec:notation} for an explanation of the notation used in the formulation of the following theorem (whose proof is given at the end of the current section).
\begin{theorem}\label{thm:reduction} 
Let $K$ be a $z_0$-bounded regularizing kernel and let $B_0$ be given by \eqref{b0 function of m0} as a function of $K$ and $M_0 \in \mathcal M_{+,b}(\Omega)$. Then the solution $B$ of \eqref{RE} satisfies 
\begin{equation} \label{Bs tends to zero exponentially}
 \| B(t, \cdot)^s \| \leq c_1 e^{z_0 t}+ c_2 t e^{t z_0} \quad t > 0
\end{equation}
where $\| \cdot \|=\| \cdot \|_{TV} = \| \cdot \|_{\flat}$ and $c_1, c_2>0$. 
\end{theorem}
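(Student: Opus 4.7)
The plan is to iterate the renewal equation twice and exploit Assumption \ref{ass:AC}(1) to show that the resulting remainder is absolutely continuous, so that the singular part of $B(t,\cdot)$ is carried entirely by the first two summands. Substituting $B=B_0+\mathcal{L}_K B$ into itself yields
\begin{equation*}
B = B_0 + \mathcal{L}_K B_0 + \mathcal{L}_K^2 B,
\end{equation*}
and the identity $\mathcal{L}_K\mathcal{L}_K=\mathcal{L}_{K*K}$ (from the last lemma of Section~\ref{sec:well posedness}) rewrites the remainder as
\begin{equation*}
\mathcal{L}_K^2 B(t,\omega)=\int_0^t\int_{\Omega_0}(K*K)(t-\sigma,x,\omega)\,B(\sigma,dx)\,d\sigma.
\end{equation*}

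To show this remainder is AC with respect to Lebesgue measure on $\Omega_0$, fix a Lebesgue-null Borel set $N\subset\Omega_0$. Since $B(\sigma,\cdot)$ is a measure on $\Omega_0$, the inner variable $x$ ranges over $\Omega_0$, and for every such $x$ Assumption \ref{ass:AC}(1) forces $(K*K)(t-\sigma,x,N)=0$. Hence the integrand vanishes identically and $\mathcal{L}_K^2 B(t,N)=0$, so $\mathcal{L}_K^2 B(t,\cdot)$ is indeed absolutely continuous. By uniqueness (and additivity on sums of positive measures) of the Lebesgue decomposition, the singular part of $B(t,\cdot)$ reduces to
\begin{equation*}
B(t,\cdot)^s = B_0(t,\cdot)^s + (\mathcal{L}_K B_0)(t,\cdot)^s,
\end{equation*}
and in particular $\|B(t,\cdot)^s\|\le\|B_0(t,\cdot)\|+\|\mathcal{L}_K B_0(t,\cdot)\|$.

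It remains to bound these two total variations using the $z_0$-bound of Assumption \ref{z_0 kernel}. From \eqref{b0 function of m0} one has $\|B_0(t,\cdot)\|=\int_\Omega K(t,y,\Omega_0)\,M_0(dy)\le C\|M_0\|\,e^{z_0 t}$. For the second, applying $K(t-\sigma,x,\Omega_0)\le Ce^{z_0(t-\sigma)}$ inside the convolution (now with middle argument in $\Omega_0$) and inserting the previous estimate gives
\begin{equation*}
\|\mathcal{L}_K B_0(t,\cdot)\|\le C\int_0^t e^{z_0(t-\sigma)}\|B_0(\sigma,\cdot)\|\,d\sigma\le C^2\|M_0\|\,t\,e^{z_0 t},
\end{equation*}
yielding \eqref{Bs tends to zero exponentially} with $c_1=C\|M_0\|$ and $c_2=C^2\|M_0\|$. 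The one substantive step is the AC claim for the remainder: applying the RE only once would not suffice because $K(t,x,\cdot)$ itself may be singular, and it is precisely the two-step smoothing encoded in Assumption \ref{ass:AC}(1) that is needed.
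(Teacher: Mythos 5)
Your proof is correct and the overall skeleton is the one the paper uses: split off $B_0+\mathcal L_K B_0$, show the remainder is absolutely continuous, conclude $B^s=B_0^s+(\mathcal L_K B_0)^s$, and then bound both terms through the $z_0$-bound \eqref{condition on k for reduction}, which gives exactly the $c_1e^{z_0t}+c_2te^{z_0t}$ estimate. The one genuine difference is how the remainder is handled. The paper invokes the resolvent representation $B=B_0+\mathcal L_R B_0$ from Proposition \ref{prop:existence of a solution for the RE}, writes $R=K+\sum_{n\geq 2}K^{*n}$, and uses the semi-ideal property (Lemma \ref{lem:ideal}) plus an induction (Lemma \ref{lem:r is AC}) to show $\sum_{n\geq 2}K^{*n}\in\mathcal I$, so its absolutely continuous remainder is $\mathcal L_{\sum_{n\geq 2}K^{*n}}B_0$. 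You instead iterate the fixed-point equation once, obtaining $B=B_0+\mathcal L_K B_0+\mathcal L_{K*K}B$, and kill null sets directly with Assumption \ref{ass:AC} applied to $(K*K)(\cdot,x,\cdot)$ for $x\in\Omega_0$; this bypasses the series and the induction entirely and is slightly more economical (it uses only the already-established existence of $B$ and Lemma \ref{lem:ideal}'s elementary null-set argument), while the paper's version has the small advantage of displaying the whole absolutely continuous tail as an explicit functional of $B_0$ rather than of the unknown $B$. Your use of $\sup_{x}K(t,x,\Omega_0)\leq Ce^{z_0t}$ with the middle argument ranging over $\Omega$ in the $B_0$-estimate is the same (implicit) reading of Assumption \ref{z_0 kernel} that the paper itself makes, so it is not a gap relative to the paper's proof.
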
 
Since the operator $\mathcal L_K$ is linear, equation \eqref{RE} can be rewritten as 
\begin{equation}\label{RE mix}
B^{AC} +B^s = \mathcal L_K B^{AC}+ \mathcal L_K B^{s} + B_0^{AC}+ B_0^s
\end{equation}
Equation \eqref{RE mix} can be decoupled in a system of two equations 
\begin{equation}\label{RE AC int}
B^{AC} = \mathcal L_K B^{AC}+ \left(\mathcal L_K B^{s}\right)^{AC} + B_0^{AC}
\end{equation}
and 
\begin{equation}\label{singular part}
B^{s} =  \left(\mathcal L_K B^{s}\right)^{s} + B_0^{s}.
\end{equation}
Thanks to Theorem \ref{thm:reduction} we can focus on the asymptotic behaviour of the density of $B^{AC}$  to gain information regarding $B.$

We next present a definition and two lemmas which will be applied in the proof of Theorem \ref{thm:reduction}. 
\begin{definition} \label{ideal}
$\mathcal I$ is the set of the locally bounded kernels $K$ such that 
\[
K(t,x, \cdot)  \in \mathcal M_{+,AC}(\Omega_0) 
\] 
for every $t \geq 0$ and $x \in \Omega_0.$
\end{definition}
\begin{definition}[Right semi-ideal]
	Let $R$ be a semiring with the binary operations $+$ and $*$.
A set $\mathcal J \subset R$ is a right semi-ideal if  $(\mathcal J,+)$ is a monoid and for every $i \in \mathcal J $  and every $K \in R$ we have that $j*K \in \mathcal J.$	
\end{definition} 
\begin{lemma} \label{lem:ideal}
The set $\mathcal I$ is a right semi-ideal.
Moreover, if $K \in \mathcal I$, then for every $t\geq 0$ and every $f \in \mathcal X$, we have that $(\mathcal L_K f) (t, \cdot) \in \mathcal M_{+, AC}(\Omega_0)$. 
\end{lemma}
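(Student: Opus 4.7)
The plan is to treat the two assertions separately, with the core observation in both being that a positive Borel measure $\mu$ on $\Omega_0$ is absolutely continuous with respect to Lebesgue measure if and only if $\mu(\omega)=0$ for every Borel set $\omega$ of Lebesgue measure zero. Thanks to this characterization, each verification reduces to evaluating an integral of a function that is identically zero on $\omega$.

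For the semi-ideal structure, I would first check that $(\mathcal I,+)$ is a commutative monoid. The zero kernel $\mathbf{0}$ sends every $(t,x)$ to the zero measure, which is vacuously absolutely continuous, so $\mathbf{0}\in\mathcal I$. If $K_1,K_2\in\mathcal I$ then $(K_1+K_2)(t,x,\cdot)=K_1(t,x,\cdot)+K_2(t,x,\cdot)$ is the sum of two absolutely continuous measures and hence itself absolutely continuous, so $\mathcal I$ is closed under addition. For the right-ideal property, fix $j\in\mathcal I$, $K\in\mathbb B_{loc}$, $(t,x)\in\mathbb R_+\times\Omega_0$, and let $\omega\subset\Omega_0$ be Lebesgue-null. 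Lemma \ref{convolution preserves type} guarantees that $(j*K)(t,x,\cdot)$ is already a positive Borel measure, and from Definition \ref{convolution of kernels} we have
\[
(j*K)(t,x,\omega)=\int_0^t\int_{\Omega_0} j(t-s,\xi,\omega)\,K(s,x,d\xi)\,ds.
\]
Since $j\in\mathcal I$, the integrand $j(t-s,\xi,\omega)$ vanishes for every $s\in[0,t]$ and every $\xi\in\Omega_0$, whence the integral is zero. Therefore $(j*K)(t,x,\cdot)\in\mathcal M_{+,AC}(\Omega_0)$ and $j*K\in\mathcal I$.

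For the second statement I would argue in exactly the same spirit. Given $K\in\mathcal I$ and $f\in\mathcal X$, the lemma immediately preceding Proposition \ref{prop:existence of a solution for the RE} ensures that $\mathcal L_Kf\in\mathcal X$, so in particular $(\mathcal L_Kf)(t,\cdot)$ is a positive Borel measure on $\Omega_0$. For any Lebesgue-null set $\omega$, the definition of $\mathcal L_K$ gives
\[
(\mathcal L_Kf)(t,\omega)=\int_0^t\int_{\Omega_0} K(t-\sigma,x,\omega)\,f(\sigma,dx)\,d\sigma,
\]
and since $K\in\mathcal I$ the integrand is identically zero. Hence $(\mathcal L_Kf)(t,\omega)=0$ for every Lebesgue-null $\omega$, and $(\mathcal L_Kf)(t,\cdot)\in\mathcal M_{+,AC}(\Omega_0)$.

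I do not anticipate any substantive obstacle: the argument is a direct unfolding of the definitions combined with the equivalent characterisation of absolute continuity via null sets. The only technical points to keep an eye on are measurability of the integrands, which is built into the definition of a locally bounded kernel, and the fact that the relevant objects are genuine measures, which is supplied by Lemma \ref{convolution preserves type} and the lemma on $\mathcal L_K$ mapping $\mathcal X$ into itself. No use of Fubini or of more delicate measure-theoretic machinery is needed, because we are integrating the zero function.
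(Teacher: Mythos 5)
Your proof is correct and follows essentially the same route as the paper: both argue via the null-set characterisation of absolute continuity, noting that for $j\in\mathcal I$ the integrand $j(t-s,\xi,\omega)$ (respectively $K(t-\sigma,x,\omega)$ in the $\mathcal L_K$ case) vanishes identically on Lebesgue-null sets $\omega$, so the convolution integrals are zero. The only difference is that you also verify explicitly that $(\mathcal I,+)$ is a monoid (zero kernel and closure under addition), a point the paper leaves implicit.
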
 
\begin{proof}
Assume that $K_2 \in \mathcal I$ and that $K_1 \in \mathbb B_{loc} $. 
Consider a set $A$ that has Lebesgue measure equal to zero. 
Since $K_2 \in \mathcal I$, we deduce that, for every $t\geq 0 $ and $x \in \Omega_0$,  
\[
K_2(t,x,A)=0. 
\]
By Definition \ref{convolution of kernels} and formula \eqref{eq:convolution of kernels}, we deduce that, for every $t\geq 0$ and $x \in \Omega_0$
\[
(K_2 * K_1)(t,x,A)=0.
\] 
We conclude that for every $t\geq 0$ and $x \in \Omega_0$ the measure $(K_2 * K_1)(t,x, \cdot) $ is absolutely continuous with respect to the Lebesgue measure, hence  $ K_2 * K_1 \in \mathcal I.$
The second statement of the proof follows analogously from formula \eqref{def L_k}
\qed\end{proof} 

\begin{lemma}\label{lem:r is AC} 
If $K$ satisfies Assumption \ref{ass:AC}, then $ \sum_{n=2}^\infty K^{*n} \in \mathcal I$. 
\end{lemma}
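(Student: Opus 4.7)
The plan is to exploit the right semi-ideal structure of $\mathcal I$ established in Lemma \ref{lem:ideal} together with the associativity of $*$, which holds because $(\mathbb B_{loc}, *)$ is a semiring by Lemma \ref{convolution preserves type}. The base case $K^{*2} \in \mathcal I$ is immediate from the first half of Assumption \ref{ass:AC}: by the convolution formula \eqref{eq:convolution of kernels}, the measure \eqref{AC m_xt} is exactly $K^{*2}(t,x,\cdot)$, which is assumed to be absolutely continuous with respect to the Lebesgue measure for every $t \geq 0$ and $x \in \Omega_0$.

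Next, for every $n \geq 3$, I would use associativity together with the recursive definition $K^{*n} = K^{*(n-1)} * K$ to obtain, by a short induction,
\begin{equation*}
K^{*n} = K^{*2} * K^{*(n-2)}.
\end{equation*}
The key observation is that the factor in $\mathcal I$ sits on the \emph{left} of the convolution, which is the correct position for Lemma \ref{lem:ideal}. Since $K^{*(n-2)} \in \mathbb B_{loc}$ (the class is closed under $*$), Lemma \ref{lem:ideal} gives $K^{*n} \in \mathcal I$ for every $n \geq 2$.

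Finally, I would pass to the infinite sum. By Proposition \ref{prop:existence of a solution for the RE}, the resolvent $R = \sum_{n=1}^\infty K^{*n}$ converges in $\mathbb B_{loc}$, hence $\sum_{n=2}^\infty K^{*n} = R - K$ also lies in $\mathbb B_{loc}$. For any Borel set $A \subset \Omega_0$ with Lebesgue measure zero, each term satisfies $K^{*n}(t,x,A)=0$ for $n \geq 2$, so the sum vanishes on $A$ as well, and this is precisely the absolute continuity needed for membership in $\mathcal I$.

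The only genuine subtlety is the non-commutativity of $*$: one must write $K^{*n} = K^{*2} * K^{*(n-2)}$ and not the reversed product, so that the absolutely continuous factor lies on the side where Lemma \ref{lem:ideal} applies. Beyond this bit of bookkeeping, there is no real obstacle, since the passage from finite to infinite sums is trivial once each individual term is known to be absolutely continuous.
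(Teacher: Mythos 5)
Your proposal is correct and follows essentially the same route as the paper: the base case $K^{*2}\in\mathcal I$ comes from Assumption \ref{ass:AC}, membership of the higher powers is propagated through the right semi-ideal property of $\mathcal I$ (Lemma \ref{lem:ideal}), and the infinite sum is handled termwise. Your decomposition $K^{*n}=K^{*2}*K^{*(n-2)}$ is only a cosmetic variant of the paper's induction $K^{*(n+1)}=K^{*n}*K$ (both keep the absolutely continuous factor on the left, as the convolution formula requires), and your explicit treatment of the passage to the infinite sum is a small but welcome addition that the paper leaves implicit.
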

\begin{proof}
Thanks to Assumption \ref{ass:AC} we know that $K* K \in \mathcal I$. 
Since $\mathcal I$ is a right-ideal we deduce that, if $K^{*n} \in \mathcal I$, then $K^{*n} * K= K^{*(n+1)} \in \mathcal I$. We conclude by induction that $\sum_{n=2}^\infty K^{*n} \in \mathcal I$. 
\qed\end{proof} 
\begin{proof}[Proof of Theorem  \ref{thm:reduction}]
Since $B$ solves \eqref{RE}, then
\[
B=B_0+ \mathcal  L_R B_0= B_0 + \mathcal  L_K B_0+ \mathcal  L_{ \sum_{n=2}^\infty K^{*n}} B_0. 
\]
Thanks to Lemma \ref{lem:r is AC} we deduce that
\[
B^s=B_0^s + \mathcal  L_K B_0^s . 
\]
As a consequence of the fact that for every $t\geq 0$, $B_0(t, \Omega_0) \leq c_1 e^{z_0 t} $ and \eqref{condition on k for reduction}, we deduce that there exists $c_2>0$ such that
\[
\mathcal L_K B_0 (t, \Omega_0) \leq c_2 t e^{z_0 t} \quad \text{ for every } t \geq 0, 
\]
hence $B^s(t, \Omega_0) \leq c_1 e^{z_0 t} + c_2 t e^{z_0 t}. $
\qed\end{proof}

\section{Asymptotic behaviour of the solution of the renewal equation} \label{sec:method}
In this section we denote with $X$ the Banach space $L^1(\Omega_0)$ endowed with the $L^1$ norm $\| \cdot \|_{1}.$
Moreover we denote with $X_+$ the cone of the positive functions in $L^1(\Omega_0)$ and we call the bounded linear operator $L: X \rightarrow X$ positive if $L: X_+ \rightarrow X_+$. 

To study the asymptotic behaviour of the solution $B$ of \eqref{RE} we adopt the following strategy: 
\begin{itemize}
\item in Section \ref{sec:RE for density} we introduce the renewal equation for the density of $B^{AC}$ and we prove that it has a unique solution; 
\item in Section \ref{sec:charc} we perform the Laplace transform to all the terms in the renewal equation for the density of $B^{AC}$. We derive in this way a non-linear eigenproblem;
\item in Section \ref{sec:positive op} we present some results on positive operators that are important to study the non-linear eigenproblem derived in \eqref{sec:charc}; 
\item in Section \ref{sec:r} we prove that there exists a unique, up to renormalisation, real eigencouple solving the non-linear eigenproblem derived in Section \ref{sec:charc}; 
\item in Section \ref{sec:asympt AC} we adapt the approach presented by Heijmans in \cite{heijmans1986dynamical} to prove that the solution of the non-linear eigenproblem is attractive, i.e., we deduce the asymptotic behaviour of the density of $B^{AC}$; 
\item in Section \ref{sec:gripenberg} we sketch a different approach to derive the asymptotic behaviour of the density of $B^{AC};$
\item in Section \ref{sec:asympt measure} we show that the behaviour of  the density of $B^{AC} $ determines the behaviour of $B$. 
\end{itemize}

\subsection{Renewal equation for the density} \label{sec:RE for density}
\begin{definition}\label{def: operator kernel}
A positive locally bounded operator kernel is a map $\tilde{K}: \mathbb R_+ \rightarrow \mathcal L (X)$ such that 
\begin{itemize}
\item $\tilde{K}(a)$ is a positive operator for every $a \geq 0$; 
\item the map $a \mapsto \tilde{K}(a) f $ is Bochner measurable  for every $f \in X $ ; 
\item  for every $T\geq 0$
\begin{equation}\label{kernel operator boundedness}
\sup_{a \in [0,T]} \| \tilde{K}(a)\|_{op} =
\sup_{a \in [0,T]} \sup_{\{f \in X: \|f\|_1 \leq 1\}} \| \tilde{K}(a) f \|_{1} < \infty.
\end{equation}
\end{itemize}
\end{definition}
Since in this paper we will only deal with operator kernels that are positive and locally bounded, in the following we use the term \textit{operator kernel} to refer to locally bounded operator kernels. 
\begin{lemma} \label{lem:for b}
Let $\tilde{K}$ be an operator kernel. 
Let $b_0 : \mathbb R_+ \rightarrow X $ be Bochner measurable and locally bounded.
Then the equation
\begin{equation}\label{eq b}
b(t) = \int_0^t \tilde{K}(t-a) b(a) da + b_0(t), \quad t\geq 0
\end{equation}
has a unique solution $b:  \mathbb R^*_+ \rightarrow X$, which is locally bounded and Bochner measurable. 
\end{lemma}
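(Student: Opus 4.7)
The plan is to prove existence and uniqueness by a Banach fixed-point argument in the space of essentially bounded, strongly (Bochner) measurable $X$-valued functions on a bounded time interval, equipped with an exponentially weighted sup norm. This is a classical workaround for Volterra equations: the weight makes the Volterra operator a genuine contraction on the whole interval, so that one does not need to build the solution on a small interval first and then concatenate.

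Concretely, I fix $T>0$ and set $M_T := \sup_{a \in [0,T]} \|\tilde K(a)\|_{op}$, which is finite by \eqref{kernel operator boundedness}. Let $V_T$ be the Banach space of (equivalence classes of) strongly measurable $b:[0,T]\to X$ with $\mathrm{ess\,sup}_{t\in[0,T]}\|b(t)\|_1<\infty$, equipped for a parameter $\lambda>0$ with the equivalent norm $\|b\|_\lambda := \mathrm{ess\,sup}_{t\in[0,T]}e^{-\lambda t}\|b(t)\|_1$. Define
\[
(\mathcal T b)(t) := \int_0^t \tilde K(t-a) b(a)\, \diff a + b_0(t), \qquad t \in [0,T].
\]
The verification splits naturally into three steps: (i) checking that $\mathcal T$ maps $V_T$ into itself, i.e.\ that $t\mapsto \int_0^t \tilde K(t-a) b(a)\,\diff a$ is strongly measurable and essentially bounded whenever $b\in V_T$; (ii) estimating, for $b_1,b_2 \in V_T$,
\[
\|(\mathcal T b_1)(t)-(\mathcal T b_2)(t)\|_1 \;\le\; M_T \int_0^t e^{\lambda a}\,\diff a\cdot \|b_1-b_2\|_\lambda \;\le\; \frac{M_T}{\lambda}\,e^{\lambda t}\,\|b_1-b_2\|_\lambda,
\]
which after multiplying by $e^{-\lambda t}$ and taking the essential supremum gives $\|\mathcal T b_1-\mathcal T b_2\|_\lambda \le (M_T/\lambda)\|b_1-b_2\|_\lambda$; (iii) choosing $\lambda>M_T$ so that $\mathcal T$ becomes a strict contraction, and invoking the Banach fixed-point theorem to obtain a unique $b^T\in V_T$ solving \eqref{eq b} on $[0,T]$. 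Uniqueness on each interval forces the family $\{b^T\}_{T>0}$ to be consistent under restriction, and gluing yields a unique, locally bounded, Bochner measurable solution $b$ on $\mathbb R_+^*$.

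The main obstacle, to my mind, is not the contraction step but the measurability bookkeeping in (i). The assumption on $\tilde K$ gives only strong (not uniform) measurability of $a\mapsto\tilde K(a)$, so $(t,a)\mapsto \tilde K(t-a)b(a)$ is not automatically jointly strongly measurable, and the Bochner integral defining $(\mathcal T b)(t)$ has to be built carefully. I would handle this by invoking the Pettis measurability theorem (the range of $b$ is essentially separably valued, and $X=L^1(\Omega_0)$ can be taken separable in all examples of interest), approximating $\tilde K$ by simple operator-valued functions for which a Fubini-type argument is trivial, and then passing to the limit using the dominated convergence theorem with bound $M_T\|b\|_\infty$. Once this measurability step is in place, the rest of the proof is a routine adaptation of the standard scalar Volterra fixed-point argument.
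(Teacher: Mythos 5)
Your argument is correct, but it takes a genuinely different route from the paper. You run a Banach fixed-point argument in a Bielecki-type exponentially weighted sup norm on each $[0,T]$ and then glue; the paper instead constructs the resolvent kernel $\tilde{R}=\sum_{n\ge 1}\tilde{K}^{\star n}$ as a Neumann series, proving its convergence directly when $\sup_{\|f\|_1\le 1}\|\int_0^T\tilde{K}(s)f\,ds\|_1<1$ and otherwise after the exponential rescaling $\tilde{K}_\lambda(a)=e^{-\lambda a}\tilde{K}(a)$, with the measurability of the operator convolution delegated to Gripenberg's Theorem 1 and uniqueness to standard renewal-theory arguments. The trade-off: your weighted-norm contraction is more elementary and self-contained (the weight removes the need for the rescaling/small-interval bookkeeping), while the paper's construction produces the resolvent $\tilde{R}$ and the variation-of-constants representation $b=b_0+\tilde{R}\star b_0$ explicitly, objects that are reused later (e.g., in Section \ref{sec:gripenberg} and in the measure-level analogue, Proposition \ref{prop:existence of a solution for the RE}), so for the paper the existence proof and the construction of $\tilde{R}$ come in one package. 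One caveat on your step (i): since $\tilde{K}$ is only assumed strongly measurable, it is in general \emph{not} approximable by simple $\mathcal L(X)$-valued functions in operator norm, so the phrase ``approximating $\tilde K$ by simple operator-valued functions'' should be replaced by approximating $b$ by simple $X$-valued functions (using Pettis/separability of $L^1(\Omega_0)$) and exploiting that $a\mapsto\tilde{K}(a)x$ is Bochner measurable for each fixed $x$, then passing to the limit by dominated convergence; with that correction your measurability argument is exactly the adaptation of the classical convolution-measurability proof that the paper cites from Gripenberg. Also note that your uniqueness is a priori within essentially bounded classes modulo null sets, but since two a.e.-equal solutions produce identical convolution terms for every $t$, the equation itself upgrades this to pointwise uniqueness among locally bounded Bochner measurable solutions, as claimed in the lemma.
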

\begin{proof}
The main step of the proof consists in proving the existence of the resolvent of $\tilde{K}.$
To this end, we aim at proving that
\begin{align*}
\sup_{a \in [0,T]}  \sup_{\{f \in X: \|f\|_1 \leq 1\} } \| \tilde{R}(a) f \|_{1} =\sup_{a \in [0,T]}   \sup_{\{f \in X: \|f\|_1 \leq 1\} } \|\sum_{n=1}^\infty \tilde{K}^{\star n} (a) f \|_{1}  < \infty
\end{align*}
where for every $f \in X$ and every $a \geq 0$
\[
K^{\star 1}(a)f:=\tilde{K}(a) f 
\]
and for every $n \geq 2 $
\[
\tilde{K}^{\star n} (a ) f := \int_0^a \tilde{K}^{\star(n-1)}(a-s) \tilde{K}(s) f ds. 
\]
To ensure that the resolvent is well defined, we need to prove that, if $K_i$ are operator kernels, then $K_1 \star K_2: \mathbb R_+ \rightarrow \mathcal L(X)$, defined by
\[
K_1 \star K_2: t \mapsto \left( f \mapsto \int_0^t K_1 (t-a) K_2(a) f da \right)
\]
is also an operator kernel.
Inequality \eqref{kernel operator boundedness} follows by the boundedness properties of $K_1$ and $K_2$, while the Bochner measurability can be proven by an adaptation of the proof of the measurability of the classical convolution product. See the proof of Theorem 1 in \cite{gripenberg1987asymptotic} for more details.

Hence, if 
\[
  \sup_{\{f \in X: \|f\|_1 \leq 1\} } \left\| \int_0^T \tilde{K}(s) f ds \right\|_1 < 1  
\] 
then for every $0 \leq a \leq T$
\begin{align*}
  \sup_{\{f \in X: \|f\|_1 \leq 1\} } \| \tilde{R}(a) f \|_{1}&=  \sup_{\{f \in X: \|f\|_1 \leq 1\} } \left\|\sum_{n=1}^\infty \tilde{K}^{\star n} (a) f \right\|_{1} \\
  & \leq \sum_{n=1}^\infty \left(  \sup_{\{f \in X: \|f\|_1 \leq 1\} } \left\| \int_0^T \tilde{K} (s) f ds \right\|_{1} \right)^n < \infty.
\end{align*}
If, instead, 
\[
  \sup_{\{f \in X: \|f\|_1 \leq 1\} } \left\| \int_0^T \tilde{K}(a) f da \right\|_1 \geq 1  
\] 
the above argument can be adapted by considering a scaled version of $\tilde{K}$, $\tilde{K}_\lambda (a):= e^{-\lambda a} \tilde{K}(a) $, with $\lambda$ chosen such that
\[
  \sup_{\{f \in X: \|f\|_1 \leq 1\} } \left\| \int_0^T \tilde{K}_\lambda (a)f da \right\|_1 < 1.  
\] 
The uniqueness of the solution of equation \eqref{eq b} follows by standard arguments of renewal theory.
See for instance \cite{gripenberg1987asymptotic} or \cite[pp. 233-234]{gripenberg1990volterra}. 
\qed\end{proof}

\subsection{Laplace transformed equation} \label{sec:charc} 
In this section, we make the following assumptions on $\tilde K$ and $b_0$. 
\begin{assumption}\label{ass:on the operator Ktilde}
$\tilde K$  is an operator kernel.
Moreover, there exists a $z_0<0$ and a constant $C>0$ such that for every $t \geq 0$
\begin{equation}\label{eq:z_0 Ktilde}
\| \tilde{K} (t) f \|_{1} \leq C e^{z_0 t}  \|f \|_1.
\end{equation}
\end{assumption} 
\begin{assumption}\label{ass:on b_0}
 $b_0: \mathbb R_+ \rightarrow X $ is a Bochner measurable function and there exists a $c>0$ such that for every $t \geq 0$
\begin{equation} \label{bound b0}
\| b_0(t) \|_1 \leq c e^{z_0 t}+ c_1 t e^{z_0 t} + c_2 t^2 e^{z_0 t}.
\end{equation}
\end{assumption}
We denote with $b$ the solution of equation \eqref{eq b}. 
\begin{lemma}
There exists a $\beta \in \mathbb R$ such that $b(t)e^{-\lambda t}$ is integrable over $\mathbb R_+$ for every $ \lambda > \beta.$
\end{lemma}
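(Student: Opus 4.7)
The plan is to show directly that $\|b(t)\|_1$ has an exponential bound $M(1+t^k)e^{\beta_0 t}$ for some explicit $\beta_0$ and integer $k$, so that any $\beta > \beta_0$ renders $b(t)e^{-\lambda t}$ Bochner integrable over $\mathbb{R}_+$ whenever $\lambda > \beta$. Measurability of $t\mapsto b(t)e^{-\lambda t}$ is already guaranteed by Lemma~\ref{lem:for b}, so everything reduces to a scalar a priori estimate on $\phi(t):=\|b(t)\|_1$.

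First I would take norms on both sides of equation \eqref{eq b} and apply Assumption~\ref{ass:on the operator Ktilde} to obtain
\[
\phi(t)\;\leq\;\|b_0(t)\|_1+\int_0^t C e^{z_0(t-a)}\phi(a)\,da.
\]
Second, I would multiply by $e^{-z_0 t}$ and set $\psi(t):=\phi(t)e^{-z_0 t}$ and $h(t):=\|b_0(t)\|_1 e^{-z_0 t}$. Assumption~\ref{ass:on b_0} yields that $h$ is dominated by the polynomial $c+c_1 t+c_2 t^2$, so that
\[
\psi(t)\;\leq\;h(t)+C\int_0^t\psi(a)\,da.
\]
Third, an application of Gronwall's inequality (or an explicit computation of the resolvent of the convolution kernel $Ce^{z_0 t}$, whose Laplace transform $C/(\lambda-z_0)$ gives the resolvent $Ce^{(z_0+C)t}$) yields $\psi(t)\leq P(t)e^{Ct}$ for some polynomial $P$, hence $\phi(t)\leq P(t)e^{(z_0+C)t}$.

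Finally, I would pick $\beta:=z_0+C$ (or any larger real number). For any $\lambda>\beta$ one has
\[
\int_0^\infty \|b(t)\|_1\,e^{-\lambda t}\,dt\;\leq\;\int_0^\infty P(t)e^{(z_0+C-\lambda)t}\,dt\;<\;\infty,
\]
which, combined with Bochner measurability, gives Bochner integrability of $t\mapsto b(t)e^{-\lambda t}$. There is no real obstacle here: the only mild care needed is to carry the polynomial factors from the bound on $\|b_0\|_1$ through the Gronwall step, which is why the resulting exponent is $z_0+C$ rather than $z_0$ itself. The sign of $z_0$ plays no role; what matters is only that $\tilde K$ and $b_0$ are exponentially dominated, as guaranteed by Assumptions~\ref{ass:on the operator Ktilde} and~\ref{ass:on b_0}.
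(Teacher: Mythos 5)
Your proposal is correct: the Gronwall step is legitimate because Lemma \ref{lem:for b} guarantees $t\mapsto\|b(t)\|_1$ is measurable and locally bounded, and the polynomial prefactor coming from \eqref{bound b0} is harmless since the lemma only requires integrability for $\lambda$ strictly larger than $\beta$. The route differs slightly from the paper's, which adapts Heijmans' Lemma 3.4: there one first chooses $\beta$ large enough that $k_1:=\int_0^\infty e^{-\beta a}\sup_{\|f\|_1\le 1}\|\tilde K(a)f\|_1\,da<1$ and $k_2:=\sup_{t\ge 0}e^{-\beta t}\|b_0(t)\|_1<\infty$, and then bounds the running maximum $M(t)=\max_{a\in[0,t]}e^{-\beta a}\|b(a)\|_1$ via the absorption inequality $M(t)\le k_1 M(t)+k_2$, obtaining the pure exponential bound $\|b(t)\|_1\le c e^{\beta t}$ with an implicit $\beta$. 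Your version instead rescales by $e^{-z_0 t}$ and invokes Gronwall, which buys an explicit (if possibly cruder) exponent $\beta=z_0+C$ at the cost of carrying a polynomial factor; the paper's version avoids Gronwall and produces a cleaner bound, but does not exhibit $\beta$ explicitly. Both arguments rest on the same a priori estimate extracted from \eqref{eq b}, \eqref{eq:z_0 Ktilde} and \eqref{bound b0}, so either serves the purpose of the lemma equally well.
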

\begin{proof}
This proof is an adaptation of the proof of Lemma 3.4 in \cite{heijmans1986dynamical}. 
Thanks to the fact that $\tilde{K}$ satisfies \eqref{eq:z_0 Ktilde} and $b_0$ satisfies \eqref{bound b0}, we know that there exists a $\beta \in \mathbb R$ such that both
\[
\int_0^\infty e^{-\beta  a} \sup_{\{f \in X: \| f\|_1=1\} } \left\| \tilde{ K}(a) f \right\|_1 da= k_1 < 1
\]
and
\[
\sup_{ t\geq 0} e^{-\beta   t } \|  b_0(t) \|_1 = k_2 <\infty 
\] hold.
Since $b$ satisfies \eqref{eq b}, then 
\begin{align*}
& e^{ - \beta  t } \|b(t)\|_1  \leq   e^{ - \beta t} \left\| \int_0^t \tilde{K}(a)b(t-a) da \right\|_1 + e^{- \beta  t }  \|b_0(t)\|_1 \\
& \leq  \left\| \int_0^t e^{ -\beta a} \tilde{K}(a) e^{-\beta  (t-a)} b(t-a) da\right\|_1+ k_2 \\
\end{align*}
Consider the map $M: \mathbb R_+ \rightarrow \mathbb R_+$ defined by $M(t):= \max_{a \in [0,t]} e^{-\beta  a} \| b(a)\|_1$
then we deduce that for every $t>0$
\begin{align*}
M(t) \leq M(t) k_1 + k_2
\end{align*}
this implies that $M(t) \leq \frac{k_2}{1-k_1}$. We deduce that $\| b (t)\|_1 \leq c e^{\beta  t}$
for a positive constant $c>0$, and the desired conclusion follows.
\qed\end{proof}

As a consequence the Laplace transform of $b$, 
\[
\hat{b}(\lambda):= \int_0^\infty e^{-\lambda t} b(t) dt 
\]  
is well defined for every $\lambda \in \mathbb C$ with $\Re \lambda > \beta.$

The \textit{next generation operator} corresponding to the operator kernel $\tilde{K}$ is the operator $\mathbb K_0 : X \rightarrow X$ defined by 
\begin{align}\label{NGO}
 \mathbb K_0 f: = \int_0^\infty \tilde{K}(a)f da. 
\end{align}
Notice that the integral in \eqref{NGO} is guaranteed to converge thanks to the fact that $\tilde{K} $ satisfies \eqref{eq:z_0 Ktilde}.

Motivated by the interpretation in the context of population models, we call 
\begin{equation}\label{basci repro n}
R_0:=\rho(\mathbb K_0),
\end{equation}
where $\rho(\mathbb K_0)$ denotes the spectral radius of $\mathbb K_0$, \textit{basic reproduction number}.

We denote with $\mathbb K_\lambda$ the \textit{discounted next generation operator}
\begin{align}\label{NGO lambda}
 \mathbb K_\lambda f: = \int_0^\infty e^{- \lambda a} \tilde{K}(a)f da. 
\end{align}
Notice that if $\lambda  \in \mathbb C$ then $\mathbb K_\lambda $ is a complex-valued function. This is the reason why we introduce the concept of complexification of a Banach space and of a linear operator. 

We denote with $X^{\mathbb C}$ the set of the functions $f : \Omega_0 \rightarrow \mathbb C$ such that  $f=f_1  +i  f_2$ for some $f_1 \in X$ and $f_2 \in X$. 
\begin{definition}
	The complexification of a linear operator $T: X \rightarrow X$ is the operator $T:X^{\mathbb C}\rightarrow X^{\mathbb C}$ defined by 
	\[
	T (f+ ig ) = Tf+i Tg.
	\]
\end{definition}

The operator $\mathbb K_\lambda$ is well defined for every $\lambda \in \mathbb C$ with $\Re \lambda > z_0$. The same holds for
\[
\hat{b_0}(\lambda):= \int_0^\infty e^{-\lambda t} b_0(t) dt. 
\]

The Laplace transformed version of equation \eqref{eq b}, is
\begin{equation} \label{eq:laplace transform RE AC} 
\hat{b}(\lambda)=\hat{b_0}(\lambda) + \mathbb K_\lambda \hat{b}(\lambda) \quad \Re \lambda > z_0.
\end{equation} 

Let 
\begin{equation}\label{sigma}
\Sigma:=\{ \lambda \in \Delta: 1 \in \sigma (\mathbb K_\lambda )  \}
\end{equation}
where 
\begin{equation}\label{delta} 
\Delta:=\{ \lambda \in \mathbb C: \Re \lambda > z_0 \}
\end{equation}
For $\lambda \in \mathbb C \setminus \Sigma$ it is possible to write 
\begin{equation}\label{lap transform of b}
\hat{b}(\lambda)= \left( I - \mathbb K_\lambda\right)^{-1} \hat{b_0}(\lambda).
\end{equation}
As will be explained later, applying the inverse Laplace transform formula to $\hat{b}$, we deduce the asymptotic behaviour of $b$. 

It is then clear that the first fundamental step to deduce the asymptotic behaviour of $\hat{b}$ is to study the non-linear eigenproblem 
\begin{equation}\label{eigenproblem}
f =\mathbb K_\lambda f, 
\end{equation}
which is, in a sense, the differentiated version of \eqref{ansatz char}.

If the non-linear eigenproblem \eqref{eigenproblem} has a unique, upon normalisation of $f$, real solution $(\lambda,f)=(r, \psi_r)$, then $r$ is called \textit{Malthusian parameter}, while the eigenvector $\psi_r$ is called the \textit{stable distribution}.

\subsection{Compact and non-supporting positive operators} \label{sec:positive op}
 The aim of this section is to present the results on positive compact and non-supporting operators that we need to study the non-linear eigenproblem \eqref{eigenproblem}. 
To this end we introduce the following notation: $X_+^* $ is the positive cone in the dual of $X^*$, represented by the set $L^\infty_+(\Omega_0)$.

We start this section by introducing the concept of non-supporting operators. 
\begin{definition}[Non-supporting operator]\label{def:nonsupporting}
	Let $L: X \rightarrow X$ be a positive bounded linear operator. The operator $L$ is non-supporting with respect to $X_+$ if for every $\psi \in X_+$, $\psi \neq 0$ and $F \in X_+^*$, $F \neq 0$, there exists an integer $p$ such that for every $n \geq p $ we have that $\langle F, L^n \psi\rangle >0$.
\end{definition}

The following result is fundamental for our purposes as it provides important information regarding the spectral radius of positive non-supporting operators. 
We do not write the statement in its most general form, i.e., for a generic Banach space $E$ with certain properties, but we state the result for $E=X=L^1(\Omega_0)$. 
\begin{theorem}[\cite{marek1970frobenius} and \cite{sawashima1965spectal}]\label{theorem 5.2}
	Let $T:X\rightarrow X $ be positive and  non-supporting (cf. Definition \ref{def:nonsupporting}) and suppose that $\rho(T)$
	is a pole of the resolvent, then 
	\begin{enumerate}
		\item $\rho(T)>0$ and $\rho(T)$ is  an  algebraically  simple  eigenvalue of $T$. 
		\item  The corresponding eigenvector $\psi$ is almost everywhere strictly positive.
		\item  The  corresponding dual eigenvector $F$ is strictly positive, i.e.$\langle F, \phi\rangle >0$ for every $\phi \in X_+$ with $\phi \neq 0.$
		\item  If $\{\lambda \in \sigma(T): |\lambda|=\rho(T)\}$  consists only of poles  of the  resolvent, then it consists only of $\lambda =\rho(T)$ and all the remaining elements $\lambda \in \sigma(T)$ satisfy $|\lambda |< \rho(T)$.
	\end{enumerate} 
\end{theorem}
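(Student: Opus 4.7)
The plan is to follow the classical Krein--Rutman--Perron--Frobenius scheme of Marek and Sawashima, working with the Laurent expansion of the resolvent $R(\lambda,T)=(\lambda I-T)^{-1}$ around $\lambda=\rho(T)$. Since $\rho(T)$ is a pole of some order $k\geq 1$, one may write
\[
R(\lambda,T)=\sum_{n=-k}^{\infty}(\lambda-\rho(T))^{n}A_{n},
\]
with $A_{-k}\neq 0$, $(T-\rho(T))A_{-k}=0$, and the spectral projection onto the generalised eigenspace at $\rho(T)$ equal to $P=A_{-1}$. The range of $A_{-k}$ consists of genuine eigenvectors, and the whole strategy is to harvest a strictly positive eigenvector from $A_{-k}$ and a strictly positive dual eigenvector from the analogous expansion for $T^{\ast}$.

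For parts 1 and 2, I would fix a nonzero $\phi\in X_{+}$. For real $\lambda>\rho(T)$ the Neumann series $R(\lambda,T)\phi=\sum_{n=0}^{\infty}\lambda^{-n-1}T^{n}\phi$ is a positive combination of the iterates $T^{n}\phi$, hence lies in $X_{+}$. Multiplying by $(\lambda-\rho(T))^{k}$ and passing to the limit gives
\[
A_{-k}\phi=\lim_{\lambda\downarrow\rho(T)}(\lambda-\rho(T))^{k}R(\lambda,T)\phi\in X_{+}.
\]
The non-supporting hypothesis forces both $\rho(T)>0$ (otherwise the pairings $\langle G,T^{n}\phi\rangle$ would decay faster than any geometric rate) and $A_{-k}\phi\neq 0$. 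Set $\psi:=A_{-k}\phi$; then $T\psi=\rho(T)\psi$ and $\psi\in X_{+}\setminus\{0\}$. The dual analogue of this construction applied to $T^{\ast}$ (whose spectral radius also equals $\rho(T)$ and is likewise a pole of its resolvent) produces $F\in X_{+}^{\ast}\setminus\{0\}$ with $T^{\ast}F=\rho(T)F$. Almost-everywhere strict positivity of $\psi$ now follows by contradiction: if $\psi$ vanished on a set $\omega$ of positive Lebesgue measure, then for $G=\chi_{\omega}\in X_{+}^{\ast}$ one would have $\langle G,T^{n}\psi\rangle=\rho(T)^{n}\langle G,\psi\rangle=0$ for every $n$, violating the non-supporting property applied to $\psi$ and $G$. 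Strict positivity of $F$ is shown by the dual argument.

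Algebraic simplicity will follow from a clean pairing against $F$. Suppose $\phi_{0}$ is a generalised eigenvector of minimal order $m\geq 2$ and write $\phi_{j}:=(T-\rho(T))^{j}\phi_{0}$; by minimality $\phi_{m-1}$ is a nonzero eigenvector at $\rho(T)$, so $\phi_{m-1}$ is a scalar multiple of $\psi$ (geometric simplicity being checked first, by the same pairing applied to any two supposedly independent positive eigenvectors). Then
\[
\langle F,\phi_{m-1}\rangle=\langle F,(T-\rho(T))\phi_{m-2}\rangle=\langle(T^{\ast}-\rho(T))F,\phi_{m-2}\rangle=0,
\]
while strict positivity of $F$ combined with $\phi_{m-1}=c\psi\neq 0$ yields $\langle F,\phi_{m-1}\rangle>0$, a contradiction. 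For part 4, let $\mu=\rho(T)e^{i\theta}\in\sigma(T)$ be a pole with eigenvector $\phi\in X^{\mathbb C}\setminus\{0\}$. The pointwise inequality $|T\phi|\leq T|\phi|$, valid since $T$ is positive, paired against $F$ yields $\rho(T)\langle F,|\phi|\rangle\leq\langle F,T|\phi|\rangle=\rho(T)\langle F,|\phi|\rangle$, whence $T|\phi|=\rho(T)|\phi|$ and $|\phi|=c\psi$ by simplicity. Iterating $T^{n}\phi=e^{in\theta}\rho(T)^{n}\phi$ against $T^{n}|\phi|=\rho(T)^{n}|\phi|$ and comparing phases through $F$ rules out every $\theta\neq 0$ via the non-supporting property.

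The main obstacle I expect is making the Laurent-residue computation in part 2 genuinely rigorous: one must verify that $A_{-k}$ sends $X_{+}\setminus\{0\}$ into $X_{+}\setminus\{0\}$, which requires combining closedness of the positive cone under the Abel-type limit with a quantitative lower bound on $\langle G,T^{n}\phi\rangle$ drawn from the non-supporting property. Once the strictly positive eigenpair $(\psi,F)$ is in hand, every remaining assertion reduces to a routine \emph{pair-with-}$F$ argument of Perron--Frobenius type, so the real content of the theorem is concentrated in this single positivity-of-the-residue step.
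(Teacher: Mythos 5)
You should first be aware that the paper does not prove this statement at all: Theorem \ref{theorem 5.2} is quoted as a known result of Marek and Sawashima, so there is no in-paper argument to compare yours against. What you sketch is essentially the classical proof from those very references (Laurent expansion of the resolvent at $\rho(T)$, positivity of the leading coefficient obtained as a limit of $(\lambda-\rho(T))^{k}R(\lambda,T)\phi$ along real $\lambda\downarrow\rho(T)$, then pair-with-$F$ arguments), and that architecture is the right one.

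As a proof, however, the proposal still has concrete gaps beyond the one you flag. (i) Your justification of $\rho(T)>0$ ("the pairings would decay faster than any geometric rate") is not an argument; the standard route is that if $\rho(T)=0$ were a pole, the residue construction would give a nonzero $\psi\in X_{+}$ with $T\psi=0$, hence $\langle G,T^{n}\psi\rangle=0$ for all $n\geq 1$, contradicting non-supportingness. Similarly, the claim that $A_{-k}\phi\neq 0$ for an \emph{arbitrary} nonzero $\phi\in X_{+}$ is asserted, not proved; what you actually need (and what is easy) is only that $A_{-k}$, being positive, nonzero and defined on a space with generating cone, is nonzero at \emph{some} positive $\phi$. (ii) Geometric simplicity is waved through: eigenvectors at $\rho(T)$ need not be positive, so "the same pairing applied to two positive eigenvectors" does not cover the general case; one must first show for a real eigenvector $\eta$ that $T|\eta|=\rho(T)|\eta|$ (pair $T|\eta|-|T\eta|\geq 0$ with the strictly positive $F$), then split $\eta=\eta_{+}-\eta_{-}$ and use a.e.\ strict positivity together with disjointness of supports to force one part to vanish. (iii) The genuinely delicate step of part 4 is the one you compress into "comparing phases through $F$": once $|\phi|=c\psi$ one must exclude $\theta\neq 0$, and in $L^{1}$ this requires an equality-in-the-triangle-inequality argument (of the kind the paper itself invokes via Theorem \ref{thm:rudin} in the proof of Proposition \ref{lambda_d largest}) or Sawashima's signum-operator device; as written, this step is not supplied. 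So the outline is faithful to Marek--Sawashima, but items (ii) and (iii) are where the real work of the cited proofs lies and would have to be filled in.
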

The following result, proven by Marek, \cite[Theorem 4.3 and Theorem 4.5]{marek1970frobenius}, allows us to compare the spectral radius of two positive operators by comparing the operators.  
	Again, we do not write the statement in its most general form, but we state the result for $E=X=L^1(\Omega_0)$ and for the classes of operators we are interested in. 
\begin{proposition}\label{prop 10.12}
	Suppose that $S,T: X \rightarrow X$ are positive, bounded, linear operators. Then, the following holds:
	\begin{enumerate}
		\item if $S \leq T$, that is if $T-S: X_+ \rightarrow X_+ $, then $\rho(S)\leq \rho(T )$; 
		\item if $T,S$ are non-supporting and compact and  $S \leq T$ with $S \neq T$, then $\rho(S)<  \rho(T )$.
	\end{enumerate}
\end{proposition}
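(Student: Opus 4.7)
The plan is to split into the weak inequality (part 1) and the strict inequality (part 2), handling the first by a monotone-powers comparison and the second by pairing the principal eigenvector of $T$ against the strictly positive dual eigenvector of $S^{\ast}$ produced by Theorem \ref{theorem 5.2}.

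For part 1, I would first show by induction that $0 \leq S^{n} \leq T^{n}$ for every $n \geq 1$. Assuming $S^{n-1} \leq T^{n-1}$, the operators $S$, $T^{n-1}$, and $T-S$ are all positive, so
\[
S^{n} = S \cdot S^{n-1} \leq S \cdot T^{n-1} \leq T \cdot T^{n-1} = T^{n}.
\]
A positive operator $L$ on $X = L^{1}(\Omega_{0})$ attains its operator norm on the positive cone, so the inequality $0 \leq S^{n} \leq T^{n}$ together with positivity on $X_{+}$ gives $\|S^{n}\|_{op} \leq \|T^{n}\|_{op}$. Gelfand's spectral radius formula then yields $\rho(S) \leq \rho(T)$.

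For part 2, I would invoke Theorem \ref{theorem 5.2}, which applies to both $T$ and $S$ because compactness guarantees that every nonzero spectral point is a pole of the resolvent. This produces an a.e.\ strictly positive eigenvector $\psi_{T} \in X_{+}$ with $T\psi_{T} = \rho(T)\psi_{T}$, together with a strictly positive dual eigenvector $F_{S} \in X^{\ast}_{+}$ satisfying $S^{\ast}F_{S} = \rho(S)F_{S}$. Suppose, for contradiction, $\rho(S) = \rho(T) =: \rho$. Pairing $F_{S}$ with $(T-S)\psi_{T} \geq 0$ gives
\[
0 \leq \langle F_{S},(T-S)\psi_{T}\rangle = \rho\langle F_{S},\psi_{T}\rangle - \langle S^{\ast}F_{S},\psi_{T}\rangle = 0,
\]
and strict positivity of $F_{S}$ forces $(T-S)\psi_{T} = 0$.

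The main obstacle is converting this vanishing at the single vector $\psi_{T}$ into $T \equiv S$ globally. Here I would use that $\psi_{T} > 0$ a.e.: for any measurable $A \subset \Omega_{0}$ of finite positive measure, the sets $A_{n} := A \cap \{\psi_{T} > 1/n\}$ satisfy $\chi_{A_{n}} \leq n\psi_{T}$ pointwise, so positivity of $T-S$ yields $0 \leq (T-S)\chi_{A_{n}} \leq n(T-S)\psi_{T} = 0$. Since $\chi_{A_{n}} \to \chi_{A}$ in $X$ by dominated convergence, boundedness of $T-S$ propagates the identity to $(T-S)\chi_{A} = 0$, and density of simple functions in $X_{+}$ then gives $T - S \equiv 0$, contradicting $S \neq T$. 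This final $L^{1}$-density step is the only place where the concrete structure of $X$ is used in an essential way, and it is the point I expect to require the most care in a formal write-up.
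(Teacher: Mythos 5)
Your proposal is correct in substance, but it necessarily takes a different route from the paper, because the paper does not prove Proposition \ref{prop 10.12} at all: it is quoted from Marek \cite{marek1970frobenius} (Theorems 4.3 and 4.5). Your argument makes the statement self-contained using only tools already present in the paper. Part 1 — the induction $0\le S^n\le T^n$, the observation that on $L^1(\Omega_0)$ the operator norm of a positive operator is computed on the positive cone (since $|Lf|\le L|f|$), and Gelfand's formula — is correct. Part 2 — pairing the eigenvector $\psi_T$ of $T$ against the strictly positive dual eigenvector $F_S$ of $S^*$ supplied by Theorem \ref{theorem 5.2}, concluding $(T-S)\psi_T=0$ from $\langle F_S,(T-S)\psi_T\rangle=0$, and then upgrading this to $T=S$ via the truncations $\chi_{A_n}\le n\,\psi_T$, the a.e.\ strict positivity of $\psi_T$, and density of integrable simple functions in $L^1(\Omega_0)$ (Lebesgue measure on $\Omega_0\subset\mathbb R^n$ is $\sigma$-finite) — is a complete and correct scheme, and the last density step, which you rightly single out, does work exactly as you describe. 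What your route buys is a transparent proof from the Sawashima--Marek theorem already stated in the paper; what the citation buys is brevity and the full generality of Marek's setting.

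The one step that needs more care is your justification for invoking Theorem \ref{theorem 5.2}. Compactness guarantees that every \emph{nonzero} spectral point is a pole of the resolvent, but the hypothesis of the theorem is that the spectral radius itself is a pole, so you are tacitly assuming $\rho(S)>0$ and $\rho(T)>0$; for a quasinilpotent compact operator the point $0$ need not be a pole (the Volterra operator is the standard example), so this does not follow from compactness alone. It is true in the present setting, because a compact, non-supporting (hence irreducible) positive operator on an infinite-dimensional $L^1$ space has strictly positive spectral radius (de Pagter's theorem), or one can simply import this from Marek's hypotheses. Note that the paper makes exactly the same glossing in the proof of Lemma \ref{lem:spectral radius is eigenvector}, so this is a caveat rather than a fatal flaw, but in a formal write-up you should state explicitly why $\rho(T)>0$ and $\rho(S)>0$ before applying Theorem \ref{theorem 5.2}.
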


\subsection{The Malthusian parameter $r$} \label{sec:r}
In this section we make again Assumption \ref{ass:on the operator Ktilde} on $\tilde K$ and Assumption \ref{ass:on b_0} on $b_0$. 
We denote with $\mathbb K_\lambda $ the discounted next generation operator, \eqref{NGO lambda}.
We recall that $\Delta$ is given by \eqref{delta}. 

The aim of this section is to prove that there exists a unique, up to renormalisation, real solution to the non-linear eigenproblem \eqref{eigenproblem}. 
  \begin{theorem}\label{thm:eigencouple}
	Assume that for every $\lambda \in \Delta \cap \mathbb R$ the positive operator $\mathbb K_\lambda $ is non-supporting and compact. 
	Then, there exists a unique real eigencouple $(r, \psi_r)$, with $\psi_r \in X_+$ and $\| \psi_r\|_1=1$, that solves equation \eqref{eigenproblem}. 
	If $R_0 >1 $ then $r >0$, if $R_0=1$ then $r=0$, if $R_0 <1 $ then $r <0.$
\end{theorem}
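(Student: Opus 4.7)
The plan is to study the map $\mu: \Delta \cap \mathbb R \to (0,\infty)$ given by $\mu(\lambda) := \rho(\mathbb K_\lambda)$ and locate the unique $r$ with $\mu(r) = 1$. Since each $\mathbb K_\lambda$ is compact, positive and non-supporting by hypothesis, Theorem \ref{theorem 5.2} tells us that $\mu(\lambda)$ is an algebraically simple eigenvalue with a strictly positive eigenvector $\psi_\lambda$, unique up to scaling; moreover, any solution $(\lambda, f)$ of \eqref{eigenproblem} with $\lambda \in \mathbb R$ and $f \in X_+ \setminus \{0\}$ forces $\mu(\lambda) = 1$ and $f \propto \psi_\lambda$. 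Existence and uniqueness of the real eigencouple thus reduce to finding a unique zero of $\mu - 1$ on $(z_0, \infty)$.

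First I would prove strict monotonicity. For $\lambda_1 < \lambda_2$ in $\Delta \cap \mathbb R$, the operator $\mathbb K_{\lambda_1} - \mathbb K_{\lambda_2} = \int_0^\infty (e^{-\lambda_1 a} - e^{-\lambda_2 a}) \tilde K(a) \, da$ is positive and non-zero (since $\tilde K \not\equiv 0$ and the exponential weight is strictly larger for $\lambda_1$), so $\mathbb K_{\lambda_1} \geq \mathbb K_{\lambda_2}$ with $\mathbb K_{\lambda_1} \neq \mathbb K_{\lambda_2}$. As both are compact and non-supporting, Proposition \ref{prop 10.12}(2) gives $\mu(\lambda_1) > \mu(\lambda_2)$. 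Continuity of $\mu$ would follow from norm-continuity of $\lambda \mapsto \mathbb K_\lambda$ on $\Delta \cap \mathbb R$ (dominated convergence with integrable dominator $C e^{(z_0 - \lambda_\ast) a}$ for $\lambda$ near any fixed $\lambda_\ast > z_0$), combined with the standard perturbation fact that an isolated algebraically simple eigenvalue of a compact operator depends continuously on the operator in the norm topology.

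Next come the limits. The bound $\|\mathbb K_\lambda\|_{op} \leq C/(\lambda - z_0)$, obtained from \eqref{eq:z_0 Ktilde}, forces $\mu(\lambda) \to 0$ as $\lambda \to \infty$. At the left end, as $\lambda \to z_0^+$, I would show $\mu(\lambda) \to \infty$ by pairing $\mathbb K_\lambda \psi$ against the strictly positive dual eigenfunctional $F_0$ of $\mathbb K_0^*$ (supplied by Theorem \ref{theorem 5.2}), using a Collatz--Wielandt-type lower estimate together with $\int_0^\infty e^{-\lambda a} \langle F_0, \tilde K(a)\psi\rangle \, da \to \infty$ to transfer norm blow-up into spectral-radius blow-up via the non-supporting structure. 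This is the step I expect to be the main obstacle, because norm blow-up does not in general imply spectral-radius blow-up; one must exploit non-supportingness essentially.

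With strict monotonicity, continuity, and the two limits in place, the intermediate value theorem yields a unique $r \in (z_0, \infty)$ with $\mu(r) = 1$, and comparison of $\mu(0) = R_0$ with $1$ together with strict monotonicity pins down the sign of $r$ exactly according to the stated trichotomy ($R_0 > 1 \Leftrightarrow r > 0$, etc.). Taking $\psi_r$ to be the normalised positive eigenvector of $\mathbb K_r$ supplied by Theorem \ref{theorem 5.2} completes the construction; algebraic simplicity of the eigenvalue $1$ gives uniqueness of $\psi_r$ in $X_+$ with $\|\psi_r\|_1 = 1$, and the uniqueness of $r$ from strict monotonicity gives uniqueness of the pair.
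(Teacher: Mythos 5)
Your overall strategy is the same as the paper's: combine Theorem~\ref{theorem 5.2} (via Lemma~\ref{lem:spectral radius is eigenvector}) with the comparison result Proposition~\ref{prop 10.12} to get strict monotonicity of $\lambda\mapsto\rho(\mathbb K_\lambda)$, prove continuity, use $\rho(\mathbb K_\lambda)\to 0$ as $\lambda\to\infty$, and conclude by an intermediate-value argument (Propositions~\ref{prop:Klambda decreasing} and~\ref{prop:caract}). Two of your choices differ only mildly: you get continuity from norm-continuity of $\lambda\mapsto\mathbb K_\lambda$ plus perturbation of an isolated simple eigenvalue, whereas the paper proves the direct estimate $\rho(\mathbb K_\lambda)-\rho(\mathbb K_\mu)\le\|\mathbb K_\lambda-\mathbb K_\mu\|_1$ by pairing with the eigenvectors and dual eigenfunctionals; and your explicit remark that any solution $f=\mathbb K_\lambda f$ with $f\in X_+\setminus\{0\}$ forces $\rho(\mathbb K_\lambda)=1$ (pair with the strictly positive $F_\lambda$) is a point the paper's one-line proof leaves implicit, so that part of your write-up is if anything more complete.

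The genuine gap is the step you flagged yourself: the behaviour at the left end. The claim $\rho(\mathbb K_\lambda)\to\infty$ (or even $\limsup_{\lambda\to z_0^+}\rho(\mathbb K_\lambda)\ge 1$) does not follow from the hypotheses. Assumption~\ref{ass:on the operator Ktilde} provides only the upper bound $\|\tilde K(t)f\|_1\le Ce^{z_0t}\|f\|_1$; if $\tilde K$ in fact decays like $e^{z_1t}$ with $z_1<z_0$ (take $\tilde K(t)=e^{z_1t}T$ with $T$ compact, non-supporting and of small norm), then $\int_0^\infty e^{-\lambda a}\langle F_0,\tilde K(a)\psi\rangle\,da$ stays bounded as $\lambda\to z_0^+$ and $\rho(\mathbb K_\lambda)=\rho(T)/(\lambda-z_1)$ stays below $1$ on all of $(z_0,\infty)$, so there is no real root at all. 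Hence your Collatz--Wielandt transfer has nothing to transfer, and the case $R_0<1$ cannot be settled this way. Note that the paper does not prove this limit either: Proposition~\ref{prop:caract} treats $R_0\ge1$ by monotonicity from $\rho(\mathbb K_0)=R_0\ge1$ down to $0$, and in the subcritical case it explicitly adds the hypothesis that $\rho(\mathbb K_z)\ge1$ for some $z\in[z_0,0)$; Theorem~\ref{thm:eigencouple} is then proved by citing that proposition, so its trichotomy for $R_0<1$ tacitly carries that extra assumption. Your argument is complete for $R_0\ge1$; for $R_0<1$ you should either import the same hypothesis or impose a condition (e.g.\ a matching lower exponential bound on $\tilde K$) that genuinely forces $\rho(\mathbb K_\lambda)\ge1$ somewhere in $(z_0,0)$.
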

To prove Theorem \ref{thm:eigencouple} we follow the approach presented by Heijmans in \cite{heijmans1986dynamical}.
The main steps of the proof consist in
\begin{enumerate} 
	\item proving that $\rho(\mathbb K_\lambda)$ is a positive eigenvalue of $\mathbb K_\lambda$ and that its corresponding eigenfunction is strictly positive: to this end we apply Theorem \ref{theorem 5.2}, hence we need the operator $\mathbb K_\lambda $ to be compact and non-supporting;
	\item proving that the function $\lambda \mapsto \rho(\mathbb K_\lambda )$ is strictly decreasing and continuous and that $\lim_{\lambda \rightarrow z_0} \rho(\mathbb K_\lambda ) \geq 1$ while $\lim_{\lambda \rightarrow \infty } \rho(\mathbb K_\lambda )=0$. To this end we will employ step $1$ and Proposition \ref{prop 10.12}.
\end{enumerate} 
Step $1$ is made in Lemma \ref{lem:spectral radius is eigenvector} and Step $2$ is made in Proposition \ref{prop:Klambda decreasing} and Proposition \ref{prop:caract}. 

\begin{lemma}\label{lem:spectral radius is eigenvector}
	Assume that the operator $\mathbb K_\lambda$ is compact and non-supporting for every $\lambda \in \Delta \cap \mathbb R$.  Then
	\begin{enumerate}
		\item  the spectral radius of $\mathbb K_\lambda$, denoted with $\rho(\mathbb K_\lambda) $, is a positive, algebraically simple eigenvalue of $\mathbb K_\lambda$; 
		\item the corresponding eigenvector $\psi_\lambda \in X$, with normalisation $\| \psi_\lambda\|_1=1$, satisfies $\psi_\lambda(x) >0 $ for a.e. $x \in \Omega_0$ 
		\item the dual eigenfunctional $F_\lambda \in X^*$, such that $\mathbb K_\lambda^* F_\lambda =\rho(\mathbb K_\lambda) F_\lambda$ where $\mathbb K_\lambda^* $ is the dual operator of $\mathbb K_\lambda$, is strictly positive, i.e. $\langle F_\lambda , \phi \rangle>0$ for every $\phi \in X_+$ with $\phi \neq 0$.
	\end{enumerate}
\end{lemma}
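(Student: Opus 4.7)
The plan is to apply Theorem \ref{theorem 5.2} (Marek--Sawashima) directly to $\mathbb K_\lambda$: the three conclusions of the lemma are precisely items (1)--(3) of that theorem. So the proof reduces to verifying the theorem's hypotheses, namely that $\mathbb K_\lambda$ is positive, non-supporting, and that $\rho(\mathbb K_\lambda)$ is a pole of the resolvent of $\mathbb K_\lambda$.

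Positivity is immediate from the defining formula \eqref{NGO lambda}: since $e^{-\lambda a}>0$ for real $\lambda \in \Delta$ and each $\tilde K(a)$ is a positive operator, the Bochner integral $\mathbb K_\lambda f = \int_0^\infty e^{-\lambda a}\tilde K(a)f\,da$ maps $X_+$ into itself. The non-supporting property is part of the hypothesis of the lemma. Only the pole condition remains, and here I would exploit compactness of $\mathbb K_\lambda$: the Riesz--Schauder theory for compact operators on a Banach space guarantees that every non-zero point of $\sigma(\mathbb K_\lambda)$ is an isolated eigenvalue of finite algebraic multiplicity and a pole of the resolvent. Therefore the pole condition reduces to showing the strict positivity $\rho(\mathbb K_\lambda)>0$.

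The step I expect to be the only mildly subtle obstacle is ruling out quasi-nilpotency of $\mathbb K_\lambda$, i.e.\ establishing $\rho(\mathbb K_\lambda)>0$. I would argue as follows: pick $\psi \in X_+\setminus\{0\}$ and $F \in X_+^*\setminus\{0\}$; by the non-supporting hypothesis there exists $p\geq 1$ with $\langle F, \mathbb K_\lambda^n\psi\rangle>0$ for all $n\geq p$, so in particular $\mathbb K_\lambda^p \psi$ is a non-zero positive vector that is "quasi-interior" relative to $X_+^*$. Combining this eventual strict positivity with compactness and the standard result of Sawashima \cite{sawashima1965spectal} that a compact positive non-supporting operator has strictly positive spectral radius yields $\rho(\mathbb K_\lambda)>0$. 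With $\rho(\mathbb K_\lambda)$ now identified as a pole of the resolvent, Theorem \ref{theorem 5.2} delivers verbatim the three conclusions: $\rho(\mathbb K_\lambda)$ is an algebraically simple positive eigenvalue of $\mathbb K_\lambda$, the normalised eigenvector $\psi_\lambda$ is strictly positive a.e.\ on $\Omega_0$, and the dual eigenfunctional $F_\lambda \in X^*$ is strictly positive on $X_+\setminus\{0\}$.
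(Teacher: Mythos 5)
Your proof is correct and follows essentially the same route as the paper: reduce the lemma to Theorem \ref{theorem 5.2} by observing that positivity gives $\rho(\mathbb K_\lambda)\in\sigma(\mathbb K_\lambda)$ and compactness makes the spectral radius a pole of the resolvent. You are in fact slightly more careful than the paper's own argument, which silently passes over the quasi-nilpotent case (Riesz--Schauder only makes \emph{non-zero} spectral points of a compact operator poles), whereas you explicitly secure $\rho(\mathbb K_\lambda)>0$ by appealing to the Sawashima-type result for compact non-supporting operators.
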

\begin{proof}
	By the fact that $\mathbb K_\lambda$ is positive we deduce that $\rho(\mathbb K_\lambda) \in \sigma (\mathbb K_\lambda)$. 
	Since $\mathbb K_\lambda$ is also compact we deduce that the spectral radius is a pole of the resolvent. 
	Hence, if we additionally assume that $\mathbb K_\lambda$ is non-supporting, we can apply Theorem \ref{theorem 5.2} and deduce the desired conclusion.
\qed\end{proof}

\begin{proposition} \label{prop:Klambda decreasing}
	Assume $\mathbb K_\lambda$ to be compact and non-supporting for every $\lambda \in \Delta \cap \mathbb R$. 
	The map $\lambda \mapsto \rho( \mathbb K_\lambda) $ is decreasing and continuous for $\lambda \in [z_0, \infty)$.
\end{proposition}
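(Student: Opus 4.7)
The plan is to handle monotonicity and continuity separately; the endpoint $z_0$ is understood in the limiting sense, since $\mathbb K_{z_0}$ need not be well-defined as a bounded operator.

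For monotonicity, let $z_0 < \lambda_1 < \lambda_2$. Since $e^{-\lambda_1 a} > e^{-\lambda_2 a}$ for every $a > 0$ and $\tilde K(a)$ is positive, the difference
\begin{equation*}
(\mathbb K_{\lambda_1} - \mathbb K_{\lambda_2}) f = \int_0^\infty \bigl(e^{-\lambda_1 a} - e^{-\lambda_2 a}\bigr) \tilde K(a) f \, da
\end{equation*}
lies in $X_+$ for every $f \in X_+$, so $\mathbb K_{\lambda_2} \leq \mathbb K_{\lambda_1}$ in the sense of Proposition \ref{prop 10.12}. Excluding the trivial case $\tilde K \equiv 0$ (in which the claim is immediate), the two operators are distinct, and since both are compact and non-supporting by hypothesis, Proposition \ref{prop 10.12}(2) yields the strict inequality $\rho(\mathbb K_{\lambda_2}) < \rho(\mathbb K_{\lambda_1})$.

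For continuity, I would first show that $\lambda \mapsto \mathbb K_\lambda$ is continuous in the operator norm on $(z_0, \infty)$. Fix $\lambda_0 > z_0$, pick $\lambda_* \in (z_0, \lambda_0)$, and restrict to $\lambda \geq \lambda_*$. Then
\begin{equation*}
\|\mathbb K_\lambda - \mathbb K_{\lambda_0}\|_{op} \leq \int_0^\infty \bigl|e^{-\lambda a} - e^{-\lambda_0 a}\bigr|\, \|\tilde K(a)\|_{op}\, da,
\end{equation*}
and the integrand is dominated by $2 C e^{(z_0 - \lambda_*) a}$ thanks to Assumption \ref{ass:on the operator Ktilde}, which is integrable since $z_0 - \lambda_* < 0$. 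Dominated convergence gives $\|\mathbb K_\lambda - \mathbb K_{\lambda_0}\|_{op} \to 0$ as $\lambda \to \lambda_0$.

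Next, I would transfer norm continuity to continuity of the spectral radius. By Lemma \ref{lem:spectral radius is eigenvector}, $\rho(\mathbb K_{\lambda_0})$ is an algebraically simple eigenvalue of the compact operator $\mathbb K_{\lambda_0}$, hence an isolated point of $\sigma(\mathbb K_{\lambda_0})$ separated from the rest of the spectrum by a definite gap. Standard perturbation theory for isolated simple eigenvalues of bounded operators (e.g.\ Kato, \emph{Perturbation Theory for Linear Operators}, Theorem IV.3.16) then guarantees that this eigenvalue depends continuously on $\lambda$ in a neighborhood of $\lambda_0$; moreover, since the gap is preserved under small perturbations, the perturbed simple eigenvalue remains the spectral radius. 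As $\lambda_0 \in (z_0,\infty)$ was arbitrary, continuity follows on the full open interval. Alternatively, observing that $\lambda \mapsto \mathbb K_\lambda$ is in fact analytic (as a Laplace transform), one can appeal to analyticity of simple isolated eigenvalues to reach the same conclusion.

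The main technical obstacle is the continuity step, specifically transferring operator norm continuity to continuity of the Perron eigenvalue: the spectral radius is only upper semicontinuous under norm perturbation in general, and it is the algebraic simplicity supplied by Lemma \ref{lem:spectral radius is eigenvector}, together with the compactness of $\mathbb K_\lambda$, that makes it possible to pin down continuity via perturbation theory.
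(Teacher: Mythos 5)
Your monotonicity argument is the same as the paper's (positivity of $\mathbb K_{\lambda_1}-\mathbb K_{\lambda_2}$ plus Proposition \ref{prop 10.12}), but your continuity argument takes a genuinely different route. The paper never invokes eigenvalue perturbation theory: it uses the eigenelements supplied by Lemma \ref{lem:spectral radius is eigenvector} and positivity to write
\[
\rho(\mathbb K_\lambda)-\rho(\mathbb K_\mu)=\frac{\langle (\mathbb K_\lambda^*-\mathbb K_\mu^*)F_\lambda,\psi_\mu\rangle}{\langle F_\lambda,\psi_\mu\rangle}\leq \|\mathbb K_\lambda-\mathbb K_\mu\|,
\]
where strict positivity of $F_\lambda$ (Theorem \ref{theorem 5.2}) guarantees $\langle F_\lambda,\psi_\mu\rangle>0$; combined with the elementary estimate $|e^{-\lambda a}-e^{-\mu a}|\leq |\lambda-\mu|a$ this yields a quantitative Lipschitz-type bound $|\rho(\mathbb K_\lambda)-\rho(\mathbb K_\mu)|\leq\|\mathbb K_\lambda-\mathbb K_\mu\|$ and hence continuity directly. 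You instead prove norm continuity of $\lambda\mapsto\mathbb K_\lambda$ by dominated convergence (which, incidentally, is cleaner than the paper's Lipschitz estimate when $\lambda,\mu$ may be negative) and then transfer it to the spectral radius via Kato-type perturbation of the isolated algebraically simple Perron eigenvalue. That route is also valid, but the step "the perturbed eigenvalue remains the spectral radius" deserves one more ingredient than you state: isolation alone gives continuation of the eigenvalue, while to identify it with $\rho(\mathbb K_\lambda)$ you need that the remaining spectrum of $\mathbb K_{\lambda_0}$ lies in a disk of strictly smaller radius — which is exactly part 4 of Theorem \ref{theorem 5.2} (applicable since the nonzero spectrum of a compact operator consists of poles) — together with upper semicontinuity of the spectrum under norm perturbation. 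So your approach buys generality and a standard perturbation-theoretic viewpoint at the cost of this extra spectral bookkeeping, whereas the paper's duality trick is more elementary, exploits positivity and non-supportingness directly, and gives an explicit modulus of continuity. Your remark that $\mathbb K_{z_0}$ itself need not exist, so the endpoint is understood in a limiting sense, is a fair reading of the statement.
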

\begin{proof}
	To prove that the function $\lambda \mapsto \rho(\mathbb K_\lambda)$ is decreasing it is enough to notice that if $\lambda_2>\lambda_1$, then for every $f\in X_+ $ we have that $\left(\mathbb K_{\lambda_1}- \mathbb K_{\lambda_2}\right) f$ belongs to $X_+.$ 
	Hence, by Proposition \ref{prop 10.12} we deduce that $0<\rho(\mathbb K_{\lambda_2}) < \rho(\mathbb K_{\lambda_1})$ by Theorem \ref{theorem 5.2}. 
	
	To prove the continuity notice that thanks to Lemma \ref{lem:spectral radius is eigenvector} we have that for every $\lambda >z_0$ it holds that $ \frac{\langle \mathbb K_\lambda^* F_\lambda , \psi_\mu \rangle }{\langle F_\lambda, \psi_\mu \rangle } = \frac{ \rho(\mathbb K_\lambda) \langle F_\lambda , \psi_\mu \rangle }{\langle F_\lambda, \psi_\mu \rangle} =\rho(\mathbb K_\lambda)$
	and similarly that $ \frac{\langle  F_\lambda \mathbb K_\mu, \psi_\mu \rangle }{\langle F_\lambda, \psi_\mu \rangle }=  \frac{ \rho(\mathbb K_\mu) \langle F_\lambda , \psi_\mu \rangle}{\langle F_\lambda, \psi_\mu \rangle }= \rho(\mathbb K_\mu)$. Hence
	\begin{align*}
	& \rho(\mathbb K_\lambda) -\rho(\mathbb K_\mu) = \frac{\langle \mathbb K_\lambda^* F_\lambda , \psi_\mu \rangle}{\langle F_\lambda, \psi_\mu \rangle}-  \frac{\langle F_\lambda \mathbb K_\mu , \psi_\mu \rangle }{\langle F_\lambda, \psi_\mu \rangle}=  \frac{\langle \left( \mathbb K^*_\lambda -\mathbb K^*_\mu \right) F_\lambda, \psi_\mu \rangle }{\langle F_\lambda, \psi_\mu \rangle } \\
	&\leq \| \mathbb K^*_\lambda -\mathbb K^*_\mu \|_{{X}^*} =  \| \mathbb K_\lambda -\mathbb K_\mu \|_1. 
	\end{align*}
	Therefore, if we prove that $ \lim_{\lambda \rightarrow \mu} \| \mathbb K_\lambda -\mathbb K_\mu \|_1=0 $, then we deduce that $\lambda \mapsto \rho(\mathbb K_\lambda)$ is continuous.
	Since for every $x_1 ,x_2 \geq 0$ we have $|e^{-x_1} - e^{-x_2}|\leq |x_1-x_2 |$, then 
	\begin{align*}
	& \int_{\Omega_0} |\mathbb K_\lambda f(x) - \mathbb K_\mu f(x) | dx \leq \int_{\Omega_0} \int_0^\infty | e^{-\lambda a }-  e^{-\mu a }|  \left| \tilde{K}(a) f \right| (x) dx  \\
	& \leq \int_0^\infty | e^{-\lambda a }-  e^{-\mu a }|  \| \tilde{K}(a) f \|_{1} da  \leq  |\lambda -\mu |  \| f\|_{X}  \int_0^\infty a e^{z_0 a } da. 
	\end{align*} 
	Hence $\lambda \mapsto \mathbb K_\lambda $ is continuous and the desired conclusion follows.
\qed\end{proof} 

\begin{proposition}\label{prop:caract}
	Let $\mathbb K_\lambda$ be compact and non-supporting for every $\lambda \in \Delta \cap \mathbb R$.  
	If $R_0\geq 1$, then there exists a unique $r \geq 0$ such that 
	\begin{equation}\label{equazione caratteristica}
	\rho(\mathbb K_{r})=1. 
	\end{equation}
	If $R_0 <1 $ and there exists a $z \in [z_0, 0)$ such that $\rho(\mathbb K_z )\geq 1$, then there exists a unique $r<0$ such that \eqref{equazione caratteristica} holds. 
\end{proposition}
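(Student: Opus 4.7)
The plan is to combine the strict monotonicity and continuity of $\lambda \mapsto \rho(\mathbb K_\lambda)$ already established in Proposition \ref{prop:Klambda decreasing} with an intermediate value argument on the interval where the spectral radius crosses $1$. The only genuinely new ingredient needed is an estimate controlling $\rho(\mathbb K_\lambda)$ as $\lambda \to \infty$.

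First I would verify that $\lim_{\lambda \to \infty} \rho(\mathbb K_\lambda) = 0$. Using Assumption \ref{ass:on the operator Ktilde}, for $\lambda > z_0$ and any $f \in X$ we have
\[
\|\mathbb K_\lambda f\|_1 \leq \int_0^\infty e^{-\lambda a} \|\tilde K(a) f\|_1 \, da \leq \int_0^\infty e^{-\lambda a} C e^{z_0 a} \|f\|_1 \, da = \frac{C}{\lambda - z_0} \|f\|_1,
\]
so $\rho(\mathbb K_\lambda) \leq \|\mathbb K_\lambda\|_{op} \leq C/(\lambda - z_0) \to 0$ as $\lambda \to \infty$. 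Note that by Lemma \ref{lem:spectral radius is eigenvector}, $\rho(\mathbb K_\lambda) > 0$ for every $\lambda \in \Delta \cap \mathbb R$, so the continuous, strictly decreasing function $\varphi(\lambda) := \rho(\mathbb K_\lambda)$ maps $[z_0, \infty)$ into $(0, \varphi(z_0)]$ and tends to $0$ at $+\infty$.

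For the case $R_0 \geq 1$, we have $\varphi(0) = \rho(\mathbb K_0) = R_0 \geq 1$ and $\varphi(\lambda) \to 0$ as $\lambda \to \infty$. By the intermediate value theorem applied to $\varphi$ on $[0, \infty)$ there exists some $r \geq 0$ with $\varphi(r) = 1$, and strict monotonicity forces uniqueness. For the case $R_0 < 1$ with a $z \in [z_0, 0)$ satisfying $\rho(\mathbb K_z) \geq 1$, the function $\varphi$ restricted to $[z, 0]$ is continuous and strictly decreasing, with $\varphi(z) \geq 1 > R_0 = \varphi(0)$, so again the intermediate value theorem provides a unique $r \in [z, 0)$ with $\varphi(r) = 1$.

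No step here presents a serious difficulty once Proposition \ref{prop:Klambda decreasing} is in hand; the argument is essentially an application of the intermediate value theorem, and the only mild technical point is the exponential estimate giving $\varphi(\lambda) \to 0$, which follows cleanly from the $z_0$-bound on $\tilde K$. The sign statements about $r$ follow automatically because strict monotonicity of $\varphi$ makes $r$ lie on the appropriate side of $0$ according to whether $R_0$ exceeds, equals, or falls below $1$.
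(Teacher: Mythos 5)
Your proposal is correct and follows essentially the same route as the paper: both rely on the continuity and (strict) monotonicity of $\lambda \mapsto \rho(\mathbb K_\lambda)$ from Proposition \ref{prop:Klambda decreasing}, the bound $\rho(\mathbb K_\lambda) \leq \| \mathbb K_\lambda \|_{op} \rightarrow 0$ as $\lambda \rightarrow \infty$, and an intermediate value argument in each of the two cases. Your explicit estimate $\| \mathbb K_\lambda \|_{op} \leq C/(\lambda - z_0)$ merely spells out what the paper asserts more briefly.
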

\begin{proof}
	The map $\lambda \mapsto \rho(\mathbb K_\lambda) $ is decreasing and continuous.
	First of all, in both cases, $R_0 <1 $ and $R_0 \geq 1 $, we have that $\rho(\mathbb K_\lambda)\rightarrow 0$ as $\lambda \rightarrow \infty$. To see this, it is enough to notice that
	\begin{align*}
	0 \leq \rho(\mathbb K_\lambda) \leq \| \mathbb K_\lambda \|_{op} \rightarrow 0 \text{ as  } \lambda \rightarrow \infty. 
	\end{align*} 
	If $R_0 \geq 1 $, then, by the comparison theorem of linear operators, i.e., Proposition \ref{prop 10.12}, the definition of $R_0$ and the continuity of $\lambda \mapsto \rho(\mathbb K_\lambda)$ we deduce that there exists a $r\geq 0$ such that \eqref{equazione caratteristica} holds. 
	
	When $R_0 < 1$ the proof is similar.
\qed\end{proof}
\begin{proof}[Proof of Theorem \ref{thm:eigencouple}]
	The proof is a direct consequence of Proposition \ref{prop:caract} and of the fact that the spectral radius of $\mathbb K_\lambda$ is an eigenvalue when positive. 
\qed\end{proof}

\subsection{Large time behaviour of the density}\label{sec:asympt AC} 

Most of the results of this section hold thanks to the assumption that the discounted next generation operator $\mathbb K_\lambda$ is non-supporting for real values of $\lambda.$ 

The aim of this section is to prove that the unique real eigensolution of \eqref{eigenproblem} is attracting. Namely we aim at proving the following theorem.

\begin{theorem} \label{thm:asymptotic behaviour AC}
	Assume that for every $\lambda \in \Delta \cap \mathbb R$ the operator $\mathbb K_\lambda $ is non-supporting and that its complexification $\mathbb K_\lambda$ is compact for every $\lambda \in \Delta$.
	Additionally, assume that if $\lambda \in \Sigma $, where $\Sigma $ is given by \eqref{sigma}, and $\lambda \neq r $, then $\Re \lambda < r.$
	Let $(r, \psi_r) $ be the unique real normalised eigencouple solving \eqref{eigenproblem}. 
	Then there exists $v>0$ such that
	\[
	\| e^{-r t } b(t) - c \psi_r(\cdot) \|_{1} \leq L e^{- v t} , \quad t >0
	\]
	for some constants $L,c >0 $. 
\end{theorem}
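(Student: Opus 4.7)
The plan is to extract the asymptotic behaviour of $b(t)$ by deforming the contour in the inverse Laplace transform representation, following the Heijmans approach. Since we control $\hat b(\lambda)=(I-\mathbb K_\lambda)^{-1}\hat b_0(\lambda)$ for $\Re\lambda$ large, the idea is to push the Bromwich line leftwards past the rightmost singularity of $\hat b$ (at $\lambda=r$) down to a vertical line $\Re\lambda=r-v$ for some $v>0$, picking up a residue that produces the leading term $c\,e^{rt}\psi_r$ and leaving a remainder that decays like $e^{(r-v)t}$.

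First I would fix $\sigma>r$ large enough that $\hat b$ is holomorphic in $\Re\lambda\ge\sigma$ and that the $X$-valued inverse Laplace formula
\[
b(t)=\frac{1}{2\pi i}\int_{\sigma-i\infty}^{\sigma+i\infty}e^{\lambda t}\hat b(\lambda)\,d\lambda
\]
is valid (a Paley--Wiener type argument based on the exponential bound on $\|b(t)\|_1$ obtained in Section~\ref{sec:charc}). The singularities of $\hat b$ in $\Delta$ are exactly the points of $\Sigma$; since the complexification $\mathbb K_\lambda$ is compact on $X^{\mathbb C}$, each such point is a pole of $(I-\mathbb K_\lambda)^{-1}$ of finite order. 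By hypothesis, $r$ is the only element of $\Sigma$ with $\Re\lambda\ge r$.

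Next I would compute the residue at $\lambda=r$. Because $1=\rho(\mathbb K_r)$ is an algebraically simple eigenvalue (Lemma~\ref{lem:spectral radius is eigenvector}), the spectral projection $P_r=\frac{\langle F_r,\,\cdot\,\rangle}{\langle F_r,\psi_r\rangle}\psi_r$ is well defined and analytic perturbation theory applied to $\lambda\mapsto\mathbb K_\lambda$ yields a Laurent expansion
\[
(I-\mathbb K_\lambda)^{-1}=\frac{P_r}{(1-\rho(\mathbb K_\lambda))}+H(\lambda)=\frac{-P_r}{\rho'(\mathbb K_r)(\lambda-r)}+\tilde H(\lambda),
\]
with $\tilde H$ holomorphic near $r$ and $\rho'(\mathbb K_r)<0$ by Proposition~\ref{prop:Klambda decreasing}. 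The residue of $e^{\lambda t}\hat b(\lambda)$ at $r$ is therefore a scalar multiple of $\psi_r$:
\[
c\,e^{rt}\psi_r,\qquad c=\frac{\langle F_r,\hat b_0(r)\rangle}{-\rho'(\mathbb K_r)\langle F_r,\psi_r\rangle}>0,
\]
positivity of $c$ following from strict positivity of $F_r$ and $\hat b_0(r)\in X_+$. Since no other point of $\Sigma$ lies in a sufficiently narrow strip to the left of $\{\Re\lambda=r\}$, compactness together with continuity of $\lambda\mapsto\mathbb K_\lambda$ lets me choose $v>0$ so that $\{r-v\le\Re\lambda\le\sigma\}\setminus\{r\}$ is free of $\Sigma$. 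Applying the residue theorem on $[r-v,\sigma]\times[-T,T]$ and sending $T\to\infty$ yields
\[
b(t)=c\,e^{rt}\psi_r+\frac{1}{2\pi i}\int_{r-v-i\infty}^{r-v+i\infty}e^{\lambda t}\hat b(\lambda)\,d\lambda,
\]
and the remainder is bounded in $X$ by $L\,e^{(r-v)t}$, which after multiplication by $e^{-rt}$ delivers the claim.

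The main obstacle will be justifying the contour shift rigorously, i.e., showing that the horizontal closing segments $[r-v,\sigma]\pm iT$ contribute nothing as $T\to\infty$ and that $\hat b(\lambda)$ is Bochner-integrable along the shifted line. This rests on two decay estimates: uniformly for $\Re\lambda$ in a bounded interval, $\|\mathbb K_\lambda\|_{op}\to 0$ as $|\Im\lambda|\to\infty$ by a vector-valued Riemann--Lebesgue argument applied to $a\mapsto \tilde K(a)f$, so $\|(I-\mathbb K_\lambda)^{-1}\|_{op}$ stays bounded for large $|\Im\lambda|$; and $\|\hat b_0(\lambda)\|_1$ has $|\lambda|^{-2}$-type decay coming from integration by parts, making use of the $t$- and $t^2$-factors available in \eqref{bound b0}. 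Combining these produces the integrability on $\Re\lambda=r-v$ and the vanishing of the horizontal pieces, completing the estimate.
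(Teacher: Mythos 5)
Your overall strategy is exactly the paper's (Heijmans-style): represent $b$ by the inverse Laplace integral of $\hat b(\lambda)=(I-\mathbb K_\lambda)^{-1}\hat b_0(\lambda)$, shift the Bromwich line to $\Re\lambda=r-v$ using the spectral gap from Lemma \ref{cor:epsilon}, pick up the residue at the simple pole $\lambda=r$, and bound the remaining line integral by $Le^{(r-v)t}$. Your residue computation via the spectral projection $P_r$ is a legitimate variant of the paper's Laurent-series argument (Proposition \ref{prop:residue}); note only that it needs the local analyticity of $\lambda\mapsto\rho(\mathbb K_\lambda)$ near $r$ and $\rho'(\mathbb K_r)\neq 0$, which you should justify (e.g.\ perturbation theory for the simple isolated eigenvalue, and $\rho'(\mathbb K_r)=\langle F_r,K_1\psi_r\rangle/\langle F_r,\psi_r\rangle<0$ since $-K_1\psi_r=\int_0^\infty a e^{-ra}\tilde K(a)\psi_r\,da$ is positive and $F_r$ strictly positive); the paper's identity $R_{-1}\psi=\langle F_r,\psi\rangle\psi_r/\langle F_r,-K_1\psi_r\rangle$ encodes exactly this quantity without invoking differentiability of the spectral radius.

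There is, however, one step whose justification as written would fail: the claim that $\|\hat b_0(\lambda)\|_1$ has $|\lambda|^{-2}$-type decay ``coming from integration by parts, making use of the $t$- and $t^2$-factors available in \eqref{bound b0}''. The bound \eqref{bound b0} is only a pointwise upper bound on $\|b_0(t)\|_1$; polynomial factors multiplying $e^{z_0t}$ give integrability and analyticity of $\hat b_0$ in $\Re\lambda>z_0$ but carry no information about oscillatory decay in $\Im\lambda$, and integration by parts is unavailable because $b_0$ is merely Bochner measurable, not differentiable in $t$. So this mechanism does not yield the integrability of $\eta\mapsto\|\hat b(r-v+i\eta)\|_1$. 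The paper closes this point differently: it combines the uniform resolvent bound (Riemann--Lebesgue for large $|\eta|$, continuity on the compact part) with the statement that $\eta\mapsto\hat b_0(\xi+i\eta)$ belongs to $L^1((-\infty,\infty),X^{\mathbb C})$ for each $\xi>z_0$, citing Theorem 6.3.2 of Hille--Phillips, and then uses $\|\hat b(\xi+i\eta)\|_1\le C\|\hat b_0(\xi+i\eta)\|_1$ (together with the Hardy--Lebesgue class $H_1(\alpha,X^{\mathbb C})$ and the inversion Lemma \ref{lem:laplace inversion} to legitimize the Bromwich representation in the first place). You should replace your integration-by-parts argument by an appeal of this type, or by some other genuine proof of integrability of $\hat b$ along the shifted vertical line; everything else in your outline matches the paper's proof.
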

To prove Theorem \ref{thm:asymptotic behaviour AC} we follow the approach presented by Heijmans in \cite{heijmans1986dynamical}.
We need to prove that the exponential solution of the form \eqref{expo ansatz} with $\lambda=r$ and $\psi=\psi_r$, is attracting.
We can divide the proof in the following main steps.
\begin{enumerate} 
	\item We prove that $(I-\mathbb K_\lambda )^{-1}$ is meromorphic on the half-plane $ \Delta$ and that it has a pole of order $1$ in $\lambda=r$. The residue has the form: $R_{-1} \psi= C(\psi) \psi_r$ where $C(\psi)>0.$ 
	\item We prove that there exists a spectral gap: there exists an $\varepsilon$, with $0<\varepsilon < r $, such that $\Re \lambda \leq r-\varepsilon $ for every $\lambda \in \Sigma.$ To this end we apply the Riemann Lebesgue Lemma (i.e. Lemma \ref{lemma:rl}), and we exploit the fact that for every $\lambda \in \Sigma $ we have that $\Re \lambda < r $. The results proven in step 1 will also be used.
	\item We apply the Laplace transform inversion theorem (i.e. Lemma \ref{lem:laplace inversion}), to deduce the behaviour of $b$. To this end we apply some results of complex analysis (such as the Cauchy Theorem) and the results of the previous steps.
	\end{enumerate}
Step 1 is made in Proposition \ref{prop:mero} and \ref{prop:residue}. Step 2 is made in Lemma \ref{cor:epsilon}. Finally step 3 is made in Proposition \ref{prop:hardy space} and Theorem \ref{thm:asymptotic behaviour AC}.

\begin{proposition}\label{prop:mero}
Assume that the positive operator $\mathbb K_\lambda$ is compact for every $\lambda \in \Delta$.
The function $\lambda \mapsto (I- \mathbb K_\lambda)^{-1}$ is meromorphic on $ \Delta.$
\end{proposition}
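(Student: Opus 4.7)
The natural tool is the analytic Fredholm alternative: if $D \subset \mathbb C$ is open and connected, $\lambda \mapsto T(\lambda) \in \mathcal L(X)$ is holomorphic on $D$, and $T(\lambda)$ is compact for every $\lambda \in D$, then either $I - T(\lambda)$ fails to be invertible at every point of $D$, or else $(I - T(\lambda))^{-1}$ is meromorphic on $D$. Since the half-plane $\Delta$ is open and connected, it suffices to (i) verify holomorphy of $\lambda \mapsto \mathbb K_\lambda$ on $\Delta$, (ii) note that compactness holds by hypothesis, and (iii) exhibit one $\lambda \in \Delta$ at which $I - \mathbb K_\lambda$ is invertible, so as to rule out the trivial alternative.

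For (iii), Assumption \ref{ass:on the operator Ktilde} yields
\begin{equation*}
\| \mathbb K_\lambda \|_{op} \leq \int_0^\infty e^{-\Re \lambda \, a}\, \| \tilde K(a) \|_{op} \, da \leq \frac{C}{\Re \lambda - z_0}, \qquad \lambda \in \Delta,
\end{equation*}
so $\| \mathbb K_\lambda \|_{op} < 1$ for $\Re \lambda$ sufficiently large, and the Neumann series then provides $(I - \mathbb K_\lambda)^{-1}$. For (i), the candidate derivative at $\lambda_0 \in \Delta$ is
\begin{equation*}
\mathbb K_{\lambda_0}' = -\int_0^\infty a\, e^{-\lambda_0 a} \, \tilde K(a)\, da,
\end{equation*}
whose operator-norm majorant is $C/(\Re \lambda_0 - z_0)^2 < \infty$, so the integral converges absolutely in $\mathcal L(X)$. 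To confirm this really is the Fr\'echet derivative, I would estimate, for $|h|$ small enough that $\lambda_0 + h$ remains in a fixed half-plane $\Re \lambda > z_0 + \delta$, the operator norm of
\begin{equation*}
\frac{\mathbb K_{\lambda_0 + h} - \mathbb K_{\lambda_0}}{h} - \mathbb K_{\lambda_0}' = \int_0^\infty \left( \frac{e^{-h a} - 1}{h} + a \right) e^{-\lambda_0 a}\, \tilde K(a) \, da,
\end{equation*}
using the elementary pointwise bound $|(e^{-ha} - 1)/h + a| \leq \tfrac12 |h|\, a^2 e^{|h| a}$ together with \eqref{eq:z_0 Ktilde}, to conclude that the difference quotient converges to $\mathbb K_{\lambda_0}'$ in operator norm as $h \to 0$.

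With (i), (ii), (iii) in hand, the analytic Fredholm alternative immediately delivers meromorphy of $\lambda \mapsto (I - \mathbb K_\lambda)^{-1}$ on $\Delta$. The main obstacle is essentially cosmetic: one must be careful with operator-valued holomorphy since the integrand lies in $\mathcal L(X)$ rather than in $\mathbb C$, but the exponential majorant supplied by \eqref{eq:z_0 Ktilde} makes the dominated-convergence style arguments routine. Provided one is willing to invoke the analytic Fredholm theorem (itself a short consequence of the Riesz theory of compact operators applied on a connected domain), the whole argument reduces to bookkeeping.
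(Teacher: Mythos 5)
Your proposal is correct and follows essentially the same route as the paper: invoke the analytic Fredholm alternative (Steinberg's theorem, the paper's Theorem \ref{theorem 6.3}) after noting that $\|\mathbb K_\lambda\|_{op}\to 0$ as $\Re\lambda\to\infty$ rules out the nowhere-invertible case. The only difference is that you spell out the verification of operator-norm analyticity of $\lambda\mapsto\mathbb K_\lambda$, which the paper takes for granted.
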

 To prove this proposition we use the following result due to Steinberg and proven in  \cite{steinberg1968endomorphisms}.
\begin{theorem}\label{theorem 6.3}
	Let	$\Gamma$ be a subset  of the  complex plane  which  is  open  and  connected. If $\{T(\lambda): \lambda \in \Gamma \}$ is an analytic family of compact operators  on  $X^{\mathbb C}$, then  either $I-T(\lambda)$ is  nowhere invertible in  $\Gamma$ or $(I-T(\lambda))^{-1}$ is meromorphic in $\Gamma.$
	\end{theorem}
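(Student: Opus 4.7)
The plan is to give the classical analytic Fredholm argument: reduce locally near each $\lambda_0 \in \Gamma$ to a scalar analytic determinant, then patch globally using the connectedness of $\Gamma$. Fix $\lambda_0 \in \Gamma$. Since $T(\lambda_0)$ is compact, its nonzero spectrum consists of eigenvalues of finite algebraic multiplicity with $0$ as the only possible accumulation point. If $1 \notin \sigma(T(\lambda_0))$, then by norm continuity of $\lambda \mapsto T(\lambda)$ (which follows from analyticity), the inverse $(I - T(\lambda))^{-1}$ exists and is analytic on some neighborhood of $\lambda_0$, and we are done locally.

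Otherwise, $1$ is isolated in $\sigma(T(\lambda_0))$ with finite algebraic multiplicity $d$. I would introduce the Riesz projection
\begin{equation*}
P(\lambda) := \frac{1}{2\pi i}\int_\gamma (zI - T(\lambda))^{-1}\, dz,
\end{equation*}
where $\gamma$ is a small positively oriented circle around $1$ disjoint from $\sigma(T(\lambda_0))$. On some neighborhood $U_0$ of $\lambda_0$, $\gamma$ remains disjoint from $\sigma(T(\lambda))$, and the formula yields an analytic family of projections. Because $P(\lambda)$ depends continuously on $\lambda$ in operator norm and projections of different finite rank cannot be close, the rank is constantly $d$ on $U_0$.

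Next I would reduce the problem to finite dimensions. The splitting $X^{\mathbb C} = \mathrm{Range}\, P(\lambda) \oplus \ker P(\lambda)$ is $T(\lambda)$-invariant; on $\ker P(\lambda)$ the spectrum of $T(\lambda)$ stays away from $1$, so $I - T(\lambda)$ is invertible there with an analytic inverse. Picking a basis $\{e_1, \ldots, e_d\}$ of $\mathrm{Range}\, P(\lambda_0)$ and setting $e_j(\lambda) := P(\lambda) e_j$ gives, after possibly shrinking $U_0$, an analytic basis of $\mathrm{Range}\, P(\lambda)$. In this basis the restriction of $I - T(\lambda)$ is a $d \times d$ matrix $A(\lambda)$ with analytic entries; invertibility of $I - T(\lambda)$ is equivalent to $\Delta(\lambda) := \det A(\lambda) \neq 0$, and where $\Delta \neq 0$ the inverse depends meromorphically on $\lambda$ with poles only at zeros of $\Delta$. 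By the identity principle applied to the scalar analytic $\Delta$ on the connected set $U_0$, either $\Delta \equiv 0$ on $U_0$ (so $I - T(\lambda)$ is nowhere invertible on $U_0$) or the zeros of $\Delta$ are isolated (so $(I - T(\lambda))^{-1}$ is meromorphic on $U_0$).

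Finally, I would globalize. Define $V := \{\lambda \in \Gamma : I - T(\mu) \text{ is non-invertible for every } \mu \text{ in some neighborhood of } \lambda\}$ and $W := \Gamma \setminus V$. The local analysis shows $V$ and $W$ are both open and jointly cover $\Gamma$, so connectedness forces one of them to equal $\Gamma$. If $V = \Gamma$ we obtain the first alternative. If $W = \Gamma$, the local meromorphic representations of $(I - T(\lambda))^{-1}$ agree on overlaps by uniqueness of analytic continuation and glue to a global meromorphic function on $\Gamma$. The \emph{main obstacle} is the local step: constructing the Riesz projection $P(\lambda)$ as an analytic family of constant finite rank on a full neighborhood of $\lambda_0$, and using $e_j(\lambda) = P(\lambda) e_j$ to produce an analytic basis of its range. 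This is what converts an operator-valued invertibility question into the scalar analytic problem for $\Delta(\lambda)$; once it is in place, the identity principle, connectedness, and gluing of meromorphic functions are routine complex analysis.
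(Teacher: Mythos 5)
The paper offers no proof of this statement: it is quoted, with attribution, as Steinberg's analytic Fredholm theorem from \cite{steinberg1968endomorphisms} and is then used as a black box to show that $\lambda \mapsto (I-\mathbb K_\lambda)^{-1}$ is meromorphic. So there is nothing in the paper to measure your argument against line by line; what can be said is that your proposal is the classical proof of the analytic Fredholm theorem (Riesz projection of locally constant finite rank around the eigenvalue $1$, reduction to a $d\times d$ analytic matrix $A(\lambda)$ and its determinant $\Delta(\lambda)$, identity principle, then an open–open connectedness argument), and it is essentially correct. It does differ from Steinberg's original route, which avoids spectral projections: there one approximates $T(\lambda)$ locally by an analytic finite-rank family $F(\lambda)$ with $\sup_{\lambda}\|T(\lambda)-F(\lambda)\|<1$, factors $I-T(\lambda)=\bigl(I-(T(\lambda)-F(\lambda))\bigr)\bigl(I-(I-(T(\lambda)-F(\lambda)))^{-1}F(\lambda)\bigr)$, and reduces invertibility to the nonvanishing of a finite determinant; both reductions achieve the same local scalarization, and your Riesz-projection version is the one in Reed--Simon. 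Two details you gesture at should be recorded to make the local step airtight: (i) analyticity of the entries of $A(\lambda)$ requires analytic coordinate functionals on $\mathrm{Range}\,P(\lambda)$, which you get by noting that the change-of-basis matrix $\bigl(\langle f_k, e_j(\lambda)\rangle\bigr)_{k,j}$, with $\{f_k\}$ a dual basis adapted to $\mathrm{Range}\,P(\lambda_0)$, is analytic and invertible after shrinking $U_0$; and (ii) meromorphy of the full inverse follows from the decomposition $(I-T(\lambda))^{-1}=\bigl[(I-T(\lambda))|_{\mathrm{Range}\,P(\lambda)}\bigr]^{-1}P(\lambda)+\bigl[(I-T(\lambda))|_{\ker P(\lambda)}\bigr]^{-1}(I-P(\lambda))$, whose second summand is analytic on $U_0$. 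With those two points spelled out, your globalization via the disjoint open sets $V$ and $W$ and connectedness of $\Gamma$ is sound, and no genuine gluing is needed since all local representations describe the one globally defined function $(I-T(\lambda))^{-1}$.
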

\begin{proof}[Proof of Proposition \ref{prop:mero}]
 From the the definition of $\mathbb K_\lambda$ we know that
\[
\| \mathbb K_\lambda \|_{op} \leq \| \mathbb K_{\Re\lambda} \|_{op}  \rightarrow 0 \text{ as } \Re \lambda \rightarrow \infty.
\] 
Hence $(I-\mathbb K^{\mathbb C}_\lambda)$ is invertible for $\Re \lambda $ large enough.
Since $\mathbb K_\lambda $ is compact for each real $\lambda $ with $\lambda >z_0$ and since $\lambda \mapsto \mathbb K_\lambda$ is analytic, we can apply Theorem \ref{theorem 6.3} to deduce that $\lambda \mapsto (I- \mathbb K_\lambda)^{-1}$ is meromorphic.
\qed\end{proof}

\begin{lemma}\label{cor:epsilon} 
Let $\mathbb K_\lambda$ be non-supporting for every $\lambda \in \Delta \cap \mathbb R$ and compact for every $\lambda \in \Delta$. Let $r$ be the Malthusian parameter.
	Moreover, assume that if $\lambda \in \Sigma$, where $\Sigma $ is given by \eqref{sigma}, and $\lambda \neq r$, then $\Re \lambda < r$.
There exists an $\varepsilon >0$ such that for every $\lambda \in \Sigma $ with $\lambda \neq r $, $\Re \lambda \leq r-\varepsilon$. 
\end{lemma}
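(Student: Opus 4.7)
The plan is to argue by contradiction. Assume no such $\varepsilon$ exists; then one extracts a sequence $\{\lambda_n\}\subset \Sigma\setminus\{r\}$ with $\Re\lambda_n\to r$. Since the standing hypothesis forces $\Re\lambda_n<r$ for every $n$, the approach is from below. Write $\lambda_n=\alpha_n+i\beta_n$. By Proposition \ref{prop:mero}, the map $\lambda\mapsto(I-\mathbb{K}_\lambda)^{-1}$ is meromorphic on $\Delta$, so $\Sigma$ is a discrete subset of $\Delta$ with no accumulation points inside $\Delta$. I would split the argument according to whether $\{\beta_n\}$ remains bounded.

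In the bounded case, after passing to a subsequence, $\lambda_{n_k}\to\lambda_\star$ with $\Re\lambda_\star=r>z_0$, so $\lambda_\star\in\Delta$. Discreteness of $\Sigma$ in $\Delta$ forces $\lambda_{n_k}=\lambda_\star$ eventually; hence $\lambda_\star\in\Sigma$, and the hypothesis $\Re\lambda<r$ on $\Sigma\setminus\{r\}$ combined with $\Re\lambda_\star=r$ forces $\lambda_\star=r$, contradicting $\lambda_{n_k}\neq r$.

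The remaining case $|\beta_n|\to\infty$ is the one where Riemann--Lebesgue enters. For each fixed $g\in X$ and each compact range of $\alpha$ in $(z_0,\infty)$, the bound \eqref{eq:z_0 Ktilde} makes $a\mapsto e^{-\alpha a}\tilde K(a)g$ Bochner integrable into $X$, so Lemma \ref{lemma:rl} gives $\mathbb{K}_{\alpha+i\beta}g=\int_0^\infty e^{-i\beta a}[e^{-\alpha a}\tilde K(a)g]\,da\to 0$ strongly as $|\beta|\to\infty$. Compactness of $\mathbb{K}_{\lambda_n}$ makes $1\in\sigma(\mathbb{K}_{\lambda_n})$ an eigenvalue, so there exist $f_n\in X^{\mathbb C}$ with $\|f_n\|_1=1$ and $f_n=\mathbb{K}_{\lambda_n}^k f_n$ for every $k\geq 1$. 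Positivity of $\tilde K(a)$ yields the pointwise domination $|\mathbb{K}_{\lambda_n}^k f_n|\leq \mathbb{K}_{\alpha_n}^k|f_n|$, and the uniform operator bound $\|\mathbb{K}_\lambda\|_{op}\leq C/(\alpha-z_0)$ keeps the family bounded in norm. Combining these with the compactness of $\mathbb{K}_{\alpha_n}^k$ and the operator-norm continuity of $\lambda\mapsto\mathbb{K}_\lambda$ established inside the proof of Proposition \ref{prop:Klambda decreasing}, I would upgrade the strong Riemann--Lebesgue decay to norm decay along the $n$-dependent sequence, contradicting $\|f_n\|_1=1$.

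The main obstacle is exactly this last step: converting the pointwise Riemann--Lebesgue statement (valid only for fixed $g$) into $\mathbb{K}_{\lambda_n}f_n\to 0$ along the $n$-dependent unit eigenvectors. A dominated-convergence or collective-compactness argument, built on the positivity-driven bound $|\mathbb{K}_\lambda^k g|\leq \mathbb{K}_\alpha^k|g|$ and the compactness of a fixed high power of $\mathbb{K}_\alpha$, is what I expect to be the delicate ingredient that closes the contradiction and produces the desired $\varepsilon>0$.
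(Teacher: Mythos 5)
Your bounded-imaginary-part case is fine and is essentially the same finiteness argument the paper uses (meromorphy of $\lambda\mapsto(I-\mathbb K_\lambda)^{-1}$ on $\Delta$ via Proposition \ref{prop:mero}, so poles cannot accumulate inside $\Delta$). The genuine gap is the case $|\beta_n|\to\infty$, which you yourself flag as the "delicate ingredient" and do not close — but that case is the entire content of the lemma, since meromorphy alone says nothing about roots escaping to $\pm i\infty$ inside the strip. The route you sketch cannot be repaired as stated: the Riemann--Lebesgue decay you invoke is strong decay, $\mathbb K_{\alpha+i\beta}g\to 0$ for each \emph{fixed} $g$, while your eigenvectors $f_n$ vary with $n$; and the domination $|\mathbb K_{\lambda_n}f_n|\leq \mathbb K_{\alpha_n}|f_n|$ gives no smallness at all, because $\|\mathbb K_{\alpha_n}|f_n|\|_1$ is of order $\rho(\mathbb K_{\alpha_n})\approx 1$ (indeed it is bounded below whenever $|f_n|$ has a fixed component along the positive eigenfunction, as one sees by pairing with $F_{\alpha_n}$), so no contradiction with $\|f_n\|_1=1$ can come from positivity and compactness of $\mathbb K_{\alpha_n}$ alone. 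The cancellation produced by the oscillatory factor $e^{-i\beta_n a}$ must be exploited at the level of the \emph{operator norm}, uniformly in the real part, and that is precisely what is missing from your argument.

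The paper's proof does exactly this and is direct rather than by contradiction: applying the Riemann--Lebesgue lemma (Lemma \ref{lemma:rl}) to the kernel, for every $\overline r<r$ there is $\eta_0>0$ with $\|\mathbb K_{s+i\eta}\|_{op}<1$ for all $s\in[\overline r,r]$ and $|\eta|>\eta_0$, so $I-\mathbb K_{s+i\eta}$ is invertible there and no element of $\Sigma$ with real part in $[\overline r,r]$ has $|{\rm Im}\,\lambda|>\eta_0$. Meromorphy then gives only finitely many poles of $(I-\mathbb K_\lambda)^{-1}$ in the remaining compact rectangle $\{|{\rm Im}\,\lambda|\leq\eta_0,\ \Re\lambda\in[\overline r,r]\}$, and the hypothesis that $r$ is the only element of $\Sigma$ on the line $\Re\lambda=r$ yields the gap $\varepsilon>0$. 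If you want to salvage your sequence argument, you would need to justify this uniform operator-norm smallness of $\mathbb K_{s+i\eta}$ for large $|\eta|$ (which requires treating $a\mapsto\tilde K(a)$ as an $L^1$ operator-valued function, the stronger measurability discussed around Theorem \ref{thm:grip}); once you have it, the eigenvector machinery is unnecessary, since $f_n=\mathbb K_{\lambda_n}f_n$ with $\|f_n\|_1=1$ already forces $\|\mathbb K_{\lambda_n}\|_{op}\geq 1$, which the uniform bound immediately contradicts.
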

The following lemma is taken from \cite{phillips1957functional}, see Theorem 6.4.2, and will be applied to prove Lemma \ref{cor:epsilon}.
\begin{lemma}[Riemann-Lebesgue]\label{lemma:rl}
	Let  $f \in L^1((0,\infty), X^{\mathbb C})$ and let $\hat{f}$ be its Laplace transform.
	Then $\lim_{|\eta| \rightarrow \infty } \hat f(\xi+i\eta)= 0$, uniformly 
	for $\xi$ in  bounded closed  subintervals  of $(0,\infty)$.
\end{lemma}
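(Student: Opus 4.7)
The plan is to reduce the vector-valued statement to an explicit computation on a dense subclass by a standard three-epsilon argument. Fix a compact subinterval $I=[\xi_0,\xi_1]\subset(0,\infty)$ and let $\varepsilon>0$.

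First, I would handle step functions. By the definition of Bochner integrability, the set $\mathcal S$ of finite linear combinations
$g(t)=\sum_{k=1}^N x_k\,\chi_{[a_k,b_k]}(t)$ with $x_k\in X^{\mathbb C}$ and $0\le a_k<b_k<\infty$ is dense in $L^1((0,\infty),X^{\mathbb C})$. For such $g$, term-by-term integration gives
\[
\hat g(\xi+i\eta)=\sum_{k=1}^N \frac{e^{-(\xi+i\eta)a_k}-e^{-(\xi+i\eta)b_k}}{\xi+i\eta}\,x_k,
\]
so
\[
\|\hat g(\xi+i\eta)\|\le \frac{2\sum_{k=1}^N\|x_k\|}{|\xi+i\eta|}\le \frac{2\sum_{k=1}^N\|x_k\|}{|\eta|},
\]
which tends to $0$ as $|\eta|\to\infty$, uniformly in $\xi\in I$ (in fact uniformly in $\xi\ge 0$).

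Second, for a general $f\in L^1((0,\infty),X^{\mathbb C})$, choose $g\in\mathcal S$ with $\|f-g\|_{L^1}<\varepsilon$. For every $\xi\ge 0$ and $\eta\in\mathbb R$ we have the uniform estimate
\[
\|\hat f(\xi+i\eta)-\hat g(\xi+i\eta)\|
\le \int_0^\infty e^{-\xi t}\,\|f(t)-g(t)\|\,dt
\le \|f-g\|_{L^1}<\varepsilon,
\]
using $e^{-\xi t}\le 1$ for $\xi\ge 0$ and $t\ge 0$. Combining with the first step, pick $\eta_0$ so that $\|\hat g(\xi+i\eta)\|<\varepsilon$ whenever $|\eta|\ge\eta_0$ and $\xi\in I$; then $\|\hat f(\xi+i\eta)\|<2\varepsilon$ for all such $(\xi,\eta)$, which is the desired uniform convergence.

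The only genuinely delicate point is the density of $\mathcal S$ in $L^1((0,\infty),X^{\mathbb C})$, which I would invoke from standard Bochner integration theory: a Bochner integrable function is an a.e.\ limit of simple functions with integrable norms, and scalar simple functions on $(0,\infty)$ with finite-measure support can in turn be approximated in $L^1$ by step functions of the form above (by regularity of Lebesgue measure). Everything else is a direct computation, and the uniformity in $\xi\in I$ falls out automatically because the estimate in the second step does not involve $\xi$ and the estimate in the first step depends on $\xi$ only through the trivial inequality $|\xi+i\eta|\ge|\eta|$.
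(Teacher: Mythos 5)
Your proof is correct. Note that the paper itself does not prove this lemma: it is imported verbatim from Hille--Phillips (Theorem 6.4.2 of \cite{phillips1957functional}), so there is no in-paper argument to compare against. What you have written is the classical self-contained route: density of $X^{\mathbb C}$-valued step functions in $L^1((0,\infty),X^{\mathbb C})$ (simple functions from Bochner integrability, then regularity of Lebesgue measure to replace measurable sets by finite unions of intervals), the explicit computation showing the transform of a step function is $O(1/|\eta|)$ uniformly in $\xi\ge 0$, and the three-epsilon estimate using $e^{-\xi t}\le 1$. Each step is sound, and your argument in fact delivers uniformity on all of $\{\xi\ge 0\}$, which is stronger than the stated uniformity on compact subintervals of $(0,\infty)$ and is harmless. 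The only thing you lean on without proof is exactly the density statement you flag, and that is indeed standard Bochner theory, so invoking it is legitimate. In short: where the authors chose to cite, you supplied the standard elementary proof, and it works.
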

\begin{proof}[Proof of Lemma \ref{cor:epsilon}]
Thanks to Lemma \ref{lemma:rl} we have that for every $ \overline r < r $ there exists an $\eta_0>0$ such that $\| \mathbb K_{s + i \eta } \|_{op}< 1$, hence $\left(I-\mathbb K_{s + i \eta }\right)^{-1}$ is analytic, for every $s \in [\overline r , r ]$ and $|\eta| > \eta_0 $. 
Since the function $\left(I - \mathbb K_\lambda \right)^{-1}$ is meromorphic, we deduce that the number of its poles contained in the compact set $\{ \lambda \in \mathbb C\ : | Im \lambda| \leq \eta_0 \text{ and } \Re \lambda \in [\overline r, r] \} $ is finite. 
This implies that the set $\Sigma \cap \{ \lambda \in \mathbb C \ : \Re \lambda \in [\overline r, r]\} $ has a finite number of elements.
Thanks to the assumption of Lemma \ref{cor:epsilon}, the only $\lambda \in \Sigma $ with $\Re \lambda =r $ is $\lambda =r $. From this we deduce that there exists an $\varepsilon >0$ such that $\Re \lambda \leq r - \varepsilon$ for every $\lambda \in \Sigma$ with $\lambda \neq r$. 
	\qed\end{proof} 
Since, from Proposition \ref{prop:mero}, we know that $\mathbb K_\lambda$ is analytic in a neighbourhood of $r$ we can write its Taylor expansion: 
\begin{equation}\label{Taylor}
\mathbb K_\lambda= \sum_{n=0}^\infty {\left( \lambda - r \right)}^n K_n. 
\end{equation}
Moreover, the map $\mathcal R_\lambda=(I- \mathbb K_\lambda)^{-1}$ can be represented by a Laurent series around the pole $r$ of order $p \geq 1$: 
\begin{equation} \label{Laurent}
\mathcal R_\lambda= \sum_{n=-p}^\infty {\left( \lambda - r \right)}^n R_n.
\end{equation}
\begin{proposition}\label{prop:residue} 
Let $\mathbb K_\lambda$ be non-supporting for every $\lambda \in \Delta \cap \mathbb R$ and let $\mathbb K_\lambda$ be compact for every $\lambda \in \Delta$.
Moreover assume that if $\lambda \in \Sigma$ and $\lambda \neq r$, then $\Re \lambda < r$.
 Let $r$ be the Malthusian parameter, $\psi_r$ be the stable distribution, $F_r$ be the corresponding dual eigenfunction. 
The function $ \lambda \mapsto {(I-\mathbb K_\lambda)}^{-1}$ has a pole of order one in $\lambda=r$ and the residue, $R_{-1}$, is given by 
\[
R_{-1} \psi=\frac{\langle F_r , \psi\rangle }{ \langle F_r, - K_{1} \psi_r \rangle }  \psi_r,  \quad \psi \in X.
\]
\end{proposition}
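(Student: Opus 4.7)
The plan is to combine the series expansions \eqref{Taylor} and \eqref{Laurent} with the defining identity $(I - \mathbb K_\lambda)\mathcal R_\lambda = I$, match coefficients of $(\lambda-r)^k$ order by order, and exploit the Fredholm alternative for the compact operator $K_0 := \mathbb K_r$ whose spectral structure is provided by Lemma \ref{lem:spectral radius is eigenvector}.

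First I would substitute \eqref{Taylor} and \eqref{Laurent} into $(I - \mathbb K_\lambda)\mathcal R_\lambda = I$ and read off the coefficient of each power of $(\lambda - r)$. The coefficient of $(\lambda-r)^{-p}$ gives $(I - K_0)R_{-p} = 0$; the coefficient of $(\lambda-r)^{-p+1}$ gives $(I - K_0)R_{-p+1} = K_1 R_{-p}$; and the coefficient of $(\lambda - r)^0$ gives $(I - K_0)R_0 = I + \sum_{j=1}^p K_j R_{-j}$. By Lemma \ref{lem:spectral radius is eigenvector} the value $1 = \rho(K_0)$ is an algebraically simple eigenvalue of $K_0$ with eigenvector $\psi_r$ and dual eigenvector $F_r$, and since $K_0$ is compact, Fredholm theory yields $\ker(I - K_0) = \mathrm{span}(\psi_r)$ and $\mathrm{range}(I - K_0) = \{\phi \in X : \langle F_r, \phi\rangle = 0\}$.

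Next I would show that the pole order is exactly $p = 1$. From $(I - K_0)R_{-p} = 0$ I have $R_{-p}\psi = \alpha(\psi)\psi_r$ for some linear functional $\alpha$. Solvability of the equation at order $-p+1$ requires $\langle F_r, K_1 R_{-p}\psi\rangle = 0$ for all $\psi$, i.e., $\alpha(\psi)\langle F_r, K_1 \psi_r\rangle = 0$. Differentiating \eqref{NGO lambda} under the integral sign (justified by Assumption \ref{ass:on the operator Ktilde} together with $r > z_0$) gives
$$-K_1 = \int_0^\infty a\, e^{-ra}\,\tilde K(a)\, da,$$
a positive operator; since $\mathbb K_r \psi_r = \psi_r \ne 0$ forces $\tilde K(a)\psi_r$ to be nonzero on a set of positive $a$-measure, $-K_1 \psi_r$ is a nonzero positive element, and the strict positivity of $F_r$ (Lemma \ref{lem:spectral radius is eigenvector}) yields $\langle F_r, -K_1\psi_r\rangle > 0$. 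Hence $\alpha \equiv 0$ and $R_{-p} = 0$ whenever $p \ge 2$, contradicting the definition of $p$; since $r \in \Sigma$ forces $p \ge 1$, we conclude $p = 1$.

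Finally, with $p = 1$ the representation $R_{-1}\psi = \alpha(\psi)\psi_r$ persists, and the order-$0$ equation reduces to $(I - K_0)R_0 = I + K_1 R_{-1}$. Pairing with $F_r$, the left-hand side vanishes and I obtain for every $\psi \in X$
$$\langle F_r, \psi\rangle + \alpha(\psi)\,\langle F_r, K_1\psi_r\rangle = 0,$$
whence $\alpha(\psi) = \langle F_r,\psi\rangle / \langle F_r, -K_1\psi_r\rangle$, which is precisely the claimed residue. The principal obstacle is the pair of sub-claims underpinning the solvability argument, namely the Fredholm characterisation of $\mathrm{range}(I - K_0)$ via $F_r$ and the strict positivity $\langle F_r, -K_1\psi_r\rangle > 0$; once these are in place the remainder is purely formal coefficient matching.
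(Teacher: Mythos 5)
Your proposal is correct, and it follows the same overall framework as the paper (Taylor expansion \eqref{Taylor}, Laurent expansion \eqref{Laurent}, coefficient matching in $(I-\mathbb K_\lambda)\mathcal R_\lambda=I$, and the positivity $\langle F_r,-K_1\psi_r\rangle>0$ to kill higher-order poles), but the mechanism you use for the two key steps is genuinely different. The paper, following Heijmans, derives the operator identities $R_{-p}K_1R_{-p}=0$ (for $p>1$) and $R_{-1}K_1R_{-1}=-R_{-1}$ (for $p=1$), introduces preimages $\Phi$ and $H$ with $R_{-p}\Phi=\psi_r$ and $R_{-p}^*H=F_r$, and reaches the contradiction and the residue formula by evaluating $\langle F_r,K_1\psi_r\rangle=\langle H,R_{-p}K_1R_{-p}\Phi\rangle$ and, in the $p=1$ case, unwinding $\langle F_r,\psi\rangle$ through $R_{-1}K_1R_{-1}=-R_{-1}$. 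You instead invoke the Fredholm alternative for the compact operator $K_0=\mathbb K_r$, identifying $\mathrm{range}(I-K_0)$ with the annihilator of $F_r$, so that the order $-p+1$ equation $(I-K_0)R_{-p+1}=K_1R_{-p}$ (valid only for $p\geq 2$, as you correctly note, since the identity enters at order $0$) directly forces $\langle F_r,K_1R_{-p}\psi\rangle=0$ and hence $R_{-p}=0$, while the residue drops out of pairing the order-$0$ equation $(I-K_0)R_0=I+K_1R_{-1}$ with $F_r$, using $K_0^*F_r=F_r$. This buys you a shorter argument that avoids constructing $\Phi$ and $H$ altogether, and your explicit verification that $-K_1=\int_0^\infty a e^{-ra}\tilde K(a)\,da$ is positive with $-K_1\psi_r\neq 0$ (via $\mathbb K_r\psi_r=\psi_r\neq 0$) actually supplies a detail the paper leaves implicit, since strict positivity of $F_r$ only yields $\langle F_r,-K_1\psi_r\rangle>0$ once one knows $-K_1\psi_r\neq 0$; the paper's route, on the other hand, stays entirely within the resolvent-identity calculus and does not require quoting the Fredholm range characterization.
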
 
\begin{proof}
The proof of this proposition is the same as the proof of Theorem 7.1 in \cite{heijmans1986dynamical}.
The fact that
\begin{equation}\label{identity}
\mathcal R_\lambda(I-\mathbb K_\lambda)=(I-\mathbb K_\lambda) \mathcal R_\lambda =I,
\end{equation}
 implies, together with \eqref{Taylor} and \eqref{Laurent} that 
 \begin{equation} \label{R zero}
 R_{-p}(I-K_0)=(I-K_0) R_{-p}=0. 
 \end{equation}
From \eqref{R zero} and from the fact that $K_0=\mathbb K_r $ we deduce that the range of $R_{-p}$ is equal to
$\{\gamma \psi_r : \gamma \in \mathbb C \}$. Similarly we deduce that the range of $R^*_{-p}$, which is the dual operator of $R_{-p}$, is equal to
$\{\gamma F_r : \gamma \in \mathbb C \}$. 
As a consequence there exist $\Phi$ and $H$ solving 
\begin{equation} \label{eq H F} 
R_{-p} \Phi=\psi_r \text{ and } 
R^*_{-p} H=F_r
\end{equation} 
 respectively. 

From the identity \eqref{identity} and formula \eqref{Laurent} and \eqref{Taylor} we can also deduce that if $p>1$, then 
\[
-R_{-p} K_1 + R_{-p+1}(I-K_0)=0.
\]
While, if $p=1$ we have that
\[
-R_{-1} K_1 + R_0(I-K_0)=I.
\]
Combining these two last equations with \eqref{R zero} we deduce that if $p>1$ then 
\begin{equation}\label{p maggiore a 1}
R_{-p}K_1 R_{-p}=0
\end{equation}
while if $p=1$
\begin{equation}\label{p uguale a 1}
R_{-1}K_1 R_{-1}=-R_{-1}.
\end{equation}
As a consequence of \eqref{p maggiore a 1} and \eqref{eq H F}, if $p>1$, then \[
\langle F_r, K_1 \psi_r \rangle=\langle R^*_{-p} H, K_1 R_{-p} \Phi \rangle =\langle H, R_{-p} K_1 R_{-p} \Phi \rangle=  0,
\] which is a contradiction with the fact that $F_r$ strictly positive and $- K_1 \psi_r=\left[ - \frac{d}{d\lambda} \mathbb K_\lambda \right]_{r} \psi_r $ is positive. Hence $p=1.$

Now let $R_{-1}\psi=f(\psi)\psi_r$ for some linear functional $f.$
Using the fact that $F_r = R_{-1}^* H_r $ and \eqref{p uguale a 1} we deduce that 
\begin{align*}
&\langle F_r , \psi  \rangle = \langle  R_{-1}^* H , \psi  \rangle =
\langle H ,   R_{-1}\psi  \rangle =
\langle  H ,  -R_{-1}K_{1} R_{-1} \psi  \rangle= \\
&\langle R_{-1}^* H ,  -K_{1} \left( f(\psi) \psi_r \right)  \rangle 
= f(\psi) \langle R_{-1}^* H ,  -K_{1}  \psi_r   \rangle=f(\psi) \langle F_r ,  -K_{1}  \psi_r   \rangle
\end{align*}
it follows that $f(\psi)= \frac{\langle F_r , \psi  \rangle}{ \langle F_r ,  -K_{1}  \psi_r   \rangle}$.
\qed \end{proof}
\begin{definition} \label{def:hardy} 
The Hardy-Lebesgue class $H_1(\alpha, X^{\mathbb C})$ is the class of functions $g: \mathbb C \rightarrow X$, which are analytic in $\Re \lambda > \alpha$ and satisfy the following conditions 
\begin{equation}\label{hardy bound}
\sup_{ \xi > \alpha} \int_{-\infty}^\infty  \| g(\xi+ i \eta)  \|_1 d \eta < \infty 
\end{equation}
and $g(\alpha+i \eta)=\lim_{\xi  \rightarrow \alpha} g(\xi+ i \eta) $ exists a.e. and is an element of $L^1((-\infty, \infty), X)$.
\end{definition} 
\begin{proposition}\label{prop:hardy space}
Let $\mathbb K_\lambda$ be compact for every $\lambda \in \Delta$ and $\mathbb K_\lambda$ non-supporting for every $\lambda \in \Delta \cap \mathbb R$.
Moreover, assume that if $\lambda \in \Sigma$ and $\lambda \neq r$, then $\Re \lambda < r$.
 Let $r$ be the Malthusian parameter. Then 
$\hat{b} \in H_1(\alpha, X^{\mathbb C})$ if $\alpha > r.$
\end{proposition}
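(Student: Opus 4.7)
The plan is to verify directly the three defining properties of $H_1(\alpha, X^{\mathbb C})$ for any $\alpha > r$, using the representation $\hat b(\lambda) = (I - \mathbb K_\lambda)^{-1} \hat{b_0}(\lambda)$ from \eqref{lap transform of b}.

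For analyticity on $\{\Re\lambda > \alpha\}$, Assumption \ref{ass:on b_0} and the bound $\|b_0(t)\|_1 \le c(1 + t + t^2) e^{z_0 t}$ imply that $\hat{b_0}$ is analytic on all of $\Delta$. By Proposition \ref{prop:mero}, $(I - \mathbb K_\lambda)^{-1}$ is meromorphic on $\Delta$ with poles in $\Sigma$, and Lemma \ref{cor:epsilon} (invoking the standing hypothesis that $\Sigma \setminus \{r\} \subset \{\Re\lambda < r\}$) provides a uniform spectral gap: every pole except $r$ has real part at most $r - \varepsilon$. Thus, for $\alpha > r$, the product $\hat b = (I - \mathbb K_\lambda)^{-1}\hat{b_0}$ is analytic on $\{\Re\lambda > \alpha\}$.

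The decisive step is the uniform integrability $\sup_{\xi > \alpha} \int_{-\infty}^{\infty} \|\hat b(\xi + i\eta)\|_1 \, d\eta < \infty$. The plan is to combine the operator-norm estimate $\|\mathbb K_{\xi + i\eta}\|_{op} \le C/(\xi - z_0)$ (which follows from Assumption \ref{ass:on the operator Ktilde}) and the analyticity of the resolvent on $\{\Re\lambda \ge \alpha\}$ together with its decay as $\Re\lambda\to\infty$, to obtain a uniform bound on $\|(I - \mathbb K_\lambda)^{-1}\|_{op}$ there; then to use the resolvent identity $\hat b = \hat{b_0} + \mathbb K_\lambda (I - \mathbb K_\lambda)^{-1} \hat{b_0}$ isolating the part of $\hat b$ that carries an extra convolution with $\tilde K$; and finally to apply the Riemann--Lebesgue lemma (Lemma \ref{lemma:rl}) together with compactness of $\mathbb K_\lambda$ to extract the $|\eta|$-decay required for integrability. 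Once the uniform $L^1$ bound holds, the existence and $L^1$ integrability of the boundary values $\hat b(\alpha + i\eta)$ follow by standard vector-valued Hardy-space arguments (a Fatou-type theorem for a half-plane).

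The main obstacle is extracting the $L^1$-in-$\eta$ decay. Assumption \ref{ass:on b_0} controls only $\|b_0(t)\|_1$ pointwise in time, so $\hat{b_0}$ is a priori only uniformly bounded, not decaying, in the imaginary direction. The decay must therefore be extracted from the smoothing action of $\tilde K$ inside $\mathbb K_\lambda (I - \mathbb K_\lambda)^{-1} \hat{b_0}$, exploiting compactness of $\mathbb K_\lambda$ and the Riemann--Lebesgue lemma; this is in the spirit of the argument of Heijmans \cite{heijmans1986dynamical}, adapted to the present operator-valued setting.
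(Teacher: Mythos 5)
Your handling of analyticity is essentially the paper's (meromorphy from Proposition \ref{prop:mero}, the spectral gap from Lemma \ref{cor:epsilon}, analyticity of $\hat{b_0}$ on $\Delta$), and your route to a uniform bound on $\|(I-\mathbb K_{\xi+i\eta})^{-1}\|_{op}$ is workable. The genuine gap is at the decisive step, the bound \eqref{hardy bound}. You concede that $\hat{b_0}(\xi+i\cdot)$ is a priori only bounded in $\eta$ and propose to extract the required $L^1(d\eta)$ decay from the smoothing action of $\tilde K$ inside $\mathbb K_\lambda(I-\mathbb K_\lambda)^{-1}\hat{b_0}$, via compactness and the Riemann--Lebesgue lemma. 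This cannot close the argument. First, Lemma \ref{lemma:rl} gives only qualitative decay ($\to 0$ as $|\eta|\to\infty$), and compactness of each individual operator $\mathbb K_\lambda$ gives no decay in $\eta$ at all; neither yields the quantitative, integrable decay (say $O(|\eta|^{-1-\varepsilon})$) you would need. Second, and more fundamentally, in the decomposition $\hat b(\lambda)=\hat{b_0}(\lambda)+(I-\mathbb K_\lambda)^{-1}\mathbb K_\lambda\hat{b_0}(\lambda)$ the term $\hat{b_0}$ appears additively: if $\hat{b_0}(\xi+i\cdot)$ really failed to be in $L^1(d\eta)$ (as it does for $b_0(t)=e^{z_0t}f_0$, where $\hat{b_0}(\lambda)=f_0/(\lambda-z_0)$ decays only like $1/|\eta|$), then no amount of smoothing in the second term could make $\hat b(\xi+i\cdot)$ integrable. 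In other words, your stated premise would contradict the very conclusion you are trying to prove, so the proposed mechanism cannot be the right one.

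The paper resolves this in the opposite way: it asserts (citing Theorem 6.3.2 of \cite{phillips1957functional}) that for each fixed $\xi>z_0$ the map $\eta\mapsto\hat{b_0}(\xi+i\eta)$ belongs to $L^1((-\infty,\infty),X^{\mathbb C})$, so the $\eta$-integrability is carried entirely by $\hat{b_0}$. After that, all that is needed about the resolvent is a bound $\|(I-\mathbb K_{\xi+i\eta})^{-1}\|_{op}\le C(\xi)$ uniform in $\eta$, obtained from Lemma \ref{lemma:rl} for $|\eta|\ge\eta_0$ and continuity of $\eta\mapsto(I-\mathbb K_{\xi+i\eta})^{-1}$ on the compact interval $[-\eta_0,\eta_0]$ when $\xi>r$; positivity of $b$ and $b_0$ then makes the estimate uniform in $\xi\ge\alpha>r$, which is exactly \eqref{hardy bound}. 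The boundary-value requirement in Definition \ref{def:hardy} is then immediate, since $\hat b$ is analytic on $\Re\lambda>r$ and $\alpha>r$ lies in the interior of that half-plane, so no Fatou-type theorem is needed. The missing ingredient in your proposal is therefore precisely this vertical-line integrability of $\hat{b_0}$ (in the applications it comes from the convolution structure of $b_0$, which turns $\hat{b_0}$ into a product of decaying transforms); without it, the Hardy-space membership does not follow from your outline.
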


\begin{proof}[Proof of Proposition \ref{prop:hardy space}]
For each fixed $\xi > z_0$ the map 
\[
\eta \mapsto \hat{b_0}(\xi + i \eta ) \quad 
\]
belongs to $L^1((-\infty, \infty), X^{\mathbb C})$, see for instance Theorem 6.3.2 in \cite{phillips1957functional}. From Lemma \ref{lemma:rl} we know that there exists a $\eta_0$ such that if $ |\eta| \geq \eta_0 $ then 
\[
\left\| \left(I - \mathbb K_{\xi+i\eta}\right)^{-1} \right\|_{op} \leq 2. 
\]

Since when $\xi > r $ the function  
$ \eta \mapsto \left(  I - \mathbb K_{\xi+i\eta}\right)^{-1} $ is continuous on the compact set $[-\eta_0, \eta_0]$ it follows that, if $\xi > r$  there exists a constant $C(\xi)>0$ such that \[
\left\| \left(I - \mathbb K_{\xi+i\eta}\right)^{-1} \right\|_{op}\leq C(\xi) \] 
 for all $\eta \in \mathbb R .$ 
Since $\hat{b}$ is given by \eqref{lap transform of b} we deduce, that 
\begin{equation*}
\|\hat{b}(\xi+i \eta )\|_1  \leq C(\xi) \|\hat{b_0}(\xi + i \eta ) \|_1 \quad \text{ for } \xi > r \text{ and } \eta \in \mathbb R. 
\end{equation*}
As a consequence of the positivity of $b$ and $\hat{b_0}$, we have that for every $\xi \geq \alpha> r$
\begin{equation}\label{ineq b b0}
\|\hat{b}(\xi+i \eta )\|_1 \leq \|\hat{b}(\alpha+i \eta )\|_1  \leq C(\alpha) \|\hat{b_0}(\alpha + i \eta ) \|_1
\end{equation}
Hence  $\|\hat{b}(\xi+i \eta )\|_1 $ is integrable with respect to $\eta $ over $(-\infty, \infty)$ and thanks to \eqref{ineq b b0} we deduce that $\hat{b}$ satisfies \eqref{hardy bound}.

Since the maps $\lambda \mapsto (I- \mathbb K_\lambda)^{-1}$ and $\lambda \mapsto \hat{b_0}(\lambda)$ are analytic when $\Re \lambda > r$ we deduce that the map $\lambda \mapsto \hat{b}(\lambda)$ is analytic for $\Re \lambda > r$. 
Hence the limit of $\hat{b}(\xi + i \eta )$ as $\xi \rightarrow \alpha $ exists and is equal to $\hat{b}(\alpha +i \eta ) $. 
The fact that $\hat{b}(\alpha +i \cdot ) \in L^1((-\infty,\infty ), X^{\mathbb C})$ follows from inequality \eqref{ineq b b0}.
\qed\end{proof}
The following lemma, useful for the proof of Theorem \ref{thm:asymptotic behaviour AC}, is taken from \cite{friedman1968nonlinear}.
\begin{lemma}\label{lem:laplace inversion}
	Let $\hat{g} \in H_1(\alpha,X^{\mathbb C})$,(cf. Definition \ref{def:hardy}) then the function 
	\[
	f(t)=\frac{1}{2\pi i }  \lim_{T \rightarrow \infty } \int_{\gamma -iT}^{\gamma +iT} e^{\lambda t} \hat{g}(\lambda) d\lambda \quad \gamma \geq \alpha
	\]
	is well defined for every $t \in \mathbb R$, and does not depend on $\gamma$. Moreover, $f(t)=0$ if $t<0$, while $f$ is continuous in $t$ for $t>0$. Finally $\hat{f}(\lambda)=\hat{g}(\lambda).$
\end{lemma}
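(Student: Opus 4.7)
The plan is to establish, in order, well-definedness of $f(t)$, the $\gamma$-independence, the vanishing for $t<0$, continuity on $t>0$, and finally the inversion identity $\hat f=\hat g$. The backbone is the observation that on any vertical line $\Re\lambda=\gamma>\alpha$ the Bochner integrand $e^{\lambda t}\hat g(\lambda)$ has $X^{\mathbb C}$-norm $e^{\gamma t}\|\hat g(\gamma+i\eta)\|_1$, which is integrable in $\eta$ by condition \eqref{hardy bound}. Hence $f(t)$ equals the absolutely convergent Bochner integral
\[
f(t) = \frac{e^{\gamma t}}{2\pi}\int_{-\infty}^{\infty} e^{i\eta t}\,\hat g(\gamma+i\eta)\,d\eta,
\]
and the symmetric limit in $T$ of the defining integral agrees with it, which settles well-definedness for each $t\in\mathbb R$ and each $\gamma>\alpha$; the edge case $\gamma=\alpha$ is handled by the same formula using the boundary value in Definition \ref{def:hardy}.

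For $\gamma$-independence I apply Cauchy's theorem to the $X^{\mathbb C}$-valued holomorphic map $\mu\mapsto e^{\mu t}\hat g(\mu)$ on the rectangle with vertices $\gamma_1\pm iT$, $\gamma_2\pm iT$, $\alpha<\gamma_1<\gamma_2$. The issue reduces to showing that the two horizontal contributions vanish along a suitable sequence $T_n\to\infty$. By Fubini and \eqref{hardy bound} the double integral $\int_{\gamma_1}^{\gamma_2}\int_{\mathbb R}\|\hat g(\xi+i\eta)\|_1\,d\eta\,d\xi$ is finite, so $\int_{\gamma_1}^{\gamma_2}\|\hat g(\xi+iT)\|_1\,d\xi$ lies in $L^1(\mathbb R_{T})$ and therefore tends to $0$ along some subsequence $T_n\to\infty$, and the factor $e^{\xi t}$ on the horizontal pieces is bounded uniformly on $[\gamma_1,\gamma_2]$. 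For $t<0$ the uniform bound
\[
\|f(t)\|_1 \leq \frac{e^{\gamma t}}{2\pi}\sup_{\xi>\alpha}\int_{-\infty}^{\infty}\|\hat g(\xi+i\eta)\|_1\,d\eta,
\]
combined with the $\gamma$-independence just proved, forces $f(t)=0$ upon sending $\gamma\to\infty$. Continuity of $f$ on all of $\mathbb R$ (in particular on $t>0$) follows from the Fourier-type representation by dominated convergence in the Bochner sense, with dominator $\|\hat g(\gamma+i\eta)\|_1$.

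For the inversion identity I compute $\hat f(\lambda)$ directly for $\Re\lambda>\gamma>\alpha$. From the bound $\|f(t)\|_1\leq C e^{\gamma t}$ in the previous step, $\hat f(\lambda)$ is an absolutely convergent Bochner integral on $\Re\lambda>\gamma$. Substituting the representation of $f$ and invoking Fubini (the joint integrand is dominated by $e^{-(\Re\lambda-\gamma)t}\|\hat g(\gamma+i\eta)\|_1$) gives
\[
\hat f(\lambda) = \frac{1}{2\pi i}\int_{\gamma-i\infty}^{\gamma+i\infty}\frac{\hat g(\mu)}{\lambda-\mu}\,d\mu.
\]
I then close the contour to the right on a rectangle with far-right side $\Re\mu=\gamma'\gg\Re\lambda$. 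The unique enclosed simple pole at $\mu=\lambda$ contributes $-\hat g(\lambda)$ by the residue theorem; the far-right vertical is bounded by $(\gamma'-\Re\lambda)^{-1}\sup_\xi\int\|\hat g(\xi+i\eta)\|_1\,d\eta\to 0$ as $\gamma'\to\infty$; and the horizontal pieces vanish along a subsequence by the same Fubini-plus-subsequence argument used earlier. This yields $\hat f(\lambda)=\hat g(\lambda)$ for $\Re\lambda>\gamma$, and since $\gamma>\alpha$ was arbitrary and both sides are analytic, the equality extends to the whole half-plane $\Re\lambda>\alpha$ by uniqueness of analytic continuation.

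The main technical obstacle is the repeated subsequential vanishing of horizontal contour contributions in two different contour-shifting steps. The Hardy-space bound \eqref{hardy bound} is exactly what powers those Fubini-plus-subsequence manipulations; without it one would need pointwise decay of $\|\hat g(\xi+i\eta)\|_1$ as $|\eta|\to\infty$, which is not part of the Hardy-Lebesgue definition. All the remaining ingredients — Cauchy's theorem and residue calculus for Bochner-holomorphic $X^{\mathbb C}$-valued functions, dominated convergence for Bochner integrals, and uniqueness of analytic continuation — are the standard $X^{\mathbb C}$-valued analogues of their scalar counterparts and can be cited directly from \cite{phillips1957functional} or \cite{friedman1968nonlinear}.
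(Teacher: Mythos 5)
Your argument is essentially correct, but note that the paper does not prove this lemma at all: it imports it verbatim from \cite{friedman1968nonlinear} (it is the vector-valued Hardy--Lebesgue inversion theorem, in the spirit of \cite{phillips1957functional}), so what you have written is a self-contained reconstruction of the classical proof rather than an alternative to an argument in the paper. Your steps are the standard ones and they hold up: absolute convergence of the Bochner integral on each vertical line via \eqref{hardy bound} gives well-definedness and the Fourier-type representation; Cauchy's theorem for $X^{\mathbb C}$-valued holomorphic functions on rectangles, with the horizontal sides killed along a subsequence $T_n\to\infty$ by the Fubini argument, gives $\gamma$-independence; letting $\gamma\to\infty$ in the uniform bound gives $f(t)=0$ for $t<0$; dominated convergence gives continuity; and the Fubini computation $\hat f(\lambda)=\frac{1}{2\pi i}\int_{\gamma-i\infty}^{\gamma+i\infty}\frac{\hat g(\mu)}{\lambda-\mu}\,d\mu$ followed by closing the contour to the right (the residue of the integrand at $\mu=\lambda$ is $-\hat g(\lambda)$, and the clockwise traversal then produces $+\hat g(\lambda)$ for the left vertical, so state the orientation explicitly to keep the signs transparent) yields the inversion identity, extended to $\Re\lambda>\alpha$ by analyticity since $\|f(t)\|_1\le Ce^{\alpha t}$ with $C$ uniform in $\gamma$.

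The one thin spot is the case $\gamma=\alpha$, which the statement of Lemma \ref{lem:laplace inversion} includes since it allows $\gamma\ge\alpha$. Your Cauchy-rectangle argument needs analyticity in a neighbourhood of the contour and therefore only covers $\alpha<\gamma_1<\gamma_2$; saying the boundary case is ``handled by the same formula using the boundary value'' does not show that the value at $\gamma=\alpha$ agrees with the common value for $\gamma>\alpha$. Definition \ref{def:hardy} only provides a.e.\ convergence $\hat g(\xi+i\eta)\to\hat g(\alpha+i\eta)$ plus the uniform bound \eqref{hardy bound}, which by itself does not allow passage to the limit under the integral; one needs convergence in $L^1$ (or uniform integrability), which in the classical Hardy-class theory is a separate fact about $H_1$ boundary values. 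This is exactly the point where citing \cite{friedman1968nonlinear} or \cite{phillips1957functional} is doing real work. It is harmless for the way the lemma is used in the paper (in Proposition \ref{prop:hardy space} and Theorem \ref{thm:asymptotic behaviour AC} one has $\hat b\in H_1(\alpha,X^{\mathbb C})$ for every $\alpha>r$, so only interior lines are ever needed), but if you want your proof to cover the lemma as stated you should either add the mean-convergence argument for the boundary line or restrict the claim to $\gamma>\alpha$.
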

\begin{proof}[Proof of Theorem \ref{thm:asymptotic behaviour AC}] 
Also in this case, the proof is very similar to a proof in \cite{heijmans1986dynamical}, viz. the proof of Corollary 8.3. 
We write the main steps here. 

Since the function $\hat{b}$ belongs to $H_1(\alpha,X^{\mathbb C}) $ for every $\alpha > r $, we deduce, by Lemma \ref{lem:laplace inversion} and by the uniqueness of the Laplace transform \cite[Theorem 6.2.3]{phillips1957functional} that 
\begin{align}\label{inverse laplace}
b(t)= \frac{1}{2\pi i } \int_{\alpha - i \infty }^{\alpha + i \infty } e^{\lambda t} \hat{b}(\lambda) d\lambda. 
\end{align} 
Consider $0 < v < \varepsilon $ where $\varepsilon $ is given by Lemma \ref{cor:epsilon} and
notice that 
\begin{align}\label{closed int}
 &\int_{\alpha - i T }^{\alpha + i T } e^{\lambda t} \hat{b}(\lambda)=  \oint_{\Gamma} e^{\lambda t} \hat{b(\lambda)} d\lambda -  \lim_{T \rightarrow \infty } \int_{\Gamma_3}  e^{\lambda t} \hat{b}(\lambda) d\lambda
  -  \lim_{T \rightarrow \infty } \int_{\Gamma_2}  e^{\lambda t} \hat{b}(\lambda) d\lambda \\
 & -  \lim_{T \rightarrow \infty } \int_{\Gamma_4}  e^{\lambda t} \hat{b}(\lambda) d\lambda \nonumber
\end{align}
where $\Gamma:=\cup_{i=1}^4 \Gamma_i$ and $\Gamma_1$ is the segment in the complex plan connecting the point $\alpha -i T $ to $\alpha + iT $, $\Gamma_2$ is the segment connecting $\alpha + i T $ with $r-v + iT $, $\Gamma_3 $ is the segment connecting 
$r-v + iT $ with $r-v -iT $ and, finally, $\Gamma_4 $ is the segment connecting $r-v-iT$ with $\alpha - iT.$
 
From the Cauchy theorem for vector valued functions (\cite{phillips1957functional}), equality \eqref{closed int} and Lemma \ref{lemma:rl} and the Laplace inversion formula \eqref{inverse laplace}, we deduce that
\begin{align*}
b(t)=\frac{1}{2\pi i } \oint_{\Gamma} e^{\lambda t} \hat{b(\lambda)} d\lambda   + \frac{1}{2\pi i }  \lim_{T \rightarrow \infty } \int_{r - v -iT}^{r - v +iT} e^{\lambda t} \hat{b}(\lambda) d\lambda.
\end{align*} 

\begin{figure}[H] 
\centering
\includegraphics[scale=1]{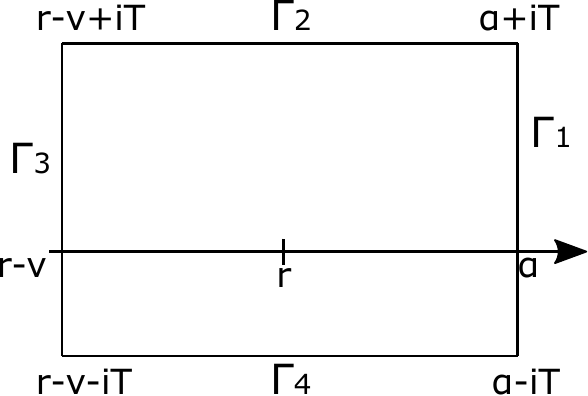}
\caption{Graphic representation of the set $\Gamma$}\label{fig:residue}
\end{figure}

By the residue theorem we have that
\begin{align*}
\frac{1}{2\pi i } \oint_{\Gamma} e^{\lambda t} \hat{b(\lambda)} d\lambda={Res}_{\lambda=r}{ e^{\lambda t} \hat{b}(\lambda)}= e^{r t } R_{-1} \hat{b_0}(r) = e^{r t} \frac{\langle F_r, \hat{b_0}(r) \rangle }{ \langle F_r, - K_1 \psi_r \rangle } \psi_r.
\end{align*}

We conclude the proof by noting that
\begin{align*}
\left\| \frac{1}{2\pi i }  \lim_{T \rightarrow \infty } \int_{r- v -iT}^{r- v +iT} e^{\lambda t} \hat{b}(\lambda) d\lambda  \right\|_{1} \leq M e^{(r-v) t }
\end{align*}
with 
\[
M = \frac{1}{2 \pi} \int_{- \infty}^\infty \left\|  \hat{b}(r- v+ i \eta) \right\|_1 d \eta 
\]
and explaining why 
\[
\int_{- \infty}^\infty \left\|  \hat{b}(r- v+ i \eta) \right\|_1 d \eta < \infty. 
\]
As in the proof of Proposition \ref{prop:hardy space} we know that thanks to Lemma \ref{lemma:rl}, there exists a $\eta_0>0$ and a constant $C>0$ such that 
\[
\| (I-\mathbb K_{r-v + i \eta} )^{-1} \|_{op} \leq C  
\]
for every $\eta$ with $|\eta |> \eta_0.$
On the other hand since the function $\eta \mapsto (I -\mathbb K_{r-v + i \eta })^{-1} $ is continuous on the compact set $[-\eta_0, \eta_0]$
we deduce that for every $\eta \in \mathbb R$
\[
\|(I-\mathbb K_{r-v + i \eta} )^{-1} \|_{op} \leq C .
\]
By equality \eqref{lap transform of b} we deduce that 
\[
\|\hat{b}(r-v + i \eta ) \|_1 \leq C \|\hat{b_0}(r-v + i \eta ) \|_1. 
\]
Since from the proof of Proposition \ref{prop:hardy space} we know that for every $\xi > z_0 $ the function $\eta \mapsto \hat{b_0}(\xi + i \eta)$ belongs to  $L^1((- \infty , \infty ) , X^\mathbb C))$ we deduce, possibly adjusting $C$, that the function $\eta \mapsto \hat{b}(r-v + i \eta )$ belongs to $L^1((- \infty , \infty ),   X^\mathbb C)). $

\qed\end{proof}

We next present here two sufficient conditions on $\tilde{K}$ that guarantee that for every $\lambda \in \Sigma$, with $r \neq \lambda,$ we have that $\Re \lambda < r$. 
As will be shown in Section \ref{sec:examples}, either Assumption \ref{rotation} or Assumption \ref{rot density} can be easily checked for all the model examples we consider. 
\begin{assumption}\label{rotation} 

\eu{There exists a measurable function $\gamma: \mathbb R_+ \times \Omega_0 \rightarrow \mathbb R $ such that for every $x \in \Omega_0$ the function $a \mapsto \gamma(a,x)$ is piecewise monotone and there exists a measurable function $c: \mathbb R_+ \times \Omega_0 \rightarrow \mathbb R_+$ such that 
\[
\tilde{K}(a) \varphi(\cdot) = c(a, \cdot) \varphi(\gamma(a, \cdot))  \quad \forall \varphi \in X_+ \text{ and } \forall a  \in \mathbb R_+.
\]}
\end{assumption}
\begin{assumption}\label{rot density}
There exists a function $\tilde{k}: \mathbb R_+ \times \Omega_0  \times \Omega_0 \mapsto \mathbb R_+$, that is measurable in each variable and such that 
\[ \sup_{a \in [0,T]} \sup_{x \in \Omega_0}\int_{\Omega_0} \tilde{k}(a,x,y) dy< \infty
\] 
and such that 
\[
(\tilde{K}(a)\varphi)(y):= \int_{\Omega_0} \tilde{k}(a,x,y) \varphi (x) dx \quad \forall \varphi \in X \text{ and }  \forall a  \in \mathbb R_+.
\]
\end{assumption}

The aim of the following proposition is to show that each of the preceding two assumptions guarantees that $\Re \lambda < r$ for every $\lambda \in \Sigma$ with $\lambda \neq r$.

\begin{proposition}\label{lambda_d largest}
	Let $\mathbb K_\lambda$ be compact for every $\lambda \in \Delta$ and non-supporting for every $\lambda \in \Delta \cap \mathbb R$. 
	Let $\tilde{K}$ satisfy either Assumption \ref{rotation} or Assumption \ref{rot density}.
	Let $r$ be the Malthusian parameter.
	If $\lambda \in \Sigma$, where $\Sigma $ is given by \eqref{sigma}, and $\lambda \neq r$, then $\Re \lambda < r$.
\end{proposition}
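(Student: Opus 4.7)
I would split the argument on $\Re\lambda > r$ versus $\Re\lambda = r$, $\lambda\neq r$, since only the second case requires the extra hypothesis on $\tilde{K}$. For $\Re\lambda > r$, positivity of each $\tilde{K}(a)$ yields the pointwise domination
\[
|\mathbb{K}_\lambda f|(y) \le \int_0^\infty e^{-\Re\lambda\,a}\,\tilde{K}(a)|f|(y)\,da = \mathbb{K}_{\Re\lambda}|f|(y),
\]
which iterates to $\|\mathbb{K}_\lambda^n f\|_1 \le \|\mathbb{K}_{\Re\lambda}^n|f|\|_1$, so $\rho(\mathbb{K}_\lambda) \le \rho(\mathbb{K}_{\Re\lambda})$. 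By Proposition~\ref{prop:Klambda decreasing}, $\rho(\mathbb{K}_{\Re\lambda}) < \rho(\mathbb{K}_r)=1$, hence $1 \notin \sigma(\mathbb{K}_\lambda)$ and we are done in this regime.

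The main case is $\lambda = r+is$ with $s \ne 0$. Suppose, toward a contradiction, that $\mathbb{K}_\lambda f = f$ for some nonzero $f \in X^{\mathbb{C}}$. The same pointwise bound gives $|f| \le \mathbb{K}_r|f|$, and pairing with the strictly positive dual eigenfunctional $F_r$ from Lemma~\ref{lem:spectral radius is eigenvector} produces
\[
0 \le \langle F_r,\,\mathbb{K}_r|f|-|f|\rangle = \langle (\mathbb{K}_r^*-I)F_r,\,|f|\rangle = 0,
\]
so $\mathbb{K}_r|f|=|f|$ a.e. Algebraic simplicity of $\rho(\mathbb{K}_r)=1$ then yields $|f|=c\,\psi_r$ with $c>0$; in particular $|f|>0$ a.e., and we may write $f=|f|\,e^{i\phi}$ with $\phi$ measurable. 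Equality in the triangle inequality forces
\[
e^{-isa}\,\tilde{K}(a)f(y) = e^{i\Theta(y)}\,\tilde{K}(a)|f|(y)
\]
for a.e. $a$, where $\Theta(y)$ is a $y$-dependent phase constant in $a$.

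Under Assumption~\ref{rotation} this reduces to $\phi(\gamma(a,y)) \equiv \Theta(y)+sa \pmod{2\pi}$ for a.e. $a$ with $c(a,y)>0$. On each strictly monotone branch of $a\mapsto\gamma(a,y)$ this pins $\phi$ down as a translate of the branch inverse; comparing two branches whose images overlap (available through piecewise monotonicity together with the nontrivial support of $c(\cdot,y)$) produces a continuum of differences $a_1-a_2$ with $s(a_1-a_2)\in 2\pi\mathbb{Z}$, forcing $s=0$. Under Assumption~\ref{rot density} the identity becomes $\phi(x) \equiv \Theta(y)+sa \pmod{2\pi}$ on the essential support of $\tilde{k}(\cdot,\cdot,y)\,|f|(\cdot)$; a Fubini argument selects $(x,y)$ for which the $a$-section $\{a: \tilde{k}(a,x,y)>0\}$ has positive Lebesgue measure, and evaluating the identity at two distinct $a$'s with this common $x$ again gives $s(a_1-a_2)\in 2\pi\mathbb{Z}$ on a set of differences of positive measure, hence $s=0$. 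The hard part is precisely this last step: converting the $\mathrm{mod}\,2\pi$ phase identity, with only measurability of $\phi$, into $s=0$. Piecewise monotonicity in Assumption~\ref{rotation} is tailor-made to enable the branch-comparison, while under Assumption~\ref{rot density} one has to extract, via Fubini, enough richness of the $a$-support of $\tilde{k}(\cdot,x,y)$ at some $(x,y)$.
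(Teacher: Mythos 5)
Your handling of the regime $\Re\lambda>r$, the steps $|f|\le \mathbb{K}_r|f|$, the pairing with $F_r$ to get $\mathbb{K}_r|f|=|f|$, the identification $|f|=c\,\psi_r$, and the equality-in-the-triangle-inequality reduction all agree with the paper's proof. Your treatment of Assumption \ref{rot density} is also essentially the paper's and is sound: there the argument of the unknown phase $\phi$ is the integration variable $x$, so for a fixed $x$ one can vary $a$ over a positive-measure section of $\{a:\tilde{k}(a,x,y)>0\}$ (such a section exists by Fubini, since otherwise $\tilde K(a)=0$ for a.e.\ $a$, contradicting $\rho(\mathbb{K}_r)=1$), and $sa\equiv\mathrm{const}\pmod{2\pi}$ on a set of positive measure forces $s=0$.

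The genuine gap is in the Assumption \ref{rotation} case. Piecewise monotonicity of $a\mapsto\gamma(a,y)$ does not provide two monotone branches with overlapping images: a globally strictly monotone map is piecewise monotone with a single branch, and this is precisely what occurs in the paper's equal-fission example, where by \eqref{Ktilde equal} one has $\gamma(s,z)=X(-s,2z)$, strictly monotone in $s$ (Lemma \ref{lem:equal rotation}). With a single branch, your relation $\phi(\gamma(a,y))\equiv\Theta(y)+sa\pmod{2\pi}$ contains no contradiction for $s\neq 0$: it merely prescribes $\phi$ along the characteristic through $y$, and comparing different $y$'s only relates the constants $\Theta(y)$ (for flow-type $\gamma$ the branch inverses at two base points differ by a time shift that is independent of the image point). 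The missing ingredient, which the paper uses, is a second exploitation of the eigen-equation: the integral whose phase you froze is $f(y)$ itself, and its modulus is $(\mathbb{K}_r\psi_r)(y)=\psi_r(y)=|f(y)|$, so the frozen phase $\Theta(y)$ must equal $\phi(y)$; equivalently, substituting the phase relation back into $f=\mathbb{K}_\lambda f$ gives $f=e^{i\beta}\psi_r$, i.e.\ $\phi$ is a.e.\ constant. Only then does the relation collapse to $sa\equiv 0\pmod{2\pi}$ for $a$ in a set of positive measure (piecewise monotonicity being used to transfer the a.e.\ constancy of $\phi$ through $\gamma(\cdot,x)$), which forces $s=0$. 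Note also that even when two overlapping branches do exist, your comparison only yields $s(a_1-a_2)\in 2\pi\mathbb{Z}$; if the inter-branch time difference happens to be constant this gives $s\in\frac{2\pi}{a_1-a_2}\mathbb{Z}$, not $s=0$, so the branch comparison alone is not decisive.
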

To prove Proposition \ref{lambda_d largest} we need the following Theorem, which corresponds to Theorem 1.39 in \cite{rudin2006real}.
\begin{theorem} \label{thm:rudin} 
	Let $\varphi \in X^{\mathbb C}$ and assume that 
	\[
	\int_{\Omega_0} |\varphi (x)| dx = \left| \int_{\Omega_0} \varphi (x) dx \right| 
	\]
	then there exists a constant $\beta $ such that $\beta \varphi = |\varphi |$ a.e. on $\Omega_0.$
\end{theorem}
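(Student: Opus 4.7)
The plan is to take an arbitrary $\lambda_0 \in \Sigma$ with $\lambda_0 \neq r$ and show $\Re \lambda_0 < r$. Since $\mathbb K_{\lambda_0}$ is compact and $1 \in \sigma(\mathbb K_{\lambda_0})$, Riesz theory produces a nonzero $\varphi \in X^{\mathbb C}$ with $\mathbb K_{\lambda_0}\varphi = \varphi$. Write $\lambda_0 = \xi + i\eta$.

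\textbf{Step 1: the inequality $\xi \le r$.} Under either Assumption \ref{rotation} or Assumption \ref{rot density} the operator $\mathbb K_{\lambda_0}$ is built by integrating a positive kernel against $e^{-\lambda_0 a}$, so applying the triangle inequality inside the defining integral for $(\mathbb K_{\lambda_0}\varphi)(y)$ yields the pointwise bound
\[
|\varphi(y)| \;\le\; (\mathbb K_\xi |\varphi|)(y).
\]
Pairing with the strictly positive dual eigenfunctional $F_\xi$ of $\mathbb K_\xi$ supplied by Lemma \ref{lem:spectral radius is eigenvector} gives $\langle F_\xi, |\varphi|\rangle \le \rho(\mathbb K_\xi)\langle F_\xi, |\varphi|\rangle$, and since $|\varphi| \not\equiv 0$ we get $\rho(\mathbb K_\xi) \ge 1 = \rho(\mathbb K_r)$. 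Strict monotonicity of $\lambda \mapsto \rho(\mathbb K_\lambda)$ (Proposition \ref{prop:Klambda decreasing}) then forces $\xi \le r$.

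\textbf{Step 2: ruling out $\xi = r$.} Suppose for contradiction that $\xi = r$, so $\eta \neq 0$. Then $\rho(\mathbb K_r) = 1$, and $\langle F_r, \mathbb K_r|\varphi| - |\varphi|\rangle = 0$ with $\mathbb K_r|\varphi| - |\varphi| \ge 0$ and $F_r$ strictly positive forces $\mathbb K_r|\varphi| = |\varphi|$ a.e. Algebraic simplicity of $\rho(\mathbb K_r)$ (Lemma \ref{lem:spectral radius is eigenvector}) gives $|\varphi| = c\,\psi_r$ for some $c > 0$, so $|\varphi| > 0$ a.e.; write $\varphi(x) = |\varphi(x)| e^{i\theta(x)}$. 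The pointwise equality case of the triangle inequality then holds for a.e.~$y$, and Theorem \ref{thm:rudin} provides a unimodular function $\beta(y)$ such that the complex integrand differs from its modulus only by $\overline{\beta(y)}$ on the support of the kernel. Under Assumption \ref{rotation} this reads
\[
e^{i\theta(\gamma(a,y))} = \beta(y)^{-1} e^{i\eta a}
\]
for a.e.\ $a$ with $c(a,y)\psi_r(\gamma(a,y)) > 0$; under Assumption \ref{rot density} it reads $e^{i\theta(x)} = \beta(y)^{-1} e^{i\eta a}$ for a.e.\ $(a,x)$ with $\tilde k(a,x,y)\psi_r(x)>0$.

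\textbf{Step 3: extracting $\eta = 0$.} In the density case, fix a representative $y$ and any $x$ in the ($a$-)section of the positivity set; then the right-hand side $\beta(y)^{-1}e^{i\eta a}$ is forced to be constant in $a$ over a set of positive Lebesgue measure (nonemptiness of such sections of positive measure is guaranteed because the non-supporting property prevents $\tilde k(\cdot,\cdot,y)\psi_r$ from being concentrated on null sets), giving $\eta = 0$. In the rotation case, piecewise monotonicity of $a \mapsto \gamma(a,y)$ means that on each monotone piece the image $\gamma(a,y)$ varies over a nondegenerate interval, and the identity forces $\theta \circ \gamma(\cdot,y)$ to be a nonconstant affine function of $a$ modulo $2\pi$ whenever $\eta \neq 0$; combining the constraints obtained from varying $y$ (compatibility of the single function $\theta$ with the family of parametrizations $\gamma(\cdot,y)$, which the non-supporting property ensures is rich) again forces $\eta = 0$. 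Either route contradicts $\lambda_0 \neq r$.

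\textbf{Principal obstacle.} The delicate point is the passage from the pointwise phase identity of Step 2 to the conclusion $\eta = 0$. This requires a careful inventory of the supports (in the $a$, $x$, $y$ variables separately) of the kernels involved and an argument that these supports have enough $a$-spread to preclude a nontrivial rotation $e^{i\eta a}$; here non-supporting of $\mathbb K_r$, together with piecewise monotonicity of $\gamma$ in the rotation case or the genuine two-variable integral structure of $\tilde k$ in the density case, is what rescues the argument.
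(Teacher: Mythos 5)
Your proposal does not prove the statement in question. The statement is Theorem \ref{thm:rudin} itself, i.e.\ the equality case of the triangle inequality for integrals of complex-valued $L^1$ functions (quoted in the paper from Rudin's Theorem 1.39 without proof). What you have written is instead an argument for Proposition \ref{lambda_d largest} (that every $\lambda\in\Sigma$ with $\lambda\neq r$ satisfies $\Re\lambda<r$), and in Step 2 of that argument you \emph{invoke} Theorem \ref{thm:rudin} as a known tool. Nowhere do you establish the actual claim: that the hypothesis $\int_{\Omega_0}|\varphi(x)|\,dx=\bigl|\int_{\Omega_0}\varphi(x)\,dx\bigr|$ forces the existence of a constant $\beta$ with $\beta\varphi=|\varphi|$ a.e.\ on $\Omega_0$. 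So, judged against the statement you were asked to prove, the proposal is off target, whatever its merits as a sketch of the proposition that uses it.

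For the record, the missing argument is short. Set $z:=\int_{\Omega_0}\varphi(x)\,dx$. If $z=0$, the hypothesis gives $\int_{\Omega_0}|\varphi(x)|\,dx=0$, so $\varphi=0$ a.e.\ and any $\beta$ works. Otherwise write $z=|z|e^{i\alpha}$ and put $\beta:=e^{-i\alpha}$. Then
\begin{equation*}
\int_{\Omega_0}\beta\varphi(x)\,dx=\beta z=|z|=\int_{\Omega_0}|\varphi(x)|\,dx ,
\end{equation*}
which is real, so $\int_{\Omega_0}\bigl(|\varphi(x)|-\Re\bigl(\beta\varphi(x)\bigr)\bigr)\,dx=0$. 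The integrand is nonnegative because $\Re(\beta\varphi)\leq|\beta\varphi|=|\varphi|$ pointwise, hence $\Re\bigl(\beta\varphi(x)\bigr)=|\varphi(x)|$ for a.e.\ $x$; since the real part of a complex number equals its modulus only when the imaginary part vanishes and the number is nonnegative, this yields $\beta\varphi=|\varphi|$ a.e., as claimed.
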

\begin{proof}[Proof of Proposition \ref{lambda_d largest}]
	The proof is an adaptation of the proof of Theorem 6.13 in\ \cite{heijmans1986dynamical}.

	Assume that there exists a $\lambda \in \Sigma$ with $\lambda \neq r$ such that $\mathbb K_\lambda \psi=\psi$ for some $\psi \in X^{\mathbb C}.$ It follows that 
	\begin{equation}\label{eq:psi}
	|\psi|= |\mathbb K_{\lambda}\psi|\leq  \mathbb K_{\Re \lambda}|\psi|.
	\end{equation}
	Taking duality parings with $F_{\Re\lambda }$ on both sides of the inequality we deduce that 
	\[
	\langle F_{\Re \lambda}, |\psi | \rangle \leq \rho(\mathbb K_{\Re\lambda})  \langle F_{\Re \lambda}, |\psi | \rangle.
	\] 
	This implies that $\rho(\mathbb K_{\Re_\lambda}) \geq 1= \rho(\mathbb K_r).$ Since, by Proposition \ref{prop:Klambda decreasing} we know that  the function $\mu \rightarrow \rho(\mathbb K_\mu)$ is decreasing when $\mu$ varies in $\mathbb R$ we deduce that $\Re\lambda \leq r.$
	
	Assume now that $\Re \lambda = r$, hence, since $\lambda \neq r$, it must hold that $Im \lambda >0$.
	From \eqref{eq:psi} we know that $ \mathbb K_{r}|\psi|\geq |\psi|.$
	If we assume that 
	$ \mathbb K_{r}|\psi| \neq |\psi|$ 
	then taking duality parings with $F_r$ we deduce that 
	$
	\langle F_r ,|\psi|\rangle  > \langle  F_r ,|\psi|\rangle.
	$
	This is a contradiction, hence it must hold that $ \mathbb K_{r}|\psi| =|\psi|$. 
	Since $(r, \psi_r)$ is the unique (up to normalization) solution of the non-linear eigenproblem \eqref{eigenproblem}, we deduce that there exists an $\ell: \Omega_0 \rightarrow \mathbb R$ such that $\psi(x)=e^{i \ell(x)} \psi_r(x)$ and, as a consequence of \eqref{eq:psi}, we have that $\mathbb K_r \psi_r = | \mathbb K_\lambda \psi |$ i.e. 
	\begin{align}\label{differentiate assumptions} 
	\int_0^\infty e^{- r a} \tilde{K}(a) \psi_r da = \left| \int_0^\infty e^{- r a} e^{i a Im \lambda } \tilde{K}(a) e^{i \ell(\cdot)}\psi_r da \right|
	\end{align}

If we make Assumption \ref{rotation} this implies that
	 	\begin{align*}
	 \int_0^\infty e^{- r a} \tilde{K}(a) \psi_r da = \left|   \int_0^\infty e^{- r a} e^{i a Im \lambda } e^{i \ell(\gamma(a,\cdot))}\tilde{K}(a) \psi_r da \right|
	 \end{align*}
	  Since 
	  \begin{align*}
	  	 \int_0^\infty e^{- r a} \tilde{K}(a) \psi_r da =   \int_0^\infty \left| e^{- r a} e^{i a Im \lambda } e^{i \ell(\gamma(a,\cdot))}\tilde{K}(a) \psi_r \right| da 
	  \end{align*}
	  we deduce, by Theorem \ref{thm:rudin}, that 
	there exist a $\beta \in \mathbb R$ such that
	\[
	a Im \lambda + \ell(\gamma(a,x) )= \beta.
	\]
	As a consequence we have that 
	\begin{align*}
	& e^{i \ell(x) }\psi_r (x)= \int_0^\infty e^{-\left(\Re \lambda+ i Im \lambda \right) a } \tilde{K}(a) \psi(x) da \\
	&= \int_0^\infty e^{- a \Re \lambda }  e^{i\beta - i \ell(\gamma(a,x)) } \tilde{K}(a) \psi(x) da  \\
	& =e^{i\beta} \int_0^\infty e^{- a \Re \lambda }   \tilde{K}(a) \psi_r(x) da = e^{i\beta} \psi_r(x).
	\end{align*}
	This implies that $\ell(x)=\beta(\mod 2 \pi)$ for a.e. $x \in \Omega_0$ and hence, the piecewise monotonicity of $\gamma (\cdot,x) $  implies that $Im \lambda =0$.
This is a contradiction and the desired conclusion follows. 

If, instead, we make Assumption \ref{rot density} equality \eqref{differentiate assumptions} implies that
\begin{align*}
&	\int_0^\infty e^{- r a} \int_{\Omega_0} \tilde{k}(a,y,x) \psi_r(y) dy da \\
&= \left| \int_0^\infty  e^{- r a}  \int_{\Omega_0} e^{i a Im \lambda + i \ell(y)}   \tilde{k}(a,y,x)  \psi_r(y) dy da \right|
\end{align*}

Since 
\begin{align*}
&	\int_0^\infty e^{- r a} \int_{\Omega_0} \tilde{k}(a,y,x) \psi_r(y) dy da \\
&=\int_0^\infty  \int_{\Omega_0} \left| e^{- r a}   e^{i a  Im \lambda + i \ell(y)}   \tilde{k}(a,y,x)  \psi_r(y)  \right| dy da,
\end{align*} 
we deduce by Theorem \ref{lemma:rl} that there exists a $\beta \in \mathbb R$ such that 
\[
	a Im \lambda + \ell(x )= \beta.
\]

As a consequence we have that 
	\begin{align*}
	& e^{i \ell(x)} \psi_r (x)= \int_0^\infty e^{-\left(\Re \lambda+ i Im \lambda \right) a } \tilde{K}(a) \psi(x) da \\
	&= \int_0^\infty e^{- a \Re \lambda }  e^{i\beta - i \ell(x) } \tilde{K}(a) \psi(x) da  \\
	& =e^{i\beta} \int_0^\infty e^{- a \Re \lambda }   \tilde{K}(a) \psi_r(x) da = e^{i\beta} \psi_r(x).
	\end{align*}
	This implies that $\ell(x)=\beta(\mod 2 \pi)$ for a.e. $x \in \Omega_0$ and hence that $Im \lambda =0$. This is a contradiction and the desired conclusion follows. 
\qed\end{proof}

\subsection{An alternative approach} \label{sec:gripenberg} 
In this section we present an alternative approach for proving Theorem \ref{thm:asymptotic behaviour AC}.
We make the same assumptions on $\tilde{K}$ and $b_0$ as made in Section \ref{sec:asympt AC} and we keep the same notation. 
 
We plan to deduce the asymptotic behaviour of the solution of equation \eqref{eq b} by obtaining estimates on the resolvent operator from the following theorem, which is Theorem 2 in \cite{gripenberg1987asymptotic}. 
\begin{theorem}\label{thm:grip}
Let $w \in \mathbb R$ and let $ \tilde{K} \in L_{- w}^1(\mathbb R_+, \mathcal L(X^{\mathbb C}))$ be an operator kernel. Then its resolvent $\tilde{R}$ belongs to $ L_{-w}^1(\mathbb R_+, \mathcal L(X^{\mathbb C}))$ if and only if $I- \mathbb K_\lambda $ is invertible for every $\lambda \in \mathbb C $ such that $\Re \lambda \geq w $.
\end{theorem}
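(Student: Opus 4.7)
The plan is to prove the two implications separately, following the Paley--Wiener/Wiener algebra strategy developed by Gripenberg. A preliminary reduction to the case $w = 0$ can be made by the substitution $\tilde K_w(t) := e^{-wt}\tilde K(t)$, which transforms $L^1_{-w}(\mathbb R_+, \mathcal L(X^{\mathbb C}))$ into $L^1(\mathbb R_+, \mathcal L(X^{\mathbb C}))$ and $\mathbb K_{w,\mu}$ into $\mathbb K_{\mu + w}$; the resolvent of $\tilde K_w$ is the weighted resolvent of $\tilde K$. So in what follows I assume $w = 0$.

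For the necessity direction, assume $\tilde R \in L^1(\mathbb R_+, \mathcal L(X^{\mathbb C}))$. Then its Laplace transform $\hat{\tilde R}(\lambda) := \int_0^\infty e^{-\lambda t}\tilde R(t)\,dt$ is a bounded operator-valued function, analytic on $\Re \lambda > 0$ and continuous up to the boundary $\Re \lambda = 0$. Applying the Laplace transform to the resolvent identities $\tilde R = \tilde K + \tilde K \star \tilde R = \tilde K + \tilde R \star \tilde K$ and invoking the convolution theorem for Bochner-integrable operator-valued functions gives
\[
\hat{\tilde R}(\lambda) \;=\; \mathbb K_\lambda + \mathbb K_\lambda \hat{\tilde R}(\lambda) \;=\; \mathbb K_\lambda + \hat{\tilde R}(\lambda)\,\mathbb K_\lambda ,
\]
which rearranges to $(I - \mathbb K_\lambda)(I + \hat{\tilde R}(\lambda)) = I = (I + \hat{\tilde R}(\lambda))(I - \mathbb K_\lambda)$. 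Hence $I - \mathbb K_\lambda$ is two-sided invertible with inverse $I + \hat{\tilde R}(\lambda)$ for every $\lambda$ with $\Re \lambda \geq 0$.

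For the sufficiency direction, I would work in the unital Banach algebra $\mathcal A := \mathbb C\cdot\delta_0 \oplus L^1(\mathbb R_+, \mathcal L(X^{\mathbb C}))$ equipped with the convolution product, and show that the element $\delta_0 - \tilde K$ is invertible in $\mathcal A$, since its inverse, if it exists, necessarily has the form $\delta_0 + \tilde R$ with $\tilde R \in L^1(\mathbb R_+, \mathcal L(X^{\mathbb C}))$ solving the resolvent equation. The argument proceeds by a localization along the imaginary axis. By a Riemann--Lebesgue estimate for Bochner integrals, $\|\mathbb K_{i\eta}\|_{op} \to 0$ as $|\eta| \to \infty$, so one can pick $\eta_0$ large enough that the Neumann series $\sum_{n \geq 0}\mathbb K_{i\eta}^n$ converges uniformly in the operator norm for $|\eta| \geq \eta_0$ and assembles an $L^1$ representative by iterated convolutions of $\tilde K$. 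On the compact set $|\eta| \leq \eta_0$, the pointwise invertibility hypothesis together with compactness and continuity of $\lambda \mapsto (I - \mathbb K_\lambda)^{-1}$ furnishes a bounded local inverse; this local inverse is then lifted to an element of $\mathcal A$ by a smooth (Wiener-algebra) partition of unity that glues the two regimes while preserving the $L^1$-integrability in $t$. The main obstacle is precisely this gluing: the classical scalar Wiener--Lévy inversion theorem does not directly apply to operator-valued symbols because the values $\mathbb K_\lambda$ need not commute at different $\lambda$, so one has to verify carefully that the partition-of-unity construction produces an element of the correct convolution algebra and that the left and right resolvent identities are both satisfied. Once invertibility in $\mathcal A$ is established, extracting the $L^1$ component of $(\delta_0 - \tilde K)^{-1}$ yields a resolvent in $L^1(\mathbb R_+, \mathcal L(X^{\mathbb C}))$, and unravelling the initial substitution returns the resolvent of the original $\tilde K$ to $L^1_{-w}(\mathbb R_+, \mathcal L(X^{\mathbb C}))$.
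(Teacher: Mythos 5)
The paper does not actually prove this statement: it is quoted verbatim as Theorem 2 of Gripenberg (1987), so the relevant comparison is with Gripenberg's proof, which rests on an operator-valued (noncommutative) Wiener--L\'evy/Paley--Wiener argument. Your reduction to $w=0$ by exponential scaling and your necessity direction are correct and standard: Laplace-transforming the two resolvent equations $\tilde R=\tilde K+\tilde K\star\tilde R=\tilde K+\tilde R\star\tilde K$ gives $(I-\mathbb K_\lambda)(I+\hat{\tilde R}(\lambda))=(I+\hat{\tilde R}(\lambda))(I-\mathbb K_\lambda)=I$ on $\Re\lambda\geq w$.

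The sufficiency direction, which is the entire content of the theorem, is not established, and the gap is concrete. First, in the high-frequency regime you assert that because $\|\mathbb K_{i\eta}\|_{op}<1$ for $|\eta|\geq\eta_0$ the Neumann series ``assembles an $L^1$ representative by iterated convolutions of $\tilde K$''; but the only bound on the iterated convolutions is $\|\tilde K^{\star n}\|_{L^1}\leq\|\tilde K\|_{L^1}^n$, and $\|\tilde K\|_{L^1}$ need not be less than $1$, so the series $\sum_n\tilde K^{\star n}$ need not converge in $L^1(\mathbb R_+,\mathcal L(X^{\mathbb C}))$. Smallness of $\|\mathbb K_{i\eta}\|_{op}$ for large $|\eta|$ gives pointwise invertibility of the symbol there, not an $L^1$ convolution inverse ``localized at high frequencies''. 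Second, the gluing over the compact frequency range is exactly the noncommutative Wiener--L\'evy problem: one must show that the locally defined analytic family $(I-\mathbb K_\lambda)^{-1}$, multiplied by frequency cutoffs, is the Laplace transform of an element of the convolution algebra $\mathcal L(X^{\mathbb C})\delta_0\oplus L^1(\mathbb R_+,\mathcal L(X^{\mathbb C}))$; since the values $\mathbb K_\lambda$ do not commute, this is precisely where the Bochner--Phillips theorem (or Gripenberg's direct construction) is needed, and you explicitly flag it as ``the main obstacle'' without resolving it. Moreover, a partition of unity along the imaginary axis at best produces a whole-line $L^1$ kernel, and an additional argument (using invertibility on the open half-plane and analyticity) is required to show the constructed inverse is causal, i.e.\ supported on $\mathbb R_+$. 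As it stands, the proposal proves the easy implication and reduces the hard one to an unproved assertion that is essentially equivalent to the theorem itself.
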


To be able to apply this theorem, we have to assume that the function $a \mapsto \tilde{K}(a)$ is measurable with respect to the topology induced by the operator norm on $ \mathcal L(X).$ 
This measurability assumption is stronger than the measurability assumption we ask for the operator kernels in Section \ref{sec:RE for density}.

\begin{theorem} \label{thm:asymptotic behaviour AC Griep}
	Assume that for every $\lambda \in \Delta \cap \mathbb R$ the operator $\mathbb K_\lambda $ is non-supporting and that its complexification $\mathbb K_\lambda$ is compact for every $\lambda \in \Sigma$. 
 Additionally assume that $\tilde{K}: \mathbb R_+ \rightarrow \mathcal L(X^{\mathbb C})$ is measurable and satisfies either Assumption \ref{rotation} or Assumption \ref{rot density}.
	Let $(r, \psi_r) $ be the eigencouple solving \eqref{eigenproblem}. 
	Then there exists $v>0$ such that
	\[
	\| e^{-r t } b(t) - c \psi_r(\cdot) \|_{1} \leq L e^{- v t} , \quad t >0
	\]
	for some constants $L,c >0 $. 
\end{theorem}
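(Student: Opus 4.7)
The plan is to apply Theorem \ref{thm:grip} to a kernel obtained from $\tilde{K}$ by shifting out the exponential growth $e^{rt}$ and then subtracting off the rank-one contribution corresponding to the eigenvalue $\lambda=r$, which is exactly what prevents $I-\mathbb{K}_\lambda$ from being invertible at that point. Set $b_r(t):=e^{-rt}b(t)$, $K_r(a):=e^{-ra}\tilde{K}(a)$ and $b_{0,r}(t):=e^{-rt}b_0(t)$, so that the equation becomes $b_r=K_r*b_r+b_{0,r}$ and the next generation operator of $K_r$ at $\mu$ is $\mathbb{K}_{\mu+r}$. By Lemma \ref{lem:spectral radius is eigenvector}, $\mathbb{K}_r$ has $1$ as an algebraically simple isolated eigenvalue with eigenfunction $\psi_r$ and dual eigenfunction $F_r$, so $I-\mathbb{K}_{\mu+r}$ is not invertible at $\mu=0$ and Theorem \ref{thm:grip} cannot be applied directly to $K_r$.

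To remove this obstruction, I would introduce the rank-one Riesz projection $\Pi:=\psi_r\otimes F_r/\langle F_r,\psi_r\rangle$ associated with the eigenvalue $1$ of $\mathbb{K}_r$ and a smooth compactly supported function $\rho:\mathbb{R}_+\to\mathbb{R}_+$ with $\int_0^\infty \rho(a)\,da=1$, and define
\[
\tilde{K}^\sharp(a):=K_r(a)-\rho(a)\Pi,
\]
whose next generation operator is $\mathbb{K}^\sharp_\mu=\mathbb{K}_{\mu+r}-\hat{\rho}(\mu)\Pi$. At $\mu=0$ the Riesz decomposition yields $\sigma(\mathbb{K}^\sharp_0)=\{0\}\cup(\sigma(\mathbb{K}_r)\setminus\{1\})$, so $1\notin\sigma(\mathbb{K}^\sharp_0)$. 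Using Lemma \ref{cor:epsilon}, applicable by Proposition \ref{lambda_d largest} together with Assumption \ref{rotation} or \ref{rot density}, fix $v\in(0,\varepsilon)$ so that $I-\mathbb{K}_{\mu+r}$ is invertible for every $\mu$ with $\Re\mu\geq -v$ and $\mu\neq 0$. Since $\hat{\rho}(\mu)\Pi$ is rank one, a Sherman--Morrison argument then shows that $I-\mathbb{K}^\sharp_\mu$ is also invertible on $\{\Re\mu\geq -v\}$, possibly after further shrinking $v$ to avoid the isolated zeros of the associated scalar determinant (these zeros form a finite set because $\|\mathbb{K}_{\mu+r}\|_{op}\to 0$ as $|\Im\mu|\to\infty$ by Lemma \ref{lemma:rl}).

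Theorem \ref{thm:grip} applied to $\tilde{K}^\sharp$ then yields $\tilde{R}^\sharp\in L^1_{-v}(\mathbb{R}_+,\mathcal{L}(X^{\mathbb{C}}))$ for its resolvent. A Neumann-series identity that exploits the rank-one structure of $\rho(a)\Pi$ decomposes the resolvent $\tilde{R}_r$ of $K_r$ into an $L^1_{-v}$ remainder plus an explicit rank-one term of the form $\psi_r\otimes q(t)$, where the scalar factor $q(t)$ solves a scalar renewal equation whose kernel has unit mass and exponentially decaying tail; the exponentially sharp scalar renewal theorem then gives $q(t)\to q_\infty$ at rate $e^{-vt}$. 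Plugging this decomposition into $b_r=b_{0,r}+\tilde{R}_r*b_{0,r}$ and using the bound \eqref{bound b0} on $b_{0,r}$ produces $\|b_r(t)-c\psi_r\|_1\leq L e^{-vt}$ for suitable constants $c,L>0$, which is the desired statement.

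The main obstacle is the algebraic bookkeeping in the last paragraph: cleanly isolating the rank-one component of $\tilde{R}_r$ from $\tilde{R}^\sharp$ and verifying that the associated scalar kernel genuinely has unit mass and exponentially decaying tail, so that the sharp scalar renewal theorem applies. A parallel and arguably cleaner route avoids the rank-one perturbation of $K_r$ entirely by decomposing $b_r=\alpha(t)\psi_r+\beta(t)$ with $\alpha(t):=\langle F_r,b_r(t)\rangle/\langle F_r,\psi_r\rangle$ and $\beta(t):=(I-\Pi)b_r(t)$, obtaining a scalar renewal equation for $\alpha$ coupled to an equation for $\beta$ on the invariant complement $(I-\Pi)X$ whose next generation operator at $\mu=0$ is $(I-\Pi)\mathbb{K}_r(I-\Pi)$, with spectrum $\sigma(\mathbb{K}_r)\setminus\{1\}$ of modulus strictly less than one; Theorem \ref{thm:grip} then applies directly on $(I-\Pi)X$ and yields $\beta(t)=O(e^{-vt})$, after which the scalar equation for $\alpha$ is handled by the classical renewal theorem.
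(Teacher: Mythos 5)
Your proposal diverges from the paper's proof in an interesting way, but as written it has a genuine gap at the decisive step. The paper applies Theorem \ref{thm:grip} only with a weight $\sigma>r$, where invertibility of $I-\mathbb K_\lambda$ on the closed half-plane $\Re\lambda\geq\sigma$ is automatic, and then extracts the $e^{rt}$ term and the exponential remainder by the same contour-shift/residue argument as in Theorem \ref{thm:asymptotic behaviour AC} (Lemma \ref{cor:epsilon} to move the inversion contour to $\Re\lambda=w<r$, Proposition \ref{prop:residue} for the residue at $\lambda=r$). You instead try to obtain the rate entirely from Theorem \ref{thm:grip} applied across the critical line to the modified kernel $\tilde K^\sharp(a)=e^{-ra}\tilde K(a)-\rho(a)\Pi$. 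But Theorem \ref{thm:grip} requires invertibility of $I-\mathbb K^\sharp_\mu$ for \emph{every} $\mu$ with $\Re\mu\geq -v$, and your Sherman--Morrison reduction only turns this into the scalar condition $1+\hat\rho(\mu)\,\langle F_r,(I-\mathbb K_{\mu+r})^{-1}\psi_r\rangle/\langle F_r,\psi_r\rangle\neq 0$. Positivity rules out zeros for real $\mu>0$ and the simple pole rules them out near $\mu=0$, but nothing excludes complex zeros with $\Re\mu\geq 0$; such zeros are untouched by ``shrinking $v$'', which only moves the left edge of the half-plane. So the key hypothesis of Theorem \ref{thm:grip} is not verified, and the same unverified zero-freeness (of a scalar characteristic function in a vertical strip) reappears in the step you flag as ``bookkeeping'', namely the claim that the scalar kernel driving $q(t)$ has unit mass, exponential tail, and satisfies the hypotheses of a renewal theorem with exponential rate.

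The alternative route has a second, structural problem: $\Pi$ is the spectral projection of the \emph{integrated} operator $\mathbb K_r=\int_0^\infty e^{-ra}\tilde K(a)\,da$ and commutes with $\mathbb K_r$, but not with the individual operators $e^{-ra}\tilde K(a)$. Hence $(I-\Pi)X$ is not invariant for the convolution kernel, and the equation for $\beta=(I-\Pi)b_r$ is not a closed renewal equation on $(I-\Pi)X$: it carries the forcing term $\int_0^t (I-\Pi)e^{-ra}\tilde K(a)\psi_r\,\alpha(t-a)\,da$, so you cannot conclude $\beta(t)=O(e^{-vt})$ from Theorem \ref{thm:grip} before you know exponential convergence of $\alpha$ --- the proposed order ``first $\beta$, then $\alpha$'' is circular. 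For the same non-commutativity reason, invertibility of $I-(I-\Pi)\mathbb K_{\mu+r}(I-\Pi)$ on the complement for all $\Re\mu\geq -v$ does not follow from the spectral gap for $I-\mathbb K_{\mu+r}$ given by Lemma \ref{cor:epsilon}; a Schur-complement (again scalar determinant) condition would have to be checked. To repair either route you would in effect have to establish zero-freeness of a characteristic function on a closed strip around the critical line --- which is precisely the information the paper exploits in the frequency domain, where it is available through Lemma \ref{cor:epsilon} and Proposition \ref{lambda_d largest}, by shifting the Laplace-inversion contour instead of modifying the kernel.
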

 \begin{proof} 
We already know that for every $\lambda \in \mathbb C$ with $\Re \lambda \geq \sigma > r$ the operator $I-\mathbb K_\lambda $ is invertible. Moreover $\tilde{K} \in L^1_{- \sigma}(\mathbb R_+, \mathcal L(X^{\mathbb C})) $  for any $\sigma > r$. 
Hence we deduce from Theorem \ref{thm:grip} that  $\tilde{R} \in L^1_{- \sigma }(\mathbb R_+, \mathcal L(X^{\mathbb C}))$. 
We denote the Laplace transform of $\tilde{R}$ as follows 
\[
\mathbb R_\lambda := \int_0^\infty e^{-\lambda a}  \tilde{R}(a) da < \infty \quad \Re \lambda > \sigma. 
\]

Similarly as in Proposition \ref{prop:hardy space} we deduce that $(I- \mathbb K_\lambda)^{-1} \mathbb K_\lambda \hat{b_0}(\lambda) \in H(\sigma, X^{\mathbb C})$ for $\sigma > r$. 
Hence, from the Laplace inversion formula we deduce that 
\[
\int_0^t \tilde{R}(a)b_0(t-a) da =  \frac{1}{2\pi i }  \lim_{T \rightarrow \infty } \int_{\sigma -iT}^{\sigma  +iT} e^{\lambda t} \left( I - \mathbb K_\lambda \right)^{-1} \mathbb K_\lambda \hat{b_0}(\lambda) d\lambda
\] 
if $\sigma >r.$

Consider a $w \in \mathbb R$ with $w < r$ such that the operator $(I- \mathbb K_\lambda)$ is invertible for every $\lambda \in \mathbb C$ with $w \leq \Re \lambda <r $.
This $w$ exists thanks to Lemma \ref{cor:epsilon}. 
Define the operator $Q$ as
\[
\int_0^t Q(a)b_0(t-a) da :=  \frac{1}{2\pi i }  \lim_{T \rightarrow \infty } \int_{w -iT}^{w +iT} e^{\lambda t} \left( I - \mathbb K_\lambda \right)^{-1} \mathbb K_\lambda \hat{b_0} (\lambda)d\lambda. 
\] 

Similarly as in the proof of Theorem \ref{thm:asymptotic behaviour AC} we can deduce, by the residue theorem, that 
\[
\int_0^t \tilde{R}(a)b_0(t-a) da - \int_0^t Q(a) b_0 (t-a) da = e^{rt} R_{-1} \mathbb K_r \hat{b_0}(r)
\]

As a consequence 
\begin{align*}
& b(t)=\int_0^t \tilde{R} (a) b_0(t-a) da + b_0(t) \\
&= \int_0^t Q(a) b_0(t-a) da  + e^{rt} R_{-1} \mathbb K_r \hat{b_0}(r)  + b_0(t) \\
&= \int_0^t Q(a) b_0(t-a) da  +   e^{rt} \frac{\langle F_r, \mathbb K_r \hat{b_0}(r)  \rangle }{\langle F_r, - K_1 \psi_r \rangle } \psi_r + b_0(t) 
\end{align*}

Notice that by the definition of $F_r $ we have that 
\[
\frac{\langle F_r, \mathbb K_r \hat{b_0}(r)   \rangle }{\langle F_r, - K_1 \psi_r \rangle }= \frac{\langle \mathbb K_r^* F_r,  \hat{b_0}(r) \rangle }{\langle F_r, - K_1 \psi_r \rangle }= \frac{\langle  F_r,  \hat{b_0}(r) \rangle }{\langle F_r, - K_1 \psi_r \rangle }
\]

Moreover, by the definition of $Q$ we have $\|\int_0^t Q(a) b_0(t-a) da  \|_1\leq c e^{w t } $ where the constant $c$ is equal to 
\[
c:=\int_{-\infty}^{\infty} \| (I- \mathbb K_{w + i \eta })^{-1} \mathbb K_{ w + i \eta}  \hat{b_0}(w + i \eta ) \|_1 d \eta
\]
The fact that $c< \infty $ follows by an adaptation of the final part of the proof of Theorem \ref{thm:asymptotic behaviour AC}. 
 Hence, using \eqref{bound b0}, it follows that 
\begin{align*}
\left\|e^{-rt}b(t) -  \frac{\langle F_r,  \hat{b_0}(r)  \rangle }{\langle F_r, - K_1 \psi_r \rangle } \psi_r\right\|_1 
\leq c_1 e^{(w-r)t} + e^{-rt } \| b_0(t) \|_1
 \leq c_2 e^{-v t},
\end{align*} 
for some positive constants $v, c_1, c_2$. 
\qed\end{proof}
This approach is not very different from the approach developed in Section \ref{sec:asympt AC}, but here we have to make stronger measurability assumptions on $\tilde{K}$. These correspond to stronger assumptions on the model parameters and therefore we decided to focus on Heijmans' approach.

\subsection{Asymptotic behaviour of the measure-valued solution} \label{sec:asympt measure} 
In this section we deduce the asymptotic behaviour of the measure $B$ from the behaviour of the density of its absolutely continuous component. 
This type of technique has been applied in \cite{diekmann2018waning} and in \cite{pichor2020dynamics}. 

\begin{lemma} \label{b and B}
Let $K$ be a $z_0$-bounded regularizing kernel and let $B_0$ be given by \eqref{b0 function of m0} as a function of $K$ and $M_0 \in \mathcal M_{+,b}(\Omega).$
Let $\tilde{K}: \mathbb R^*_+ \rightarrow \mathcal L(X)$ be the operator defined by 
\begin{equation}\label{eq:ktilde} 
\tilde{K}(a)f = d_{K, f} (a) \text{ for every } a \geq 0 \text{ and } f \in X
\end{equation}
where $d_{K,f}(a) \in X$ is the density of the measure \eqref{int AC} with $t$ replaced by $a$.
Let us denote with $B $ the solution of equation \eqref{RE}. Then 
\begin{equation}\label{b density BAC}
B^{AC}(t, \omega) =  \int_{\omega} b(t)(x) dx \quad \forall \omega \in \mathcal B(\Omega_0)
\end{equation}
where $b$ is the solution of \eqref{eq b} with respect to $\tilde{K}$ and the function $b_0: \mathbb R_+ \rightarrow X $ mapping $t$ to the density of $B_0^{AC}(t, \cdot)+\mathcal L_K(B^s)^{AC}(t, \cdot)$. 
\end{lemma}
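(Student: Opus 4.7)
The plan is to show that the candidate $\tilde B^{AC}(t,\omega):=\int_\omega b(t)(x)\,dx$ solves the same renewal equation \eqref{RE AC int} that characterises $B^{AC}$, and then to invoke uniqueness from Proposition \ref{prop:existence of a solution for the RE} to conclude $\tilde B^{AC}=B^{AC}$.

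First I would check that $(\tilde K,b_0)$ satisfies the hypotheses of Lemma \ref{lem:for b}, so that $b$ is unambiguously defined in $X$. Positivity of $\tilde K(a)$ is immediate, and the local operator bound follows from item (3) of Definition \ref{def: kernel of type 0}:
\[
\|\tilde K(a)f\|_1=\int_{\Omega_0}K(a,x,\Omega_0)f(x)\,dx\leq\Bigl(\sup_{(a,\xi)\in[0,T]\times\Omega_0}K(a,\xi,\Omega_0)\Bigr)\|f\|_1.
\]
Bochner measurability of $a\mapsto\tilde K(a)f$ in $X$ is obtained by approximating $f$ by simple functions and using the scalar measurability from item (2) of Definition \ref{def: kernel of type 0} together with separability of $L^1(\Omega_0)$ (Pettis' theorem). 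The forcing $b_0:\mathbb R_+\to X$ inherits local boundedness and Bochner measurability from the fact that $B_0^{AC}+\mathcal L_K(B^s)^{AC}\in\mathcal X$ (here we use also Theorem \ref{thm:reduction} and Lemma \ref{lem:ideal}). Lemma \ref{lem:for b} then produces the unique solution $b$.

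The main step is a Fubini-style manipulation in which the definition of $\tilde K$ converts the measure-valued convolution $\mathcal L_K$ into an operator-valued one. For $t\geq 0$ and $\omega\in\mathcal B(\Omega_0)$,
\begin{align*}
(\mathcal L_K\tilde B^{AC})(t,\omega)
&=\int_0^t\!\!\int_{\Omega_0}K(t-\sigma,x,\omega)\,b(\sigma)(x)\,dx\,d\sigma \\
&=\int_0^t\!\!\int_\omega\bigl(\tilde K(t-\sigma)b(\sigma)\bigr)(y)\,dy\,d\sigma \\
&=\int_\omega\biggl(\int_0^t\tilde K(t-\sigma)b(\sigma)\,d\sigma\biggr)(y)\,dy,
\end{align*}
where the second equality uses \eqref{int AC} and the definition \eqref{eq:ktilde}, and the third is Fubini (justified by the $z_0$-boundedness of $K$). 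Integrating \eqref{eq b} over $\omega$ and using the definition of $b_0$ as the density of $B_0^{AC}(t,\cdot)+\mathcal L_K(B^s)^{AC}(t,\cdot)$ therefore yields
\[
\tilde B^{AC}(t,\omega)=(\mathcal L_K\tilde B^{AC})(t,\omega)+B_0^{AC}(t,\omega)+\bigl(\mathcal L_K B^s\bigr)^{AC}(t,\omega),
\]
which is precisely \eqref{RE AC int}.

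Because $B^s$ is determined by \eqref{singular part}, the term $B_0^{AC}+(\mathcal L_K B^s)^{AC}$ is a prescribed element of $\mathcal X$, and \eqref{RE AC int} becomes a renewal equation of the form covered by Proposition \ref{prop:existence of a solution for the RE}; its solution is unique. Both $B^{AC}$ and $\tilde B^{AC}$ solve it, hence $B^{AC}=\tilde B^{AC}$, which is \eqref{b density BAC}. The only point I expect to require a little care is the Bochner measurability of $a\mapsto\tilde K(a)f$ strongly in $X$ rather than merely pointwise; this is settled by the Pettis theorem combined with the separability of $L^1(\Omega_0)$.
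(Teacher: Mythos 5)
Your proposal is correct and follows essentially the same route as the paper: verify that $\tilde K$ is an operator kernel (Bochner measurability via separability of $L^1(\Omega_0)$ and Pettis, local boundedness from the locally bounded kernel $K$) and that $b_0$ is admissible, apply Lemma \ref{lem:for b} to obtain $b$, integrate equation \eqref{eq b} over $\omega$ to show $\int_\omega b(t)(x)\,dx$ solves \eqref{RE AC int}, and conclude by uniqueness that it equals $B^{AC}$. Your explicit Fubini justification of the interchange is a point the paper leaves implicit, but it is the same argument.
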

\begin{proof}
First of all we need to check that $\tilde{K}$ is an operator kernel, so in particular that for every $f \in X$ the map
\begin{equation}\label{boch meas function}
a \mapsto \tilde{K}(a) f 
\end{equation}
is Bochner measurable. 
Since $X$ is separable, Bochner measurability and weak measurability coincide. So it suffices to show that the map \eqref{boch meas function} is weakly measurable, i.e. that for every $g \in L^\infty(\Omega_0)$ the map 
\[
a \mapsto \int_\Omega g(x) \tilde{K}(a) f (x)  dx = \int_\Omega g(x) d_{K,f}(a) (x)  dx 
\]
is measurable. This is a consequence of the fact that for every $\omega $ the map
\[
a \mapsto \int_\omega d_{K,f}(a) (x)dx = \int_\Omega K(a,y,\omega ) f(y) dy 
\]
is measurable. 
 We refer to \cite{franco2021one} for the details.
 Similarly one shows that $b_0: \mathbb R_+ \rightarrow X$ is Bochner measurable. 

Moreover, thanks to the fact that $K$ is a locally bounded kernel
\[
\sup_{a \in[0,T] } \sup_{ f \in X} \| \tilde{K}(a) f\|_1 < \infty. 
\]
Hence, thanks to Lemma \ref{lem:for b}, equation \eqref{eq b}, with respect to $\tilde{K}$ and $b_0$ has a unique solution $b$. 
Integrating all the terms in the equation over the set $\omega $ we deduce that 
\[
\tilde{B}(t, \omega):= \int_\omega b(t) (x) dx 
\]
is a solution of \eqref{RE AC int}. By uniqueness it follows that $\tilde{B}=B^{AC}$
\qed\end{proof} 

\begin{theorem}\label{cor:measure asympt AC}
Let $K$ be a $z_0$-bounded regularizing kernel such that the operator $\mathbb K_\lambda $, defined by \eqref{NGO lambda} with $\tilde{K}$ given by \eqref{eq:ktilde}, is compact for every $\lambda \in \Delta $ and non-supporting for every $\Delta \cap \mathbb R$.
Assume also that $\tilde{K} $  satisfies either Assumption \ref{rotation} or Assumption \ref{rot density}.
Let us denote with $B $ the solution of equation \eqref{RE} and
let $\Psi_r(dx)=\psi_r(x) dx$, with $\psi_r$ the stable distribution, and $r$ the Malthusian parameter.
Then there exist constants $M, k >0$ such that
\begin{equation}\label{speed of convergence}
\left\| e^{-r t }B(t, \cdot) -  c \Psi_r(\cdot)  \right\| \leq M e^{-k t} \quad \forall t >0.
\end{equation}  
where $c>0$ is the same constant as in Theorem \ref{thm:asymptotic behaviour AC} and $\|\cdot  \|= \| \cdot  \|_{TV} =\| \cdot \|_{\flat}$.
\end{theorem}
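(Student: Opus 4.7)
The plan is to split $B = B^{AC} + B^s$ and estimate each piece separately, then combine. Since total variation is additive on mutually singular parts (and agrees with $\|\cdot\|_\flat$ on positive measures), it will suffice to show $\|e^{-rt} B^{AC}(t,\cdot) - c\Psi_r\| \leq M_1 e^{-k_1 t}$ and $\|e^{-rt} B^s(t,\cdot)\| \leq M_2 e^{-k_2 t}$ with positive rates.

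First I would handle the singular part. By Theorem \ref{thm:reduction} we have $\|B^s(t,\cdot)\| \leq c_1 e^{z_0 t} + c_2 t e^{z_0 t}$. The hypotheses of Theorem \ref{thm:asymptotic behaviour AC} force $r > z_0$ (indeed $r \in \Delta \cap \mathbb{R}$), so after multiplying by $e^{-rt}$ we obtain exponential decay at rate $r - z_0 > 0$, up to the polynomial factor $t$ which is harmlessly absorbed by taking any slightly smaller rate.

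Next I would apply Lemma \ref{b and B} to write $B^{AC}(t,\omega) = \int_\omega b(t)(x)\,dx$, where $b$ solves \eqref{eq b} with operator kernel $\tilde K$ defined in \eqref{eq:ktilde} and source term $b_0$ equal to the density of $B_0^{AC}(t,\cdot) + (\mathcal L_K B^s)^{AC}(t,\cdot)$. To invoke Theorem \ref{thm:asymptotic behaviour AC} I must check its hypotheses. The operator-kernel bound \eqref{eq:z_0 Ktilde} on $\tilde K$ follows from Assumption \ref{z_0 kernel}, since $\|\tilde K(a)f\|_1 = \int_{\Omega_0} K(a,x,\Omega_0) f(x)\,dx \leq C e^{z_0 a} \|f\|_1$. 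For the source term, Assumption \ref{ass:on b_0} requires the bound $\|b_0(t)\|_1 \leq c e^{z_0 t} + c_1 t e^{z_0 t} + c_2 t^2 e^{z_0 t}$. The contribution from $B_0^{AC}(t,\cdot)$ is bounded by $\|B_0(t,\cdot)\| \leq C e^{z_0 t}\|M_0\|$ using \eqref{b0 function of m0} and \eqref{condition on k for reduction}. The contribution from $(\mathcal L_K B^s)^{AC}$ is bounded in $L^1$ by $\int_0^t C e^{z_0(t-s)} \|B^s(s,\cdot)\|\,ds$, and plugging in the Theorem \ref{thm:reduction} bound produces precisely the $t^2 e^{z_0 t}$ term. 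Thus Assumption \ref{ass:on b_0} holds.

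With all hypotheses verified, Theorem \ref{thm:asymptotic behaviour AC} gives $\|e^{-rt} b(t) - c\psi_r\|_1 \leq L e^{-vt}$. Since $B^{AC}(t,\cdot) - ce^{rt}\Psi_r$ is an absolutely continuous signed measure with density $b(t) - c e^{rt}\psi_r$, its total variation equals the $L^1$ norm of the density, so
\[
\|e^{-rt} B^{AC}(t,\cdot) - c\Psi_r\| = \|e^{-rt} b(t) - c\psi_r\|_1 \leq L e^{-vt}.
\]
Combining with the estimate for $B^s$ and taking $k = \min(v, r - z_0 - \eta)$ for small $\eta > 0$ yields the claim with some constant $M$. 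The main technical point—and the only nontrivial step—is the verification that the induced source term $b_0$ satisfies the polynomially-weighted exponential bound required by Assumption \ref{ass:on b_0}; once that is in place, everything else is assembly of results already proved.
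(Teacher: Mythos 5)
Your proposal is correct and follows essentially the same route as the paper: split $B=B^{AC}+B^s$, control $B^s$ via Theorem \ref{thm:reduction} using $r>z_0$, identify the density of $B^{AC}$ with the solution $b$ of \eqref{eq b} through Lemma \ref{b and B}, and apply Theorem \ref{thm:asymptotic behaviour AC}. Your explicit verification that the induced source term $b_0$ satisfies the bound \eqref{bound b0} (the $t^2e^{z_0t}$ term coming from $\mathcal L_K B^s$) is exactly the reason Assumption \ref{ass:on b_0} is stated in that form, so the argument matches the paper's.
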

\begin{proof}
Since the density of $B^{AC}(t, \cdot) $, $b(t)$, solves \eqref{eq b} with respect to the $\tilde{K}$ and $b_0$ given by Lemma \ref{b and B}, we deduce that 
\begin{align*}
& \left\| e^{-r t } B(t, \cdot) - c \Psi_r(\cdot)  \right\| \leq \left\| e^{-r t } B(t, \cdot) - e^{-r t } B^{AC}(t, \cdot) + e^{-r t } B^{AC}(t, \cdot)- c \Psi_r(\cdot)  \right\|  \\
& \leq \left\| e^{-r t } B(t, \cdot) - e^{-r t } B^{AC}(t, \cdot) \right\|  + \left\| e^{-r t }B^{AC}(t, \cdot) -c \Psi_r(\cdot)  \right\| \\
& \leq e^{-r t } \left\| B^s(t, \cdot)  \right\|  + \left\| e^{-r t } b(t, \cdot) -  c \psi_r(\cdot)  \right\|_1 \leq e^{-r t } \left\| B^s(t, \cdot)  \right\|  + L e^{-v t} \\
& \leq c_1 e^{(z_0-r) t }+ c_2 t e^{(z_0-r) t} + L e^{-v t},
\end{align*} 
where in the last inequality we have applied \eqref{Bs tends to zero exponentially}.
From this chain of inequalities we deduce that \eqref{speed of convergence} holds. 
\qed\end{proof} 

We stress that in Corollary \ref{cor:measure asympt AC} we prove balanced exponential growth and we also provide an exponential estimate of the remainder, as is done in \cite{bertoin2019feynman} to which we refer for yet another approach.

\section{Kernels arising from structured population models}\label{sec:examples}
The aim of this section is to present three classes of $z_0$-bounded regularizing kernels that, as we shall show in the next sections satisfy the assumptions of Theorems \ref{thm:eigencouple} and \ref{thm:asymptotic behaviour AC}, i.e. the corresponding operator kernel $\tilde{K} $ satisfies either Assumption \ref{rotation} or Assumption \ref{rot density} and the corresponding operator
$\mathbb K_\lambda $ is non-supporting for every $\lambda \in \Delta \cap \mathbb R$ and compact for every $\lambda \in \Delta$. 

Since the classes of kernels that we present are motivated by structured population models, we interpret the mathematical  assumptions by describing their meaning in the context of the corresponding models. To help the reader we also provide the more classical PDE formulation of the models in the next section.
\eu{In all of this section we assume that the $i$-state space $\Omega $ is a subset of $\mathbb R_+^*$.} 
\subsection{The kernel as a modelling ingredient} \label{sec:kernel modelling ingr}
The main modelling ingredient of the renewal equation is the kernel, which summarises the effect of the individual level mechanisms determining the population evolution. 
The individual level mechanisms modelled via the renewal equation \eqref{RE} are
\begin{itemize}
\item deterministic smooth development of the individual state, as growth or waning.
\item giving birth, with offspring appearing at a different position (i.e. having a different state), or jumping to another position, in which case we say that the individual in the old state died while an individual in the new state was born.
 We assume that this happens at a position dependent rate $\Lambda$.
\end{itemize}
Therefore we assume that the kernel is 
\begin{equation}\label{kernel}
K(a,\xi,\omega):= \mathcal F(a,\xi) \Lambda(X(a,\xi)) \nu(X(a,\xi), \omega) 
\end{equation}

where 
\begin{itemize}
	\item $X(a,\xi)$ is the state of an individual that survived up to the current time and that, $a$ time ago, had state $\xi $.
\item $\mathcal F(a,\xi)$ is the probability that an individual that $a$ time ago had state $\xi$ survives up to the present time.
\item $\nu(z, \omega)$ denotes the expected number of individuals born with size in $\omega $ when an individual with size $z$ reproduces or jumps.
\end{itemize}
We want to find sufficient conditions on $\mathcal F$, $X$ and $\nu $ that guarantee that $K$ is a $z_0$-bounded regularizing kernel, that the corresponding operator kernel $\tilde{K}$ satisfies either Assumption \ref{rotation} or Assumption \ref{rot density} and that the corresponding operator $\mathbb K_\lambda $ is compact for every $\lambda \in \Delta $ and non-supporting for every $\lambda \in \Delta \cap \mathbb R$.
 To this end we start by writing the following basic assumptions on $\mathcal F$, $X$, $\nu $ implying that $K$ is a $z_0$-bounded kernel.
\begin{assumption} \label{ass:2nd level} 
	Assume that
\begin{itemize}
	\item the map $(a,\xi) \mapsto \mathcal F(a, \xi)$ is measurable; 
	\item  the map $(a,\xi) \mapsto X(a, \xi)$ is measurable; 
	\item for every $\omega \in \mathcal B(\Omega_0 ) $ the map $x \mapsto \nu(x, \omega )$ is measurable and  
\[ \sup_{x \in \Omega } \nu (x, \Omega_0 ) \leq M 
\] 
 for some $M>0$; 
	\item there exists a $z_0<0$ and a constant $c>0$ such that 
	\begin{equation}\label{z0 k}
	\sup_{x \in \Omega_0} \mathcal F(t,x) \Lambda(X(t,x))  \leq  c  e^{ z_0 t}, \quad t\geq 0; 
	\end{equation}
\end{itemize} 
\end{assumption} 
If Assumption \ref{ass:2nd level} holds, then the kernel $K $, defined by \eqref{kernel}, is a $z_0$-bounded kernel. 
What additional assumptions on $\nu $ and $X$ guarantee that $K$ is also a regularizing kernel?
\begin{proposition} \label{prop:fission ass}
Let $\mathcal F $, $\nu$, $X$ satisfy Assumption \ref{ass:2nd level}. 

If, additionally, $\nu(x, \cdot) \in \mathcal M_{+, AC}(\Omega_0)$, then the kernel $K$ defined by \eqref{kernel} is a $z_0 $-bounded regularizing kernel.  
\end{proposition}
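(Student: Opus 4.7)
The plan is to verify directly that, under the stated hypotheses, the kernel $K$ defined by \eqref{kernel} satisfies both Assumption \ref{z_0 kernel} (the $z_0$-bound) and Assumption \ref{ass:AC} (the regularising property), which together constitute Definition \ref{def:z0 regularizing}. The argument will proceed in two independent steps: one establishes the exponential bound, the other propagates the absolute continuity of $\nu(x,\cdot)$ through the factorised form of $K$ to both conditions in Assumption \ref{ass:AC}.

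For the $z_0$-bound, I would simply combine the uniform bound $\nu(x,\Omega_0)\le M$ from Assumption \ref{ass:2nd level} with the exponential estimate \eqref{z0 k}, obtaining
\[
K(t,x,\Omega_0)=\mathcal F(t,x)\Lambda(X(t,x))\,\nu(X(t,x),\Omega_0)\le Mc\,e^{z_0 t},
\]
uniformly in $x\in\Omega_0$. Measurability of $(a,\xi)\mapsto K(a,\xi,\omega)$ and of $\omega\mapsto K(a,\xi,\omega)$ is inherited from the measurability clauses in Assumption \ref{ass:2nd level} and the assumption that $\nu(x,\cdot)$ is a Borel measure, so $K$ is a locally bounded kernel.

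The regularising property is the crux, but it becomes straightforward once one makes the following observation: by \eqref{kernel}, the measure $K(a,\xi,\cdot)$ is the scalar $\mathcal F(a,\xi)\Lambda(X(a,\xi))$ multiplied by the measure $\nu(X(a,\xi),\cdot)$, which is absolutely continuous by assumption. Hence $K\in\mathcal I$ in the sense of Definition \ref{ideal}. For the first clause of Assumption \ref{ass:AC}, I would invoke Lemma \ref{lem:ideal}: since $\mathcal I$ is a right semi-ideal of $\mathbb B_{loc}$, the convolution $K*K$ lies in $\mathcal I$, which is precisely the statement that the measure \eqref{AC m_xt} is absolutely continuous. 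For the second clause, given $A\subset\Omega_0$ with Lebesgue measure zero, I would note that $\nu(X(t,x),A)=0$ for every $x$, whence
\[
\int_{\Omega_0}K(t,x,A)f(x)\,dx=\int_{\Omega_0}\mathcal F(t,x)\Lambda(X(t,x))\,\nu(X(t,x),A)f(x)\,dx=0,
\]
so the measure \eqref{int AC} is absolutely continuous.

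I do not anticipate any real obstacle: the proposition is essentially a propagation of the absolute continuity of $\nu(x,\cdot)$ through the multiplicative structure of $K$, with the semi-ideal property of $\mathcal I$ handling the convolution in one step. The only subtlety is bookkeeping, namely to ensure that the coefficient $\mathcal F(t,x)\Lambda(X(t,x))$ is a measurable scalar factor (so that the resulting integrals are well-defined and inherit absolute continuity), which is guaranteed by the measurability clauses in Assumption \ref{ass:2nd level}.
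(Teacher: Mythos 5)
Your proposal is correct and follows essentially the same route as the paper: the $z_0$-bound comes directly from Assumption \ref{ass:2nd level}, and the regularising property follows because the factorised form \eqref{kernel} together with $\nu(x,\cdot)\in\mathcal M_{+,AC}(\Omega_0)$ puts $K$ into $\mathcal I$, after which the semi-ideal property (Lemma \ref{lem:ideal}) and the direct null-set computation give both clauses of Assumption \ref{ass:AC}. The paper's proof is just a compressed version of exactly this argument.
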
 
\begin{proof}
The absolute continuity, with respect to the Lebesgue measure, of the measures \eqref{int AC} and \eqref{AC m_xt} is a consequence of the fact that $K(a,x,\cdot) \in \mathcal I$, with $\mathcal I$ defined in Definition \ref{ideal}, for every $a>0$ and $ x \in \Omega_0$. 
\qed\end{proof}
We aim at finding milder conditions on $\nu $ that still guarantee that the kernel $K$ is regularizing. Motivated by biological applications (see the upcoming sections) we focus on the following type of measures
	\begin{equation}\label{measure nu generalized} 
	\nu(x, \omega)= \beta (x) \delta_{q(x)} (\omega) \quad x \in \Omega, \  \omega \in \mathcal B(\Omega_0) . 
	\end{equation}
where $q $ and $\beta$ are suitable functions.

We first present an example of a measure $\nu$, satisfying \eqref{measure nu generalized}, and a function $X$, that give rise to a kernel $K$ which is \textbf{not} regularizing.
\begin{example}\label{example eq fission}
	If 
	\[
	\nu(x, \omega ) = 2 \delta_{\frac{x}{2}} (\omega)
	\] 
	and if we assume that the development is  exponential, i.e. $X(a,\xi) = \xi e^{a}$, then Assumption \ref{ass:AC} does not hold. Hence $K $ is not a regularizing kernel. 
	Indeed, 
	\begin{align*}
	&2  \int_0^t \int_\Omega K(s,x,d\xi) \mathcal F(t-s, \xi ) \delta_{\frac{1}{2}X(t-s, \xi)} (\omega) \Lambda (X(t-s, \xi))ds \\
	&= 4  \int_0^t \int_\Omega \mathcal F(s,x) \Lambda(xe^s) \delta_{\frac{x}{2} e^s }(d\xi)   \mathcal F(t-s, \xi ) \delta_{\frac{1}{2}\xi e^{t-s}} (\omega) \Lambda (X(t- s, \xi)) ds \\
	&= 4 \delta_{\frac{x}{4} e^t} (\omega)  \int_0^t \mathcal F(s,x) \Lambda(xe^s)  \mathcal F\left(t-s,  \frac{x}{2} e^s \right) \Lambda \left(X\left(t-s, \frac{x}{2} e^s\right)\right) ds. 
	\end{align*} 
\end{example}
The take home message of this example is that it is not only the shape of $\nu $ that determines whether the kernel is regularizing or not, but also the development rate.

We now state sufficient assumptions on $q $, $\beta$, $\mathcal F$ and $X$ that guarantee that the kernel $K$ defined by \eqref{kernel} is a $z_0$-bounded regularizing kernel.
\begin{proposition} \label{prop:regularization}
	Let $\mathcal F $, $\nu$, $X$ satisfy Assumption \ref{ass:2nd level}. 	
	Assume that $\nu $ is of the form \eqref{measure nu generalized} for a measurable function $\beta: \Omega \rightarrow \mathbb R_+$ and a measurable function $q: \Omega \rightarrow \mathbb R_+ $. 
	Additionally, assume that $q$ is such that the function
	\begin{equation} \label{Fa}
	F_a: x \mapsto q(X(a,x))
	\end{equation}
is invertible and such that if $|\omega|=0$, then $|F_a^{-1}(\omega)|=0$, where we are denoting with $|\cdot |$ the Lebesgue measure, see Appendix \ref{sec:notation}.
Finally assume that $q$ and $X$ are such that the function
\begin{equation}\label{pTx}
p_{t,x} : a \mapsto q(X(t-a,q(X(a,x))))
\end{equation}
is invertible and such that $|\omega|=0$ implies  $|p_{t,x}^{-1}(\omega)|=0$.
Then the kernel $K$ defined by \eqref{kernel} is a $z_0$-bounded regularizing kernel.
\end{proposition}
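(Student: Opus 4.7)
The plan is to verify the two parts of being a $z_0$-bounded regularizing kernel: first that $K$ satisfies Assumption \ref{z_0 kernel}, and second that $K$ satisfies the two absolute continuity conditions of Assumption \ref{ass:AC}. The first part is essentially immediate from Assumption \ref{ass:2nd level}: writing $K(a,\xi,\omega) = c(a,\xi)\,\delta_{q(X(a,\xi))}(\omega)$ with $c(a,\xi):=\mathcal F(a,\xi)\Lambda(X(a,\xi))\beta(X(a,\xi))$, one has $K(a,\xi,\Omega_0) = c(a,\xi)$, and the bound $\nu(x,\Omega_0)\le M$ together with \eqref{z0 k} yields $\sup_{\xi}K(a,\xi,\Omega_0)\le cM\,e^{z_0 a}$. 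The measurability hypotheses collected in Assumption \ref{ass:2nd level} likewise guarantee the remaining items of Definition \ref{def: kernel of type 0}.

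The substance of the proof is then the absolute continuity. The key technical move is to rewrite the Dirac evaluations as indicators of preimages under $F_a$ and $p_{t,x}$. For the condition involving \eqref{int AC}, fix $t\ge 0$ and $f\in L^1(\Omega_0)$; using $\delta_{q(X(t,x))}(\omega)=\mathbf 1_{\omega}(q(X(t,x)))=\mathbf 1_{F_t^{-1}(\omega)}(x)$, I would write
\begin{equation*}
\int_{\Omega_0} K(t,x,\omega)f(x)\,dx = \int_{F_t^{-1}(\omega)} c(t,x)\,f(x)\,dx.
\end{equation*}
The hypothesis that $|\omega|=0$ implies $|F_t^{-1}(\omega)|=0$ then forces this integral to vanish on Lebesgue-null sets $\omega$, so the resulting set function is absolutely continuous with respect to Lebesgue measure (and one checks countable additivity in $\omega$ by monotone convergence, since $c(t,\cdot)f(\cdot)\in L^1$).

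For the condition involving \eqref{AC m_xt}, the plan is to carry out the inner $\xi$-integration against $K(a,x,d\xi)=c(a,x)\,\delta_{q(X(a,x))}(d\xi)$, which collapses $\xi$ to $q(X(a,x))$, and then re-express the remaining Dirac evaluation via $p_{t,x}$:
\begin{equation*}
\int_0^t\!\!\int_{\Omega_0} K(t-a,\xi,\omega)\,K(a,x,d\xi)\,da = \int_0^t c(a,x)\,c(t-a,q(X(a,x)))\,\mathbf 1_{p_{t,x}^{-1}(\omega)}(a)\,da.
\end{equation*}
Again, the assumption that $|\omega|=0$ yields $|p_{t,x}^{-1}(\omega)|=0$, so the integrand vanishes almost everywhere in $a$ and the integral is zero. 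As before, countable additivity in $\omega$ is immediate by dominated convergence, using the $z_0$-bound from Assumption \ref{ass:2nd level} to control the integrand uniformly on $[0,t]$.

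I do not anticipate a serious obstacle: the whole argument is a direct manipulation of point masses, and the invertibility of $F_a$ and $p_{t,x}$ is used only through the null-set-preservation property that has been hypothesized. The most delicate bookkeeping item, and the one that was singled out in Example \ref{example eq fission}, is precisely the invertibility of $p_{t,x}$; that example shows that if $q(y)=y/2$ and $X(a,\xi)=\xi e^{a}$, then $p_{t,x}(a)=\tfrac{x}{4}e^{t}$ is constant in $a$, so the condition fails. The proposition is therefore sharp in the sense that its invertibility/null-set hypotheses cannot simply be dropped.
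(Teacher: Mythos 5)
Your proposal is correct and follows essentially the same route as the paper: verify the $z_0$-bound directly from Assumption \ref{ass:2nd level}, then rewrite the Dirac evaluations as indicators of the preimages under $F_a$ and $p_{t,x}$ so that the null-set-preservation hypotheses make both measures \eqref{int AC} and \eqref{AC m_xt} vanish on Lebesgue-null sets. The extra remarks (countable additivity, the sharpness illustrated by Example \ref{example eq fission}) are harmless additions but not needed beyond what the paper does.
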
 
\begin{proof}
 The kernel $K$ is  $z_0$-bounded because  $\mathcal F $, $X$ and $\nu$ satisfy Assumption \ref{ass:2nd level}.
	
We now prove that, for every $f$, the measure \eqref{int AC} is absolutely continuous with respect to the Lebesgue measure.
For notational convenience we rewrite $K$ as
\[
K(a,x,\omega)=j(a,x) \delta_{q(X(a,x))}(\omega).
\]

	Let $A \in \mathcal B(\Omega )$  be a set of zero Lebesgue measure, then
\begin{align*}
&\int_{\Omega_0} f(x) K(a,x,A) dx = \int_{\Omega_0} f(x) j(a,x) \delta_{q(X(a,x)) } (A)dx \\
&= \int_{F_a^{-1}(A)} f(x) j(a,x) dx =0. 
\end{align*}
We now prove that also \eqref{AC m_xt} is an absolutely continuous measure with respect to the Lebesgue measure. Indeed
	\begin{align*}
&K^{*2}(T,x,A)\\
&= \int_0^T \int_\Omega K(s,x,d \xi) j(T-s, \xi ) \delta_{q \left( X(T-s, \xi)\right) }(A)  ds \\
&= \int_0^T  j(s,x)  j(T-s, q(X(s,x)))  \delta_{p_{T,x}(s)}(A)  ds \\
& =\int_{[0,T] \cap p_{T,x}^{-1}(A ) } j(s,x) j(T-s, q(X(s,x)) )  ds.
\end{align*}
The assumptions on $p_{T,x}$ then guarantee that $K^{*2}(T,x,\cdot)$ is absolutely continuous with respect to the Lebesgue measure. 

\qed\end{proof}

\section{Asymptotic behaviour of the population birth rate for the model examples} \label{sec:asy models} 
We now motivate the above assumptions on $\nu$ by presenting the models that we are going to study with the results presented in Section \ref{sec:method}.

\subsection{Two applications to structured population models}\label{sec:two applications}

\subsubsection{Cell growth and fission} 
The first example is the model of cell growth and fission that is classically formulated via the PDE 
\begin{equation}\label{eq:equal fission den} 
\partial_t n (t, x)+ \partial_x \left( g(x)  n(t,x) \right) = - \left[ \Lambda(x) + \mu(x) \right] n(t,x) +4  \Lambda(2x) n(t,2x) 
\end{equation}
 or alternatively via the PDE
\begin{equation}\label{PDE growth-fission den} 
\partial_t n (t, x)+ \partial_x \left( g(x)  n(t,x)  \right) = - \left[ \Lambda(x) + \mu(x) \right] n(t,x) + \int_\Omega h(y,x)  \Lambda(y) n(t,y) dy ,
\end{equation}
These PDEs describe the evolution in time of a population of cells, structured by size, growing at rate $g$, dying at rate $\mu$ and dividing into two smaller cells at rate $\Lambda$.
 The type of equation depends on how the cells divide. More precisely, if cells divide into equal parts, then the density of cells of size $x$ at time $t$, $n(t,x)$, is the solution of equation \eqref{eq:equal fission den}. 
If, instead, the expected number of cells with size in $[y, y+ dy]$, produced by the division of a cell of size $x$, is equal to $h(x,y)dy$, then $n(t,x)$ is the solution of equation \eqref{PDE growth-fission den}.

The model described above fits into the class of models introduced in Section \ref{sec:kernel modelling ingr}. 
Hence, the population birth rate, which in this case is the rate at which individuals are born due to fission, has to satisfy \eqref{RE}, with $K$ given by \eqref{kernel} and $X(a,\xi)$ is the solution at time $a$ of the following ODE
	\begin{equation} \label{eq:size evolution} 
	\frac{dx}{dt}= g(x) \quad x(0)=\xi,
	\end{equation}
	while
\begin{equation}\label{survival probability} 
\mathcal F(t, \xi) := \exp\left( - \int_0^t \tilde \mu(X(s, \xi) ) ds\right) =  \exp\left( - \int_\xi^{X(t, \xi)} \frac{\tilde \mu(x)}{g(x)} dx \right) 
\end{equation}
where $\tilde \mu(x)=\mu(x)+ \Lambda(x)$, and with 
\[
\nu(x, \omega ) = \int_\omega h(x,y) dy \quad \text{ or } \quad  \nu(x, \omega )=2 \delta_{\frac{x}{2}}(\omega).
\] 

Now the question is, what are the assumptions on the parameters $g$, $\Lambda$ and $\nu$ that ensure that $K$ is a $z_0$-bounded regularizing kernel, that the corresponding operator $\tilde{K} $ satisfies Assumption \ref{rotation} or Assumption \ref{rot density} and that $\mathbb K_\lambda $ is non-supporting for every $\lambda \in \Delta \cap \mathbb R$, compact for every $\lambda \in \Delta$? In other words, what are the assumptions on the parameters that allow us to study the evolution of the population by using the results presented in Section \ref{sec:method}? 
Below we present two collections of assumptions, one for the case of fission into equal sizes and one for the case of fission into unequal sizes. 
We start with the latter.

\begin{assumption} [Unequal fission model]\label{ass: nu AC}
We assume that 
\begin{enumerate}
\item $\Omega=\mathbb R_+^* $; 
\item the growth rate $g: \Omega \rightarrow \mathbb R_+^*$ is a continuous function such that for every $z \in \Omega$
\begin{equation}\label{infinite travel time}
\int_z^{\infty} \frac{1}{g(s)} ds=\infty; 
\end{equation}
\item the fission rate $\Lambda: (0,\infty) \rightarrow \mathbb R_+$ is a measurable function such that either $ supp(\Lambda)=[M, \infty ) $, where $M> 0$, or $ supp(\Lambda)=\mathbb R^*_+$, and such that 
  $\lim_{ z \rightarrow \infty }\Lambda(z)$ exists and is strictly positive; 
\item the death rate $\mu: \Omega \rightarrow \mathbb R_+$ is measurable;  
\item  for every $y \in \Omega $
\begin{equation} \label{density h}
\nu(y, \cdot) \in \mathcal M_{+,AC}(\Omega),
\end{equation}
 with density $h(y,\cdot)$ such that $ h(y,x) =0$ when $y<x$ and $h(y,x)>0$ if $y>x $
\begin{equation}\label{eq:consistency condition} 
   \int_0^y h(y,x) dx =2,  \quad h(y,x)=h(y,y-x); 
\end{equation} 
\item the set of the states at birth is
\[
\Omega_0:= \bigcup_{y \in supp (\Lambda)} \textit{supp} \left( h(y, \cdot)\right)=(0,\infty).
\]
We assume that for every $\varepsilon >0 $ there exists a $\delta_\varepsilon >0$ such that for every $0 <\delta < \delta_\varepsilon$ we have 
\begin{equation}\label{ass:comp}
\left| \int_{\Omega_0} \left( h(y,x) - h(y, x+\delta) \right) dx \right|  < \frac{\varepsilon}{y} \quad  \text{ for every }  y >0
\end{equation}
where $h(y, x+ \delta ):=0$ if $x +\delta \notin \Omega_0$. 
\item 
 Finally we assume that 
 \begin{equation}\label{ass:comp2}
 \int_0^1 \frac{\Lambda(y)}{y g(y)} dy < \infty.
 \end{equation}
\end{enumerate}
\end{assumption}

We now explain the interpretation of these requirements on $g$, $h$, $\Lambda$. 
By the definition of $g$,
\[
\tau(x,y):=\int_x^y \frac{1}{g(z) } dz
\]
is the time that it takes to develop from size $x $ to size $y$. 
Hence, the fact that $g$ satisfies \eqref{infinite travel time} implies that the time that it takes to grow up to size equal to $\sup \Omega$ is equal to infinity.  
 This, together with the assumption on the limiting large size behaviour of the fission rate, guarantees that the probability that a cell reaches size equal to infinity is zero.

The first assumption on $h$ in \eqref{eq:consistency condition} guarantees that a cell always divides into two cells. The second assumption in the same line is a consequence of the fact that mass is conserved during fission and hence a cell of size $x$ that divides into a cell of size $y$ produces also a cell of size equal to $x-y$. 

In many works the $i$-state space $\Omega$ is assumed to be a compact subset of $\mathbb R_+^*$, see for instance \cite{diekmann1984stability} and \cite{heijmans1984stable}. 
Here we relax this assumption and assume that $\Omega =\mathbb R^*_+$. 
The price of this generalisation is that we need to impose assumptions on the model parameters $g, \Lambda, h$ that exclude gelation (i.e. escape of mass at infinity, in the "fragmentation" terminology) and shattering (i.e. escape of mass at zero). 
This is why we introduce conditions \eqref{ass:comp} and \eqref{ass:comp2}. 
These tightness assumptions guarantee the compactness of the operator $\mathbb K_\lambda $, as we will see in Section \ref{sec:asymp unequal fission}, proof of Proposition \ref{K compact and nonsupporting}.

Condition \eqref{ass:comp} holds for a broad class of self-similar kernels. In particular it holds for uniform fragmentation, $h(y,x)=\frac{2}{y} \chi_{(0, x )}$, but also for some of the self-similar kernels considered in \cite{stadler2020analyzing}. 
Indeed assume that
\[
h(y,x)=\frac{2}{y} p\left(\frac{x}{y}\right)
\] 
 where $p:[0,1] \mapsto \mathbb R_+$ is s.t. $p \in L^\infty ([0,1])$ with $\int_0^1 p(z) dz =1 $ and $p(1-z)=p(z)$ for every $z \in [0,1]$. 
Then 
\begin{align*}
&\left| \int_{\Omega_0} \left[ h(y,x)- h(y,x+\delta) \right] dx \right| \leq \left| \int_0^{y -\delta} \left[ h(y,x)- h(y,x+\delta) \right] dx \right| \\
& + \left| \int_{y -\delta}^y h(y,x) dx \right| \leq \frac{2}{y} \left| \int_0^{y -\delta} \left[ p\left(\frac{x}{y} \right)- p\left(\frac{x+\delta}{y}\right) \right] dx \right| \\
& + \frac{2}{y} \left| \int_{y -\delta}^y  p\left( \frac{x}{y} \right) dx \right| \leq 2 \left| \int_0^{1- \frac{\delta}{y}} \left[ p\left(z\right)- p\left(z+ \frac{\delta}{y}\right) \right] dz \right| \\
& + 2\left| \int_{1 -\frac{\delta}{y}}^1 p\left( z \right) dz \right|   \\
&\leq 2 \left| \int_0^{1- \frac{\delta}{y}}  p\left(z\right)dz -  \int_{\frac{\delta}{y}}^{1}   p\left(z\right)dz \right| +  \frac{2}{y}  \delta\| p \|_{L^\infty} \\
&\leq 2 \left| \int_{1- \frac{\delta}{y}}^1  p\left(z\right)dz \right| + 2 \left|  \int_0^{\frac{\delta}{y}}   p\left(z \right)dz \right| +  \frac{2}{y}  \delta\| p \|_{L^\infty}  \leq \frac{ 6 }{y}\delta\| p \|_{L^\infty}. 
\end{align*}
Hence $h$ satisfies  (\ref{ass:comp}).

Finally condition \eqref{ass:comp2} guarantees that $\Lambda (y) \rightarrow 0$ as $y \rightarrow 0$ quickly, namely faster that $y$ itself. 
\eu{We expect that it is possible to weaken considerably the condition $h(x, y) > 0$ if $x > y$. This condition is however attractive, because it allows for a straightforward proof of the non-supportingness of the operator $\mathbb K_\lambda$.}

For the model in which cells divide into equal parts we make the following assumptions. 
\begin{assumption}[Equal fission model]\label{ass:equal fission}
In this case we assume that 
\begin{enumerate} 
\item $\Omega= \Omega_0:=(0,\infty)$, 
\item $g$ satisfies point $2$ of Assumption \ref{ass: nu AC}, is Lipschitz continuous and $g(2x) < 2 g(x)$ for every $x \in \Omega$ and $0 <  \sup_{x \in \Omega} \frac{1}{g(x)} < \infty$, 
\item  $\Lambda $ satisfies point $3$  of Assumption \ref{ass: nu AC}, 
\item  $\mu$ satisfies point $4$  of Assumption \ref{ass: nu AC}, 
\item for every $y \in \Omega $ we have that 
\begin{equation}\label{nu AC fission}
\nu(y, \omega)= 2\delta_{y/2} (\omega) \quad  \omega \in \mathcal B(\Omega_0). 
\end{equation}  
\end{enumerate}
\end{assumption} 

The requirements on the parameters listed in Assumption \ref{ass:equal fission} are needed to deduce the asymptotic behaviour of the population with the method presented in Section \ref{sec:asympt measure}.
Indeed the assumptions on the growth rate $g$ exclude the possibility of having cyclic solutions, see \cite{metz2014dynamics}, \cite{bernard2016cyclic}, being a sufficient assumption to guarantee that the operator $\mathbb K_\lambda $ is compact and non-supporting, as we will see in Section \ref{sec:asymp unequal fission}.

\begin{lemma} \label{lem:examples work}
	Let either Assumption \ref{ass: nu AC} or Assumption \ref{ass:equal fission} hold.
	Then the kernel $K$ defined by \eqref{kernel} is a $z_0$-bounded regularizing kernel for some $z_0 <0$.
\end{lemma}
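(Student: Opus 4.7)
The strategy is to invoke Proposition \ref{prop:fission ass} under Assumption \ref{ass: nu AC} and Proposition \ref{prop:regularization} under Assumption \ref{ass:equal fission}; both require, as a prerequisite, the basic Assumption \ref{ass:2nd level} on $\mathcal F$, $X$, $\nu$ and $\Lambda$. The measurability properties and the uniform bound $\nu(x, \Omega_0) = 2$ are immediate in either case from the continuity of $g$ and the form of $\nu$. The real content of Assumption \ref{ass:2nd level} is the exponential estimate $\sup_{x \in \Omega_0} \mathcal F(t,x)\, \Lambda(X(t,x)) \leq c\, e^{z_0 t}$, which I would establish by the change of variables $u = X(s,x)$ (so $ds = du/g(u)$): this turns $\int_0^t \Lambda(X(s,x))\, ds$ into $\int_x^{X(t,x)} \Lambda(u)/g(u)\, du$, and combining the positive asymptotic limit $L := \lim_{z \to \infty} \Lambda(z) > 0$, the no-escape condition \eqref{infinite travel time} and the tightness condition \eqref{ass:comp2} yields a linear-in-$t$ lower bound on this integral that is uniform in $x \in (0, \infty)$.

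In the unequal-fission case the regularization is immediate: condition \eqref{density h} states that $\nu(x, \cdot)$ admits a density, hence $\nu(x, \cdot) \in \mathcal M_{+, AC}(\Omega_0)$ for every $x$, and Proposition \ref{prop:fission ass} applies directly.

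In the equal-fission case the measure $\nu(x, \omega) = 2\delta_{x/2}(\omega)$ is singular, so I would invoke Proposition \ref{prop:regularization} with $\beta(x) = 2$ and $q(x) = x/2$. Since $g$ is Lipschitz and strictly positive, the flow $X(a, \cdot)$ is a bi-Lipschitz homeomorphism of $\Omega$, so $F_a(x) = X(a,x)/2$ is invertible with Lipschitz inverse, and hence null sets are preserved. For $p_{t,x}(a) = \tfrac12 X(t-a, X(a,x)/2)$, differentiating the semigroup identity $X(s, X(\tau, y)) = X(s+\tau, y)$ at $\tau = 0$ gives $\partial_y X(s, y) = g(X(s,y))/g(y)$, and a short chain-rule computation then yields
\[
p_{t,x}'(a) \;=\; \frac{g\bigl(X(t-a,\, X(a,x)/2)\bigr)}{4\, g(X(a,x)/2)}\, \Bigl[\,g(X(a,x)) - 2\, g(X(a,x)/2)\,\Bigr].
\]
The assumption $g(2y) < 2 g(y)$, applied with $y = X(a,x)/2$, makes the bracket strictly negative, so $p_{t,x}$ is strictly decreasing and hence invertible on $[0,t]$; continuity of $p_{t,x}'$ on the compact interval $[0,t]$ gives $|p_{t,x}'|$ bounded below by a positive constant there, so $p_{t,x}^{-1}$ is Lipschitz and null-set preserving, and Proposition \ref{prop:regularization} applies.

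The main obstacle is precisely the uniform exponential bound of Assumption \ref{ass:2nd level}: because $\Omega_0 = (0,\infty)$ admits arbitrarily small states, the waiting time $\tau(x, Z) = \int_x^Z du/g(u)$ before $X(\cdot, x)$ reaches a level $Z$ where $\Lambda$ is bounded below may a priori blow up as $x \to 0^+$, and producing an estimate that is simultaneously exponential in $t$ and uniform in $x \in (0, \infty)$ is exactly what requires the joint use of \eqref{infinite travel time}, the limit condition on $\Lambda$, and the tightness assumption \eqref{ass:comp2}.
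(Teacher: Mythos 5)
Your treatment of the regularising half is exactly the paper's: unequal fission is dispatched by Proposition \ref{prop:fission ass} because $\nu(x,\cdot)$ has the density $h(x,\cdot)$, and equal fission by Proposition \ref{prop:regularization} with $q(x)=x/2$, the identity $\partial_2 X(a,\xi)=g(X(a,\xi))/g(\xi)$ from the semigroup property, and the very same expression for $p_{T,x}'$ whose sign is forced by $g(2y)<2g(y)$; your additional remark that $|p_{T,x}'|$ is bounded away from zero on the compact interval $[0,T]$, so that $p_{T,x}^{-1}$ is Lipschitz and hence null-set preserving, is a legitimate sharpening of the paper's brief ``monotone, hence invertible'' step.

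The gap is in your route to the exponential bound \eqref{z0 k} (equivalently \eqref{condition on k for reduction}) in the unequal-fission case. You claim that \eqref{infinite travel time}, the positivity of $\lim_{z\to\infty}\Lambda(z)$ and \eqref{ass:comp2} together yield a linear-in-$t$ lower bound on $\int_0^t\Lambda(X(s,x))\,ds=\int_x^{X(t,x)}\Lambda(u)/g(u)\,du$ that is uniform in $x\in(0,\infty)$. They do not: \eqref{ass:comp2} is an \emph{upper} bound on $\int_0^1\Lambda(u)/g(u)\,du$, and if the travel time from $0$ to level $1$ is infinite the accumulated hazard of trajectories starting near $0$ stays of order one for arbitrarily long times. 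Concretely, take $\mu\equiv 0$, $g(u)=u^2$ and $\Lambda(u)=u^3$ for $u\le 1$, $g\equiv\Lambda\equiv 1$ for $u\ge 1$, and uniform $h$; all items of Assumption \ref{ass: nu AC} hold, yet for every $t$ one can choose $x_t$ with $X(t,x_t)=1$, and then $\mathcal F(t,x_t)\Lambda(X(t,x_t))\ge e^{-\int_0^1\Lambda/g\,du}\,\Lambda(1)$, a positive constant independent of $t$, so no bound of the form $C e^{z_0 t}$ with $z_0<0$ can be uniform in $x$ by the mechanism you describe. (The paper's own proof is no more detailed here — it asserts $K(t,x,\Omega_0)\sim e^{-\lim_{z\to\infty}\Lambda(z)\,t}$, a statement that is pointwise in $x$ — so your instinct that uniformity near $x=0$ is the delicate point is right; but closing it needs something beyond the three conditions you invoke, e.g.\ $g$ bounded below, finite travel time from $0$, or a lower bound on $\mu$ near $0$.) In the equal-fission case your argument is fine as stated, since Assumption \ref{ass:equal fission} requires $0<\sup_x 1/g(x)<\infty$, so every trajectory reaches the region where $\Lambda$ is close to its positive limit within a time bounded uniformly in the initial state.
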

\begin{proof} 
Thanks to \eqref{infinite travel time}, 
	\[
	K(t,x,\Omega_0) \sim e^{-\lim_{z \rightarrow \infty} \Lambda(z) t} 
	\]
	as time tends to infinity. Hence for every $z_0 >- \lim_{z \rightarrow \infty} \Lambda(z)$ we have that $K$ satisfies \eqref{condition on k for reduction}.
	If $\nu $ satisfies Assumption \ref{ass: nu AC}, then this concludes the proof thanks to Proposition \ref{prop:fission ass}.
	
	Assume, instead, that $\nu$ is given by \eqref{nu AC fission} and that $g(2x) < 2 g(x) $. 
	Then the function $p_{T,x}$ introduced in Proposition \ref{prop:regularization} is equal to
	\[
	p_{T,x} : s \mapsto \frac{1}{2} X\left(T-s,  \frac{1}{2} X(s,x) \right). 
	\]
	This map is differentiable and by the chain rule
\[
	2 p_{T,x}'(s) = - \partial_1 X\left(T-s,  \frac{1}{2} X(s,x) \right) + \partial_2 X\left(T-s,  \frac{1}{2} X(s,x) \right)  \frac{1}{2} \frac{d}{ds} X(s,x)
\]

Using \eqref{eq:size evolution} we deduce that for every $a>0$, $\xi >0$ and $s>0$
\[
\frac{d X(a,(X(s,\xi))}{ds} = \partial_2 X \left(a,X(s,\xi) \right) g(X(s, \xi)). 
\]
On the other hand
\[
\frac{d X(a,(X(s,\xi))}{ds}=\frac{d X(a+s,\xi)}{ds} = g(X(a+s,\xi)). 
\] 
Hence substituting $s=0$ we deduce that 
\[
\partial_2 X\left(a,\xi  \right) = \frac{g\left(  X\left(a,\xi  \right) \right)}{g(\xi)}. 
\]

Therefore using $g(2x) < g(x)$ we deduce that
	\begin{align*}
	2 p_{T,x}'(s) &= - g\left( X\left(T-s,  \frac{1}{2} X(s,x) \right)\right) + \frac{g( X\left(T-s,  \frac{1}{2} X(s,x) \right)) }{g\left(\frac{1}{2} X(s,x) \right) } \frac{1}{2} g (X(s,x)) \\
	& = - g\left( X\left(T-s, \frac{1}{2} X(s,x) \right)\right)  \left( 1 - \frac{g (X(s,x))}{2g\left( \frac{1}{2} X(s,x) \right) }  \right) <0.
	\end{align*}
	As a consequence $p_{T,x}$ is monotone, hence invertible and such that $|A|=0$ implies $|p_{T,x}^{-1}(A)|=0$

The function $F_a $, given by \eqref{Fa}, is invertible and such that if $|A |=0$, then $|F_a^{-1}(A)|=0$ as the map $x \mapsto X(a,x)$ is monotone. 
\qed \end{proof}


\subsubsection{Waning and boosting} 
Consider a population of individuals structured by their level of immunity against a pathogen. 
Assume that the level of immunity decreases with rate $g$ and is boosted by infection and that the force of infection equals a constant $\gamma.$
We assume that the time that it takes the immune systems to clear the infection is negligible compared to the time in between two infections and consider, accordingly, boosting as instantaneous. 

Assume that the immunity level after the boosting is determined by the immunity level before the boosting event via the boosting function $f$.
\eu{We assume that $f$ is as in Figure \ref{fig:boosting} and we denote with $f_1$ the restriction of $f$ to the set $(0, x_c)$ and with $f_2 $ the restriction to $f$ on $(x_c, M]$. } 
The density of individuals with immunity level $x$ at time $t$, $n(t,x)$, satisfies the following PDE
	\begin{equation}\label{eq:waning and boosting den} 
	\partial_t n (t, x)+ \partial_x \left(  g(x) n(t,x) \right) = - \gamma n(t,x) +Sn(t,x)
	\end{equation} 
where 
\begin{equation}\label{op S}
S \varphi(x)=\begin{cases} 0 & x < m \\
 - \gamma \frac{1}{f'(f_1^{-1}(x))} \varphi ( f_1^{-1} (x)) +  \gamma \frac{1}{f'(f_2^{-1}(x))} \varphi ( f_2^{-1} (x)) & m<x<r  \\ 
\gamma \frac{1}{f'(f_2^{-1}(x))} \varphi ( f_2^{-1} (x)) & r<x<M.
\end{cases}
\end{equation}

The term  $S n(t, x)$ in equation \eqref{eq:waning and boosting den} represents the individuals that (re)appear in the population at time $t$ with state $x$ after boosting.
\eu{Since the function $f$ has a local minimum, an individual with immunity level $x \in [m,r]$ can be obtained as the result of the boosting of an individual in any one of the sets $(0, x_c)$, $(x_c,M)$ while an individual with state at birth $x \in [r, M]$ is produced by the boosting of an individual with state in $(x_c,M)$.}

The backward reformulation of equation \eqref{eq:waning and boosting den} is 
\begin{equation}\label{eq:waning and boosting den back} 
	\partial_t m (t, x)- g(x) \partial_x  m(t,x) = - \gamma m(t,x) + S^*m(t,x) 
\end{equation} 
where $ S^*$ is the (pre)dual operator of $S $ and is given by
\[
S^* \varphi(x) = \gamma \varphi(f(x)).
\]

	This model fits into the class of models described in Section \ref{sec:kernel modelling ingr}. 
	Hence, the population birth rate $B$, which in this case is the rate at which individuals appear in the population with a higher immunity level due to boosting, \eu{is the solution of equation \eqref{RE} with a  kernel $K$ given by formula \eqref{kernel}. 
The factor $X(a,\xi)$ in  \eqref{kernel} is the solution of the ODE \eqref{eq:size evolution}, with $g$ the rate of waning. 
The factor $\Lambda(x)=\gamma >0$ is the boosting rate. 
Since we assume the death rate to be equal to zero, we have that the term $\mathcal F$ in  \eqref{kernel}  is equal to
	\begin{equation}\label{survival probability w} 
	\mathcal F(t, \xi) := e^{- \gamma t }
	\end{equation}
and
\begin{equation}
\label{nu AC immune}
\nu(y, \omega):= \delta_{f(y)} (\omega) 
\end{equation}
for every $i$-state $y$ and for every set of states at birth $\omega $.}

\eu{We have chosen a specific form of $f$ in order to make the computations in Section \ref{sec:asympt waning boosting} not too demanding for the reader. For sure the result holds for a much wider class of boosting functions $f$ (see for instance \cite{diekmann2018waning}, but note that in that paper there is no proof that convergence is exponential).} 
We now specify the assumptions on the parameters that guarantee that we can apply the results presented in Section \ref{sec:method}. 
\begin{assumption}[Waning and boosting model, see Figure \ref{fig:boosting}]\label{ass:immunity}
We assume that 
\begin{enumerate}
\item $\Omega =(0,M] $;
\item \eu{the boosting function $f: \Omega \rightarrow [m, M] =:\Omega_0$} is such that $f(x)=f_1(x)$ if $x \in (0, x_c]$ while 
$f(x)=f_2(x)$ if $x \in ( x_c, M ]$ where 
\[
f_1(x)= -\alpha_1 x + q_1 \text{ and }  f_2(x)= \alpha_2 x + q_2 
\]
with 
\[
\alpha_1 =\frac{r-m}{x_c }, \quad  q_1 =r\textit{ where } 0<m<M, 0<r<M
\] 
\[
\alpha_2 =\frac{M-m}{M-x_c }, \quad  q_2 =m - x_c \frac{M-m}{M-x_c}; 
\] 
\item  $g: \Omega \rightarrow \mathbb (-\infty , 0)$ is a continuous function and  such that
\begin{equation}\label{assumption on jumps}
\frac{\alpha_2 g(y)}{g(f(y))}<1  \quad \text{ for a.e. } \ y \in \Omega_0.
\end{equation}
\end{enumerate}
\end{assumption}
The conditions on the parameters $g$, $\nu$, $\Lambda $ listed in Assumption \ref{ass:immunity} guarantee that the model is well defined and allow to apply the results of Section \ref{sec:asympt measure} as we will see in Section \ref{sec:asympt waning boosting}. 

In this work we focus on Assumption \ref{ass:immunity} and we assume that the set of the possible immunity levels is a compact set, but this assumption can be relaxed as for instance in \cite{diekmann2018waning}.

\begin{figure}[H] 
\centering
\includegraphics[scale=0.6]{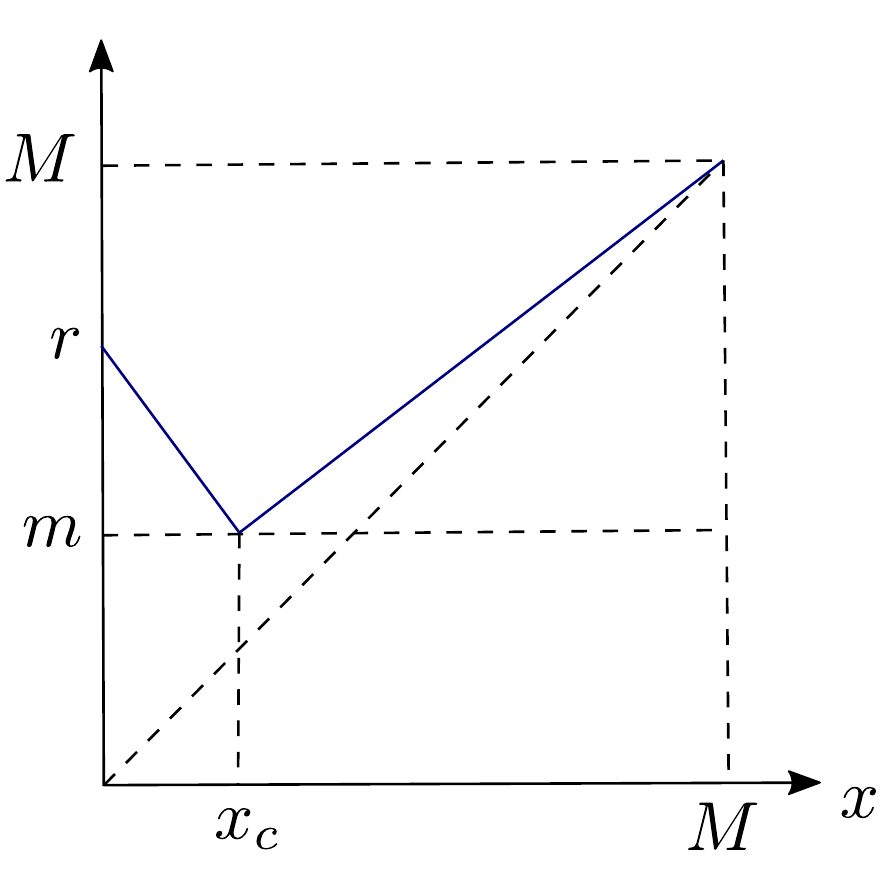}
\caption{Boosting function}\label{fig:boosting}
\end{figure}

Condition \eqref{assumption on jumps} is sufficient to guarantee that the kernel $K$ defined by \eqref{kernel} is regularizing. The meaning of this assumption is the following. The immunity level of an individual who boosts at time $t$ and then wanes for a time interval of length $dt$ is lower than the immunity level of an individual who wanes for $dt$ and then boosts at time $t+dt.$ This assumption can be seen as a congener of the assumption $g(2x) < 2 g(x)$ in the case of fission into equal sizes. We refer to \cite{diekmann2018waning} for more explanations.

\begin{lemma} \label{lem:examples work imm}
	Let $g, \mu, \Lambda, \nu$ satisfy Assumption \ref{ass:immunity}. 
	Then the kernel $K$ defined by \eqref{kernel} is a $- \gamma$-bounded regularizing kernel.
\end{lemma}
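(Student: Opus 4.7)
My plan is to verify the two defining properties of a $-\gamma$-bounded regularizing kernel by direct computation, in the same spirit as the proof of Lemma \ref{lem:examples work}. Since $\mathcal F(t,\xi)\Lambda(X(t,\xi)) = \gamma\, e^{-\gamma t}$ and $\nu(y,\Omega_0)=1$ for every $y$, we have $K(t,x,\Omega_0)=\gamma e^{-\gamma t}$, so \eqref{condition on k for reduction} holds with $z_0=-\gamma$ and $C=\gamma$. Local boundedness and the measurability requirements of Definition \ref{def: kernel of type 0} are immediate from the continuity of $g$, the continuity of $f$, and the fact that $\Lambda$ and $\mathcal F$ are continuous.

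To verify Assumption \ref{ass:AC} I would invoke the framework of Proposition \ref{prop:regularization} with $\beta\equiv 1$ and $q=f$. The only subtlety is that $f$ is not globally invertible, so the hypotheses of that proposition are not satisfied verbatim. However, inspection of its proof shows that invertibility is used solely to perform a change of variables; what is actually required is the zero-Lebesgue-measure pullback property for $F_a$ and $p_{t,x}$. I would verify this piece by piece, on the finitely many intervals on which each of these maps is strictly monotone with nonvanishing derivative; the standard area/coarea argument then carries over.

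For $F_a(x)=f(X(a,x))$, the inner flow $x\mapsto X(a,x)$ is a diffeomorphism of $\Omega_0$ onto its image because $g$ is continuous and strictly negative on $\Omega$. The outer map $f$ is piecewise affine with slopes $-\alpha_1$ and $\alpha_2$, both nonzero, so $F_a$ decomposes into at most two monotone pieces with nonvanishing derivative, giving the required property.

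For $p_{t,x}(a)=f(X(t-a,f(X(a,x))))$ the chain rule, combined with the identity $\partial_2 X(a,\xi)=g(X(a,\xi))/g(\xi)$ already used in the proof of Lemma \ref{lem:examples work}, yields
\[
p_{t,x}'(a)=f'(X(t-a,f(X(a,x))))\, g(X(t-a,f(X(a,x))))\,\left[\frac{f'(X(a,x))\, g(X(a,x))}{g(f(X(a,x)))}-1\right].
\]
Both $f'$ and $g$ are nonzero outside a negligible set, and since $g<0$ the ratio $g(X(a,x))/g(f(X(a,x)))$ is strictly positive. If $X(a,x)>x_c$, then $f'(X(a,x))=\alpha_2$ and the hypothesis \eqref{assumption on jumps} forces the bracket into $(-1,0)$; if $X(a,x)<x_c$, then $f'(X(a,x))=-\alpha_1<0$ makes the bracket strictly less than $-1$. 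In both cases $p_{t,x}'$ is nonzero almost everywhere, so $p_{t,x}$ is piecewise strictly monotone with nonvanishing derivative on each piece, and the zero-measure pullback holds.

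The main (minor) obstacle is the mismatch between the invertibility statement of Proposition \ref{prop:regularization} and the piecewise-monotone nature of $f$; the fix is the partition-into-monotone-pieces argument above. The computational heart of the argument is the sign analysis of $p_{t,x}'$, where Assumption \eqref{assumption on jumps} plays exactly the same role that $g(2x)<2g(x)$ plays in the equal-fission case.
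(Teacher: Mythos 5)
Your proposal is correct and follows essentially the same route as the paper: the identity $\mathcal F(t,x)\Lambda(X(t,x))=\gamma e^{-\gamma t}$ gives $-\gamma$-boundedness, and the regularizing property is checked through the maps $F_a$ and $p_{t,x}$ of Proposition \ref{prop:regularization}, with the same chain-rule computation of $p_{t,x}'$ (using $\partial_2 X(a,\xi)=g(X(a,\xi))/g(\xi)$) and the same use of \eqref{assumption on jumps} to pin down its sign. Your explicit observation that invertibility in Proposition \ref{prop:regularization} can be relaxed to piecewise strict monotonicity with a.e.\ nonvanishing derivative (i.e.\ the null-set pullback property is all that the proof actually uses) is, if anything, slightly more careful than the paper's own wording, which reaches the same conclusion via piecewise monotonicity without further comment.
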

\begin{proof} 
	The fact that $K$ is a $-\gamma $ kernel follows simply by noting that 
	\[
	\mathcal F(a,x) \Lambda(X(a,x) ) = \gamma e^{- \gamma a}.
	\]
	We next investigate whether $K$ is a regularizing kernel. 
	To this end we notice that the function $F_a$ introduced in Proposition \ref{prop:regularization},
	\[
	F_a : x \mapsto f(X(a,x))
	\]
	 is invertible because it is piecewise monotone.
	
	On the other hand, the function $p_{T,x}$ now reads 
	\[
	p_{T,x}: a \mapsto f(X(T-a, f(X(a,x))))
	\]
As in the proof of Lemma \ref{lem:examples work}, using the chain rule, condition \eqref{assumption on jumps} and the definition of $X$ as the solution of the ODE \eqref{eq:size evolution} we prove that 
	\begin{align*}
&	p_{T,x}'(a)= - f'(X(T-a, f(X(a,x)))) g(X(T-a, f(X(a,x)))) \cdot \\
	& \cdot \left[ 1 - \frac{f'(X(a,x)) g(X(a,x))}{g(f(X(a,x)))}\right] \quad a.e.\  a >0.
	\end{align*}
	Thanks to \eqref{assumption on jumps} we deduce that $p_{T,x}$ is piecewise monotone. Hence the desired conclusion follows. 	
\qed\end{proof}

\eu{\subsection{Asymptotic behaviour for the model examples} }\label{sec:asympt examples} 
\eu{
In this section we apply the results presented in Section \ref{sec:method} to the model examples. To this end we proceed as follows
\begin{enumerate}
\item  we use Lemma \ref{b and B} to associate to the kernel $K$ an operator kernel $\tilde{K}$; 
\item we define the discounted next generation operator $\mathbb K_\lambda$ as a function of $\tilde{K}$, using \eqref{NGO lambda};  
\item then we check that the operator $\mathbb K_\lambda$ is compact and non supporting. 
\end{enumerate} }

\subsubsection{Cell growth and fission (into unequal parts)}\label{sec:asymp unequal fission} 
In this section we assume that the parameters $g, \mu, \Lambda , \nu $ satisfy Assumption \ref{ass: nu AC}. Hence there exists a $z_0$ such that the kernel $K$, given by \eqref{kernel}, is a $z_0$-regularizing kernel. 

It remains to prove that, under the assumptions of the unequal fission model, $\mathbb K_\lambda$ satisfies the assumptions of Theorem \ref{thm:asymptotic behaviour AC}.
Recall that $K \in \mathcal I$. We denote with $k $ its density, given by
\begin{align}\label{kernel density AC}
k (a,x,y):=\mathcal F(a,x) \Lambda(X(a,x)) h(X(a,x), y ).
\end{align}
The operator $\tilde{K}$ introduced in \eqref{eq:ktilde}, is given by
\begin{equation}\label{Ktilde unequal}
\left(\tilde{K}(a) \varphi \right)(y) := \int_{\Omega_0} k(a,x,y) \varphi(x) dx \quad a \geq 0, \quad y \in \Omega_0
\end{equation}
and as a direct consequence we have the following result.
\begin{lemma}\label{AC rotation}
The kernel $\tilde{K}$ satisfies Assumption \ref{rot density}. 
\end{lemma}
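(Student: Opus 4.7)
The plan is to take the natural candidate $\tilde{k}(a,x,y) := k(a,x,y) = \mathcal F(a,x)\Lambda(X(a,x))h(X(a,x),y)$ from \eqref{kernel density AC} and verify the three requirements of Assumption \ref{rot density}: joint measurability, the uniform local bound on the $y$-integral, and the integral representation of $\tilde K(a)$. The integral representation is free: it is precisely the definition \eqref{Ktilde unequal} of $\tilde K$.

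For measurability, I would argue as follows. By Assumption \ref{ass:2nd level}, the maps $(a,x)\mapsto \mathcal F(a,x)$ and $(a,x)\mapsto X(a,x)$ are measurable, $\Lambda$ is measurable on $\Omega$, and $h$ is measurable on $\Omega\times\Omega_0$ (the density of $\nu(y,\cdot)$ exists by Assumption \ref{ass: nu AC}, point 5, and its joint measurability can be taken as part of the modelling data). Composition and multiplication of measurable functions then gives measurability of $\tilde k$ in each variable.

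For the uniform bound, I would use the mass-conservation identity \eqref{eq:consistency condition}, $\int_{\Omega_0} h(y,x)\,dx = 2$. Since $h$ is symmetric in the sense stated, this gives
\[
\int_{\Omega_0} \tilde k(a,x,y)\,dy = \mathcal F(a,x)\Lambda(X(a,x))\int_{\Omega_0} h(X(a,x),y)\,dy = 2\,\mathcal F(a,x)\Lambda(X(a,x)).
\]
By the $z_0$-bound \eqref{z0 k}, the right-hand side is bounded by $2Ce^{z_0 a}$ uniformly in $x$, so the supremum over $(a,x)\in[0,T]\times\Omega_0$ is finite.

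There is no real obstacle here: the statement is essentially a re-packaging of the explicit density representation \eqref{kernel density AC} combined with the $z_0$-boundedness of $K$ (established already in Lemma \ref{lem:examples work}) and the normalisation of $h$. The only mildly delicate point is ensuring the joint measurability in $(a,x,y)$ of the composite $h(X(a,x),y)$, which follows from the measurability of $X$ and of $h$ via standard composition arguments for Carathéodory-type functions.
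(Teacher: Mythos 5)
Your proposal is correct and follows the same route the paper takes implicitly: the paper states the lemma as a direct consequence of the representation \eqref{Ktilde unequal} with density \eqref{kernel density AC}, and your verification (the uniform bound via $\int_{\Omega_0} h(X(a,x),y)\,dy=2$ from \eqref{eq:consistency condition} together with the $z_0$-bound \eqref{z0 k}, plus measurability by composition) just spells out the details the paper leaves unstated.
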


The following theorem, see e.g \cite{kufner1977function}, is fundamental to prove the compactness of the operator $\mathbb K_\lambda$ in the model examples.  
\begin{theorem}[Fr\'echet-Kolmogorov]\label{lemma2}
	Let $T: X^{\mathbb C} \rightarrow X^{\mathbb C}$ be linear and bounded. 
	If for every $\varepsilon >0$ there exists a $\delta >0$ such that for every $0 < |h | < \delta $
	\[
	\int_{\Omega_0} \left| T \varphi(x+ h) - T \varphi (x) \right|dx \leq \varepsilon \| \varphi \|_1 
	\]
	for every $\varphi \in X^{\mathbb C}$, where $T\varphi(x+h)=0$ if $x+ h \notin \Omega_0$, then the operator $T$ is compact. 
\end{theorem}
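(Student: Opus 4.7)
The plan is to deduce this from the classical Kolmogorov–Riesz–Fréchet compactness criterion in $L^1$. Setting $F := T(B_1)$ where $B_1 = \{\varphi \in X^{\mathbb C} : \|\varphi\|_1 \leq 1\}$, it suffices to show that $F$ is relatively compact in $L^1(\Omega_0)$. Boundedness of $T$ gives that $F$ is bounded in $L^1$; the hypothesis, by linearity and homogeneity, gives precisely equicontinuity of translations: for every $\varepsilon > 0$ there exists $\delta > 0$ such that
\[
\| \tau_h (T\varphi) - T\varphi \|_1 \leq \varepsilon \quad \text{for all } \varphi \in B_1 \text{ and } |h| < \delta,
\]
where $\tau_h g (x) = g(x+h)$ and $T\varphi$ is extended by $0$ outside $\Omega_0$.

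The argument I would then run proceeds by mollification. Pick a standard mollifier $\rho_\delta$ supported in the ball of radius $\delta$. Consider the regularised family $F_\delta := \{ \rho_\delta * T\varphi : \varphi \in B_1\}$. Writing
\[
(\rho_\delta * T\varphi)(x) - T\varphi(x) = \int \rho_\delta(h) \left[ T\varphi(x-h) - T\varphi(x) \right] dh,
\]
Fubini plus the translation equicontinuity yields $\|\rho_\delta * T\varphi - T\varphi\|_1 \leq \varepsilon$ uniformly over $\varphi \in B_1$, so $F$ is within $\varepsilon$ (in $L^1$) of $F_\delta$. On the other hand, each $\rho_\delta * T\varphi$ is uniformly bounded in $L^\infty$ by $\|\rho_\delta\|_\infty \|T\|_{\mathrm{op}}$ and uniformly equicontinuous (via a bound on $\nabla \rho_\delta$). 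By Arzelà–Ascoli, restricted to any compact $K \subset \Omega_0$, the set $F_\delta|_K$ is relatively compact in $C(K)$ and hence in $L^1(K)$, so it admits a finite $\varepsilon$-net there. Combining the $\varepsilon$-net for $F_\delta|_K$ with the $\varepsilon$-approximation of $F$ by $F_\delta$ produces a $3\varepsilon$-net for $F$, showing total boundedness and therefore relative compactness.

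The main obstacle is handling tightness when $\Omega_0$ has infinite Lebesgue measure, since the translation hypothesis alone does not prevent mass from escaping to infinity (or to $0$ if $\Omega_0$ is only locally compact). For a bounded $\Omega_0$ — as in the waning-and-boosting example where $\Omega_0 = [m,M]$ — this issue is vacuous and the argument above is complete. For $\Omega_0$ unbounded — as in the fission case where $\Omega_0 = (0,\infty)$ — one needs an additional equi-tightness statement: for every $\varepsilon > 0$ there exists a compact $K \subset \Omega_0$ with $\int_{\Omega_0 \setminus K} |T\varphi|\, dx \leq \varepsilon$ for all $\varphi \in B_1$. In the present paper this tightness is not included in the statement of the theorem but is obtained from the model-dependent assumptions (such as \eqref{ass:comp2}) when the result is invoked to prove compactness of $\mathbb K_\lambda$; with tightness in hand, one restricts $F$ to such a $K$ up to error $\varepsilon$ and the preceding Arzelà–Ascoli step then yields a finite $\varepsilon$-net for the full family, concluding the proof.
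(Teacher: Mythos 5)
The paper does not prove this statement at all: it is quoted as a known theorem with a pointer to the literature (\cite{kufner1977function}), and is then invoked in Proposition \ref{K compact and nonsupporting} and Lemma \ref{lem:comp non supp equal}. Your mollification argument is the standard proof of the Kolmogorov--Riesz--Fr\'echet criterion (approximate $T\varphi$ by $\rho_\delta * T\varphi$ uniformly over the unit ball using translation equicontinuity, then apply Arzel\`a--Ascoli to the mollified family on a compact set, and patch the $\varepsilon$-nets), and it is correct as far as it goes; so in that sense you have supplied a proof where the paper supplies none.

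Your caveat about tightness is not a cosmetic remark but a genuine mathematical point, and you are right to insist on it: when $\Omega_0$ has infinite Lebesgue measure, boundedness plus uniform translation equicontinuity do \emph{not} imply compactness. The standard counterexample is convolution with a fixed smooth compactly supported kernel on $L^1(\mathbb R)$: it satisfies the displayed hypothesis with $\varepsilon = \|\rho(\cdot+h)-\rho\|_1$, yet translates of a bump function are mapped to translates of a fixed profile, which admit no $L^1$-convergent subsequence. So the theorem as stated in the paper is literally complete only for $\Omega_0$ of finite measure (e.g.\ $\Omega_0=[m,M]$ in the waning--boosting model); on $\Omega_0=(0,\infty)$ an equi-tightness condition, $\sup_{\|\varphi\|_1\le 1}\int_{\Omega_0\setminus K}|T\varphi|\,dx\le\varepsilon$ for some compact $K$, must be added to the hypotheses and verified separately. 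Be careful, though, with your closing assertion that the paper's model assumptions (such as \eqref{ass:comp2}) deliver this tightness when the theorem is applied to $\mathbb K_\lambda$ in the unequal-fission case: the paper's proof of Proposition \ref{K compact and nonsupporting} only verifies the translation estimate and never checks uniform smallness of mass near $0$ and at infinity, and condition \eqref{ass:comp2} controls the behaviour near $0$ only. Tightness at infinity would have to be extracted from additional structure (decay of the daughter densities $h(X(a,y),\cdot)$ uniformly in the mother size, or a restriction on $\Lambda$, $g$ at infinity) and does not follow automatically from Assumption \ref{ass: nu AC}; so if you want a self-contained argument you should state the tightness hypothesis explicitly in the theorem and prove it for the concrete kernel rather than attribute it to \eqref{ass:comp2}.
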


\begin{proposition}\label{K compact and nonsupporting} 
The operator $\mathbb K_\lambda$ is compact for every $\lambda \in \Delta $ and the operator $\mathbb K_\lambda $ is non-supporting for every $\lambda \in \Delta \cap \mathbb R$.
\end{proposition}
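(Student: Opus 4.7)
The plan is to establish compactness via the Fréchet--Kolmogorov criterion (Theorem \ref{lemma2}), exploiting the tightness assumption \eqref{ass:comp} on the fragmentation kernel together with \eqref{ass:comp2}, and to establish non-supportingness by combining the strict positivity of $h(y,x)$ for $x<y$ with the unboundedness of the development flow guaranteed by \eqref{infinite travel time}.

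For compactness, the first step is to perform the change of variables $a \mapsto z = X(a,x)$ in the integral defining $\mathbb{K}_\lambda\varphi(y)$. By \eqref{infinite travel time} together with $g>0$, the map $a \mapsto X(a,x)$ is a diffeomorphism from $[0,\infty)$ onto $[x,\infty)$ with $dz = g(z)\,da$ and inverse $a=\tau(x,z):=\int_x^z ds/g(s)$. This rewrites
\[
\mathbb{K}_\lambda\varphi(y) \;=\; \int_0^\infty h(z,y)\,\frac{\Lambda(z)}{g(z)}\,G_\lambda[\varphi](z)\,dz, \qquad G_\lambda[\varphi](z) \;:=\; \int_0^z e^{-\lambda\tau(x,z)}\,\mathcal{F}(\tau(x,z),x)\,\varphi(x)\,dx,
\]
so that the whole $y$-dependence is located in the factor $h(z,y)$. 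Applying the translation $y\mapsto y+\delta$ and using \eqref{ass:comp} to bound $\int_{\Omega_0}|h(z,y+\delta)-h(z,y)|\,dy \leq \varepsilon/z$ gives, after Fubini,
\[
\int_{\Omega_0}|\mathbb{K}_\lambda\varphi(y+\delta)-\mathbb{K}_\lambda\varphi(y)|\,dy \;\leq\; \varepsilon \int_0^\infty \frac{\Lambda(z)}{z\,g(z)}\,|G_\lambda[\varphi](z)|\,dz.
\]
The right-hand side is then shown to be bounded by $C(\lambda)\,\varepsilon\,\|\varphi\|_1$ by exchanging order of integration and bounding $\int_x^\infty \tfrac{\Lambda(z)}{zg(z)}\,e^{-\Re\lambda\,\tau(x,z)}\mathcal{F}(\tau(x,z),x)\,dz$: finiteness near $0$ is exactly \eqref{ass:comp2}, while at infinity one uses the $z_0$-bound \eqref{z0 k} to produce exponential decay (recall $\Re\lambda>z_0$). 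Theorem \ref{lemma2} then gives compactness.

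For non-supportingness, fix $\lambda\in\Delta\cap\mathbb{R}$ and a nonzero $\varphi\in X_+$, and let $A=\{\varphi>0\}$, which has positive Lebesgue measure. For any fixed $y>0$, choose any $x_0\in A$; by \eqref{infinite travel time} and the assumption that $\lim_{z\to\infty}\Lambda(z)>0$, there exists $a_0>0$ with $X(a_0,x_0)>\max(y,M)$ and $X(a_0,x_0)\in\mathrm{supp}(\Lambda)$. By continuity of $X$ in both arguments and strict positivity of $h(z,y)$ for $z>y$, the integrand in $\mathbb{K}_\lambda\varphi(y)$ is strictly positive on a neighbourhood of $(a_0,x_0)$ in $\mathbb{R}_+\times A$, so $\mathbb{K}_\lambda\varphi(y)>0$. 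Since $y$ was arbitrary, $\mathbb{K}_\lambda\varphi>0$ a.e.\ on $\Omega_0$, and the same then holds for every iterate $\mathbb{K}_\lambda^n\varphi$ with $n\geq 1$. Consequently $\langle F,\mathbb{K}_\lambda^n\varphi\rangle>0$ for every nonzero $F\in X_+^*$ and every $n\geq 1$, which is Definition \ref{def:nonsupporting} with $p=1$.

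The main obstacle is the bookkeeping in the compactness step: showing that $\int_0^\infty \tfrac{\Lambda(z)}{zg(z)}\,|G_\lambda[\varphi](z)|\,dz$ is controlled by $\|\varphi\|_1$ uniformly in $\lambda$ on compact subsets of $\Delta$. This is precisely the point where \eqref{ass:comp2} (tightness at $0$, i.e.\ no shattering) and the $z_0$-bound (tightness at infinity, i.e.\ no gelation) must be used in tandem, and it is the only place in the proof where these two tightness assumptions do real work. The non-supportingness argument is, by comparison, essentially a consequence of $h>0$ above the diagonal.
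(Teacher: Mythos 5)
Your proposal is correct and follows essentially the same route as the paper: compactness via the Fréchet--Kolmogorov criterion, using \eqref{ass:comp} for the translation estimate and then Fubini plus the change of variables $z=X(a,x)$, with \eqref{ass:comp2} controlling the region near $0$ and the $z_0$-bound \eqref{z0 k} controlling the region at infinity; and non-supportingness (with $p=1$) from $h(z,y)>0$ for $z>y$ together with \eqref{infinite travel time} and the positive limit of $\Lambda$ at infinity. The only differences are cosmetic: you perform the change of variables before the translation estimate, and you argue pointwise positivity of $\mathbb K_\lambda\varphi$ via a neighbourhood of a density point of $\{\varphi>0\}$, whereas the paper notes directly that the inner integral $\int_y^\infty e^{-\lambda\tau(y,z)}\hat{\mathcal F}(y,z)\frac{\Lambda(z)}{g(z)}h(z,x)\,dz$ is strictly positive for every $x$.
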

\begin{proof}
To prove compactness we apply Theorem \ref{lemma2}. 
Recalling Assumption \ref{ass:comp} we deduce that for every $\varepsilon >0$ there exists a $\delta_\varepsilon>0$ such that for every $\delta < \delta_\varepsilon$ and for every $\varphi \in X_+$
\begin{align*}
& \int_{\Omega_0}  \left| \left( \mathbb K_\lambda \varphi \right) (x+\delta)- \left(\mathbb K_\lambda \varphi \right) (x) \right| dx  \\
& \leq \int_0^\infty \int_{\Omega_0} \varphi(y) e^{- a \Re\lambda } \mathcal F(a,y) \Lambda(X(a,y))  \cdot \\
&\cdot \left| \int_{\Omega_0} h(X(a,y), x+\delta) - h(X(a,y), x ) dx \right| dy  da  \\
& \leq \varepsilon \int_0^\infty \int_0^\infty \varphi(y) e^{-a \Re \lambda } \mathcal F(a,y) \frac{\Lambda(X(a,y))}{X(a,y)} dy da, 
\end{align*}
where for the last inequality we have used \eqref{ass:comp}.  

Using Fubini's theorem and performing the change of variables $X(a,y)=x$ we deduce that
\begin{align*}
&\int_0^\infty \int_0^\infty  \varphi(y) e^{- a \Re \lambda } \mathcal F(a,y) \frac{\Lambda(X(a,y))}{X(a,y)} dy da \\
&= \int_0^\infty \int_0^\infty \varphi(y) e^{-a \Re \lambda } \mathcal F(a,y) \frac{\Lambda(X(a,y))}{X(a,y)} da dy \\
&= \int_0^\infty \int_y^\infty \varphi(y)  e^{- \tau(y,x)\Re \lambda  } \hat{\mathcal F}(y,x) \frac{\Lambda(x)}{x g(x) }dx dy  \\
&\leq  \int_0^1 \int_0^x \varphi(y) e^{- \tau(y,x) \Re \lambda } \hat{\mathcal F}(y,x) \frac{\Lambda(x)}{x g(x) } dy dx \\
& + \int_1^\infty \int_0^x \varphi(y) e^{-\tau(y,x) \Re \lambda} \hat{\mathcal F}(y,x) \frac{\Lambda(x)}{x g(x) } dy dx
\end{align*}
Now, using Fubini's theorem, the change of variables $x=X(a,y)$ and the bound \eqref{z0 k}, we estimate the second term in the following way 
\begin{align*}
&\int_1^\infty \int_0^x  \varphi(y) e^{- \tau(y,x) \Re \lambda } \hat{\mathcal F}(y,x) \frac{\Lambda(x)}{x g(x) } dy dx \\
&\leq \int_1^\infty  \int_0^x  \varphi(y) e^{-\tau(y,x) \Re \lambda } \hat{\mathcal F}(y,x) \frac{\Lambda(x)}{ g(x) } dy dx \\
& \leq \int_0^\infty  \int_0^x     \varphi(y) e^{-\tau(y,x) \Re \lambda } \hat{\mathcal F}(y,x) \frac{\Lambda(x)}{ g(x) } dy dx \\
& \leq \int_0^\infty  \int_y^\infty   \varphi(y) e^{-\tau(y,x) \Re \lambda } \hat{\mathcal F}(y,x) \frac{\Lambda(x)}{ g(x) } dx dy\\
& \leq \int_0^\infty  \int_0^\infty  \varphi(y) e^{-a \Re \lambda } \mathcal F(a,y) \Lambda(X(a,y)) da dy \\
& \leq C \int_0^\infty   e^{\left( -\Re \lambda + z_0\right)a }da  \int_0^\infty  \varphi(y) dy
\leq \frac{C}{\Re \lambda - z_0 } \|\varphi \|_1 
\end{align*}
On the other hand, thanks to \eqref{ass:comp2} 
\begin{align*}
\int_0^1 \int_0^x \varphi(y) e^{-\tau(y,x)\Re \lambda } \hat{\mathcal F}(y,x) \frac{\Lambda(x)}{x g(x) } dy dx \leq e^{-\tau(0,1)\Re \lambda } \| \varphi \|_1 \int_0^1 \frac{\Lambda(x)}{x g(x) } dx . 
\end{align*}

It follows that for every $\varepsilon >0$ there exists a $\delta$ such that 
\[
 \left\| \left( \mathbb K_\lambda \varphi \right) (\cdot+\delta)- \left(\mathbb K_\lambda \varphi \right) (\cdot) \right\|_{1}  \leq \varepsilon  \| \varphi \|_{1} .
\]
Applying Lemma \ref{lemma2} we conclude that $\mathbb K_\lambda$ is compact for every $\lambda \in \mathbb C$ with $\Re \lambda > z_0$.

We now prove that $\mathbb K_\lambda$ is non-supporting. 
To this end we firstly prove a stronger property. Indeed, thanks to \eqref{ass:comp} we can prove that $\mathbb K_\lambda \varphi (x)>0$ for every $x \in \Omega_0$, because for every $\varphi \in L_+^1(\Omega_0)$ and every $x \in \Omega_0$
\begin{align*}
(\mathbb K_\lambda \varphi )(x)=& \int_0^\infty \int_0^\infty \varphi(y) e^{-\lambda a} \mathcal F(a,y) \Lambda(X(a,y)) h(X(a,y), x) da dy \\
& = \int_0^\infty \varphi(y) \int_y^\infty  e^{-\lambda \tau(y,z)} \hat{\mathcal F}(z,y) \frac{\Lambda(z)}{g(z)} h(z, x) dz dy \\
\end{align*} 
Since we assume that $h(z,x)>0$ for every $z>x $, then \[
\int_y^\infty \hat{\mathcal F}(y,z) e^{-\lambda \tau(y,z)} \frac{ \Lambda(z) }{g(z)} h(z, x) dz >0.\] 
\eu{Since $\varphi$ belongs to $X_+$ there exists a set of positive Lebesgue measure $U \subset \Omega_0$, on which $\varphi$ is strictly positive.  
} 
\eu{The integration is over $\Omega_0$.  We conclude that $(\mathbb K_\lambda \varphi )(x)>0$ for every $x \in \Omega_0.$}
\qed\end{proof} 

\begin{proposition}
	Let $g, \mu, \Lambda, \nu $ be such that Assumption \ref{ass: nu AC} holds.
	Let $r, \psi_r$ be respectively the Malthusian parameter and the stable distribution.
The solution $B$ of \eqref{RE} satisfies 
\[
\| e^{-rt } B(t, \cdot) - c \Psi_r  \|\leq M e^{-v t}
\] 
where $\Psi_r(dx)=\psi(x) dx $ and $c, M>0$ and $v>0$ and the norm $\| \cdot \|= \| \cdot \|_{TV} = \| \cdot \|_{\flat}$.
Moreover
\[
sign(r)=sign(R_0 -1 )
\]
where $R_0 $ is defined in \eqref{basci repro n} and
\[
\mathbb K_0 \mu (\cdot) = \int_0^\infty \int_{\Omega_0} k(t,x,\cdot) \mu (dx) dt. 
\]
\end{proposition}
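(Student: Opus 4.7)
The plan is to verify that all the hypotheses of Theorem \ref{cor:measure asympt AC} are met in the present setting, since that result already packages the desired conclusion. Via \eqref{Ktilde unequal} we associate to the kernel $K$ the operator kernel $\tilde{K}$, and via \eqref{NGO lambda} the corresponding discounted next generation operator $\mathbb K_\lambda$.

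First, by Lemma \ref{lem:examples work}, the kernel $K$ is a $z_0$-bounded regularizing kernel for some $z_0 < 0$. Second, by Proposition \ref{K compact and nonsupporting}, the operator $\mathbb K_\lambda$ is compact for every $\lambda \in \Delta$ and non-supporting for every $\lambda \in \Delta \cap \mathbb R$. Third, by Lemma \ref{AC rotation}, $\tilde{K}$ satisfies Assumption \ref{rot density}. This last fact, combined with the compactness and non-supportingness just recalled, allows us to invoke Proposition \ref{lambda_d largest} and conclude the spectral gap condition: every $\lambda \in \Sigma \setminus \{r\}$ satisfies $\Re \lambda < r$, which is the final hypothesis needed to apply Theorem \ref{thm:asymptotic behaviour AC}.

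With all the hypotheses in place, Theorem \ref{thm:eigencouple} yields the unique real normalised eigencouple $(r, \psi_r)$ together with the sign characterisation $sign(r) = sign(R_0 - 1)$, and Theorem \ref{cor:measure asympt AC} delivers the exponential convergence estimate
\[
\| e^{-rt} B(t,\cdot) - c \Psi_r \| \leq M e^{-vt}
\]
in the total variation (equivalently, flat) norm.

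There is no genuinely new obstacle at this stage: all the analytic work has already been carried out in Proposition \ref{K compact and nonsupporting}, where the compactness proof via the Fr\'echet--Kolmogorov theorem crucially exploited the tightness condition \eqref{ass:comp} together with the integrability \eqref{ass:comp2}, and where non-supportingness rested on the strict positivity $h(y,x) > 0$ for $y > x$. The final proof is essentially a bookkeeping exercise that checks this concrete model fits the abstract framework of Section \ref{sec:method}; no further estimates or constructions are required.
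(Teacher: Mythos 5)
Your proposal is correct and follows essentially the same route as the paper: the paper states this proposition without a separate proof precisely because it is the direct combination of Lemma \ref{lem:examples work}, Lemma \ref{AC rotation}, Proposition \ref{K compact and nonsupporting}, and the abstract results Theorem \ref{thm:eigencouple} and Theorem \ref{cor:measure asympt AC} (with the spectral gap supplied by Proposition \ref{lambda_d largest} through Assumption \ref{rot density}), exactly as you have assembled them.
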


\subsubsection{Cell growth and fission (into equal parts)}
In this section we assume that the parameters $g, \mu, \Lambda , \nu $ are such that Assumption \ref{ass:equal fission} holds. 
Also in this case we have to check whether the assumptions of Lemma \ref{b and B} hold. 
In this case 
\begin{equation}\label{Ktilde equal} 
(\tilde{K}(s)\varphi) (z):= 4 \frac{g(X(-s,2z))}{g(2z)}\mathcal F(s, X(-s,2z)) \Lambda(2z) \varphi(X(-s, 2z))  
\end{equation} 
for every $z>0$ such that $s < \tau(0, 2z)$ while $(\tilde{K}(s)\varphi) (z)=0$ otherwise. 
Indeed 
\begin{align*}
& \int_{\Omega} K(s,x,\omega) \varphi(x) dx =2  \int_0^\infty \mathcal F(s,x) \Lambda(X(s,x)) \delta_{\frac{1}{2} X(s,x)} (\omega) \varphi(x) dx \\
& = 4  \int_{\omega} \chi_{[0, \tau(0, 2y)]}(s) \mathcal F(s,X(-s,2y)) \Lambda(y) \varphi(x) \frac{g(X(-s, 2y))}{g(2y)} dy 
\end{align*}

\begin{lemma}\label{lem:equal rotation}
The kernel $\tilde{K}$ satisfies Assumption \ref{rotation}. 
\end{lemma}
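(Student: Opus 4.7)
The plan is to read off the functions $c$ and $\gamma$ from formula \eqref{Ktilde equal} directly. Rewrite the definition of $\tilde{K}$ as
\[
(\tilde{K}(s)\varphi)(z) = c(s,z)\,\varphi(\gamma(s,z))
\]
by setting
\[
\gamma(s,z) := X(-s, 2z), \qquad c(s,z) := 4\,\frac{g(X(-s,2z))}{g(2z)}\,\mathcal F(s, X(-s,2z))\,\Lambda(2z)\,\chi_{\{s<\tau(0,2z)\}},
\]
where for $s \geq \tau(0,2z)$ (so that the backward trajectory $X(-s,2z)$ would exit $\Omega_0$) we extend $\gamma(s,z)$ by an arbitrary fixed value $z_* \in \Omega_0$, which is harmless since the prefactor $c(s,z)$ vanishes there.

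The measurability of $\gamma$ and $c$ follows from the fact that $X$ is jointly continuous in its arguments (as the flow of an ODE with continuous, Lipschitz right-hand side, by point~2 of Assumption~\ref{ass:equal fission}) and that $g$, $\mathcal F$, $\Lambda$ and $\tau$ are measurable by the standing hypotheses. It only remains to check piecewise monotonicity of $s \mapsto \gamma(s,z)$ for each fixed $z \in \Omega_0$. On the interval $[0, \tau(0,2z))$ we have
\[
\tfrac{d}{ds}\,X(-s,2z) = -\,g(X(-s,2z)) < 0
\]
because $g$ is strictly positive on $\Omega_0$; hence $s \mapsto X(-s,2z)$ is strictly decreasing there, and by our choice of extension $\gamma(\cdot,z)$ is constant on $[\tau(0,2z),\infty)$. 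In particular $\gamma(\cdot,z)$ is piecewise monotone, so Assumption~\ref{rotation} is satisfied.

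I do not foresee any real obstacle beyond handling the endpoint $s = \tau(0,2z)$ (if it is finite) cleanly; the extension by a constant takes care of this, and since $\mathcal F(s,X(-s,2z)) \to 0$ as $s \uparrow \tau(0,2z)$ (individuals are certain to have died or divided before size $0$) no continuity issue arises in the factorisation.
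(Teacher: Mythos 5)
Your proof is correct and takes essentially the same route as the paper, whose own proof just says the statement follows from the definition of $\tilde K$: you spell this out by reading off $c$ and $\gamma$ from \eqref{Ktilde equal} and verifying piecewise (in fact strict) monotonicity of $s \mapsto X(-s,2z)$ on $[0,\tau(0,2z))$ from $\frac{d}{ds}X(-s,2z) = -g(X(-s,2z)) < 0$, with the constant extension handling the region where $\tilde K(s)\varphi$ vanishes. Only your closing side remark that $\mathcal F(s,X(-s,2z)) \to 0$ as $s \uparrow \tau(0,2z)$ is not justified by the hypotheses (condition \eqref{ass:comp2} makes $\int_0^1 \Lambda(y)/g(y)\,dy$ finite and $\mu$ is merely measurable, so the exponent need not diverge), but this is harmless since Assumption \ref{rotation} requires only measurability and piecewise monotonicity, not continuity at that endpoint.
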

\begin{proof}
The statement follows by the definition of $\tilde{K}$.
\qed\end{proof}
The following theorem can be found in \cite{krasnoselskii1976integral} and will be important to prove the compactness of the operator $\mathbb K_\lambda$. 
\begin{theorem}\label{lemma1}
	Let $T: X^{\mathbb C} \rightarrow X^{\mathbb C}$ be linear and bounded and of the form 
	\[
	(T \varphi )(x)=\int_{\Omega_0} h(x,y) \varphi (y) dy. 
	\]
	Suppose that there exists an $h_+ $ such that 
	\[
	|h(x,y)| \leq h_+(x,y) \quad x,y \in \Omega_0
	\]
	and that the operator $T_+: X_+ \rightarrow X_+$
	\[
	(T_+ \varphi )(x):=\int_{\Omega_0} h_+(x,y) \varphi (y) dy 
	\]
	is compact. Then $T$ is compact.
\end{theorem}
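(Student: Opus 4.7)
The plan is to reduce the theorem, by linearity, to the case of a real nonnegative kernel pointwise dominated by $h_+$, and then to apply the Fr\'echet-Kolmogorov criterion of Theorem \ref{lemma2}. I would first write $h = (h_R^+ - h_R^-) + i(h_I^+ - h_I^-)$, where $h_R^\pm, h_I^\pm$ are the positive and negative parts of the real and imaginary parts of $h$. Each of these four nonnegative kernels is bounded pointwise by $|h| \leq h_+$, and correspondingly $T$ decomposes as a $\mathbb C$-linear combination of four positive kernel operators, each dominated by $T_+$. Since compact operators form a closed linear subspace of $\mathcal L(X^{\mathbb C})$, it suffices to prove the following lemma: if $0 \leq s(x,y) \leq h_+(x,y)$ pointwise and $T_+$ is compact on $L^1(\Omega_0)$, then the integral operator $S$ with kernel $s$ is also compact.

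To prove this lemma I would verify the hypotheses of Theorem \ref{lemma2} for $S$. The pointwise bound $|S\varphi(x)| \leq T_+|\varphi|(x)$ immediately transfers boundedness ($\|S\|_{op} \leq \|T_+\|_{op}$), tightness, and uniform integrability of $\{S\varphi : \|\varphi\|_1 \leq 1\}$ from $T_+$ to $S$, since each of these properties is in fact a monotone property of the image measure. What is left is the translation equicontinuity: for every $\varepsilon > 0$ there exists $\delta > 0$ such that $\|S\varphi(\cdot + h) - S\varphi(\cdot)\|_1 \leq \varepsilon \|\varphi\|_1$ for every $\varphi \in X^{\mathbb C}$ and $|h| < \delta$. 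Writing $\varphi = \varphi_1 - \varphi_2 + i(\varphi_3 - \varphi_4)$ with $\varphi_j \geq 0$ and applying Fubini, this further reduces to showing that $\sup_{y \in \Omega_0} \int_{\Omega_0} |s(x+h, y) - s(x, y)| dx \to 0$ as $h \to 0$.

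The main obstacle is precisely this last step, because the pointwise domination $0 \leq s \leq h_+$ only gives the useless estimate $|s(x+h,y) - s(x,y)| \leq h_+(x+h,y) + h_+(x,y)$, which is not small in $h$. To circumvent this I would invoke the classical Krasnoselskii construction: the compactness of $T_+$ on $L^1$ is, by a standard duality computation, equivalent to the relative $L^1(\Omega_0, dx)$-compactness of the family $\{h_+(\cdot, y) : y \in \Omega_0\}$. Using this, one approximates $s$ by a sequence of \emph{simple} kernels of the form $s_n(x, y) = \sum_{k=1}^{N_n} \alpha_k^{(n)}(y) \phi_k^{(n)}(x)$, where the $\phi_k^{(n)}$ are chosen from a precompact subset of $L^1(dx)$ containing the essential range of $y \mapsto h_+(\cdot, y)$, the $\alpha_k^{(n)}$ are uniformly bounded, and $s_n \leq h_+$. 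For each $s_n$ the translation equicontinuity is inherited from the translation equicontinuity of the finitely many $\phi_k^{(n)}$, uniformly in $y$. Finally, one shows that the associated operators $S_n$ converge to $S$ in the $L^1 \to L^1$ operator norm, whence $S$ is compact as a limit of compact operators. The full details of this approximation are the content of the argument in \cite{krasnoselskii1976integral}; our proposal is essentially to reproduce that argument after the reductions above.
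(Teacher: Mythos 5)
First, a remark on the comparison you were asked for: the paper contains no proof of Theorem \ref{lemma1} at all; it is quoted as a known result from \cite{krasnoselskii1976integral}. So your proposal is not competing with an argument in the text and has to stand on its own merits.

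It does not, and the gap is in the final step, not in a fixable technicality. Your reductions are fine up to a point: splitting $h$ into four nonnegative kernels dominated by $h_+$, and reducing compactness of a positive dominated kernel operator $S$ to the translation estimate $\sup_{y}\int_{\Omega_0}|s(x+h,y)-s(x,y)|\,dx\to 0$ via Theorem \ref{lemma2}, are both legitimate. But the pointwise bound $0\le s\le h_+$ transfers no information about translations of $s$: relative $L^1(dx)$-compactness of the family $\{h_+(\cdot,y)\}_y$ (your characterisation of compactness of $T_+$ acting from $L^1$) constrains $\{s(\cdot,y)\}_y$ in no way, so the claimed operator-norm approximation $S_n\to S$ by simple kernels built from a precompact family is unjustified. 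Worse, the lemma you reduce to — $0\le s\le h_+$ pointwise and $T_+$ compact imply $S$ compact — is false on $X=L^1(\Omega_0)$. Take $\Omega_0=(0,1)$, $h_+\equiv 1$, so that $T_+\varphi=\bigl(\int\varphi\bigr)\mathbf 1$ is rank one, and set $s(x,y)=\tfrac12\bigl(1+r_n(x)\bigr)$ for $y\in(1/(n+1),1/n]$, with $r_n$ the Rademacher functions. For $\varphi_n=n(n+1)\chi_{(1/(n+1),1/n]}$ one has $\|\varphi_n\|_1=1$, $S\varphi_n=\tfrac12(1+r_n)$, and $\|S\varphi_n-S\varphi_m\|_1=\tfrac12\|r_n-r_m\|_1=\tfrac12$ for $n\ne m$, so $S$ is not compact; taking instead $h(x,y)=r_{n(y)}(x)$ gives a kernel with $|h|=h_+$ exhibiting the same failure. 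Hence bare pointwise domination over $L^1$ cannot be the whole hypothesis: whatever the precise formulation in \cite{krasnoselskii1976integral}, it involves more than what is restated here (the analogous domination result does hold, for instance, when the source space is $L^p$ with $p>1$), and any correct proof must exploit additional structure of $h$. In the paper's applications such structure is present — the modulus of the complex kernel coincides, up to harmless factors, with the dominating kernel, and the oscillatory factor $e^{-i\,\mathrm{Im}\lambda\,\tau}$ is uniformly continuous, so the Fr\'echet--Kolmogorov estimate of Theorem \ref{lemma2} can be verified directly for the complex kernel itself — but your proposed route through pure domination cannot be closed.
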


\begin{lemma}\label{lem:comp non supp equal}
The operator $\mathbb K_\lambda$ is compact for every $\lambda \in \Delta $ and $\mathbb K_\lambda$ is non-supporting for every $\lambda  \in \Delta \cap \mathbb R.$
\end{lemma}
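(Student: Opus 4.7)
My plan is to follow the strategy used in the unequal-fission case (Proposition \ref{K compact and nonsupporting}), starting from the explicit form \eqref{Ktilde equal} of $\tilde K(a)$ in the equal-fission case. First I would rewrite $\mathbb K_\lambda \varphi = \int_0^\infty e^{-\lambda a}\tilde K(a)\varphi\,da$ as a concrete integral operator in $\varphi$ by the substitution $x = X(-a,2y)$. Using $\partial_2 X(a,\xi) = g(X(a,\xi))/g(\xi)$ from the proof of Lemma \ref{lem:examples work}, one has $dx = -g(x)\,da$ and obtains
\[
(\mathbb K_\lambda \varphi)(y) = \int_{\Omega_0} k_\lambda(y,x)\,\varphi(x)\,dx,\qquad k_\lambda(y,x) = \chi_{\{x<2y\}}\, \frac{4\Lambda(2y)}{g(2y)}\, e^{-\lambda\tau(x,2y)}\, \mathcal F(\tau(x,2y),x),
\]
where $\tau(x,u) = \int_x^u ds/g(s)$.

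For compactness I would apply Theorem \ref{lemma2}: the point is to show that for every $\varepsilon>0$ there is $\delta_0>0$ such that for every $|\delta|<\delta_0$ and every $\varphi$,
\[
\int_{\Omega_0}\bigl|(\mathbb K_\lambda \varphi)(y+\delta)-(\mathbb K_\lambda \varphi)(y)\bigr|\,dy \le \varepsilon \|\varphi\|_1.
\]
Swapping integrals, this reduces to a uniform-in-$x$ translation estimate on the kernel: $\sup_x \int_{\Omega_0} |k_\lambda(y+\delta,x)-k_\lambda(y,x)|\,dy \to 0$ as $\delta \to 0$. The smooth factor $e^{-\lambda\tau(x,\cdot)}\mathcal F(\tau(x,\cdot),x)$ is continuous in $y$ on $\{x<2y\}$ thanks to Lipschitz continuity of $g$; the $z_0$-boundedness of $K$ gives $\Lambda(2y)\mathcal F(\tau(x,2y),x)\le C e^{z_0\tau(x,2y)}$, which controls tails in $y$; and $\sup_\Omega 1/g<\infty$ localises the jump of $\chi_{\{x<2y\}}$ at $y=x/2$ so that its contribution is bounded by a multiple of $\delta$. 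For complex $\lambda$ I then invoke Theorem \ref{lemma1} with dominating kernel $k_{\Re\lambda}$.

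For non-supportingness, let $\lambda\in\Delta\cap\mathbb R$ and $\varphi\in X_+\setminus\{0\}$, and set $c$ equal to the essential infimum of $\supp\varphi$. Because $\mathcal F$ and $e^{-\lambda\tau}$ are strictly positive, the explicit formula gives $(\mathbb K_\lambda \varphi)(y)>0$ for every $y$ with $2y>c$ and $\Lambda(2y)>0$. Iterating, $\mathbb K_\lambda^n \varphi$ is strictly positive on $\supp\Lambda(2\cdot)\cap(c/2^n,\infty)$, and for $n$ large this coincides with $\supp\Lambda(2\cdot)$ up to a Lebesgue-negligible initial segment. Hence for every $F\in X_+^*\setminus\{0\}$ the pairing $\langle F,\mathbb K_\lambda^n\varphi\rangle$ is strictly positive for all sufficiently large $n$, which is exactly Definition \ref{def:nonsupporting}.

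The main obstacle will be the translation estimate, because the multiplier $\Lambda(2\cdot)/g(2\cdot)$ is only measurable, so a pointwise modulus of continuity in $y$ is unavailable. I would overcome this by decomposing $k_\lambda(y+\delta,x)-k_\lambda(y,x)$ into (i) a contribution involving differences of the continuous factor $e^{-\lambda\tau}\mathcal F$, controlled by uniform continuity on compact sets together with the exponential tail bound, and (ii) a multiplicative difference of $\Lambda(2\cdot)/g(2\cdot)$ against a fixed smooth weight, controlled by the $L^1$-translation continuity of this bounded measurable multiplier, with uniformity in $x$ coming from the integrability bounds derived above.
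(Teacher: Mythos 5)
Your overall architecture (the explicit kernel $k_\lambda(y,x)$ obtained from \eqref{Ktilde equal} by the substitution $x=X(-a,2y)$, a Fr\'echet--Kolmogorov translation estimate, domination via Theorem \ref{lemma1}, and the halving iteration for non-supportingness) is close to the paper's, and the non-supporting half is essentially the paper's argument. But the compactness half has a genuine gap at exactly the step you flag as the main obstacle. What is needed is $\sup_x\int_{\Omega_0}|k_\lambda(y+\delta,x)-k_\lambda(y,x)|\,dy\to0$, and the mechanism you propose --- $L^1$-translation continuity of the bounded measurable multiplier $\Lambda(2\cdot)/g(2\cdot)$ ``against a fixed smooth weight'', with uniformity in $x$ supplied by the integrability bounds --- does not deliver it. The weight is not fixed (it depends on $x$), translation continuity of a bounded measurable function is a local statement and is not uniform over windows that escape to infinity, and the bound $\Lambda(2y)\,\mathcal F(\tau(x,2y),x)\le Ce^{z_0\tau(x,2y)}$ controls tails only in the \emph{age} $\tau(x,2y)$, not in $y$: as $x\to\infty$ the whole mass of $k_\lambda(\cdot,x)$ sits in $\{y\ge x/2\}$, so you cannot reduce to a compact $y$-interval uniformly in $x$ and then translate there. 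The same difficulty affects the factor $\mathcal F(\tau(x,2y),x)$, whose $y$-dependence involves the merely measurable $\tilde\mu$, so ``continuity thanks to Lipschitz continuity of $g$'' gives no usable uniform modulus. A repair along your lines exists, but it must use an ingredient you never invoke: since $\lim_{z\to\infty}\Lambda(z)$ exists (Assumption \ref{ass: nu AC}, point 3), the oscillation $|\Lambda(2y+2\delta)-\Lambda(2y)|$ is pointwise small for $y\ge R$ uniformly in $\delta$, and it can be paired with $\sup_x\int \chi_{\{x<2y\}}e^{-\Re\lambda\tau(x,2y)}\hat{\mathcal F}(x,2y)\,g(2y)^{-1}dy<\infty$, while on $(0,R)$ ordinary translation continuity applies.

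The paper avoids the whole issue by a different use of domination: using \eqref{z0 k} it bounds $\mathcal F(s,X(-s,2y))\Lambda(2y)\le ce^{z_0 s}$, so that $|\mathbb K_\lambda\varphi|\le c\,\mathbb K^{+}(p)|\varphi|$ with $p=\Re\lambda-z_0>0$ and a dominating kernel $e^{-p\tau(z,2y)}\chi_{\{z<2y\}}/g(2y)$ containing neither $\Lambda$ nor $\mu$; the Kolmogorov estimate is then carried out only for $\mathbb K^{+}(p)$, which involves just the Lipschitz data $g$ and $\tau$, and Theorem \ref{lemma1} transfers compactness to $\mathbb K_\lambda$. In other words, in the paper Theorem \ref{lemma1} serves to strip out the rough factors \emph{before} translating, not merely (as in your plan) to pass from complex to real $\lambda$; adopting that step would close your gap. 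A final small caution: your concluding ``hence for every $F$'' in the non-supporting part, like the paper's, implicitly uses $\Lambda(2x)>0$ for a.e.\ $x\in\Omega_0$; if $\supp\Lambda=[M,\infty)$, positivity of $\mathbb K_\lambda^n\varphi$ never propagates below $M/2$, so functionals supported in $(0,M/2)$ are not reached.
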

\begin{proof}
Using \eqref{Ktilde equal} and \eqref{z0 k}, we deduce that for every $\varphi \in X_+$ 
\begin{align*} 
&  \left|\left(\mathbb K_\lambda \varphi \right)(y)\right| \\
& \leq  4  \int_0^{\tau(0,2y)} e^{-s \Re \lambda}   \frac{g(X(-s, 2y))}{g(2y)}\mathcal F(s, X(-s,2y)) \Lambda(2y) \varphi(X(-s, 2y))  ds \\
& \leq c \int_0^{\tau(0,2y)}  e^{-s \Re \lambda} e^{z_0 s }  \frac{g(X(-s, 2y))}{g(2y)} \varphi(X(-s, 2 y))  ds \\
\end{align*}
Let $p >0$. The operator $\mathbb K^{+}(p)$ defined by 
\[
\left( \mathbb K^{+}(p) \varphi \right)(y) = \int_0^{\tau(0, 2y)}  e^{-p s} \frac{g(X(-s, 2y))}{g(2y)} \varphi(X(-s,2y)) ds  
\]
is a linear bounded map from $X_+$ to $X_+$. 
Indeed, thanks to the second assumption in \eqref{ass:comp2}, if we assume that $\varphi \in X_+$, then 
\begin{align*}
&\int_0^\infty \left( \mathbb K^{+}(p) \varphi\right) (y) dy =\int_0^\infty  \int_0^{\tau(0, 2y)}  e^{-p s} \varphi(X(-s,2y)) \frac{g(X(-s, 2y))}{g(2y)}ds dy \\
&= \int_0^\infty  \int_0^{2y} e^{-p \tau(z,2y)} \frac{\varphi(z)}{g(2y)} dz dy \leq  \int_0^\infty  \int_{z/2}^\infty  \frac{e^{-p \tau(z,2y)}}{g(2y)} dy \varphi(z) dz \\
&=\frac{1}{p}  \| \varphi \|_1\\
\end{align*}

We want to prove that $\mathbb K_\lambda$ it is compact. To this end, we apply Lemma \ref{lemma2}. 
Consider $\delta >0$, then 
\begin{align*}
&\left| \mathbb K^{+}(p) \varphi(\delta+y ) - \mathbb K^{+}(p) \varphi(y) \right| \\
& \leq \left| \int_0^{\tau(0, 2y+ 2\delta)}  e^{-p s} \frac{g(X(-s,2y+ 2\delta))}{g(2y+2\delta)} \varphi(X(-s,2y+ 2\delta )) ds  \right. \\
& \left. -  \int_0^{\tau(0, 2y)}  e^{-p s} \frac{g(X(-s,2y))}{g(2y)} \varphi(X(-s,2y)) ds \right| \\
& \leq \left| \int_0^{2y+2\delta }  e^{-p \tau(z,2y+2\delta)}  \frac{\varphi(z)}{g(2y+2\delta )} dz - \int_{0}^{2y}  e^{-p \tau(z, 2y)} \frac{\varphi(z)}{g(2y)} dz \right| \\
& \leq  \left| \int_{2y}^{2y+2\delta }  e^{-p \tau(z,2y+2\delta)}  \frac{\varphi(z)}{g(2y+2\delta)} dz \right| \\
&+ \int_{0}^{2y}  \left|\frac{e^{-p \tau(z, 2y)}}{g(2y)} - \frac{e^{-p \tau(z,2y+2 \delta)}}{{g(2y+2\delta)}} \right| \varphi(z) dz . \\
\end{align*} 
\eu{Moreover}
\begin{align*}
 & \int_{2y}^{2y+ 2\delta}  \left|e^{-p \tau(z,2y+2\delta)} \frac{\varphi(z)}{g(2y+2 \delta)} \right| dz \\
& \leq   \sup_{x \in \Omega}\frac{1}{g(x)}  e^{-p \tau(2y ,2y+2\delta)}  \int_{2y}^{2y+ 2\delta}  e^{-p \tau(z,2y)} \varphi(z) dz . \\
\end{align*}
Consequently, 
\begin{align*}
&\| \mathbb K^+ (p) \varphi (\cdot+\delta) - \mathbb K^+ (p) \varphi(\cdot ) \|_1 \\
& \leq \sup_{x \in \Omega}\frac{1}{g(x)} \int_0^\infty    e^{-p \tau(2y ,2y+2\delta)} \int_{2y}^{2y+ 2\delta}  e^{-p \tau(z,2y)} \varphi(z) dz   dy  \\
&+ \int_0^\infty  \int_{0}^{2y}  \left| \frac{e^{-p \tau(z, 2y)}}{g(2y)} - \frac{e^{-p \tau(z,2y+2 \delta)}}{g(2y+2\delta)} \right| \varphi(z) dz  dy \\
& \leq \sup_{x \in \Omega}\frac{1}{g(x)} \int_0^\infty \int_{z/2-\delta}^{z/2}   e^{-p \tau(2y ,2y+2\delta)}  e^{-p \tau(z,2y)}  dy \varphi(z) dz \\
&+\left( \sup_{x \in \Omega}  \frac{1}{g(x)} \right)^2 \int_0^\infty  \int_{z/2}^\infty  \left| g(2y+2\delta)e^{-p \tau(z, 2y)} - e^{-p \tau(z,2y+2 \delta)} g(2y ) \right| \varphi(z)  dy dz  \\
& \leq \sup_{x \in \Omega}  \frac{1}{g(x)} \hat{c} \delta \|\varphi \|_1  +\left(\sup_{x \in \Omega}  \frac{1}{g(x)}\right)^2 c' \delta \|\varphi \|_1  \\
& \leq \sup_{x \in \Omega}  \frac{1}{g(x)} \left[ \hat{c} \delta \|\varphi \|_1 + c^*  2 \delta \|\varphi \|_1  \right] \\
\end{align*} 
where $\hat{c}, c^*, c'>0$ and where we have used the Lipschitz continuity of $g$ and \eu{of $\tau$ with respect to its second argument.} 
We deduce that $\mathbb K^{+} (p)$ is compact for every $p>0$, hence by Lemma \ref{lemma1} we have that $\mathbb K_\lambda$ is compact for every $\lambda \in \mathbb C$ with  $\Re \lambda > z_0$.

We now check that $\mathbb K_\lambda $ is non-supporting for every $\lambda \in \mathbb R \cap \Delta$. 
For every $\varphi \in X_+$, there exists a set $S$ of positive Lebesgue measure with $\varphi(x) \neq 0$ for every $x \in S.$ Therefore for every $2 x> x_1:=\inf S$
\[
 \mathbb K_\lambda \varphi (x)=4\Lambda (2x) \int_0^{2x} e^{-\lambda \tau(z, 2x)} \hat{\mathcal F}(z,2 x)  \frac{\varphi(z)}{g(2x)} dz>0.
\] 
This implies that for every $x > x_1/4 $ we have that $\mathbb K_\lambda^2 \varphi (x) >0$, indeed 
\[
\mathbb K_\lambda^2 \varphi(x)= 4 \int_0^{2x} e^{-\lambda \tau(z, 2x)} \hat{\mathcal F}(z,2 x)  \frac{ \mathbb K_\lambda \varphi (z) }{g(2x)} dz. 
\]
Iterating this argument we deduce that for every $\varphi \in X_+$ there exists an $x_1>0$ and an $n \in \mathbb N$ such that $\mathbb K^n_\lambda \varphi (x)>0$ for every $x > \frac{x_1}{2^n}$. 
As a consequence, this implies that for every $F \in L^\infty_+(\Omega_0)$ and every $\varphi \in X_+$ there exists an $n$ such that $\langle F, \mathbb K_\lambda^n \varphi \rangle >0.$
\qed\end{proof}

\begin{proposition}
	Let $g , \mu, \Lambda, \nu $ be such that Assumptions \ref{ass: nu AC} holds. 
		Let $r, \psi_r$ be respectively the Malthusian parameter and the stable distribution.  
The solution $B$ of \eqref{RE} satisfies 
\[
\| e^{-rt } B(t, \cdot) -c  \Psi_r  \|\leq M  e^{-v t}
\] 
where $c, M, v>0$ and $\Psi_r(dx)=\psi_r dx $ and $\| \cdot \|= \| \cdot \|_{TV} = \| \cdot \|_{\flat}$, 
\[
sign(r)=sign (R_0-1 ), 
\]
and $R_0$ is the spectral radius of $\mathbb K_0$ defined by
\[
\mathbb K_0 \varphi (x) = \int_0^\infty \frac{g(X(-t,2x))}{g(2x)} \mathcal F(t, X(-t, 2x)) \Lambda(2x) \varphi(X(-t, 2x)) dt . 
\]
\end{proposition}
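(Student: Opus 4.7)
The plan is to assemble the conclusion from the ingredients already proved for the equal fission model and then invoke the general Theorem \ref{cor:measure asympt AC}. First I would verify each hypothesis of that theorem in turn. The fact that $K$, defined by \eqref{kernel} with $\nu$ given by \eqref{nu AC fission}, is a $z_0$-bounded regularizing kernel is exactly the content of Lemma \ref{lem:examples work} applied under Assumption \ref{ass:equal fission}. Next, the associated operator kernel $\tilde{K}$ given by \eqref{Ktilde equal} is of the pull-back form required by Assumption \ref{rotation}, with $\gamma(a,\cdot) = X(-a, 2\,\cdot)$ (monotone because it is the flow of \eqref{eq:size evolution} in reverse) and $c(a,\cdot) = 4\,\frac{g(X(-a,2\,\cdot))}{g(2\,\cdot)}\mathcal{F}(a,X(-a,2\,\cdot))\Lambda(2\,\cdot)$, as proved in Lemma \ref{lem:equal rotation}. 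Finally, Lemma \ref{lem:comp non supp equal} gives that $\mathbb{K}_\lambda$ is compact for every $\lambda\in\Delta$ and non-supporting for every $\lambda \in \Delta\cap\mathbb{R}$.

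With these hypotheses at hand, Theorem \ref{cor:measure asympt AC} applies directly and yields the asynchronous exponential growth estimate
\[
\|e^{-rt}B(t,\cdot) - c\Psi_r\| \leq M e^{-vt},
\]
with $\Psi_r(dx)=\psi_r(x)\,dx$ and $\|\cdot\| = \|\cdot\|_{TV} = \|\cdot\|_{\flat}$, for suitable constants $c, M, v>0$. The sign statement $\mathrm{sign}(r)=\mathrm{sign}(R_0-1)$ is the last assertion of Theorem \ref{thm:eigencouple}, since $R_0 = \rho(\mathbb{K}_0)$ by definition \eqref{basci repro n}.

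It remains to make explicit the next generation operator $\mathbb{K}_0$. Substituting $\lambda=0$ in \eqref{NGO lambda} and using the formula \eqref{Ktilde equal} for $\tilde{K}$, I obtain, for every $\varphi\in X_+$,
\[
\mathbb{K}_0\varphi(x) = \int_0^{\tau(0,2x)} 4\,\frac{g(X(-t,2x))}{g(2x)}\mathcal{F}(t,X(-t,2x))\Lambda(2x)\,\varphi(X(-t,2x))\,dt,
\]
which, upon extending the integrand by zero for $t>\tau(0,2x)$ (so that $X(-t,2x)$ would leave $\Omega_0$), is precisely the expression in the statement. The only real obstacle is therefore bookkeeping: making sure that the change of variables and the convention on the support of the integrand match exactly the expression given for $\mathbb{K}_0$, and that the factor $4$ coming from the Jacobian combines correctly with the multiplicity $2$ of $\nu$ already present in \eqref{nu AC fission}. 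All the analytical hard work has already been done in Lemmas \ref{lem:examples work}, \ref{lem:equal rotation}, and \ref{lem:comp non supp equal} together with the main Theorem \ref{cor:measure asympt AC}; this proposition is a corollary.
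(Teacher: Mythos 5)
Your proposal is correct and coincides with the paper's (implicit) argument: the paper states this proposition without a written proof, precisely because it is the direct corollary of Lemma \ref{lem:examples work}, Lemma \ref{lem:equal rotation} and Lemma \ref{lem:comp non supp equal} combined with Theorem \ref{cor:measure asympt AC} (for the convergence estimate) and Theorem \ref{thm:eigencouple} (for $\mathrm{sign}(r)=\mathrm{sign}(R_0-1)$), exactly as you assembled it. Your bookkeeping remark is also on target: substituting $\lambda=0$ in \eqref{NGO lambda} with $\tilde{K}$ from \eqref{Ktilde equal} produces the factor $4$ (multiplicity $2$ of $\nu$ times the Jacobian $2$ of the change of variables), which the displayed formula for $\mathbb K_0$ in the statement omits, so your expression is the consistent one.
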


\subsubsection{Waning and boosting}\label{sec:asympt waning boosting}
\eu{In this section we make Assumption \ref{ass:immunity}.
In this case the operator kernel $\tilde{K}$ is such that for every $a \geq 0$ the operator $\tilde{K}(a)$ belongs to $ \mathcal L(L_+^1(\Omega_0))=\mathcal L(L_+^1([m,M]))$ and is equal to 
\begin{equation}\label{Ktilde imm} 
 (\tilde{K}(s)\varphi) (z):=\begin{cases} 
 \gamma  e^{-\gamma s} \left[- \beta_1(z)  \varphi(X(-s, f_1^{-1}(z) ))g(X(-s, f_1^{-1}(z)) \right. &\\ 
\left. + \beta_2(z) \varphi(X(-s, f_2^{-1}(z) ))g(X(-s, f_2^{-1}(z)) \right], &z<r  \\
 \gamma  e^{-\gamma s} \beta_2(z) \varphi(X(-s, f_2^{-1}(z) ))g(X(-s, f_2^{-1}(z)), &z>r
\end{cases} 
\end{equation} 
for $z \in [m,M]$, where 
\begin{equation}\label{eq:Di}
\beta_1(z)= \frac{1}{g(f^{-1}_1(z)) f'(f_1^{-1}(z))}>0 
\end{equation}
while
\begin{equation}\label{eq:Di}
\beta_2(z)= \frac{1}{g(f^{-1}_2(z)) f'(f_2^{-1}(z))}<0. 
\end{equation}}

\eu{Indeed the measure \eqref{int AC} is equal to 
\begin{align*}
& \int_\Omega K(s,x, \omega) \varphi(x) dx = \gamma e^{-\gamma s} \int_{\Omega_0} \delta_{f(X(s,x))}(\omega) \varphi(x) dx\\
&=  \gamma e^{-\gamma s} \int _{\Omega} \delta_{f(y)}(\omega)\frac{g(X(-s,y))}{g(y)} \varphi(X(-s,y) ) dy \\
&= \gamma  e^{-\gamma s} \left[ \int _{\omega \cap [m,r] } \left( - \frac{\varphi(X(-s, f_1^{-1}(z) )) g(X(-s,f_1^{-1}(z)))}{g(f_1^{-1}(z)) f'(f_1^{-1}(z))} \right. \right.  \\
&+ \left.  \frac{\varphi(X(-s, f_2^{-1}(z) )) g(X(-s,f_2^{-1}(z)))}{g(f_2^{-1}(z)) f'(f_2^{-1}(z))}\right) dz \\
&+  \left.  \int _{\omega \cap [r,M] }   \frac{\varphi(X(-s, f_2^{-1}(z) )) g(X(-s,f_2^{-1}(z)))}{g(f_2^{-1}(z)) f'(f_2^{-1}(z))}dz\right] 
\end{align*} 
Hence the density of the measure \eqref{int AC} is \eqref{Ktilde imm}.}

\eu{By the definition of $\tilde{K}$ we deduce the following. }
\begin{lemma}
The kernel $\tilde{K} $  satisfies Assumption \ref{rotation}.
\end{lemma}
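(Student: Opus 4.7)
The plan is to read off the required data $c$ and $\gamma$ directly from the explicit formula \eqref{Ktilde imm} for $\tilde{K}(s)$. Because the boosting function $f$ has two monotone branches $f_1,f_2$, the operator $\tilde{K}(s)$ is a sum of at most two composition operators: one coming from the preimage through $f_1$ (active only for $z\in[m,r]$) and one coming from the preimage through $f_2$ (active on all of $[m,M]$). So I would first write $\tilde{K}(s) = T_1(s) + T_2(s)$ with each $T_i$ in the form $\varphi \mapsto c_i(s,\cdot)\,\varphi(\gamma_i(s,\cdot))$ required by Assumption \ref{rotation}.

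The natural choices are $\gamma_i(s,z):= X(-s, f_i^{-1}(z))$, together with
\[
c_1(s,z):= -\gamma\, e^{-\gamma s}\, \beta_1(z)\, g\!\left(X(-s, f_1^{-1}(z))\right)\chi_{[m,r]}(z),
\]
\[
c_2(s,z):= \gamma\, e^{-\gamma s}\, \beta_2(z)\, g\!\left(X(-s, f_2^{-1}(z))\right).
\]
The next step is to verify three essentially bookkeeping requirements: measurability of $c_i$ and $\gamma_i$ follows at once from the measurability of the flow $X$, of $g$, of $f_i^{-1}$ and of the $\beta_i$; and positivity of $c_i$ is sign tracking, since Assumption \ref{ass:immunity} forces $g<0$ on $\Omega$, the affine formulas for $f_1,f_2$ give $\beta_1>0$ and $\beta_2<0$, and $\gamma>0$, so both $c_i$ come out nonnegative.

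The only point that requires a genuine (if short) argument is piecewise monotonicity of $s\mapsto \gamma_i(s,z)$ for each fixed $z$. Fix $y=f_i^{-1}(z)$; then $s \mapsto X(-s,y)$ is the solution of the autonomous ODE $\dot x = g(x)$ traced backwards in time. Since $g<0$ on $\Omega$, the forward flow is strictly decreasing in $s$, hence the backward flow is strictly increasing, and $\gamma_i(\cdot,z)$ is in fact strictly monotone on its natural domain; piecewise monotonicity is therefore immediate.

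I expect no real obstacle in this proof. The only subtlety worth flagging is that Assumption \ref{rotation} is literally phrased in terms of a single composition operator, whereas here $\tilde{K}(s)$ is a finite sum of two such. This is entirely benign: Assumption \ref{rotation} is used in Proposition \ref{lambda_d largest} only to apply Theorem \ref{thm:rudin} in order to force, for an eventual eigenfunction, an identity of the form $s\, Im\,\lambda + \ell(\gamma(s,x)) = \text{const}$, and exactly the same reasoning applies branch by branch to the two rotations $\gamma_1,\gamma_2$ constructed above.
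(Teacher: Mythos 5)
Your verification is correct and is essentially what the paper intends: the paper offers no argument beyond the phrase ``by the definition of $\tilde K$'', and your explicit identification of the weights and maps $\gamma_i(s,z)=X(-s,f_i^{-1}(z))$, together with the sign bookkeeping ($\beta_1>0$, $\beta_2<0$, $g<0$ by Assumption \ref{ass:immunity}, so both coefficients are nonnegative) and the strict monotonicity of $s\mapsto X(-s,y)$ coming from $g<0$, is exactly the content being asserted. The substantive point is the one you flag yourself: on $(m,r)$ formula \eqref{Ktilde imm} is a \emph{sum} of two composition operators, so Assumption \ref{rotation} as literally written (a single pair $(c,\gamma)$) cannot hold, since the operator evaluates $\varphi$ at two distinct points; the paper silently glosses over this. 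Your resolution is the right one: the assumption enters the proof of Proposition \ref{lambda_d largest} only through the equality-in-the-triangle-inequality step, and Theorem \ref{thm:rudin} (Rudin's Theorem 1.39, valid on an arbitrary measure space, here the disjoint union of two copies of $\mathbb R_+$ in the $a$-variable) yields one common phase constant $\beta$ for both branches; with that, the computation giving $\ell\equiv\beta \ (\mathrm{mod}\ 2\pi)$ and then forcing the imaginary part of $\lambda$ to vanish goes through verbatim for $\tilde K(a)\varphi = c_1(a,\cdot)\varphi(\gamma_1(a,\cdot))+c_2(a,\cdot)\varphi(\gamma_2(a,\cdot))$. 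So your proof is sound and in fact slightly more careful than the paper, which states the lemma without proof and does not record the finite-sum extension of Assumption \ref{rotation} that the waning--boosting kernel actually requires.
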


\begin{lemma}
The operator $\mathbb K_\lambda$ is compact for every $\lambda \in \Delta $ and non-supporting for every $\lambda \in \Delta \cap \mathbb R$.
\end{lemma}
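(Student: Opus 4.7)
The plan is to treat the two assertions separately, paralleling the approach used for the fission examples above but taking into account the piecewise structure of the operator kernel \eqref{Ktilde imm} caused by the V--shape of $f$ (with case split at $z = r$).

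For compactness, I would first perform the change of variable $u = X(-s, f_i^{-1}(z))$ in each branch of \eqref{Ktilde imm} (so that $ds = -du/g(u)$, recalling $g < 0$) and then integrate against $e^{-\lambda s}$. This rewrites $\mathbb K_\lambda$ as an integral operator
\[
(\mathbb K_\lambda \varphi)(z) = \int_m^M k_\lambda(z, u)\,\varphi(u)\,du,
\]
with kernel $k_\lambda$ given by a single $f_2$--branch for $z > r$ and the sum of an $f_1$--branch and an $f_2$--branch for $z < r$. In each piece $k_\lambda$ is bounded on the compact set $[m,M]^2$ and is Lipschitz (indeed smooth) in $z$, because $\beta_i$, $f_i^{-1}$, and $\sigma(u, f_i^{-1}(z)) := \int_{f_i^{-1}(z)}^u ds/(-g(s))$ are all smooth on the relevant subintervals. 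Since the kernel need not be signed in an obvious way due to the cancellations between the $\beta_1$ and $\beta_2$ terms at $z < r$, I would invoke Theorem \ref{lemma1} to dominate $|k_\lambda|$ by a positive kernel, and then apply the Fréchet--Kolmogorov criterion (Theorem \ref{lemma2}): estimating $\|\mathbb K_\lambda \varphi(\cdot + h) - \mathbb K_\lambda \varphi(\cdot)\|_1$ is straightforward thanks to the piecewise Lipschitz dependence on $z$, and the only new feature relative to Lemma \ref{lem:comp non supp equal} is the jump of $k_\lambda(\cdot, u)$ at $z = r$, which only contributes an extra term of order $h\|\varphi\|_1$.

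For non-supportingness, let $\varphi \in X_+\setminus\{0\}$. Working directly from the integral form of $\mathbb K_\lambda$, I would first observe that after the change of variable $(\mathbb K_\lambda \varphi)(z)$ is positive iff $\varphi$ has mass on a set of the form $[u_{\min}(z), M]$, where $u_{\min}(z) = f_1^{-1}(z) \wedge f_2^{-1}(z)$ for $z<r$ and $u_{\min}(z) = f_2^{-1}(z)$ for $z > r$. Tracking the support through one application of $\mathbb K_\lambda$: if $u^* := \operatorname{ess\,sup}\operatorname{supp}(\varphi) > x_c$ the image is strictly positive on all of $(m, \max(r, f_2(u^*)))$, while if $u^* \le x_c$ the image is strictly positive on $(f_1(u^*), r)$. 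Iterating this analysis and exploiting Assumption \eqref{assumption on jumps}, which governs the interaction between waning and boosting, I would show that the essential supremum of the support of $\mathbb K_\lambda^n \varphi$ grows and eventually reaches $M$, while the essential infimum falls to $m$; hence $\mathbb K_\lambda^n \varphi$ is strictly positive a.e. on $[m, M]$ for all $n$ past some $N$, and consequently $\langle F, \mathbb K_\lambda^n \varphi\rangle > 0$ for every $F \in X_+^*\setminus\{0\}$ and every such $n$.

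The main obstacle is the non-supportingness step. In contrast with the fission case, where $f$--type transport was strictly monotone on all of $\Omega$, here the minimum of $f$ at $x_c$ means that the image of $f_1$ is restricted to $[m, r]$; to reach any $z \in (r, M]$ one must produce a pre--boost state in $(x_c, M]$, which in turn requires the previous state--at--birth to lie above $x_c$. Showing that iteration pushes the essential supremum above $x_c$ and eventually up to $M$ is therefore delicate and precisely the point at which Assumption \eqref{assumption on jumps} on the ratio $g(y)/g(f(y))$ must be used. Once this monotone propagation of the support is established, the remainder of the argument (full positivity after finitely many iterations) follows the template of Lemma \ref{lem:comp non supp equal}.
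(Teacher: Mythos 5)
Your compactness argument follows essentially the same route as the paper, which writes $\mathbb K_\lambda$ (after the change of variables $y=X(-s,f_i^{-1}(z))$) as a sum of branch operators split at $z=r$ and applies the Fr\'echet--Kolmogorov criterion (Theorem \ref{lemma2}) to each piece; your single piecewise kernel with an $O(h)$ boundary term at $z=r$ is the same computation. Two corrections, though. First, there are no cancellations between the $\beta_1$ and $\beta_2$ terms: in \eqref{Ktilde imm} the products $-\beta_1 g$ and $\beta_2 g$ are both positive because $g<0$, $f_1'=-\alpha_1<0$ and $f_2'=\alpha_2>0$, so after the change of variables the kernel is positive (as it must be, since $\tilde K(a)$ is a positive operator). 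The detour through Theorem \ref{lemma1} is therefore unnecessary for real $\lambda$; it is, however, the natural tool for complex $\lambda\in\Delta$, where one dominates by $\mathbb K_{\Re\lambda}$. Second, $g$ is only assumed continuous, so your claim that $k_\lambda$ is Lipschitz (indeed smooth) in $z$ is not available; what the paper uses, and what suffices for Theorem \ref{lemma2}, is uniform continuity of $g$ on compact intervals together with the Lipschitz dependence of $\tau$ on its second argument.

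For non-supportingness you have identified the correct mechanism (the $f_2$-branch pushes the support of $\mathbb K_\lambda^n\varphi$ upward under iteration, exactly as in the paper), but two steps as you state them would fail. Assumption \eqref{assumption on jumps} plays no role here: it is the hypothesis that makes $K$ regularizing (Lemma \ref{lem:examples work imm}), and there is nothing to extract from the ratio $g(y)/g(f(y))$ for the support propagation. What you actually need is only the explicit affine form of $f_2$ in Assumption \ref{ass:immunity} (increasing, fixed point at $M$, $f_2(x)>x$ for $x<M$, cf.\ Figure \ref{fig:boosting}), which immediately gives that $f_2^{n}(x_0)$ exceeds any $z<M$ for $n$ large; so the step you call delicate is elementary, but your proposed route to it is the wrong one. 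More seriously, your conclusion that $\mathbb K_\lambda^{n}\varphi$ is strictly positive a.e.\ on $[m,M]$ for all $n$ past some $N$ is false: since $M-f_2^{n}(x_0)=\alpha_2^{n}(M-x_0)>0$, the $n$-th iterate is strictly positive only on $(m,f_2^{n}(x_0))$, which never exhausts $(m,M)$. This does not ruin the argument, but you must use Definition \ref{def:nonsupporting} as stated: for a given $F\in X_+^*\setminus\{0\}$ choose $p$ (depending on $F$ as well as $\varphi$) so large that $\{x<f_2^{p}(x_0)\}$ meets the support of $F$ in a set of positive measure; then $\langle F,\mathbb K_\lambda^{n}\varphi\rangle>0$ for all $n\ge p$ because the positivity region keeps growing. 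This is precisely how the paper concludes, and with these repairs your outline coincides with its proof.
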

\begin{proof}
By the definition of $\tilde{K}$ and by the change of variables 
\[y=X(-s,f^{-1}_1(z))\] we deduce that $\mathbb K_\lambda $ is equal to 
\eu{\begin{align*}
\frac{1}{\gamma} \mathbb K_\lambda\varphi(z) = \begin{cases}
 D_1(z) \int^M_{f_1^{-1}(z) } e^{-(\lambda + \gamma) \tau(x, f^{-1}_1(z))} \varphi(y ) dy &\\
+ D_2(z) \int^M_{f_2^{-1}(z)}  e^{-(\lambda + \gamma) \tau(x, f^{-1}_2(z) )} \varphi(y) dy &  m<z<r \\
D_2(z) \int^M_{f_2^{-1}(z)}  e^{-(\lambda + \gamma) \tau(x, f^{-1}_2(z) )} \varphi(y) dy & r<z<M
\end{cases} 
\end{align*}
where 
\[
D_1(z) = \beta_1(z)>0 \text{ and } D_2(z) = - \beta_2(z) >0. 
\]
}
\eu{Hence $\mathbb K_\lambda $ is the sum of the three operators $\mathbb K^i_\lambda $ defined as 
\[
\mathbb K^1_\lambda \varphi(z)=
\begin{cases}
\gamma D_1(z)  \int^M_{f_1^{-1}(z) }  e^{-(\lambda + \gamma) \tau(x, f^{-1}_1(z))} \varphi(y)dy &  m < z < r  \\
0 & r<z<M
\end{cases} 
\]
\[
\mathbb K^2_\lambda \varphi(z)=
\begin{cases}
\gamma D_2(z)  \int^M_{f_2^{-1}(z) }  e^{-(\lambda + \gamma) \tau(x, f^{-1}_2(z))} \varphi(y)dy  &  m < z < r  \\
0 & r<z<M
\end{cases} 
\]
and 
\[
\mathbb K^3_\lambda \varphi(z)=
\begin{cases}
\gamma D_2(z) \ \int^M_{f_2^{-1}(z) } e^{-(\lambda + \gamma) \tau(x, f^{-1}_2(z))} \varphi(y)  dy &  r< z < M \\
 0 &  z < r\\
\end{cases} 
\]
Since ${\mathbb K_\lambda} = \sum_{i =1}^3 \mathbb K_\lambda^i $ if we prove that for $i=1,2,3$ the operator ${\mathbb K^{i}_\lambda}$ is compact, then we deduce that $\mathbb K_\lambda$ is compact. }

\eu{To prove that each ${\mathbb K_\lambda^i} $ is compact we apply Lemma \ref{lemma2}. 
We describe in detail how to prove that $\mathbb K_\lambda^1 $ is compact. 
Consider $\delta < \min\{m ,r-m\}$. 
If  $z \in [m, r-\delta]$,  then since $f^{-1}_1 (z) > f_1^{-1}(z+\delta)$
\begin{align*}
 & \frac{1}{\gamma}  \left|  {\mathbb K_\lambda^1} \varphi(z+ \delta) - {\mathbb K_\lambda^1} \varphi(z)  \right| \\
&  =  \left| D_1(z) \int_{f^{-1}_1(z)}^M e^{-(\lambda + \gamma) \tau(x,  f_1^{-1}(z))} \varphi(x) dx \right.   \\
&\left. - D_1(z+ \delta ) \int^M_{f_1^{-1}(z+ \delta)} e^{-(\lambda + \gamma) \tau(x,  f_1^{-1}(z+\delta))} \varphi(x) dx \right| \\
&  \leq  \int_{f^{-1}_1(z)}^M   \left| D_1(z)  \ e^{-(\lambda + \gamma) \tau(x,  f_1^{-1}(z))} - D_1(z+ \delta ) e^{-(\lambda + \gamma) \tau(x,  f_1^{-1}(z+\delta))} \right| \varphi(x)  dx \\
&+  \int^{f^{-1}_1(z)}_{f^{-1}_1(z+\delta)} D_1(z+ \delta ) e^{-(\lambda + \gamma) \tau(x,  f_1^{-1}(z+\delta))} \varphi(x) dx. 
\end{align*}
\eu{On the other hand if $z \in [ r-\delta, r]$ we have that 
\begin{align*}
 & \frac{1}{\gamma}  \left|  {\mathbb K_\lambda^1} \varphi(z+ \delta) - {\mathbb K_\lambda^1} \varphi(z)  \right| \\
&  =   D_1(z) \int_{f^{-1}_1(z)}^M e^{-(\lambda + \gamma) \tau(x,  f_1^{-1}(z))} \varphi(x) dx 
\end{align*}}
Hence 
\begin{align}\label{horrible ineq} 
 & \frac{1}{\gamma}  \left\|  {\mathbb K_\lambda^1} \varphi(\cdot + \delta) - {\mathbb K_\lambda^1} \varphi(\cdot )  \right\|_1  \nonumber \\
&\leq \int_m^{r-\delta} \int_{f^{-1}_1(z)}^M   \left| D_1(z)  \ e^{-(\lambda + \gamma) \tau(x,  f_1^{-1}(z))} \right. \\
&\left.  - D_1(z+ \delta ) e^{-(\lambda + \gamma) \tau(x,  f_1^{-1}(z+\delta))}  \right| \varphi(x)dx dz \nonumber \\
&+  \int_m^{r-\delta} \int^{f^{-1}_1(z)}_{f^{-1}_1(z+\delta)} D_1(z+ \delta ) e^{-(\lambda + \gamma) \tau(x,  f_1^{-1}(z+\delta))}  \varphi(x) dx dz \nonumber \\
&+  \int_{r-\delta}^r D_1(z) \int_{f^{-1}_1(z)}^M e^{-(\lambda + \gamma) \tau(x,  f_1^{-1}(z))} \varphi(x) dx dz. \nonumber
\end{align} 
Since $f_1^{-1}(z) \in [0,x_c],$ and  $f_1^{-1}(z+ \delta) \in [0,x_c] $ if $z \in [m,r]$, then  
\[
D_1(z)=- \frac{1}{\alpha_1}  \frac{1}{g(f_1^{-1}(z))} \text{ and } D_1(z+ \delta)=- \frac{1}{\alpha_1} \frac{1}{g(f_1^{-1}(z+\delta))}. 
\]
 Using similar arguments to the one used in the proof of Lemma \ref{lem:comp non supp equal} we estimate the first term of inequality \eqref{horrible ineq} in the following way
\begin{align*}
&\int_m^{r-\delta} \int_{f^{-1}_1(z)}^M   \left| D_1(z)  e^{-(\lambda + \gamma) \tau(x,  f_1^{-1}(z))} \right. \\
& \left.  - D_1(z+ \delta ) e^{-(\lambda + \gamma) \tau(x,  f_1^{-1}(z+\delta))}  \right| \varphi(x) dx dz \\
& \leq \frac{1}{\alpha_1} \int_m^{r-\delta} \int_{f^{-1}_1(z)}^M e^{-(\lambda + \gamma) \tau(x,  f_1^{-1}(z+\delta))}  \left| \frac{1}{g(f^{-1}_1(z))}   e^{-(\lambda + \gamma) \tau(f^{-1}_1(z),  f_1^{-1}(z+\delta))} \right. \\
& \left. - \frac{1}{g(f^{-1}_1(z+\delta ))} \right|  \varphi(x) dx dz \leq c \delta \|  \varphi \|_1
\end{align*}
where we have used the uniform continuity of $g$ on compact intervals and the Lipschitz continuity of the function $\tau$ in the second argument. 
On the other hand, using the expression for $f_1 $ we deduce that the second term in inequality \eqref{horrible ineq} can be estimated in the following way
\begin{align*}
&  \int_r^{m-\delta} \int^{f^{-1}_1(z)}_{f^{-1}_1(z+\delta)} D_1(z+ \delta ) e^{-(\lambda + \gamma) \tau(x,  f_1^{-1}(z+\delta))}  \varphi(x)  dx dz \\
& \leq c \sup_{ x \in [m,M]}{\frac{1}{g(x)} }  \int_m^{r-\delta} \int^{-\frac{z + \delta }{\alpha_1}+ \frac{q_1}{\alpha_1}}_{-\frac{z  }{\alpha_1}+ \frac{q_1}{\alpha_1}} \varphi(x)  dx dz\\ 
& \leq c \sup_{ x \in [m,M]}{\frac{1}{g(x)}}  \int_m^{r} \int_{q_1 -\delta - \alpha_1 x }^{q_1 -x\alpha_1} dz  \varphi(x)   dx \leq c \delta \|\varphi \|_1 \\ 
\end{align*}
for a suitable constant $c>0.$
Finally the third term in inequality \eqref{horrible ineq} can be estimated in the following way
\[
\int_{r-\delta}^r D_1(z) \int_{f^{-1}_1(z)}^M e^{-(\lambda + \gamma) \tau(x,  f_1^{-1}(z))} \varphi(x) dx dz \leq c' \delta \| \varphi\|_1.  
\]
}

As a consequence, for every $\varepsilon >0$ there exists a $\delta_\varepsilon >0$ such that, for every $\delta < \delta_\varepsilon$
\[
\| {\mathbb K_\lambda^1} \varphi(\cdot+ \delta) - {\mathbb K_\lambda^1} \varphi(\cdot) \|_1 < \varepsilon \|\varphi\|_1
\]
and, hence the operator ${\mathbb K_\lambda^1}$ is  compact. 
The same technique can be used to prove that the operators $\mathbb K_\lambda^2 $ and $\mathbb K_\lambda^3$  are compact, hence the operator ${\mathbb K_\lambda}$ is compact for every $\lambda \in \mathbb C$ with $\Re \lambda > -\gamma $.

Now we have to prove that the operator $\mathbb K_\lambda$ is non-supporting for every $\lambda \in \Delta \cap \mathbb R$.
For every $\varphi\in X_+ =L^1_+([m,M])$ there exists a $ x_0 \in (m, M)$ such that $\varphi \neq 0$ for a subset $S$ of $[x_0, M]$ with positive measure. 
On the other hand for every $F \in L^\infty_+(\Omega_0)= L^\infty_+([m, M])$ there exists a set $S_F$ of positive Lebesgue measure on which $F$ is strictly positive.
If $x< x_0$ we have that 
\begin{align*}
&(\mathbb K_\lambda \varphi )(x) \geq  \frac{-1}{\alpha_2g(f_2^{-1}(x))} \int_{f_2^{-1}(x)}^{M} e^{-(\gamma +\lambda)\tau(z,f_2^{-1}(x))}\varphi(z)dz \\
& \geq  \frac{-1}{\alpha_2g(f_2^{-1}(x))} \int_{x}^{M} e^{-(\gamma +\lambda)\tau(z,f_2^{-1}(x))} \varphi(z) dz \\
&\geq  \frac{-1}{\alpha_2g(f_2^{-1}(x))}\int_{x_0}^{M} e^{-(\gamma +\lambda)\tau(z,f_2^{-1}(x))}\varphi(z)dz\\ 
\end{align*}
then $\mathbb K_\lambda \varphi(x)>0$. 

Assume now that $x > x_0$. Also in this case we have that 
\begin{align*}
&(\mathbb K_\lambda \varphi )(x) \geq \frac{-1}{\alpha_2 g(f_2^{-1}(x))} \int_{f_2^{-1}(x)}^{M} e^{-(\gamma +\lambda)\tau(z,f_2^{-1}(x))}\varphi(z) dz . 
\end{align*}
Thanks to the fact that $f_2$ is monotonically increasing we deduce that, if $x< f_2(x_0)$ then $f_2^{-1}(x)< x_0$ and hence 
$\mathbb K_\lambda \varphi (x)>0$ for every $x< f_2(x_0) $. 
Iterating this argument we deduce that $\mathbb K_\lambda \varphi (x)>0$ for every $x< f^n_2(x_0) $. 

Since for every $z<M$ there exists an $\overline n $ such that $f_2^{\overline n}(x_0)> z.$ 
we deduce that  there exists an $\overline n$ such that the set $\{x < f^{\overline n}(x_0)\} \cap S_F $ has positive measure and therefore $\mathbb K_\lambda$ is non-supporting. 
\qed\end{proof}
\begin{proposition}
	Let $g , \mu, \Lambda, \nu $ be such that Assumptions \ref{ass:immunity} hold. 
	Let $\psi_0$ be the stable distribution.
	The Malthusian parameter is equal to $0$ and $R_0=1$.  
	The solution $B$ of \eqref{RE} satisfies 
	\[
	\|  B(t, \cdot) - c\Psi_0  \|\leq M e^{-v t}
	\] 
	where $c, v, M >0$ and $\Psi_0(dx)=\psi_0(x) dx $ the norm $\| \cdot \|= \| \cdot \|_{TV} = \| \cdot \|_{\flat}$. 
\end{proposition}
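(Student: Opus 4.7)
My plan is to reduce the statement to a direct application of Theorem \ref{cor:measure asympt AC}. The three structural hypotheses of that theorem have already been verified in the preceding lemmas of this subsection: $K$ is a $-\gamma$-bounded regularizing kernel by Lemma \ref{lem:examples work imm}, the operator kernel $\tilde K$ satisfies Assumption \ref{rotation}, and $\mathbb K_\lambda$ is compact for every $\lambda \in \Delta$ and non-supporting for every $\lambda \in \Delta \cap \mathbb R$. So Theorem \ref{cor:measure asympt AC} will yield an estimate of the form $\|e^{-rt} B(t,\cdot) - c\Psi_r\| \leq M e^{-vt}$ as soon as the eigencouple $(r, \psi_r)$ is identified. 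All that remains is to pin down $R_0 = 1$, whence by Proposition \ref{prop:caract} $r = 0$.

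The crux is the identification $R_0 = 1$, which I would obtain by exploiting the mass-conservation character of the waning--boosting dynamics: there is no death, and boosting is merely a state relabelling rather than a demographic event. Concretely, since $\mathcal F(a,x) = e^{-\gamma a}$, $\Lambda \equiv \gamma$, and $\nu(x,\Omega_0) = \delta_{f(x)}([m,M]) = 1$, we have $K(a,x,\Omega_0) = \gamma e^{-\gamma a}$, and a short Fubini computation will give
\[
\int_{\Omega_0}(\mathbb K_0 \varphi)(z)\,dz = \int_0^\infty \gamma e^{-\gamma a}\,da \cdot \|\varphi\|_1 = \|\varphi\|_1 \quad \text{for every } \varphi \in X_+.
\]

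From this identity I would extract $R_0 = 1$ in two steps. Read dually, it says $\mathbb K_0^* \mathbf 1 = \mathbf 1$, so $1 \in \sigma(\mathbb K_0^*) = \sigma(\mathbb K_0)$ (using that $\mathbb K_0$ is compact), giving $\rho(\mathbb K_0) \geq 1$. For the reverse inequality, the positivity estimate $|\mathbb K_0 \varphi| \leq \mathbb K_0 |\varphi|$ combined with the mass identity applied to $|\varphi|$ yields $\|\mathbb K_0\|_{op} \leq 1$, hence $\rho(\mathbb K_0) \leq 1$. Together these force $R_0 = 1$, and Proposition \ref{prop:caract} then gives $r = 0$; substituting into Theorem \ref{cor:measure asympt AC} delivers the claim. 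I do not anticipate any serious obstacle beyond recognising mass conservation in operator form; the rest is bookkeeping.
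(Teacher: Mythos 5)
Your proposal is correct and is essentially the argument the paper intends but leaves unwritten: the proposition is stated without an explicit proof, being a direct consequence of Lemma \ref{lem:examples work imm}, the compactness/non-supporting lemma of that subsection, and Theorem \ref{cor:measure asympt AC}, once $R_0=1$ (hence $r=0$ by Theorem \ref{thm:eigencouple}) is identified. Your identification of $R_0=1$ via $K(a,x,\Omega_0)=\gamma e^{-\gamma a}$, which gives $\mathbb K_0^*\mathbf 1=\mathbf 1$ (so $\rho(\mathbb K_0)\ge 1$, compactness not even needed here) together with $\|\mathbb K_0\|_{op}\le 1$ from positivity, is sound and matches the conservation-of-individuals reasoning implicit in the paper.
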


\section{Relation between the PDE formulation and the RE} \label{sec:PDE}
In this section we prove asynchronous exponential growth/decline for the population distribution for the model examples introduced in Section \ref{sec:asy models}. 

We assume that the kernel $K$ is defined by \eqref{kernel} for parameters satisfying one among the three Assumptions \ref{ass: nu AC}, \ref{ass:equal fission}, \ref{ass:immunity}; hence $K$ is a $z_0$-bounded regularizing kernel and induces via  formula (\ref{eq:ktilde})  an operator kernel $\tilde{K}$ that satisfies either Assumption \ref{rotation} or Assumption \ref{rot density}.  The operator kernel $\tilde{K}$ in turn induces  the discounted next generation operator $\mathbb K_\lambda$ through the Laplace transform (\ref{NGO lambda}). 
We assume that  $\mathbb K_\lambda$  is non-supporting for every $\lambda \in \Delta \cap \mathbb R$ and compact for every $\lambda \in \Delta$.

\eu{\subsection{From the population birth rate to the population distribution}}
\eu{We start by making the connection between the renewal equations and the partial differential equations formalising the model examples presented in Section \ref{sec:examples}. }
\eu{Let $M(t, \omega)$ be the number of individuals in the population with state in the set $\omega$ at time $t.$ 
Assume that at time $t=0$ we have  $M(0,\cdot) =M_0$ with $M_0 \in \mathcal M_{+,b}(\Omega)$. 
Then the number of individuals, born before time zero, with state in the set $\omega $ at time $t$ is equal to
 \[
\int_{\Omega } \mathcal F(t,x) \delta_{X(t,x)}(\omega ) M_0(dx). 
\] 
On the other hand, the number of individuals, born after time zero, with state in the set $\omega $ at time $t$  equals
\[
\int_0^t \int_{\Omega_0} B(t-a, d\xi) \mathcal F(a, \xi) \delta_{X(a,\xi)}(\omega) da, 
\]
where $B$ is the population birth rate.
The two above observations lead to the following expression of $M$ in terms of $B $ and $M_0$}
\begin{equation}\label{eq:m in terms of b}
M(t, \omega)=\int_0^t \int_{\Omega_0} B(t-a, d\xi) \mathcal F(a, \xi) \delta_{X(a,\xi)}(\omega) da + \int_{\Omega } \mathcal F(t,x) \delta_{X(t,x)}(\omega ) M_0(dx).
\end{equation}
Once $B$ has been solved from the renewal equation \eqref{RE}, the formula \eqref{eq:m in terms of b} is an explicit formula for $M$.

\eu{
An alternative way to define $M $, see for instance \cite{canizo2021spectral}, is to define it by duality with the solution of the backward equation corresponding to \eqref{PDE growth-fission den}, \eqref{eq:equal fission den}, \eqref{eq:waning and boosting den}, that reads, in its general form, as 
\begin{equation}\label{backward PDE} 
\partial_t \varphi(t, x) =  g(x)  \partial_x \varphi(s,x)-\tilde{\mu}(x)  \varphi(s,x)  +  \int_{\Omega_0} \varphi(s,\eta) \nu(x, d\eta)  \Lambda(x). 
\end{equation}
}

\eu{If $M$ is differentiable this amounts to define $M$ as the function that satisfies 
	\begin{align}\label{PDE dual}
	& \int_\Omega \frac{d}{dt} M(t, dx) \varphi( x) =   \int_\Omega \left(  g(x)  \partial_x \varphi(x)-\tilde{\mu}(x)  \varphi(x)  \right) M(t, dx)   \\
	&+\int_\Omega \left( \int_{\Omega_0} \varphi(\eta) \nu(x, d\eta)\right)  \Lambda(x)   M(t , dx ) \nonumber
	\end{align}
for every $\varphi \in C^1_c(\mathbb R_+). $
We provide more details on how to interpret the term $\int_\Omega \frac{d}{dt} M(t, dx) \varphi( x)$ in Appendix \ref{sec:well posed PDE} (proof of Proposition \ref{prop:PDE exists}). }

\eu{If $M$ is not differentiable, as in the present case, this alternative way cannot be used.
However, $M$ can still be defined as the solution of an equation, namely equation \eqref{PDE} below, which can be seen as a weak version of the PDEs \eqref{PDE growth-fission den}, \eqref{eq:equal fission den} , \eqref{eq:waning and boosting den} when $\nu $ is  equal to \eqref{density h}, \eqref{nu AC fission} or \eqref{nu AC immune}, respectively.}

\begin{proposition}\label{prop:PDE}
Assume $\mu, g , \Lambda, \nu $ are either as in Assumptions \ref{ass: nu AC},  Assumption \ref{ass:equal fission} or Assumption \ref{ass:immunity}. 

The function $M$, defined by equation \eqref{eq:m in terms of b}, is the unique function mapping $\mathbb R_+ \times \mathcal B(\Omega) $ into $\mathbb R_+$, that satisfies the following  equation for every $\varphi \in C^1(\mathbb R_+ , C^1_c(\Omega))$ 
	\begin{align}\label{PDE}
& \int_\Omega \varphi(t, x) M(t, dx) -\int_\Omega \varphi(0, x) M_0( dx)  
- \int_0^t  \int_\Omega \partial_s \varphi(s, x) M(s, dx) ds \\
	&=   \int_0^t \int_\Omega \left(  g(x) \partial_x \varphi(s,x) -\tilde{\mu}(x)  \varphi(s,x)  \right) M(s, dx) ds  \nonumber \\
	&+ \int_0^t \int_\Omega \left( \int_{\Omega_0} \varphi(s,x) \nu(\eta, dx)\right)  \Lambda(\eta)   M(s , d\eta ) ds \nonumber
	\end{align}
	and the initial condition $M(0, \cdot)=M_0(\cdot).$
\end{proposition}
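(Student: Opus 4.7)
The proposition splits naturally into existence (the explicit formula \eqref{eq:m in terms of b} gives a function satisfying \eqref{PDE}) and uniqueness (any such function equals $M$). For existence, the plan is to compute $\frac{d}{ds}\int_\Omega \varphi(s,x)\,M(s,dx)$ by brute force from \eqref{eq:m in terms of b}, exploit the chain rule via the ODE \eqref{eq:size evolution} satisfied by $X$ together with $\partial_a \mathcal F(a,\xi)=-\tilde\mu(X(a,\xi))\mathcal F(a,\xi)$, and then close the loop by invoking the renewal equation \eqref{RE} to rewrite the boundary contribution at $a=0$ as the reproduction integral. For uniqueness, the natural route is duality with a sufficiently smooth solution of the backward equation \eqref{backward PDE}; this is where I expect to lean on the appendix.

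\textbf{Existence computation.} Set $\psi(s,a,\xi):=\varphi(s,X(a,\xi))\mathcal F(a,\xi)$ and $I(s):=\int_\Omega \varphi(s,x)\,M(s,dx)$. After the substitution $u=s-a$, \eqref{eq:m in terms of b} gives
\begin{equation*}
I(s)=\int_0^s\!\!\int_{\Omega_0}\psi(s,s-u,\xi)\,B(u,d\xi)\,du+\int_\Omega \psi(s,s,x)\,M_0(dx).
\end{equation*}
Differentiating in $s$ (the boundary term at $u=s$ in the first integral gives $\int_{\Omega_0}\varphi(s,\xi)\,B(s,d\xi)$ because $\mathcal F(0,\xi)=1$), and using $\partial_a\psi=g(X(a,\xi))\partial_x\varphi(s,X(a,\xi))\mathcal F(a,\xi)-\tilde\mu(X(a,\xi))\psi$ together with $\partial_s\psi=\partial_s\varphi(s,X(a,\xi))\mathcal F(a,\xi)$, the two integrals over $u$ and $M_0$ recombine, via \eqref{eq:m in terms of b} again, into a single integral against $M(s,dx)$. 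The outcome is
\begin{equation*}
I'(s)=\int_{\Omega_0}\varphi(s,\xi)\,B(s,d\xi)+\int_\Omega\bigl[\partial_s\varphi+g\,\partial_x\varphi-\tilde\mu\,\varphi\bigr](s,x)\,M(s,dx).
\end{equation*}

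\textbf{Closing with the RE.} It remains to identify $\int_{\Omega_0}\varphi(s,\xi)\,B(s,d\xi)$ with the reproduction term in \eqref{PDE}. Insert the explicit form \eqref{kernel} of $K$ into the renewal equation \eqref{RE}, multiply by $\varphi(s,\cdot)$ and integrate; Fubini yields
\begin{equation*}
\int_{\Omega_0}\varphi(s,\xi)\,B(s,d\xi)=\int_0^s\!\!\int_{\Omega_0}\mathcal F(a,\xi)\Phi(s,X(a,\xi))\,B(s-a,d\xi)\,da+\int_\Omega \mathcal F(s,x)\Phi(s,X(s,x))\,M_0(dx),
\end{equation*}
where $\Phi(s,z):=\Lambda(z)\int_{\Omega_0}\varphi(s,\eta)\,\nu(z,d\eta)$. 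By \eqref{eq:m in terms of b} applied to the test function $\Phi(s,\cdot)$, the right-hand side is exactly $\int_\Omega \Phi(s,x)\,M(s,dx)$. Integrating $I'(s)$ over $[0,t]$ then produces precisely \eqref{PDE}.

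\textbf{Uniqueness.} Given two solutions with the same initial datum, their difference $N$ satisfies \eqref{PDE} with $M_0=0$. For a fixed terminal time $T>0$ and an arbitrary test function $\phi_0\in C^1_c(\Omega)$, let $\varphi(s,x)$ be a $C^1$ solution on $[0,T]\times\Omega$ of the backward equation \eqref{backward PDE} with $\varphi(T,\cdot)=\phi_0$, whose construction for the three concrete forms of $\nu$ (uniform-type fission densities, the mass-equal Dirac $2\delta_{y/2}$, and the boosting Dirac $\delta_{f(y)}$) is carried out in Appendix \ref{sec:well posed PDE} by the method of characteristics combined with a Duhamel/contraction iteration on $\Lambda$. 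Plugging this $\varphi$ into \eqref{PDE} for $N$ makes every term on the right-hand side vanish, yielding $\int_\Omega \phi_0\,N(T,dx)=0$ for all admissible $\phi_0$; a density argument then gives $N(T,\cdot)=0$. Since $T$ was arbitrary, $N\equiv 0$. The construction of a sufficiently regular backward solution for each $\nu$ is the main technical obstacle — in particular the boosting case, where $f$ has the local minimum visible in Figure \ref{fig:boosting} and characteristics must be tracked through the non-injectivity of $f$ — which is why I would delegate the detailed well-posedness to the appendix and only quote it here.
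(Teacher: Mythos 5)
Your proposal is correct and follows essentially the same route as the paper: for existence, the paper likewise differentiates the pairing $s\mapsto\int_\Omega\varphi(s,x)M(s,dx)$ (formalised there as a.e.\ differentiability of the $(C^1_c(\Omega))^*$-valued maps $T_B$, $T_{M_0}$, using the Lebesgue point theorem for the boundary term at $a=0$) and then closes by inserting the kernel \eqref{kernel} into the renewal equation \eqref{RE} to identify $\int_{\Omega_0}\varphi(s,\xi)B(s,d\xi)$ with the reproduction term, exactly as in your Fubini step; for uniqueness, the paper also tests the difference against a solution of $\mathcal{G}\varphi=0$ with prescribed terminal datum, constructed by characteristics plus a contraction argument in the appendix. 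The only cosmetic difference is that you phrase the time-differentiation as a direct Leibniz computation after the substitution $u=s-a$, where the paper packages it as Lemma \ref{lem:m a.e. differentiable}; just make sure to note, as the paper does, that $T_B$ is only differentiable for a.e.\ $s$ (Lebesgue points of $B$), which suffices since one integrates over $[0,t]$.
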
 
We refer to the appendix for the proof of this proposition. 

\eu{Equation \eqref{PDE} is used in the literature, see for instance \cite{debiec2018relative} and also \cite{gabriel2018measure} in a slightly different situation. It is not entirely intuitive because it relies  on  what one could call a double duality (both in the state variable and in the time variable). }
In our approach we use the interpretation to define $M$ by \eqref{eq:m in terms of b}. In the next section we shall deduce the large time behaviour of $M$ from that of $B$ in a simple natural way. So there is no need to formulate a PDE for $M$ and to specify in which sense we solve it. Our motivation to, nevertheless, formulate and prove Proposition \ref{prop:PDE} is simply to show that our constructively defined $M$ does indeed coincide with $M$ as defined in other works.

By the interpretation of  $\mathcal F(t, x)$  as the survival probability we expect $\mathcal F(t, x)$ to tend to zero as time tends to infinity. 
\eu{Indeed, it follows from the assumption on the model parameters that we have made in this section that there exists a constant $C>0$ such that
\begin{equation}\label{bound F}
\sup_{ x \in \Omega_0} \mathcal F(t, x) \leq C e^{z_0 t} \quad  {\rm for \,  all} \,\, t>0.
\end{equation}
} 
The exponential bound \eqref{bound F} is crucial in deducing the asymptotic behaviour of the solution $B$ of the renewal equation \eqref{RE} and subsequently of $M$ defined in terms of $B$ in  \eqref{eq:m in terms of b}. This will be done in the next subsection.

\subsection{Asymptotic behaviour of the solution of the PDE}

We now state and prove our result on asynchronous exponential growth/decline of the population distribution $M$.

\begin{theorem}\label{gener asympt}
Assume that $\mu, g , \Lambda, \nu $ are either as in Assumptions \ref{ass: nu AC} ,   Assumption \ref{ass:equal fission}, or Assumption \ref{ass:immunity}. 
	Let $M$ be given by \eqref{eq:m in terms of b} and let $\psi_r$ and $r$ be as in Corollary \ref{cor:measure asympt AC}. 
	Then there exist a constant $C>0$ and a constant $\ell>0 $ such that
	\begin{equation} \label{eq m expo convergence}
	\left\| e^{-r t} M(t, \cdot ) - c M_{\psi_r} ( \cdot) \right\| \leq C e^{- \ell t} \quad t >0 .
	\end{equation}
	where $c>0 $ and $M_{\psi_r} $ is defined by
	\begin{equation}\label{Mpsi}
	M_{\psi_r}(\omega) := \int_0^\infty  \int_{\Omega_0} e^{-a r } \psi_r(\xi) \mathcal F(a, \xi) \delta_{X(a,\xi)}(\omega) d\xi da \quad \omega \in \mathcal B(\Omega)
	\end{equation}
and 	where $\|\cdot  \|= \| \cdot  \|_{TV} =\| \cdot \|_{\flat}. $
\end{theorem}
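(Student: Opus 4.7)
The plan is to exploit the explicit formula \eqref{eq:m in terms of b} expressing $M$ in terms of the population birth rate $B$ and the initial distribution $M_0$, and to insert into it the known asymptotic behaviour of $B$ provided by Theorem \ref{cor:measure asympt AC}, which applies under each of the three sets of assumptions by the arguments of Section \ref{sec:asympt examples}. I would first split $M(t,\cdot)=M_1(t,\cdot)+M_2(t,\cdot)$, where $M_2(t,\omega):=\int_{\Omega}\mathcal F(t,x)\delta_{X(t,x)}(\omega)\,M_0(dx)$ is the contribution from individuals present at $t=0$ and $M_1$ is the convolution-like integral involving $B$, and treat the two pieces separately.

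The term $M_2$ is easy: its total variation is at most $\int_{\Omega}\mathcal F(t,x)\,M_0(dx)\le C e^{z_0 t}\|M_0\|$ by \eqref{bound F}. Since $r\in\Delta$ forces $r>z_0$, multiplying by $e^{-rt}$ gives exponential decay at rate $r-z_0>0$. For $M_1$ I would write $B(s,\cdot)=c\,e^{rs}\Psi_r+E(s,\cdot)$ with $\|E(s,\cdot)\|\le L\,e^{(r-v) s}$, as guaranteed by Theorem \ref{cor:measure asympt AC}, and substitute this decomposition under the integral sign. The leading piece becomes $c\,e^{rt}\int_0^t\int_{\Omega_0}e^{-ra}\psi_r(\xi)\mathcal F(a,\xi)\delta_{X(a,\xi)}(\omega)\,d\xi\,da$, i.e.\ $c\,e^{rt}$ times the truncation to $[0,t]$ of the integral defining $M_{\psi_r}$ in \eqref{Mpsi}. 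The missing tail $\int_t^\infty\!\!\cdots$ has total variation at most $C\|\psi_r\|_1(r-z_0)^{-1}e^{(z_0-r) t}$, again by \eqref{bound F}, so the contribution of this piece to $\|e^{-rt} M(t,\cdot)-c M_{\psi_r}\|$ is $O(e^{(z_0-r)t})$.

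The remaining piece, $\int_0^t\int_{\Omega_0}E(t-a,d\xi)\mathcal F(a,\xi)\delta_{X(a,\xi)}(\omega)\,da$, is bounded in total variation by $\int_0^t \|E(t-a,\cdot)\|\cdot C\,e^{z_0 a}\,da$; plugging in the bounds on $E$ and on $\mathcal F$ and carrying out the elementary integral yields a bound of the form $C'\bigl(e^{(r-v) t}+e^{(z_0) t+(r-v-z_0)\cdot 0}\bigr)$ up to constants, which after dividing by $e^{rt}$ decays like $e^{-\ell t}$ for any $\ell<\min(r-z_0,v)$. Collecting the three estimates through the triangle inequality then yields \eqref{eq m expo convergence}, with the constant $c>0$ inherited directly from Theorem \ref{cor:measure asympt AC}.

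The only mild subtlety I anticipate is the resonance $z_0=r-v$, where the elementary integral $\int_0^t e^{(z_0-r+v) a}\,da$ equals $t$ rather than an exponential; this contributes an extra factor of $t$ that is harmlessly absorbed by decreasing $\ell$ slightly. Otherwise the argument is straightforward bookkeeping: the real analytic work has already been done in Section \ref{sec:method} to control the density of $B^{AC}$, in Theorem \ref{thm:reduction} to control the singular component $B^s$, and in Section \ref{sec:asympt examples} to verify compactness and non-supportingness of $\mathbb K_\lambda$ for each concrete model.
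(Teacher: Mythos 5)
Your proposal is correct and follows essentially the same route as the paper: decompose $M$ via the explicit formula \eqref{eq:m in terms of b}, bound the initial-cohort term using \eqref{bound F} and $r>z_0$, and compare the birth-rate convolution with $c\,e^{rt}M_{\psi_r}$ by inserting the known asymptotics of $B$, handling the tail $\int_t^\infty$ and the error convolution exactly as the paper does. The only cosmetic difference is that you invoke the measure-level estimate \eqref{speed of convergence} for $B$ as a black box, whereas the paper re-splits $B$ into its absolutely continuous part (using the $L^1$ asymptotics of its density) and its singular part (using \eqref{Bs tends to zero exponentially}) inside the proof; the two bookkeepings are equivalent, and your remark about the resonance case $z_0=r-v$ is a legitimate, harmless refinement.
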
 
\begin{proof} 
\eu{We start by introducing some useful notation.}
	Let $\tilde{M}^{AC}$ be the measure defined by 
	\[
	\tilde{M}^{AC}(t, \omega): =\int_0^t \int_{\Omega_0} b(t-a)(\xi) \mathcal F(a, \xi) \delta_{X(a,\xi)}(\omega) d\xi da
	\] 
	where $b$ is the density of $B^{AC}.$
	In analogy we define $\tilde{M}^s $ as follows
	\[
	\tilde{M}^{s}(t, \omega): =\int_0^t \int_{\Omega_0} B^s(t-a, d \xi) \mathcal F(a, \xi) \delta_{X(a,\xi)}(\omega) da. 
	\] 
 With $\tilde{M}$ we denote the measure defined by
	\[
	\tilde{M}(t,\omega) : = \int_{\Omega } \mathcal F(t,x) \delta_{X(t,x)}(\omega ) M_0(dx).
	\] 
	Notice that 
	\begin{align*}
	& \left\| M(t, \cdot ) - c e^{r t} M_{\psi_r} (\cdot)\right\| \leq \left\| M(t, \cdot ) - \tilde{M}^{AC}(t,\cdot)  \right\|+ 
	\left\| \tilde{M}^{AC}(t,\cdot)  - c e^{r t} M_{\psi_r} ( \cdot)\right\| \\
	& \leq \left\| \tilde{M}(t, \cdot) \right\| + \left\| \tilde{M}^s(t, \cdot ) \right\|+ \left\| \tilde{M}^{AC}(t,\cdot)  -c  e^{r t} M_{\psi_r} (\cdot)\right\|. 
	\end{align*} 
\eu{Now we estimate all these terms.} In the estimates that follow we shall denote by $C$ a suitably chosen positive constant the value of which may change from line to line.
	Thanks to \eqref{bound F}  we have
	\begin{align*}
	& \left\| \tilde{M}(t, \cdot ) \right\|= \int_{\Omega_0} M_0(d\xi) \mathcal F(t, \xi) \delta_{X(t,\xi)}(\Omega) \leq  \int_{\Omega_0} M_0(d\xi) \mathcal F(t, \xi) \leq C \| M_0 \| e^{z_0 t}
	\end{align*}
for every $t \geq 0.$
	It follows from \eqref{Bs tends to zero exponentially} and \eqref{bound F}  that 
	\begin{align*}
	& \left\| \tilde{M}_s(t, \cdot ) \right\|=\int_0^t \int_{\Omega_0} B^{s}(t-a, d\xi) \mathcal F(a, \xi) \delta_{X(a,\xi)}(\Omega) da  \\
	& \leq \int_0^t \left( c_1 e^{ z_0 (t-a)} + c_2 t e^{z_0 (t-a)} \right) \sup_{\xi \in \Omega } \mathcal F(a, \xi) da \\
	&\leq \left( c_1  e^{ z_0 t} + c_2  t e^{z_0 t} \right) t. 
	\end{align*}
Let  $m$ be the density of $\tilde{M}^{AC}$ and let $m_{\psi_r}$ be the density of $M_{\psi_r}$.
\eu{This means that
\[
m_{\psi_r}(y):=\int_0^\infty e^{-r a } \psi_r(X(-a,y)) \mathcal F(a, X(-a,y)) \frac{g(y)}{g(X(-a,y))}  da
\]
and 
\[
m(t,y):=\int_0^t b(t-a)(X(-a,y)) \mathcal F(a, X(-a,y))  \frac{g(y)}{g(X(-a,y))}  da, 
\]
where $b(t-a)(X(-a,y))$ is the evaluation in $X(-a, y)$ of the function $b(t-a)$. } 
By the definition of the total variation norm and of the flat norm, we then have 
	\[
	\left\| \tilde{M}^{AC}(t,\cdot) - c e^{r t} M_{\psi_r} ( \cdot)\right\| = \left\| m(t,\cdot)  - c e^{r t}m_{\psi_r} ( \cdot)\right\|_1.
	\] 
\eu{	Notice that by the change of variable $X(-a,y)=x$ we get }
	\begin{align*}
	&\left\|m(t,\cdot)  - c e^{rt} m_{\psi_r} (t, \cdot)\right\|_1 \leq  e^{r t} \int_t^\infty \int_{\Omega_0} \psi_r(x) e^{-r a}\mathcal F(a,x) dxda \\
	&+  \int_0^t \int_{\Omega_0} \left| b(a)(x) - c e^{r a} \psi_r(x) \right| \mathcal F(t-a,x) dx da 
	\end{align*} 
\eu{	The fact that $\mathcal F$ satisfies \eqref{bound F} and the fact that $r > z_0$ imply 
	\[
     e^{r t} \int_t^\infty \int_{\Omega_0} \psi_r(x) e^{-r a}\mathcal F(a,x) dxda \leq C e^{z_0 t}.
	\]
} 

	Recall that by Corollary \ref{cor:measure asympt AC}  there exists a constants $C>0$ and $ r>k>0$ such that
	\[ 
	\left\| b(t) - c e^{r t } \psi_r \right\|_1 \leq C e^{-k t+ r t } \text{ for every } t >0.
	\] 
	\eu{Therefore, using \eqref{bound F}, as well as the fact that $r-k>0$ and, hence, $\max_{a \in [0,t]} e^{(r-k) a} =  e^{(r-k) t} $ we deduce that} 
	\begin{align*}
	& \int_{0}^t \int_{\Omega_0} \left| b (a)(x) - c e^{r a} \psi_r(x) \right| \mathcal F(t-a,x) \ dx da \\
	& \leq C \int_{0}^t  e^{(r-k) a} \sup_{x \in \Omega} \mathcal F(t-a,x) da \leq C  e^{(r-k) t} \int_0^\infty e^{z_0 a } da.
	\end{align*} 
	Combining all the bounds that we have for $\| \tilde{M}(t, \cdot)\|$, $\|m^s(t, \cdot)\|$ and $\|\tilde{M}^s(t, \cdot) - e^{- r t} \psi_r (\cdot) \|$ we deduce that 
	\[
	\left\|  M(t, \cdot ) - c e^{rt} M_{\psi_r} ( \cdot) \right\| \leq C e^{(r-\ell)  t}
	\]
	fore some $\ell >0$, that is,  \eqref{eq m expo convergence} holds.
\qed\end{proof}

\section{Concluding remarks}
Models of physiologically structured populations can be formulated from first principles as renewal equations for the population birth rate $B$, which takes on values in the space of measures on the set of admissible states-at-birth \cite{DGG2007,franco2021one}.
In this paper we proved the asynchronous exponential growth of the measure-valued solution $B$ of the renewal equation, \eqref{RE} under a regularisation assumption on the kernel $K.$
This  assumption enabled us to derive the asymptotic behaviour of $B $ from the behaviour of its absolutely continuous part  $B^{AC}$. 
 \eu{Moreover, using the regularisation assumption, we proved that also the density of $B^{AC}$ satisfies a renewal equation}. 
We studied the long term behaviour of this density by way of Laplace transform methods.  

We applied our results to a model of cell growth and fission (either into equal or unequal parts) and to a model of waning and boosting of the level of immunity against a pathogen. 
For these examples we then used the interpretation to express in Equation \eqref{eq:m in terms of b} the population state $M$, that is, the distribution of individual states, in terms of the population birth rate $B$.   If we assume that the values of $B(t, \omega)$ for $t<0$ are given, we can write \eqref{eq:m in terms of b} as follows:
\eu{\begin{equation} \label{M function of B tr inv}
M(t, \omega)=\int_0^\infty \int_{\Omega_0} B(t-a,d\xi) \mathcal F(a,\xi) \delta_{X(a,\xi)}(\omega) da.
\end{equation}
}
Vice versa,  we can express $B$ in terms of of $M$:
\begin{equation} \label{B function of M tr inv}
B(t, \omega)= \int_{\Omega} \Lambda(\eta) \nu(\eta, \omega) M(t, d\eta). 
\end{equation}
Combining Equations \eqref{M function of B tr inv} and \eqref{B function of M tr inv} we deduce the translation invariant formulation  
\begin{equation}\label{tr inv B}
B(t, \omega)=\int_0^\infty \int_{\Omega_0} B(t-a, d\xi) K(a,\xi, \omega) da 
\end{equation} 
of Equation \eqref{RE}.

When we solved \eqref{tr inv B} and then used \eqref{M function of B tr inv} to define $M$, we actually solved a PDE, the weak version of which is \eqref{PDE} (see Section 6 of \cite{franco2021one} for general remarks about the way $RE$ arise when solving certain types of PDE). 
 However, as noted above, there  is no need to write down the PDE itself and to specify the notion of solution, nor to rigorously prove the existence of such a solution, since the interpretation justifies our conclusions. 
So guided by the interpretation we determined the asymptotic behaviour of the population distribution efficiently using \eqref{M function of B tr inv} and thus avoided demanding technicalities associated with PDEs.

When the measure $M(t, \cdot)$ is absolutely continuous with respect to the Lebesgue measure,  it is simpler to write down the PDE.  For the model of cell growth and fission into equal parts it takes the form \eqref{eq:equal fission den}, for fission into unequal parts it becomes \eqref{PDE growth-fission den} and for the model of waning and boosting we have \eqref{eq:waning and boosting den}. These PDEs have been treated for instance in \cite{diekmann1984stability}, \cite{heijmans1984stable} and \cite{diekmann2018waning}, respectively.

\eu{The corresponding backward formulation of these equations is \eqref{backward PDE}. 
When $\Omega \subset \mathbb R$, then the measure $M(t, \cdot) $ can be represented by the NBV function $N$  defined by
\[
N(t,x):=\int_{[0,x]} M(t, d\eta). 
\]
The function $N$ satisfies the following forward equation
\begin{equation}
\partial_t N(t,x)= - g(x) \partial_x N(t,x)- \int_{[0,x]} \tilde{\mu}(\xi) N(t,d\xi) + \int_\Omega \Lambda(\eta) \nu(\xi, [0,x]) N(t, d\xi). 
\end{equation} 
}

﻿ ﻿The regularization assumption on the kernel $K$ entails, of course, a restriction concerning the class of models that is covered. For the special example of fission into two equal parts, it is shown in Section II.12 of \cite{metz2014dynamics} that one can establish convergence to an absolutely continuous stable distribution under a relaxed regularity condition. So there is definitely room for deriving sharper results. On the other hand, it is known that a stable distribution may have a non-zero singular component. Indeed, this can happen in the context of the selection-mutation balance as analyzed in \cite{ackleh2016population,burger1996stationary}. We now briefly comment on the similarities and differences of the models considered here and those treated in \cite{ackleh2016population,burger1996stationary}.
	In \cite{burger1996stationary}, the nonlinearity is of the ‘replicator’ type, meaning that it is due to dividing by the total population size, so due to working with relative magnitudes. In the context of our framework we can, if we wish, do the same. In \cite{ackleh2016population} the per capita birth and death rates are allowed to depend on the total population size (see below for a more general setup). Neither \cite{ackleh2016population} nor \cite{burger1996stationary} considers a dynamical trait and both implicitly assume that the survival probability as a function of age is an exponential function. In these respects the model considered here is far more general. In \cite{burger1996stationary} mutation is incorporated as a random change of trait, while in \cite{ackleh2016population} it is incorporated as production of offspring with a different trait. If in our framework we put $K=K_1+K_2$, with $K_1( a,\xi, . )$ equal to a $(a,\xi)$ dependent multiple of the Dirac measure concentrated in $\xi$ (describing production of offspring with exactly the same trait) and $K_2(a,\xi, . )$ equal to a $(a,\xi)$ dependent multiple of a fixed absolutely continuous probability distribution, we obtain a “house-of-cards” type model in the spirit of Section 4 of \cite{burger1996stationary}. Perhaps one can do a lot of more or less explicit calculations for this special case of a one-dimensional range perturbation of a rather degenerate kernel $K_1$, but this has not been done so far.

It is a challenge to extend the analysis developed in this paper to the case of a structured population embedded in a non-constant environment which influences the evolution of the population and which in turn is influenced by feedback from the population.
An example of an environment for cell growth and fission is the amount of nutrient resources, as it is known that the availability of nutrients affects both the growth and fission rates \cite{Monahan2014}.
In the waning and boosting context, the force of infection $\gamma $ is the most relevant environmental variable. 

Let us denote the environment by $E$.  
The evolution in time of $(B(t), E(t))$ is given by the following system of equations 
\begin{equation}  \label{foor RE} 
B(t, \omega)=\int_0^\infty \int_{\Omega_0} B(t-a, d\xi) K(a,\xi,E_t,E(t), \omega) da  
\end{equation} 
\begin{equation}  \label{food}
\frac{d}{dt}E(t)=f(E(t)) - \int_0^\infty \int_{\Omega_0} B(t-a, d\xi ) c(a,\xi, E_t, E(t)) da
\end{equation} 
where  $\frac{ d E(t)}{dt} = f(E(t))$ describes the evolution in time of the  environment in the absence of a consumer population and  $c(a,\xi, E_t, E(t))$ represents the influence on the environment of an individual born with state $\xi$, that at time $t$ has age $a.$
Both $c$ and the kernel $K$ in equation \eqref{foor RE} depend on $E(t)$ as well as on the {\em history} $E_t$, that is on all the values of $E$ before time $t$ and this dependence on $E$ introduces, via \eqref{food}, a non-linearity in  equation \eqref{foor RE}. 

It is an open problem to study the asymptotic behaviour of $(B(t),E(t)) $ under an (adapted) regularisation assumption on the kernel $K$ and to uncover the connection with the corresponding PDE formulation. 
The special case $\Omega_0=\{ x_0\}$ is elaborated in \cite{barril2021nonlinear}.

\appendix

\section{Notation}\label{sec:notation}
In this appendix we introduce the notation used in the paper.
We denote by $\mathbb R_+$ the set $[0, \infty)$ and by $\mathbb R_+^*$ the set $(0, \infty)$. Given a Borel measurable subset $A$ of $\mathbb R$ we denote by $\mathcal B(A)$  the $\sigma-$algebra of all Borel subsets of $A$.
$\mathcal M (A) $ is the set of the signed Borel measures on $A$,  $\mathcal M _{+}(A) $ is the cone of the positive measures and  $\mathcal M_{+,b} (A) $ the set of the positive and bounded measures on $A$. Furthermore,  $\mathcal M_{+,AC}(A)$ is the subset of the measures which are absolutely continuous with respect to the Lebesgue measure. 
We denote by $\mu^s$ the singular part of the measure $\mu$ and by $\mu^{AC}$ its absolutely continuous part, again with respect to the Lebesgue measure. We have $\mu = \mu^{s}+ \mu^{AC}.$
Finally, we denote by $|A|$ the Lebesgue measure of the Borel set $A$. 

The total variation norm  $\|\mu  \|_{TV}$ of a measure $\mu \in \mathcal M(A)$ is
defined by  
\[
\| \mu \|_{TV} = \sup_{\Pi} \sum_{i=1}^n |\mu(A^i) |,
\] 
where the supremum is taken over all the finite measurable partitions $\Pi:=\{A^1, \dots, A^n\} $ of the set $A$.
We denote by $BL(A)$ the space of the real valued bounded Lipschitz functions, endowed with the norm 
\[ 
\| f \|_{BL}:= \sup_{x \in A } |f (x)| + \sup_{x,y \in A:  x \ne y} \frac{|f(x)-f(y)|}{|x-y |}.
\]  
Finally,  the flat norm $\| \mu \|_{\flat}$ of a measure $\mu \in M(A)$ is defined by  
\[
\| \mu \|_{\flat} = \sup\left\{ \left|\int_{A} f d\mu \right| : f \in BL(A) \text{ such that } \| f \|_{BL} \leq 1 \right\}. 
\] 
For positive measures $\mu$ the equality$\| \mu \|_{\flat} = \| \mu \|_{TV}$ holds, see \cite{gwiazda2018measure}. 

We denote by $L^1_+(A) $ the set of the positive functions belonging to $L^1(A)$. The set $L^1_+(A) $ is a cone in $L^1(A)$. Similarly, we denote by $L^\infty_+(A) $ the cone of the positive functions belonging to $L^\infty(A)$.

For real numbers $\rho$, $L^1_\rho(A)$ is the space of the measurable functions $f: A \rightarrow \mathbb R$ such that 
\[
\int_A \left| f(a) e^{\rho a}\right| da < \infty.
\]

 $\mathcal L (X)$ is the space of the bounded linear operators from the normed linear space $X$ into itself equipped with the operator norm by $\| \cdot \|_{op}$.
The spectral radius of the linear operator $T$ is denoted by  $\rho(T)$.

\section{Proof of Proposition \ref{prop:PDE}} \label{sec:well posed PDE}

Let the assumptions made in the beginning of Section \ref{sec:PDE} hold.

\eu{To prove that $M$ defined by \eqref{eq:m in terms of b} is the unique solution of equation \eqref{PDE}, we use the Riesz–Markov–Kakutani representation theorem and \eqref{eq:m in terms of b} to identify $M(t, \cdot)$ with the  element 
\begin{equation}\label{M frechet}
T_B(t)+ T_{M_0}(t)
\end{equation} 
of $(C_c(\Omega))^*$,
where the function $T_B: \mathbb R_+ \rightarrow (C_c(\Omega))^*$ is defined by 
	\begin{equation}\label{eq:tb}
	T_B: t \mapsto\left( \varphi \mapsto \int_0^t \int_{\Omega_0} B(t-a, d\xi) \mathcal F(a, \xi) \varphi(X(a,\xi))  da  \right) \quad t \geq 0
	\end{equation}
while the function $T_{M_0}: \mathbb R_+ \rightarrow (C_c(\Omega))^*$ is defined by 
\begin{equation} \label{eq:tm0}
	T_{M_0}: t \mapsto\left( \varphi \mapsto \int_{\Omega} M_0( d\xi) \mathcal F(t, \xi) \varphi(X(t,\xi)) \right) \quad t \geq 0. 
	\end{equation}
}
We use this representation to  compute $\frac{d}{dt } M(t, \cdot)$.  To this end we will identify $T_B(t)$ and $T_{M_0}(t)$ with their restrictions on $C^1_c(\Omega). $
We start with an auxiliary lemma that explain how to compute $\frac{d}{dt} T_B(t)$ and $\frac{d}{dt} T_{M_0}(t).$
\begin{lemma}\label{lem:m a.e. differentiable} 
	The functions $T_B: \mathbb R_+ \rightarrow (C^1_c(\Omega))^*$ and $T_{M_0}: \mathbb R_+ \rightarrow (C^1_c(\Omega))^*$  defined  by \eqref{eq:tb} and \eqref{eq:tm0}, respectively,  are  a.e. differentiable and differentiable.
	For the values of time $t \geq 0$ for which $T_B$ is differentiable, its derivative $\frac{d}{dt} T_B (t) \in (C^1_c(\Omega))^*$ is given by
	\begin{align*}
	\frac{d}{dt} T_B (t) \varphi = F_B (t) \varphi \quad \text{ for every } \varphi \in C^1_c(\Omega),
	\end{align*} 
	where 
	\begin{align*}
	&F_B(t) \varphi:= \int_{\Omega_0} B(t, d\xi) \varphi(\xi) + \int_0^t \int_{\Omega_0} B(t-a, d\xi)  \mathcal F(a, \xi) G(\varphi)(X(a,\xi)) da  
	\end{align*} 
	with
	\begin{align*}
	G(\varphi) (x):= - \tilde{\mu} (x)\varphi(x)+g(x) \varphi'(x).
	\end{align*} 
The derivative $\frac{d}{dt} T_{M_0}(t) \in (C^1_c(\Omega))^*$ of  $T_{M_0}$  is given by
	\begin{align*}
	\frac{d}{dt} T_{M_0} (t) \varphi = F_{M_0} (t) \varphi \quad \text{ for every } \varphi \in C^1_c(\Omega),
	\end{align*} 
	where 
	\begin{align*}
	&F_{M_0}(t) \varphi:= \int_{\Omega}   \mathcal F(t, \xi) G(\varphi)(X(t,\xi)) M_0(d\xi). 
	\end{align*} 
\end{lemma}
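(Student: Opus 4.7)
The plan is to derive both formulas by differentiation under the integral sign combined with the chain rule applied to the integrand $\mathcal{F}(a, \xi)\varphi(X(a, \xi))$, exploiting the defining ODE \eqref{eq:size evolution} for $X$ and the exponential representation \eqref{survival probability} of $\mathcal{F}$. The case $T_{M_0}$ yields everywhere differentiability because $t$ enters only inside the integrand; the case $T_B$ is a convolution in time with the measure-valued function $B$ and yields only a.e. differentiability through a boundary-term argument based on the Lebesgue differentiation theorem.

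For $T_{M_0}$, fix $\varphi \in C^1_c(\Omega)$. From \eqref{eq:size evolution} we have $\frac{\partial}{\partial t}X(t, \xi) = g(X(t, \xi))$, and from \eqref{survival probability} we have $\frac{\partial}{\partial t}\mathcal{F}(t, \xi) = -\tilde\mu(X(t, \xi))\mathcal{F}(t, \xi)$. The chain rule then gives
\begin{equation}\label{eq:chain-rule}
\frac{\partial}{\partial t}\bigl[\mathcal{F}(t, \xi)\varphi(X(t, \xi))\bigr] = \mathcal{F}(t, \xi)G(\varphi)(X(t, \xi)).
\end{equation}
Since $\mathcal{F}(t, \xi) \leq 1$ and $\varphi$ has compact support on which $g, \tilde\mu, \varphi, \varphi'$ are all bounded, the difference quotient in $t$ is dominated uniformly in $\xi$ on compact time intervals. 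Because $M_0$ is a finite measure, dominated convergence yields $\frac{d}{dt}T_{M_0}(t)\varphi = F_{M_0}(t)\varphi$ for every $t \geq 0$.

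For $T_B$, substitute $s = t - a$ to obtain
\[
T_B(t)\varphi = \int_0^t \int_{\Omega_0} B(s, d\xi)\, \mathcal{F}(t-s, \xi)\varphi(X(t-s, \xi))\, ds.
\]
The difference quotient $\varepsilon^{-1}[T_B(t+\varepsilon)\varphi - T_B(t)\varphi]$ splits into the boundary integral over $s \in [t, t+\varepsilon]$ and the interior integral over $s \in [0, t]$. The boundary piece equals $\varepsilon^{-1}\int_t^{t+\varepsilon} \phi_\varepsilon(s)\, ds$, where $\phi_\varepsilon(s) := \int_{\Omega_0} B(s, d\xi)\mathcal{F}(t+\varepsilon-s, \xi)\varphi(X(t+\varepsilon-s, \xi))$. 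Setting $\phi_0(s) := \int_{\Omega_0} B(s, d\xi)\varphi(\xi)$ and using $\mathcal{F}(0, \xi) = 1$, $X(0, \xi) = \xi$ together with continuity of $\mathcal{F}$ and $X$ in the first argument, one sees $\phi_\varepsilon \to \phi_0$ uniformly on $[t, t+\varepsilon]$ as $\varepsilon \to 0$. Since $\phi_0 \in L^1_{\mathrm{loc}}(\mathbb{R}_+)$ (because $\varphi$ is bounded and $B(\cdot, \Omega_0)$ is locally integrable by Proposition \ref{prop:existence of a solution for the RE}), the Lebesgue differentiation theorem yields, for a.e.\ $t$, the limit $\int_{\Omega_0} B(t, d\xi)\varphi(\xi)$. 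The interior piece is handled by dominated convergence using \eqref{eq:chain-rule} (with $t$ replaced by $t-s$) and the same uniform bounds; reverting $a = t-s$ produces the convolution term in $F_B(t)\varphi$, completing the identification $\frac{d}{dt}T_B(t)\varphi = F_B(t)\varphi$.

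The main technical obstacle is the boundary term for $T_B$, since $B$ need not be continuous in $t$: this forces the use of Lebesgue points and yields differentiability only almost everywhere, in accordance with the statement of the lemma.
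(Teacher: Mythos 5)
Your overall route is the paper's: for $T_{M_0}$ the chain-rule identity $\partial_t[\mathcal F(t,\xi)\varphi(X(t,\xi))]=\mathcal F(t,\xi)G(\varphi)(X(t,\xi))$ plus dominated convergence (the paper omits this as ``analogous''), and for $T_B$ a split of the difference quotient into a boundary piece, treated with the Lebesgue differentiation theorem (whence the a.e.\ restriction), and an interior piece, treated by dominated convergence; reverting the change of variables gives $F_B(t)\varphi$. This is exactly the paper's decomposition.

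There is, however, a genuine gap in what you prove versus what the lemma asserts. The lemma concerns differentiability of $T_B$ as a map $\mathbb R_+\to (C^1_c(\Omega))^*$: the paper accordingly estimates $\bigl\|h^{-1}(T_B(t+h)-T_B(t))-F_B(t)\bigr\|_{op}=\sup_{\|\varphi\|_{C^1_c(\Omega)}\le 1}\bigl|h^{-1}(T_B(t+h)\varphi-T_B(t)\varphi)-F_B(t)\varphi\bigr|$ and shows that both the boundary and the interior limits hold \emph{uniformly} over the unit ball, so that a single null set of times serves all test functions. In your argument $\varphi$ is fixed at the outset, and the exceptional set is the complement of the Lebesgue points of $\phi_0(s)=\int_{\Omega_0}B(s,d\xi)\varphi(\xi)$, which depends on $\varphi$. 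Since the unit ball of $C^1_c(\Omega)$ is uncountable, you cannot take the union of these null sets; what you obtain is only that for each fixed $\varphi$ there is a null set $N_\varphi$ off which $\frac{d}{dt}\bigl(T_B(t)\varphi\bigr)=F_B(t)\varphi$. That is strictly weaker than a.e.\ differentiability of $T_B$ in $(C^1_c(\Omega))^*$ (and a single exceptional set is what the subsequent use in the proof of Proposition \ref{prop:PDE exists}, with $s$-dependent test functions, relies on). The repair is precisely the paper's uniformity argument: strip the $\varphi$-dependence using $\|\varphi\|_\infty\le 1$ and the Lipschitz bound $|\varphi(X(h,y))-\varphi(y)|\le\|\varphi'\|_\infty|X(h,y)-y|$, together with compactness of $\mathrm{supp}\,\varphi$ and continuity of $X(\cdot,y)$, so that the Lebesgue-point step is applied to a $\varphi$-independent scalar function (essentially $a\mapsto\int_\Omega B(a,dy)\mathcal F(\cdot-a,y)$, controlled by $B(a,\Omega_0)$), and check likewise that the dominated-convergence limit for the interior term is uniform on the unit ball. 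With this modification your proof coincides with the paper's.
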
 
\eu{The proof of this lemma is technical as it deals with function with values in $(C^1_c(\Omega))^*$, 
but the result  is intuitively credible as it is formally obtained by simply applying Leibniz rule for differentiating under the integral sign.
}
\begin{proof}
	We start by proving that $T_B$ is  differentiable. 
	Notice that for every $\varphi \in C^1_c(\Omega)$,
\eu{	\begin{align*}
	& \left\| \frac{T_B (t) - T_B (t+h)}{h}-  F_B (t) \right\|_{op} = \sup_{\| \varphi \|_{C_c^1(\Omega)} \leq 1}  \left| \frac{T_B (t)\varphi  - T_B (t+h) \varphi}{h}-  F_B (t)\varphi  \right|.
	\end{align*} }
	By the Lebesgue point theorem we have that for almost every $t>0$
	\begin{align*}
	\lim_{h\rightarrow 0} \frac{1}{h} \int_t^{t+h} \int_\Omega B(a,dy) \mathcal F(t-a, y) \varphi(X(t-a,y)) da = \int_\Omega B(t,dy) \varphi(y)
	\end{align*} 
\eu{for every $\varphi \in C^1_c(\Omega)$ with $\|\varphi\|_{C_{c}^1(\Omega)}\leq 1. $}
	The fact that $\varphi$ is Lipschitz continuous and that for every $y \in \Omega_0$ the map $X(\cdot, y)$ is continuous implies that the convergence is uniform in $\varphi$.
	Let us illustrate why.
	Notice that 
	\begin{align*} 
&	 \frac{1}{h}\left| \int_t^{t+h} \int_\Omega B(a,dy) \mathcal F(t-a, y) \varphi(X(t-a,y)) da - \int_\Omega B(t,dy) \varphi(y)\right| \\
	& \leq \frac{1}{h} \int_t^{t+h} \left|\int_\Omega B(a,dy) \mathcal F(t-a, y) \varphi(X(t-a,y)) \right. \\
& \left. - \int_\Omega B(a,dy) \mathcal F(t-a, y) \varphi(y) \right| da \\
	& +  \frac{1}{h} \left|\int_t^{t+h} \int_\Omega B(a,dy) \mathcal F(t-a, y) \varphi(y) da - \int_\Omega B(t,dy)\varphi(y)\right| da \\
	 	&\leq \frac{1}{h} \int_t^{t+h}\int_\Omega B(a,dy) \left| \varphi(X(t-a,y)) - \varphi(y) \right| da \\
	 & +\frac{1}{h} \int_t^{t+h} \left| \int_\Omega B(a,dy) \mathcal F(t-a, y)  da - \int_\Omega B(t,dy) \right| da\\ 
	\end{align*}
The second term goes to zero a.e., uniformly in $\varphi$. 
Since $\varphi$ is compactly supported, and Lipschitz continuous, we have that for every $\varepsilon>0$ there exists a $\delta>0$ such that for every $h<\delta $ we have $\left| \varphi(X(h,y)) - \varphi(y) \right| \leq \|\varphi'\|_\infty \left| X(h,y)-y\right|< \varepsilon$.
It follows that
\begin{align*}
 \frac{1}{h} \int_t^{t+h}\int_\Omega B(a,dy) \left| \varphi(X(t-a,y)) - \varphi(y) \right| da \rightarrow 0
 \end{align*}
 uniformly in $\varphi$ with $\|\varphi \|_{C_c^1(\Omega)} \leq 1$ as $h \rightarrow 0.$
 
	On the other hand, by the dominated convergence theorem we deduce that
	\begin{align*} 
	&\lim_{h \rightarrow 0} \int_0^{t} \int_{\Omega_0} \frac{ \Delta_h \mathcal F \varphi(t,a,y)}{h}  G(\varphi)(X(a,y))  ) B(a, dy) da  \\
	&= \int_0^{t} \int_{\Omega_0}  \mathcal F(a, y) G(\varphi)(X(a,y))  ) B(a, dy) da  
	\end{align*} 
	where
	\[
	\Delta_h \mathcal F \varphi(t,a,y)= \mathcal F(t+h-a, y) \varphi(X(t+h-a, y)) -  \mathcal F(t-a, y) \varphi(X(t-a, y)). 
	\] 
	Since for every $y \in \Omega_0$ the map $X(\cdot, y)$ is continuous and since $\varphi \in C_c^1(\mathbb R)$, hence Lipschitz continuous function, the convergence is uniform in $\varphi.$

The proof of the fact that
\[
\frac{d T_{M_0}(t) \varphi}{dt}= F_{M_0}(t) \varphi
\]
is analogous and we omit it. 
\qed\end{proof}
\begin{proposition}\label{prop:PDE exists}
	The function $M$, defined by equation \eqref{eq:m in terms of b} satisfies \eqref{PDE} for every $\varphi \in C^1(\mathbb R_+, C^1_c(\Omega))$. 
\end{proposition}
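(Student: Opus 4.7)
The plan is to derive \eqref{PDE} by differentiating $u(s) := \int_\Omega \varphi(s, x)\, M(s, dx)$ in $s$ and integrating from $0$ to $t$. The initial value $u(0) = \int_\Omega \varphi(0, x)\, M_0(dx)$ is immediate from \eqref{eq:m in terms of b} together with $X(0, x) = x$ and $\mathcal F(0, x) = 1$, which give $M(0, \cdot) = M_0$.

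Writing $M(s, \cdot) = T_B(s) + T_{M_0}(s)$ as in \eqref{eq:tb} and \eqref{eq:tm0}, and applying the chain rule in the dual pairing, Lemma \ref{lem:m a.e. differentiable} yields, for a.e.\ $s \in [0, t]$,
\[ u'(s) = \int_\Omega \partial_s \varphi(s, x)\, M(s, dx) + F_B(s)\varphi(s, \cdot) + F_{M_0}(s)\varphi(s, \cdot). \]
The contributions involving $\mathcal F(a, \xi)\, G(\varphi(s, \cdot))(X(a, \xi))$ inside $F_B$ and $F_{M_0}$ recombine, by the very definition of $M$ in \eqref{eq:m in terms of b}, into $\int_\Omega \bigl( g(x) \partial_x \varphi(s, x) - \tilde\mu(x) \varphi(s, x) \bigr)\, M(s, dx)$. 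The remaining boundary term $\int_{\Omega_0} \varphi(s, \xi)\, B(s, d\xi)$ of $F_B$ is handled by the renewal equation \eqref{RE} and the factorisation $K = \mathcal F\, \Lambda\, \nu$ of \eqref{kernel}: substituting for $B(s, d\xi)$ in \eqref{RE}, unpacking $K$, and recognising the resulting expression as an integral against $M(s, d\eta)$ delivers
\[ \int_{\Omega_0} \varphi(s, \xi)\, B(s, d\xi) = \int_\Omega \Lambda(\eta) \Bigl( \int_{\Omega_0} \varphi(s, \sigma)\, \nu(\eta, d\sigma) \Bigr) M(s, d\eta), \]
which is \eqref{B function of M tr inv} tested against $\varphi(s, \cdot)$. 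Collecting the three summands in $u'(s)$ and integrating over $[0, t]$ produces exactly \eqref{PDE}.

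The delicate step is justifying the passage from the a.e.\ pointwise identity for $u'$ to the integrated identity, since $B(\cdot, \Omega_0)$ is only locally integrable and $u$ is not a priori of class $C^1$. I would circumvent a direct appeal to the fundamental theorem of calculus by verifying via Fubini that the right-hand side of the integrated expression, after substituting \eqref{eq:m in terms of b} for $M$ and \eqref{RE} for $B$ inside every integral and interchanging the orders of integration in the $da$, $ds$, and $d\xi$ variables, telescopes back to $u(t) - u(0)$. Integration by parts along the characteristic curves $a \mapsto X(a, \xi)$, using the $C^1$ regularity of $X$ and $\mathcal F$ in $a$ guaranteed by Assumption \ref{ass: nu AC}, \ref{ass:equal fission}, or \ref{ass:immunity}, absorbs the $g\, \partial_x \varphi$ and $\tilde\mu\, \varphi$ terms into age-derivatives, while the compact support of $\varphi(s, \cdot)$ in $x$ and local boundedness of $B(\cdot, \Omega_0)$ make all exchanges of integration legitimate.
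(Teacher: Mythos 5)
Your argument is correct and follows the paper's own proof essentially step for step: it rests on Lemma \ref{lem:m a.e. differentiable} for the derivatives of $T_B$ and $T_{M_0}$, recombines the $\mathcal F\, G(\varphi)$ contributions via \eqref{eq:m in terms of b}, and identifies the boundary term $\int_{\Omega_0}\varphi(s,\xi)\,B(s,d\xi)$ with $\int_\Omega \Lambda(\eta)\bigl(\int_{\Omega_0}\varphi(s,\sigma)\,\nu(\eta,d\sigma)\bigr)M(s,d\eta)$ through the renewal equation \eqref{RE} and the factorisation \eqref{kernel}, exactly as the paper does in \eqref{eq for M2} and \eqref{eq for M3}. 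The only difference is that you explicitly flag the passage from the a.e.\ identity for $u'$ to the integrated identity and propose a Fubini-based verification, a point the paper treats implicitly in its opening ``integrating by parts'' step; this is a refinement of the same route rather than a different one.
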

\begin{proof}
	\eu{Integrating by parts we find that}
	\begin{align*}
	&\int_\Omega \varphi(t, x) M(t, dx) -\int_\Omega \varphi(0,x) M_0(dx)- \int_0^t  \int_\Omega \partial_s \varphi(s, x) M(s, dx) ds \\
	&= \int_0^t \int_\Omega \varphi(s, x) \frac{d}{ds} M(s, dx) ds 
	\end{align*} 
\eu{where the term $\int_\Omega \varphi(s, x) M(s, dx)$ is equal to $F_B(s) \varphi(s,\cdot)+ F_{M_0}(s)\varphi(s,\cdot)$. Hence thanks to Lemma \ref{lem:m a.e. differentiable}}
	\begin{align*}
	& \int_0^t \int_\Omega \varphi(s, x) \frac{d}{ds} M(s, dx) ds = \int_0^t \int_{\Omega_0} B(t, d\xi) \varphi(s, \xi) ds \\
	&+ \int_0^t \int_0^s \int_{\Omega_0} B(s-a, d\xi)  \mathcal F(a, \xi) G(\varphi(s,\cdot))(X(a,\xi))da ds  \\
	&+ \int_0^t \int_\Omega \mathcal F(s,x) G(\varphi(s,\cdot))(x) M_0( dx) ds.
	\end{align*} 
\eu{This implies that 
\begin{align*}
	&\int_\Omega \varphi(t, x) M(t, dx) -\int_\Omega \varphi(0,x) M_0(dx)- \int_0^t  \int_\Omega \partial_s \varphi(s, x) M(s, dx) ds \\
&=  \int_0^t \int_{\Omega_0} B(s, d\xi) \varphi(s, \xi) ds \\
	&+ \int_0^t \int_0^s \int_{\Omega_0} B(s-a, d\xi)  \mathcal F(a, \xi) G(\varphi(s,\cdot))(X(a,\xi))da  ds \\
	&+ \int_0^t \int_\Omega \mathcal F(s,x) G(\varphi(s,\cdot))(x) M_0( dx) ds.
\end{align*}
Hence, to deduce that $M$ satisfies \eqref{PDE}, we have to prove that 
\begin{align}\label{eq for M}
&\int_0^t \int_{\Omega_0} B(s, d\xi) \varphi(s, \xi) ds \\
	&+ \int_0^t \int_0^s \int_{\Omega_0} B(s-a, d\xi)  \mathcal F(a, \xi) G(\varphi(s,\cdot))(X(a,\xi))da ds \nonumber \\
	&+ \int_0^t \int_\Omega \mathcal F(s,x) G(\varphi(s,\cdot))(x) M_0( dx) ds \nonumber \\
&= \int_0^t \int_\Omega  G(\varphi(s,\cdot))  M(s, dx) ds  \nonumber \\
	&+ \int_0^t \int_\Omega \left( \int_\Omega  \varphi(s,x) \nu(\eta, dx) \right) \Lambda(\eta)  M(s , d\eta ) ds\nonumber  
\end{align}
} 
	\eu{Using \eqref{eq:m in terms of b} 
to compute 
\[
 \int_0^t \int_\Omega G(\varphi(s,\cdot))(x )M(s, dx) ds 
\] 
 we deduce that }
	\begin{align}\label{eq for M2}
	&\int_0^t \int_0^s \int_{\Omega_0} B(s-a, d\xi)  \mathcal F(a, \xi) G(\varphi(a,\cdot ))(X(a,\xi)) da ds \\
&=  \int_0^t \int_\Omega G(\varphi(s,\cdot))(x )M(s, dx) ds \nonumber \\
	&- \int_0^t \int_\Omega \mathcal F(s,x)  G(\varphi(s,\cdot))(x ) M_0( dx) ds\nonumber 
	\end{align}
\eu{Let
\[
L(s, x ):=\Lambda(x) \int_\Omega \nu(x, dy) \varphi(s,y). 
\]
We integrate the function $L(s,\eta) $ against the measure $M(s,d \eta) ds $ on $\mathbb R_+ \times \Omega$ and deduce that 
\begin{align*}
&\int_0^t \int_\Omega L(s, \eta) M(s,d \eta) ds = 
& \int_0^t \int_\Omega \int_\Omega \varphi(s,x)  \nu(\eta, dx) \Lambda(\eta)  M(s, d\eta )  ds \\
\end{align*}
On the other hand, integrating $L(s,\eta) $ against the following measure on $\mathbb R_+ \times \Omega$ 
\[
\int_0^s \int_{\Omega_0} B(s-a, d\xi) \mathcal F(a, \xi) \delta_{X(a, \xi)}(\cdot) da ds +  \int_\Omega \mathcal F(s,x) \delta_{X(s,x)}(\cdot)  M_0(dx) ds, 
\] 
and, additionally, using the fact that $B$ satisfies \eqref{RE}, as well as the formula \eqref{eq:m in terms of b}, we deduce that 
	\begin{align}\label{eq for M3}
&	\int_0^t \int_{\Omega_0} B(s, d\xi) \varphi(s, \xi) ds \\
&=\int_0^t \int_\Omega L(s, \eta) M(s,d \eta) ds \nonumber \\
& =  \int_0^t \int_\Omega \int_\Omega \left(\varphi(s,y) \nu(\eta, dy) \right) \Lambda(\eta)  M(s, d\eta )  ds \nonumber
	\end{align}}
\eu{Combining \eqref{eq for M} with \eqref{eq for M2} and with \eqref{eq for M3} we find that $M $ satisfies \eqref{PDE}. }
\qed\end{proof}
Finally we prove that there exists a unique solution for equation \eqref{PDE}. 
\begin{proposition}\label{prop:PDE uniq}
	If both $M_1$ and $M_2 $ solve \eqref{PDE} with the same initial condition $M_1(0, \cdot)=M_2(0,\cdot)=M_0(\cdot)$, then  $M_1(t, \cdot)=M_2(t, \cdot)$ for every $t>0$. 
\end{proposition}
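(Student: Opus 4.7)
The plan is a standard duality argument that rests on constructing enough test functions from solutions of the formal adjoint of \eqref{PDE}. By linearity, the signed measure $\mu(t,\cdot):=M_1(t,\cdot)-M_2(t,\cdot)$ satisfies \eqref{PDE} with zero initial datum, so, since $C^1_c(\Omega)$ separates finite Borel measures on $\Omega$, it suffices to prove $\int_\Omega \psi(x)\,\mu(T,dx)=0$ for every $T>0$ and every $\psi\in C^1_c(\Omega)$. For a given pair $(T,\psi)$, I would construct $\varphi\in C^1(\mathbb R_+,C^1_c(\Omega))$ with $\varphi(T,\cdot)=\psi$ that solves on $[0,T]\times\Omega$ the backward equation
\[
\partial_s\varphi(s,x)+g(x)\,\partial_x\varphi(s,x)-\tilde\mu(x)\,\varphi(s,x)+\Lambda(x)\int_{\Omega_0}\varphi(s,y)\,\nu(x,dy)=0.
\]
Substituting such a $\varphi$ into \eqref{PDE} applied to $\mu$ makes the whole right-hand side of \eqref{PDE} cancel with $\int_0^T\!\int_\Omega\partial_s\varphi\,d\mu\,ds$, leaving $\int_\Omega\psi(x)\,\mu(T,dx)=0$, which is the desired conclusion.

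The construction of $\varphi$ proceeds via the method of characteristics. Denoting by $X(\cdot,\cdot)$ the forward flow of $\dot x=g(x)$ and by $\mathcal F$ the survival factor (as in \eqref{survival probability} or \eqref{survival probability w}), integrating the adjoint equation along the characteristic through $(s,x)$ converts it into the Volterra-type fixed-point equation
\[
\varphi(s,x)=\mathcal F(T-s,x)\,\psi\bigl(X(T-s,x)\bigr)+\int_s^T\mathcal F(\tau-s,x)\,\Lambda\bigl(X(\tau-s,x)\bigr)\int_{\Omega_0}\varphi(\tau,y)\,\nu\bigl(X(\tau-s,x),dy\bigr)\,d\tau,
\]
which a standard Picard iteration in $C([0,T],L^\infty(\Omega))$ solves uniquely thanks to the boundedness of $\Lambda$ and of the total mass of $\nu(x,\cdot)$ granted by Assumption \ref{ass:2nd level}. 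Differentiating the integral equation in $x$ and bootstrapping then yield $\varphi\in C^1([0,T],C^1(\Omega))$, and extending $\varphi$ smoothly to $s\geq T$ (e.g.\ by freezing it at $\psi$) produces an admissible test function.

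The main obstacle is ensuring that $\varphi(s,\cdot)$ remains compactly supported in $\Omega$ for every $s\in[0,T]$. For the waning-and-boosting model this is automatic because $\Omega=(0,M]$ is bounded. For the two cell-growth models $\Omega=\mathbb R_+^\ast$, one uses that $\nu(x,\cdot)$ is concentrated in $(0,x]$ (fission never increases size) together with the monotonicity of $X$ to conclude that if $\psi$ vanishes outside $(0,R]$, then $\varphi(s,\cdot)$ vanishes outside $(0,X(T-s,R)]$, which remains compactly contained in $\mathbb R_+^\ast$ for finite $T$ by virtue of condition \eqref{infinite travel time}. A further delicate point is the $C^1$ regularity in $x$ when $\nu$ has a non-smooth structure, most notably the corner of the boosting function $f$ at $x_c$ in Assumption \ref{ass:immunity}; this can be handled either by approximating $f$ by smooth functions and passing to the limit (the construction is linear in $\psi$ and continuous in the data), or, equivalently, by a density argument extending \eqref{PDE} to the slightly weaker class of test functions produced by the construction, which is harmless since $\mu$ is a bounded signed measure.
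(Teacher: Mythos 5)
Your overall strategy coincides with the paper's: set $\mu=M_1-M_2$, test \eqref{PDE} against a solution of the backward (adjoint) equation with prescribed final datum $\psi$, and build that solution by integrating along the characteristics of $\dot x=g(x)$ and solving the resulting Volterra fixed-point equation by contraction/Picard iteration. The only structural difference is cosmetic: the paper runs the contraction directly in $C^1([0,\bar t\,],C^1_c(\Omega_0))$ on a short time interval and restarts, whereas you iterate in $C([0,T],L^\infty(\Omega))$ and bootstrap regularity afterwards.

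The step that would fail as stated is your support-propagation claim for the cell-growth models. It is not true that $\psi$ vanishing outside $(0,R]$ forces $\varphi(s,\cdot)$ to vanish outside $(0,X(T-s,R)]$: the backward solution at $(s,x)$ collects contributions from \emph{all} descendants alive at time $T$, and fission \emph{decreases} size. A cell of arbitrarily large size $x$ at time $s<T$ divides at a strictly positive rate (Assumption \ref{ass: nu AC} requires $\Lambda$ to have a strictly positive limit at infinity) and, in the unequal-fission case, $h(z,\cdot)>0$ on $(0,z)$, so with positive probability it produces daughters whose sizes at time $T$ lie in the support of $\psi$; hence $\varphi(s,x)>0$ for all large $x$ whenever $\psi\geq 0$, $\psi\not\equiv 0$ (the same happens for equal fission through repeated halving). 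What does propagate, by the order-preservation of the flow $X$ and the fact that daughters are smaller than the mother, is vanishing near $0$, i.e.\ on the \emph{small}-size side — the opposite of what your argument needs. Consequently your construction does not by itself produce a test function in $C^1(\mathbb R_+,C^1_c(\Omega))$ for $\Omega=\mathbb R_+^*$. (The paper's own proof simply asserts that the fixed-point operator preserves $C^1([0,\bar t\,],C^1_c(\Omega_0))$ and does not confront this point either.) A genuine repair requires an additional argument, e.g.\ truncating $\varphi$ at large sizes and controlling the error using the uniform bounds \eqref{z0 k}--\eqref{bound F} together with the finiteness of $M_i(t,\Omega)$, or extending the admissible test class by the kind of approximation you sketch at the end — but as written you aim that remark only at the lack of smoothness of the boosting function at $x_c$, not at the loss of compact support.
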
 
\begin{proof}
	Let  $M=M_1-M_2.$
	Since $M_1$ and $M_2$ satisfy equation \eqref{PDE}, it follows that
	\[
	\int_\Omega \varphi(t, x) M(t, dx)=   \int_0^t \int_\Omega \mathcal G \varphi (s,x) M(s, dx) ds,
	\]
	where 
	\begin{align}
	\mathcal G  \varphi(s,x):= \partial_s \varphi(s, x)+  g(x)  \partial_x \varphi(s,x) -\tilde{\mu}(x)  \varphi(s,x) + \int_\Omega \varphi(s,\eta)  \Lambda(x)  \nu(x, d\eta).
	\end{align}
	We prove that for every $\psi \in C_c(\Omega_0) $ there exists a $\varphi \in C^1([0,t],C^1_c(\Omega_0))$ such that $\mathcal G \varphi =0$ and $\varphi(t,x)=\psi(x).$
	This implies that $M(t,\text{supp} \psi)=0$. Making the function $\psi$ vary we deduce that $M(t,A)=0$ for every $A \in \mathcal B(\Omega_0)$. From this we find that $M_1=M_2.$
	
	Let us prove that for every $\psi$ there exists a solution to the equation $\mathcal G \varphi=0$ with final condition $\varphi(t,x)=\psi(x).$
	Thanks to the definition of $\mathcal G$, this is equivalent to prove that there exists a unique solution to  
	\begin{equation}\label{eq uniqueness}
	\partial_s \varphi(s, x)=-  g(x)  \partial_x \varphi(s,x) +\tilde{\mu}(x)  \varphi(s,x) - \int_\Omega \varphi(s,\eta)  \Lambda(x)  \nu(x, d\eta).
	\end{equation}
	Integrating along the characteristic we can rewrite the equation in a fixed point form:
	\begin{align*} 
	&\varphi(s,X(s,x))=\mathcal T \varphi(s,x)
	\end{align*} 
	with $X(s,x)$ being the solution of the ODE $\frac{dy}{ds}=g(y)$ with initial datum $y(0)=x$ and with
	\begin{align*}
	\mathcal T \varphi(s,x):=&\psi(X(t,x)) e^{- \int_s^t \tilde{\mu}(X(v,x)) dv} \\
	&+ \int_s^t \int_\Omega \varphi(v,\eta)  \Lambda(X(v,x))  \nu(X(v,x), d\eta)  e^{- \int_s^v \tilde{\mu}(X(v,x)) dv}dv
	\end{align*}
	Since 
\begin{align*}
	& \| \mathcal T \varphi_2- \mathcal T \varphi_2\|_\infty  \leq 2 \cdot \| \varphi_1-\varphi_2 \|_\infty \cdot \sup_{x \in \Omega} \int_s^t \tilde{\mu}(X(v,x))  e^{- \int^s_v \tilde{\mu}(X(a,x)) da} dv \\
&  \leq 2 \cdot \| \varphi_1-\varphi_2 \|_\infty \cdot  \sup_{x \in \Omega} \left(1- e^{- \int_0^t \tilde{\mu}(X(a,x)) da}  \right),
	\end{align*}
	we deduce that for sufficiently small $\overline {t}>0$ the operator $\mathcal T$, that maps $C^1([0,\overline{t}], C^1_c(\Omega_0))$ in itself, is a contraction. Hence there exists a unique solution of equation \eqref{eq uniqueness} as $s $ varies between $0$ and $\overline t.$ A solution for every time $t$ can be proven to exists by repeating the above reasoning for every interval of time of length $\overline t.$
\qed\end{proof}
\begin{proof}[Proof of Proposition \ref{prop:PDE}]
It is enough to combine the statement of Proposition \ref{prop:PDE uniq} and \ref{prop:PDE exists}. 
\qed\end{proof}

\section*{Declarations}
\textbf{Funding}: The research was supported by the ERC grant 741487\\
\textbf{Conflicts of interest}: The authors have no conflicts of interest to declare that are relevant to the content of this article \\
\textbf{Availability of data and material}: Not applicable \\
\textbf{Code availability}: Not applicable 

\vspace{1cm}

\textbf{Eugenia Franco} University of Helsinki, Department of Mathematics and Statistics, 

\hspace{0.5 cm} Helsinki, Finland,
eugenia.franco@helsinki.fi,  orcid:0000-0002-5311-2124  \\

\textbf{Odo Diekmann} Utrecht University, Mathematical Institute, Utrecht, Netherlands,

\hspace{0.5 cm}orcid:0000-0003-4695-7601 \\

\textbf{Mats Gyllenberg} 
University of Helsinki, Department of Mathematics and Statistics,

\hspace{0.5 cm} Helsinki, Finland, orcid:0000-0002-0967-8454 \\

\bibliographystyle{plain}
			\bibliography{biblio2}
			\nocite{*}


\end{document}